\documentclass[leqno]{amsart}
\usepackage{a4wide}
\usepackage{graphicx}
\usepackage{latexsym}
\usepackage{amssymb,amsthm,amsmath,mathtools}
\usepackage[active]{srcltx}
\makeatletter

\usepackage{enumerate,url,amssymb,mathrsfs}
\usepackage{todonotes}

\setlength{\textwidth}{\paperwidth}
\addtolength{\textwidth}{-2in}
\calclayout

\usepackage{color}

\usepackage[pagebackref=true,colorlinks=false]{hyperref}

\definecolor{darkgreen}{rgb}{0.5,0.25,0}
\definecolor{darkblue}{rgb}{0,0,0.56}
\definecolor{answerblue}{rgb}{0,0,0.75}

\hypersetup{colorlinks,breaklinks,
            linkcolor=darkblue,urlcolor=darkblue,
            anchorcolor=darkblue,citecolor=darkblue}

\newtheorem{theorem}{Theorem}[section]
\newtheorem{lemma}[theorem]{Lemma}
\newtheorem*{lemma*}{Lemma}

\newtheorem{corollary}[theorem]{Corollary}

\theoremstyle{theorem}
\newtheorem{definition}[theorem]{Definition}

\theoremstyle{theorem}
\newtheorem{remark}[theorem]{Remark}

\numberwithin{equation}{section}

\makeatletter
\@namedef{subjclassname@2020}{%
  \textup{2020} Mathematics Subject Classification}
\makeatother

\def\mxi{\boldsymbol{\xi}}
\def\mPhi{\boldsymbol{\Phi}}
\def\malpha{\boldsymbol{\alpha}}
\def\cal{\mathcal}
\newcommand{\abs}[1]{\left\lvert#1\right\rvert}
\newcommand{\norm}[1]{\left \lVert#1 \right\rVert}
\newcommand{\inn}[1]{\left\langle#1\right\rangle}
\newcommand{\innb}[1]{\bigl\langle#1\bigr\rangle}

\newcommand{\pa}{\partial}

\newcommand{\R}{\mathbb{R}}

\newcommand{\cX}{\mathscr{X}}

\newcommand{\cD}{\mathcal{D}}
\newcommand{\cL}{\mathcal{L}}
\newcommand{\cA}{\mathcal{A}}
\newcommand{\cM}{\mathcal{M}}
\newcommand{\cB}{\mathcal{B}}
\newcommand{\cT}{\mathcal{T}}

\newcommand{\cI}{\mathcal{I}}
\newcommand{\cZ}{\mathcal{Z}}
\newcommand{\cY}{\mathcal{Y}}
\newcommand{\cC}{\mathcal{C}}
\newcommand{\cK}{\mathcal{K}}

\newcommand{\bS}{\mathbb{S}}
\newcommand{\prob}{\mathbf{P}}

\newcommand{\ddiv}{\operatorname{div}}
\newcommand{\Div}{\operatorname{div}}

\DeclareMathOperator{\sign}{sign\,}

\DeclareMathOperator{\Lip}{Lip}

\def\XXint#1#2#3{{\setbox0=\hbox{$#1{#2#3}{\int}$}
\vcenter{\hbox{$#2#3$}}\kern-.5\wd0}}

\def\le{\leqslant}
\def\ge{\geqslant}
\def\mx{{\bf x}}
\def\my{{\bf y}}

\def\mp{{\bf p}}
\def\mq{{\bf q}}
\def\mff{{\mathfrak f}}
\def\eps{\varepsilon}

\def\mF{{\mathfrak F}}
\def\F{{\cal F}}

\setcounter{tocdepth}{1}


\newcommand{\N}{\mathbb N} 
\newcommand{\cinfty}{C^\infty}
\newcommand{\grad}{{\,\mathrm{grad}}\,}

\usepackage[normalem]{ulem}


\newcommand{\tensoroo}{\mathcal{T}^1_1}
\newcommand{\supp}{\mathrm{supp}\,}
\newcommand{\mcf}{{\mathcal F}}
\newcommand{\lara}[2]{\left \langle #1, #2 \right\rangle}
\newcommand{\larab}[2]{\bigl \langle #1, #2 \bigr\rangle}

\newcommand{\X}{\mathfrak{X}}

\newcommand{\spann}{\mathrm{span}}
\newcommand{\Seq}[1]{\left\{#1\right\}}
\newcommand{\Seqb}[1]{\bigl\{#1\bigr\}}

\def\dscon{\relbar\joinrel\rightharpoonup}
\newcommand{\ton}{\xrightarrow{n\uparrow \infty}}
\newcommand{\tok}{\xrightarrow{k\uparrow \infty}}
\newcommand{\toL}{\xrightarrow{L\uparrow \infty}}
\newcommand{\tonweak}{\xrightharpoonup{n\uparrow \infty}}

\newcommand{\weak}{\rightharpoonup}
\newcommand{\weakstar}{\xrightharpoonup{\star}}
\newcommand{\simd}{\overset{d}{\sim}}
\newcommand{\loc}{\operatorname{loc}}
\newcommand{\akn}{\alpha_k^{(n)}}
\newcommand{\akm}{\alpha_k^{(m)}}
\newcommand{\ajn}{\alpha_j^{(n)}}
\newcommand{\set}[1]{\left\{#1\right\}}
\newcommand{\action}[2]{\left\langle #1, #2 \right\rangle}


\renewcommand{\ge}{\geqslant}
\renewcommand{\geq}{\geqslant}
\renewcommand{\le}{\leqslant}
\renewcommand{\leq}{\leqslant}

\begin{document}
\title[A stochastic vanishing dynamic capillarity limit]
{A dynamic capillarity equation with \\ 
stochastic forcing on manifolds: \\ 
a singular limit problem}

\author[K. H. Karlsen]{Kenneth H. Karlsen}
\address[Kenneth H. Karlsen]
{\newline Department of mathematics, University of Oslo, 
P.O. Box 1053,  Blindern, N--0316 Oslo, Norway} 
\email[]{kennethk@math.uio.no}

\author[M. Kunzinger]{Michael Kunzinger}
\address[M. Kunzinger]
{\newline University of Vienna, 
Faculty of Mathematics, Oskar Morgenstern--Platz 1, 
1090 Wien, Austria}
\email{michael.kunzinger@univie.ac.at}

\author[D. Mitrovic]{Darko Mitrovic}
\address[D. Mitrovic]
{\newline Faculty of Mathematics, University of
Vienna, Oskar Morgenstern--Platz 1, 1090 Wien, Austria 
\newline 
and 
\newline 
Faculty of Mathematics, 
University of Montenegro, 
Cetinjski put bb, 
Podgorica, Montenegro}
\email{darko.mitrovic@univie.ac.at}

\subjclass[2020]{Primary: 60H15, 35L02; Secondary: 35K70, 42B37}

\keywords{Hyperbolic conservation law, 
stochastic forcing, discontinuous flux, 
pseudo-parabolic equation, vanishing diffusion-dynamic 
capillarity, Riemannian manifold, $H$-measure, 
kinetic formulation, convergence}

\begin{abstract} 
We consider a dynamic capillarity equation with stochastic 
forcing on a compact Riemannian manifold $(M,g)$:
\begin{equation*}\tag{P}
	d \left(u_{\eps,\delta}
	-\delta  \Delta  u_{\eps,\delta}\right) 
	+\Div \mff_{\eps}(\mx, u_{\eps,\delta})\, dt
	=\eps \Delta u_{\eps,\delta}\, dt 
	+ \Phi(\mx, u_{\eps,\delta})\, dW_t,
\end{equation*} 
where $\mff_{\eps}$ is a sequence of smooth 
vector fields converging in $L^p(M\times \R)$ ($p>2$) 
as $\eps\downarrow 0$ towards a vector field 
$\mff\in L^p(M;C^1(\R))$, and $W_t$ is a Wiener process 
defined on a filtered probability space.
First, for fixed values of $\eps$ and $\delta$, 
we establish the existence and uniqueness 
of weak solutions to the Cauchy problem for (P). 
Assuming that $\mff$ is non-degenerate 
and that $\eps$ and $\delta$ 
tend to zero with $\delta/\eps^2$ bounded, we show 
that there exists a subsequence of 
solutions that strongly converges 
in $L^1_{\omega,t,\mx}$ to a martingale 
solution of the following stochastic 
conservation law with discontinuous flux:
$$
d u +\Div \mff(\mx, u)\,dt
=\Phi(u)\, dW_t.
$$ 
The proofs make use of Galerkin 
approximations, kinetic formulations as well as 
$H$-measures and new velocity averaging results for 
stochastic continuity equations. 
The analysis relies in an essential way on the 
use of a.s.~representations of random 
variables in some particular quasi-Polish spaces. 
The convergence framework developed here can be applied 
to other singular limit problems for 
stochastic conservation laws. 
\end{abstract}

\date{\today}

\maketitle

\tableofcontents

\section{Introduction and main results}\label{intro}

This paper develops a convergence framework 
for inspecting a vanishing diffusion-dynamic 
capillarity limit for stochastic scalar 
conservation laws on manifolds with discontinuous flux. 
The convergence framework is general enough to apply 
to many other convergence problems for stochastic conservation laws. 
We are starting from an equation describing the 
flow in a porous medium governed by convection, 
diffusion and dynamic capillarity effects. Convection and diffusion 
are widely known phenomena. Convection represents a motion 
induced by inertial forces and is modelled by a first order 
differential operator. According to Fick's law, diffusion 
results from Brownian motion, which is described 
by a second-order differential operator. 
In porous media, there is another vital phenomenon 
influencing fluid flow. 
This is the capillary pressure, the pressure 
difference across the interface between 
capillaries' wetting and non-wetting phases. 
If one assumes that the capillary pressure 
changes dynamically depending on 
the fluid concentration \cite{Hassanizadeh:1993oz,Hassanizadeh:1990ef}, 
after a linearization procedure, one obtains a pseudo-parabolic equation, 
which we considered in \cite{GKMV} 
(see also \cite{Duijn:2007lk}). 
Porous media flow phenomena often occur along a non-flat 
surface and are influenced by unpredictable (stochastic) 
sinks or sources. Therefore, it is natural to consider 
the following pseudo-parabolic stochastic partial differential 
equation (SPDE) on a manifold $M$, over a fixed time horizon $[0,T]$:
\begin{align}
	\label{PP-1}
	& d u_{k} +\Div \mff_k(\mx,u_k) \, dt
	=\eps_k \Delta u_k \, dt
	+\delta_k d \Delta  u_k 
	+\Phi(\mx, u_k)\, dW_t, \quad \mx \in M,
	\\
	\label{PP-ID}
	& u_k|_{t=0}=u_0(\mx) \in H^2(M), 
\end{align} 
where $\ddiv$ and $\Delta$ are the divergence and 
Laplace-Beltrami operators derived from the metric $g$ on 
the Riemannian manifold $M$. We assume $M$ to be 
compact and oriented. The pseudo-parabolic nature 
of \eqref{PP-1} comes from the term $\delta_k d \Delta  u_k$, 
where $d$ denotes the It\^{o} differential. 
As for the stochastic part of \eqref{PP-1}, we consider a real-valued 
Wiener process $W_t$ defined on a filtered 
probability space $\bigl(\Omega,\mcf,\prob,\{\F_t\}\bigr)$ 
satisfying the usual conditions \cite[p.~7]{Chow:2015aa}. 
We are ultimately interested in stochastic conservation laws 
with discontinuous flux $\mff$ (denoting by $\mff_k$ 
a regularisation of $\mff$). To simplify the 
presentation, the noise function $\Phi$ 
is assumed regular enough throughout 
the paper, so it is written without the index $k$, see 
below for precise conditions.

The diffusion $\eps_k$ and capillarity $\delta_k$ 
parameters in \eqref{PP-1} take small 
positive values that tend to zero as $k\to \infty$, and one 
of the main aims of the paper is 
to inspect the behaviour of the solutions $u_k$ 
as $\eps_k,\delta_k \to 0$ (i.e., as $k\to \infty$).  
As for the other objects appearing 
in \eqref{PP-1}, we assume:
\begin{itemize}
	\item[($C_f$--1)] For each $k\in \N$ and 
	$\lambda\in \R$, the mapping 
	$\mx\mapsto \mff_k(\mx,\lambda)$ 
	is a vector field on $M$, 
	and $\mff_k$ belongs to $C^1(M\times \R) 
	\cap L^{p_0}(M \times \R)$, for some $p_0>2$. 
	Moreover, $\mff_k':=\pa_{\lambda}\mff_k 
	\in L^{p_0}(M\times \R)$ and there is a limit 
	$\mff\in L^{p_0}(M;C^1(\R))$ such that\footnote{For brevity, given 
	a vector field $V$ on $M$ and $\mx\in M$, we write $\abs{V(\mx)}$ 
	to mean $\abs{V(\mx)}_g:=\action{V(\mx)}{V(\mx)}^{1/2}$.}
	\begin{equation}\label{conv}
		\norm{\sup\limits_{\lambda \in \R}
		\abs{\mff(\cdot,\lambda)
		-\mff_k(\cdot,\lambda)}}_{L^1(M)}+
		\norm{\sup\limits_{\lambda \in \R}
		\abs{\mff'(\cdot,\lambda)
		-\mff_k'(\cdot,\lambda)}}_{L^{p_0}(M)}\tok 0.
	\end{equation}

	\item[($C_f$--2)] $\mff_k'=\mff_k'(\mx,\lambda)$ 
	is bounded uniformly in $k\in \N$, 
	i.e., $\norm{\mff_k'}_{L^\infty(M\times\R)}\le C$, 
	for some constant $C$ independent of $k$. 
	Moreover, the limit $\mff$ satisfies 
	$\norm{\mff'}_{L^\infty(M\times\R)}\le C$. 
	
	\item[($C_f$--3)] The following growth condition holds 
	uniformly in $k\in \N$:
	\begin{equation*}
		\left\|\mff_k(\cdot,\lambda)
		\right\|_{L^\infty(M)} 
		\leq C \bigl(1+|\lambda|\bigr),
	\end{equation*} 
	where $C>0$ is a constant.

	\item[($C_f$--4)] The geometry compatibility condition 
	\cite{Ben-Artzi:2007aa} holds for any $k\in \N$:
	\begin{equation}\label{gc}
		\Div \, \mff_k(\mx,\lambda)=0, 
		\quad \mx\in M,\  \lambda\in \R.
	\end{equation}

	\item[($C_\Phi$--1)] The noise function 
	$\Phi=\Phi(\mx,\lambda)\in C^1(M\times \R)$ is 
	at most linearly growing in $\lambda$: 
	\begin{equation}\label{assump-Phi}
		\left|\Phi(\mx,\lambda)\right|
		\le C\bigl(1+\left|\lambda\right|\bigr).
	\end{equation}

	\item[($C_\Phi$--2)] Bound on the 
	$\lambda$-derivative $\Phi'=\pa_{\lambda}\Phi$: 
	$\left\|\sup\limits_{\lambda \in \R}
	\bigl|\Phi'(\cdot,\lambda)\bigr|
	\right\|_{L^2(M)}<\infty$.

	\item[($C_\Phi$--3)] The noise function 
	$\Phi=\Phi(\mx,\lambda)$ is bounded 
	in $\lambda$ in the sense that
	$$
	\norm{\sup\limits_{\lambda\in \R}
	\abs{\Phi(\cdot,\lambda)}}_{L^2(M)}<\infty,
	$$
	which is a strengthening of ($C_\Phi$--1)
\end{itemize} 

These conditions reflect that we are 
working with $L^2$ solutions. In the case 
of higher integrability ($L^p$ solutions 
with $p>2$), one can weaken these conditions. 
Condition ($C_\Phi$--3) is only 
used to pass to the limit in the stochastic 
integral for solutions that converge strongly 
in $L^p$ with $p<2$ (see Lemma \ref{lem:weak-sol}).

We are first interested in the existence 
of a weak solution to \eqref{PP-1}, \eqref{PP-ID}, 
for any fixed value of $k$. Later we will return to 
the more difficult question of convergence 
of the solutions $u_k$ as $k\to \infty$.

\begin{theorem}[well-posedness of 
the pseudo-parabolic SPDE]\label{unique_sol_par}
Suppose the conditions ($C_f$--1)--($C_f$--3) 
and ($C_\Phi$--1)--($C_\Phi$--2) hold, and view 
$k\in \N$ as fixed, i.e., $\delta_k>0$ and $\eps_k>0$ 
are fixed numbers. There exists a unique 
adapted stochastic process 
\begin{equation}\label{eq:pseudo-spaces}
	u_k \in L_{\prob}^2\left(\Omega;
	L^\infty(0,T;H^1(M))\right)\cap 
	L_{\prob}^2\left(\Omega;C([0,T];L^2(M))\right),
\end{equation}
which solves problem \eqref{PP-1}, \eqref{PP-ID} in the sense that 
$\forall \varphi\in C^2(M)$ and $\forall t\in [0,T]$,
\begin{equation}\label{mild-ppde}
	\begin{split}
		& \int_M  u_k(t) \varphi \, dV 
		-\int_M u_0 \varphi \, dV 
		\\ & \quad 
		-\int_0^t \int_M \mff_k(\mx, u_k) 
		\nabla \varphi \, dV \, dt'
 		=\eps_k\int_0^t \int_M u_k 
 		\Delta \varphi \,dV\, dt'
 		\\ & \quad \quad
 		+\delta_k \left(\int_M u_k(t) 
 		\Delta \varphi \, dV
 		-\int_M u_0\Delta \varphi \, dV \right)
 		\\ & \quad \quad\quad
 		+\int_0^t\int_M \Phi(\mx, u_k)
 		\varphi \,dV\, dW_{t'}, 
 		\quad \text{almost surely}.
	\end{split}
\end{equation}
Finally, there exists a constant $C_T$, 
independent of $k$, such that 
\begin{equation}\label{eq:L2-uest-thm}
	E \left[\sup_{t\in [0,T]}
	\norm{u_k(t)}_{L^2(M)}^2
	\right]\le C_T,
\end{equation}
assuming $u_0\in L^2(M)$ and 
$\delta \left\| u_0 \right\|_{H^1(M)}^2\lesssim 1$.
\end{theorem}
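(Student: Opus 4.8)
The plan is to combine a Galerkin scheme built on the Laplace--Beltrami eigenbasis with the smoothing afforded by the capillarity term, and then to use an energy/compactness argument for existence together with a Gronwall-type estimate for uniqueness. The starting observation is that \eqref{PP-1} has the pseudo-parabolic form $d\bigl((I-\delta_k\Delta)u_k\bigr)+\Div\mff_k(\mx,u_k)\,dt=\eps_k\Delta u_k\,dt+\Phi(\mx,u_k)\,dW_t$, and that for fixed $\delta_k>0$ the operator $I-\delta_k\Delta$ is a positive self-adjoint isomorphism of $L^2(M)$ onto its domain with a smoothing inverse. Let $\{e_j\}_{j\ge1}$ be an $L^2(M)$-orthonormal basis of eigenfunctions, $-\Delta e_j=\mu_j e_j$, and let $P_n$ be the projection onto $\spann\{e_1,\dots,e_n\}$. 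Testing against $e_i$ and using $\langle(I-\delta_k\Delta)u^n_k,e_i\rangle=(1+\delta_k\mu_i)\langle u^n_k,e_i\rangle$, the ansatz $u^n_k=\sum_{i\le n}c_i(t)e_i$ turns \eqref{PP-1} into a finite system of It\^o SDEs for the coefficients $c_i$ whose drift and diffusion coefficients are $C^1$ (since $\mff_k,\Phi\in C^1$ and we pair against fixed smooth functions) and hence locally Lipschitz; standard SDE theory then gives a unique local-in-time adapted Galerkin solution $u^n_k$.

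Second, I would derive a priori estimates by applying It\^o's formula to the pseudo-parabolic energy $\mathcal E^n_k(t):=\norm{u^n_k(t)}_{L^2(M)}^2+\delta_k\norm{\nabla u^n_k(t)}_{L^2(M)}^2$, which equals $\langle(I-\delta_k\Delta)u^n_k,u^n_k\rangle$ and is exactly the quantity on which the left-hand side of the equation acts diagonally. The diffusion term produces the favourable dissipation $-2\eps_k\norm{\nabla u^n_k}_{L^2(M)}^2$; the convection term, after integration by parts, equals $-\int_M(\Div_x\tilde F_k)(\mx,u^n_k)\,dV$ with $\tilde F_k(\mx,\lambda)=\int_0^\lambda\mff_k(\mx,s)\,ds$, which vanishes under the geometry-compatibility structure and must in any case be arranged so as not to consume the dissipation; the stochastic term is estimated by the Burkholder--Davis--Gundy inequality together with the linear growth \eqref{assump-Phi}. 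Taking suprema in $t$, then expectations, and applying Gronwall yields a bound on $E[\sup_{t}\mathcal E^n_k(t)]$ uniform in $n$, which both promotes the Galerkin solutions to global ones and, since $\mathcal E^n_k(0)\le\norm{u_0}_{L^2(M)}^2+\delta_k\norm{u_0}_{H^1(M)}^2\lesssim\norm{u_0}_{L^2(M)}^2+1$ under the hypothesis $\delta\norm{u_0}_{H^1(M)}^2\lesssim1$, produces the bound \eqref{eq:L2-uest-thm} with $C_T$ independent of $k$.

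Third, and this is where the main work lies, I would pass to the limit $n\to\infty$. The energy bound gives $u^n_k$ bounded in $L^2_\prob(\Omega;L^\infty(0,T;H^1(M)))$, so that Rellich--Kondrachov furnishes spatial compactness in $L^2(M)$; complementing this with a uniform time-regularity estimate for $v^n_k:=(I-\delta_k\Delta)P_n u^n_k$ read off from the equation (an absolutely continuous drift part valued in $H^{-1}(M)$ together with a continuous martingale part) yields tightness of the laws of $u^n_k$ on a path space such as $C([0,T];H^{-1}(M))\cap L^2(0,T;L^2(M))$. Invoking the a.s.\ representation of these random variables in the quasi-Polish setting announced in the introduction, I extract a subsequence converging almost surely and strongly in $L^2_{t,\mx}$, which is precisely what is needed to pass to the limit in the nonlinear flux $\mff_k(\mx,u^n_k)$ and, using the continuity of $\Phi$ and the strong convergence of the integrands, in the It\^o stochastic integral; the limit is then shown to satisfy \eqref{mild-ppde}, and the regularity \eqref{eq:pseudo-spaces} follows from the energy bound (for $L^\infty_tH^1$) together with the time-continuity of the martingale and drift parts (for $C_tL^2$).

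Finally, uniqueness I would obtain pathwise: for two solutions $u,\tilde u$ with the same data, It\^o's formula applied to $\norm{u-\tilde u}_{L^2(M)}^2$ (or to the pseudo-parabolic energy of the difference) gives a drift contribution from the flux difference controlled by $\norm{\mff_k'}_{L^\infty}\lesssim1$ from ($C_f$--2) and a quadratic-variation contribution from the noise difference controlled via ($C_\Phi$--2), both bounded linearly in $\norm{u-\tilde u}_{L^2(M)}^2$; Gronwall then forces $u=\tilde u$. Pathwise uniqueness together with the existence of a martingale solution upgrades, by a Gy\"ongy--Krylov argument, to a unique adapted solution on the original probability space. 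The principal obstacle is the strong $L^2_{t,\mx}$ compactness needed for the nonlinear flux: the capillarity term is the source of the spatial $H^1$-bound that makes this possible, but assembling spatial compactness, time regularity, and the stochastic a.s.-representation into one convergent subsequence is the delicate step, and the convection estimate must moreover be arranged so that the constant in \eqref{eq:L2-uest-thm} does not degenerate as $\eps_k,\delta_k\downarrow0$.
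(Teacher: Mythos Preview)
Your overall strategy is sound and would succeed, but in the existence step it diverges markedly from the paper's argument. The paper does \emph{not} pass through tightness, Skorokhod--Jakubowski representations, or a Gy\"ongy--Krylov upgrade. Instead, it shows directly that the Galerkin sequence $\{u^n_k\}$ is \emph{Cauchy} in $L^2_{\prob}\bigl(\Omega;C([0,T];L^2(M))\bigr)$: one writes the SDEs for the coefficient differences $\alpha_j^{(n)}-\alpha_j^{(m)}$, applies It\^o's formula, and uses the Lipschitz bounds $(\mathrm{C}_f\text{--}2)$ and $(\mathrm{C}_\Phi\text{--}2)$ to control the flux and noise differences by $\|u^n_k-u^m_k\|_{L^2(M)}$. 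The projection error $\|u^m_k-u^m_k|_{V_n}\|_{L^2(M)}$ is absorbed using the $n$-uniform $H^1$ bound together with $\lambda_n\to\infty$, and Gronwall closes the estimate. This yields a probabilistically strong limit on the original filtered space directly, with no change of probability space and no pathwise-uniqueness upgrade. Your approach is more general (it would still work if the coefficients were merely continuous rather than Lipschitz), but for this fixed-$k$ semilinear problem the paper's Cauchy argument is both shorter and more elementary; in particular it keeps the Skorokhod--Jakubowski machinery in reserve for the genuinely hard singular-limit theorem where it is unavoidable.

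Two smaller remarks. First, you write that the convection term ``vanishes under the geometry-compatibility structure'', but Theorem~\ref{unique_sol_par} does not assume $(\mathrm{C}_f\text{--}4)$; the paper instead bounds $\int_M\mff_k(\mx,u^n_k)\cdot\nabla u^n_k\,dV$ via Young's inequality and $(\mathrm{C}_f\text{--}3)$, absorbing the gradient piece into the $\eps_k$-dissipation (this is fine because $\eps_k>0$ is fixed here). Second, your sentence ``It\^o's formula applied to $\|u-\tilde u\|_{L^2}^2$'' hides a genuine technical point: since the time differential in \eqref{PP-1} acts on $(I-\delta_k\Delta)u_k$ rather than on $u_k$, one cannot simply invoke the $L^2$-valued It\^o formula. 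The paper circumvents this by expanding both solutions in the eigenbasis and working with the scalar It\^o formula for each coefficient $\alpha_j-\beta_j$, summing afterwards; your uniqueness sketch would need the same manoeuvre (or an equivalent one on the pseudo-parabolic energy) to be complete.
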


The notations used in the previous theorem will 
be defined in the next section. 
We note that \eqref{mild-ppde} is well-defined 
if $u_k$ merely belongs to $L_{\prob}^2\bigl(\Omega; 
L^2((0,T)\times M)\bigr)$, but 
to prove uniqueness, we need solutions of 
higher ($H^1$) regularity. The proof of 
Theorem \ref{unique_sol_par} uses a stochastic variant of 
the Galerkin method.

Our second theorem deals with the 
vanishing diffusion-dynamic capillarity limit and 
requires a non-degeneracy condition on the flux function. 
Such conditions are standard in many 
results using velocity averaging lemmas (cf., e.g., 
\cite{LM5,Lions:1994qy,Pan,Perthame:1998if,Tadmor:2006vn}). 
Moreover, since an averaging lemma is a local 
result, i.e. we look for local convergence and may 
thus reduce considerations to local charts, 
it is enough to formulate it in Euclidean space. 
We have the following definition (for the flux 
in \eqref{PP-1}, take $m=1$).

\begin{definition}[non-degenerate flux]\label{def-non-deg}
We say that a function $\mff: \R^d\times \R^m 
\to \R^d$ ($d,m\in \N$) satisfies the non-degeneracy 
condition if, for any measurable set 
$K \times \Lambda \subset\subset 
\R^d \times \R^m$ and $C\in \R$, 
\begin{align}\label{non-deg}
	\int_{K} \, \sup\limits_{(\xi_0,\mxi')\in \bS^d} 
	\mathrm{meas}_{\lambda} \Bigl\{(\mx,\lambda)
	:\lambda \in \Lambda, \,\, 
	\xi_0+\bigl\langle \mff'(\mx,\lambda),\mxi'
	\bigr\rangle=C \Bigr\}\, d\mx =0,
\end{align} 
where $\mathrm{meas}_{\lambda}$ 
is the Lebesgue measure on $\R^m$ and 
$\bS^d$ is the unit sphere in $\R^{d+1}$. 
\end{definition} 

Many fluxes $\mff=\mff(\mx,\lambda)$ that are nonlinear 
with respect to $\lambda$ satisfy \eqref{non-deg}. 
For example (ignoring the growth condition), $\mff(\mx,\lambda)
=\bigl(x_1\lambda^2,x_2^2\lambda^3)$, if $m=1$ 
(scalar in $\lambda$) and $d=2$ ($\mx=(x_1,x_2)\in \R^2$).

We are interested in the singular limit 
$\eps_k,\delta_k \to 0$. Unlike Theorem \ref{unique_sol_par}, 
which supplies probabilistic strong 
solutions of \eqref{PP-1}, i.e., solutions defined relative 
to the original filtered probability space 
$\bigl(\Omega,\mcf,\prob,\Seqb{\mcf_t}\bigr)$, we now need 
to use the concept of martingale (or probabilistic weak) 
solutions for the limiting process, meaning that a new 
filtered probability space $\bigl(\tilde\Omega,\tilde\mcf,\tilde\prob,
\Seqb{\tilde{\mcf}_t}\bigr)$ is constructed along 
with a new Wiener process $\tilde W_t$ and a limit 
$\tilde{u}$ that together solve the stochastic conservation law. 
According to the famous Yamada--Watanabe principle, 
the constructed solution is probabilistic strong if 
the limit SPDE exhibits a pathwise uniqueness result, 
as for example in \cite{Debussche:2010fk,Galimberti:2018aa}.

The following theorem requires the 
additional assumption ($C_f$--4). 
This condition was first introduced 
in \cite{Ben-Artzi:2007aa}, where it 
was named the \emph{geometry compatibility condition}. 
Offhand it may appear purely technical, but, 
as noted in \cite{Graf:2017aa}, it 
represents a kind of incompressibility condition. Namely, 
\eqref{PP-1} serves as a model of fluid flow 
in porous media along non-flat surfaces and 
assumption ($C_f$--4) is related to 
incompressible fluids. 

\begin{theorem}[convergence to SPDE with 
discontinuous flux]\label{main-thm} 
Suppose conditions ($C_f$--1)--($C_f$--4) 
and ($C_\Phi$--1)--($C_\Phi$--2) hold.  
Suppose $\delta_k=o\left(\eps_k^2\right)$ in the sense that
\begin{equation}\label{neps}
	\text{$\eps_k\to 0$}
	\quad \text{and} \quad 
	\text{$\frac{\delta_k}{\eps^2_k}$}
	\quad \text{is uniformly bounded with respect to $k$}.
\end{equation} 
Moreover, suppose $\mff$ satisfies ($C_f$--2) 
and is non-degenerate in the sense 
of Definition \ref{def-non-deg}. 
Then there exist a filtered probability 
space $\bigl(\tilde\Omega,\tilde\mcf,\tilde\prob,
\Seqb{\tilde{\mcf}_t}\bigr)$, an $\tilde{\mcf}_t$ 
adapted Wiener process $\tilde{W}$ with 
$\tilde{W}\in C([0,T])$ $\tilde\prob$-a.s., 
and an $\tilde{\mcf}_t$ adapted process $\tilde{u}$, 
with $\tilde{u}\in C([0,T];H^{-1}(M))$ 
$\tilde\prob$-a.s.~and 
$$
\tilde{u} \in L^2_{\tilde{\prob}}\bigl(\tilde{\Omega}; 
L^\infty(0,T;L^2(M))\bigr),
$$ 
such that, modulo the extraction of a subsequence,
$$
\tilde{u}_k \tok \tilde{u} \quad 
\text{in $L^1_{\tilde{\prob}}
\bigl(\tilde\Omega;L^p((0,T)\times M)\bigr)$}, 
\quad \forall p\in [1,2),
$$
where $\Seq{\tilde{u}_k}$ is a Skorokhod-Jakubowski 
a.s.~representation of $\Seq{u_k}$, 
cf.~Section \ref{sec:prelim}. 

If, in addition 
to the conditions already listed, 
($C_\Phi$--3) holds, then $\tilde{u}$ is a 
weak solution to the following stochastic conservation 
law with discontinuous flux:
\begin{equation}\label{cl-1}
	d\tilde{u}+\ddiv \mff(\mx,\tilde{u})\, dt
	= \Phi(\mx, \tilde{u})\, d\tilde{W}_t,
	\quad \tilde{u}\big|_{t=0}=\tilde{u}_0
	\in L^2_{\tilde{\prob}}
	\bigl(\tilde{\Omega}; L^2(M)\bigr).
\end{equation}
The SPDE \eqref{cl-1} is interpreted weakly 
in $\mx$ in the sense that the following 
equation holds $\tilde\prob$-almost surely, for all 
$\varphi\in C^2(M)$ and for all $t\in [0,T]$,
\begin{equation}\label{mild-cl}
	\begin{split}
		\int_M \tilde u(t)\, \varphi\, dV dt
		& = \int_M \tilde u_0\, \varphi\, dV dt 
		+\int_0^t \int_M \mff(\mx, \tilde u)\cdot 
		\nabla \varphi \, dV \, dt' \\ & \qquad
		+\int_0^t\int_M \Phi(\mx,\tilde u) 
		\varphi(\mx) \, dV \, d\tilde W_{t'}.
	\end{split}
\end{equation}
\end{theorem}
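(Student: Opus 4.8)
The plan is to pass from the regularized pseudo-parabolic problems to the limiting conservation law by combining uniform a priori bounds, a kinetic reformulation, a stochastic compactness argument built on the Skorokhod--Jakubowski representation, and a velocity averaging lemma powered by the non-degeneracy condition. First I would assemble the uniform-in-$k$ estimates. Equation \eqref{eq:L2-uest-thm} already places $u_k$ in $L^2_{\prob}(\Omega;L^\infty(0,T;L^2(M)))$ uniformly. Applying It\^o's formula to $\tfrac12\norm{u_k}_{L^2}^2+\tfrac{\delta_k}{2}\norm{\nabla u_k}_{L^2}^2$ and using the geometry compatibility ($C_f$--4) to annihilate the convective contribution, I expect the additional dissipation bounds
\begin{equation*}
E\Big[\eps_k\int_0^T\norm{\nabla u_k}_{L^2(M)}^2\,dt\Big]\le C,\qquad E\Big[\sup_{t\in[0,T]}\delta_k\norm{\nabla u_k(t)}_{L^2(M)}^2\Big]\le C,
\end{equation*}
uniform in $k$; these, together with the scaling \eqref{neps}, are what later feed the averaging lemma.

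Next I would derive the kinetic formulation. Setting $f_k=\mathbf 1_{u_k>\lambda}$ and applying It\^o's formula together with the chain rule, $f_k$ solves a linear stochastic continuity equation in the variables $(t,\mx,\lambda)$ of the schematic form $df_k+\action{\mff_k'(\mx,\lambda)}{\nabla_\mx f_k}\,dt=\eps_k\Delta f_k\,dt+\delta_k\,d\Delta f_k+\pa_\lambda m_k+(\text{noise in }\lambda)$, where $m_k\ge0$ is the kinetic defect measure absorbing the parabolic dissipation and the It\^o correction; the first step bounds $m_k$ and the remaining terms uniformly. I would then verify tightness of the laws of the family $(u_k,f_k,W,m_k)$ on a suitable product of quasi-Polish spaces (weak-$\star$ topologies for the measure-valued objects, $C([0,T])$ for $W$). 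The Skorokhod--Jakubowski theorem supplies a new probability space carrying a.s.-convergent copies $(\tilde u_k,\tilde f_k,\tilde W_k,\tilde m_k)$ with identical joint laws, and a standard martingale-characterization argument identifies $\tilde W$ as an $\tilde\mcf_t$-Wiener process while showing each $\tilde u_k$ still solves the regularized kinetic equation on the new space.

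The crux is to upgrade the weak convergence $\tilde f_k\weak \tilde f$ to strong convergence of $\tilde u_k$. Since averaging is a local statement, I would reduce to local charts and work in Euclidean space, where the non-degeneracy of $\mff$ (Definition \ref{def-non-deg}) enters through a velocity averaging lemma for stochastic continuity equations: the principal symbol $\xi_0+\action{\mff'(\mx,\lambda)}{\mxi'}$ vanishes, for $(\xi_0,\mxi')\in\bS^d$, only on a $\lambda$-null set, so the $H$-measure carrying the oscillations of $\tilde u_k$ is forced to be zero by the localization principle. The scaling $\delta_k/\eps_k^2$ bounded in \eqref{neps} is precisely what keeps the capillary term (symbol $-\delta_k\abs{\mxi}^2$, active at frequencies $\abs{\mxi}\sim\eps_k^{-1}$) subordinate to the diffusive smoothing in this estimate. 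Vanishing of the $H$-measure yields strong $L^1_{t,\mx}$, hence $L^p$ for $p<2$, convergence of $\tilde u_k$, $\tilde\prob$-almost surely and in expectation. I expect this to be the main obstacle: transplanting the deterministic $H$-measure machinery onto the stochastic kinetic equation, where the martingale term obstructs the usual localization and must be controlled when testing against the symbol --- this is exactly the new averaging result announced in the abstract.

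Finally, with strong convergence secured, I would pass to the limit in the weak formulation \eqref{mild-ppde}. The diffusion term $\eps_k\int u_k\Delta\varphi$ and the capillary term $\delta_k(\cdots)$ vanish by the uniform bounds since $\eps_k,\delta_k\to0$; the flux term converges using the convergence \eqref{conv} of $\mff_k$ together with the strong convergence of $\tilde u_k$; and the stochastic term is identified through the martingale argument, invoking the strengthened bound ($C_\Phi$--3) to justify passing to the limit in the It\^o integral when the convergence is only $L^p$ with $p<2$, as flagged before Lemma \ref{lem:weak-sol}. This produces \eqref{mild-cl} and completes the identification of $\tilde u$ as a weak martingale solution.
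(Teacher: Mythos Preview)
Your overall architecture matches the paper's: uniform $L^2$ and dissipation bounds, kinetic reformulation, Skorokhod--Jakubowski representations on quasi-Polish path spaces, an $H$-measure based velocity averaging lemma under the non-degeneracy condition, and finally passage to the limit in the weak formulation. The way you handle the vanishing of the $\eps_k$ and $\delta_k$ terms and the use of ($C_\Phi$--3) for the stochastic integral is also what the paper does.

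There is, however, a substantive gap in the compactness step, and your diagnosis of the main obstacle is slightly off. You say the difficulty is that ``the martingale term obstructs the usual localization and must be controlled when testing against the symbol.'' In the paper the martingale contribution in the $H$-measure computation is actually benign (handled by the It\^o product rule and an $L^2$ bound). The genuine obstacle is that the $H$-measure construction needs \emph{strong} convergence in $t$ of the kinetic function (the commutation lemma and the Fourier analysis cannot absorb weak-in-$t$ oscillations, and there is no Fourier transform in $\omega$). Your plan places $\tilde f_k$ in a path space with ``weak-$\star$ topologies,'' which gives only weak $L^2_{t,\mx,\lambda}$ convergence $\tilde\prob$-a.s.; that is not enough to run the averaging argument. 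The paper's key device is to use instead the quasi-Polish space $L^2\bigl(0,T;(L^2(M\times L))_w\bigr)$ --- strong in $t$, weak in $(\mx,\lambda)$ --- and to prove tightness there via a temporal translation estimate of the form
\[
E\Bigl[\sup_{\tau\in(0,\vartheta)}\norm{h_k(\cdot+\tau)-h_k}_{L^1(0,T-\tau;H^{-N}(M\times L))}\Bigr]\lesssim \vartheta^{1/2},
\]
derived from the kinetic SPDE itself. This estimate (and the choice of path space it justifies) is precisely what you are missing; without it the velocity averaging step does not get started.

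A secondary point: your schematic kinetic equation ``$\dots = \eps_k\Delta f_k\,dt + \delta_k\,d\Delta f_k + \dots$'' is not how the pseudo-parabolic structure lifts. Because \eqref{PP-1} has two temporal differentials, one must first invert $I-\delta_k\Delta$ to isolate $du_k$ before applying It\^o's formula; the resulting kinetic right-hand side is a sum of several terms involving the resolvent $(I-\delta_k\Delta)^{-1}$ (see Lemma~\ref{kinetic-L}), and their bounds are where the scaling $\delta_k/\eps_k^2\lesssim 1$ is actually used (together with an $H^2$ estimate on $u_k$, cf.~\eqref{cetvrta}). Your heuristic that the capillary symbol is ``subordinate to the diffusive smoothing at frequencies $|\mxi|\sim\eps_k^{-1}$'' does not reflect the mechanism the paper uses.
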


To prove this theorem, we will use an 
approach that rewrites the SPDE under 
consideration in kinetic form and then apply 
a so-called velocity averaging result \cite{AMP,HKM,HTz}.
A significant difficulty hampering a naive adaptation of 
this ``deterministic" approach to the stochastic 
situation is that the probability 
and temporal variables $\omega,t$ cannot 
be incorporated in the same 
way as the other variables (more on this below). 
Besides, an appropriate stochastic 
velocity averaging result is not available in 
the literature.

Roughly speaking, 
$h_k=\sign\bigl(u_k(\omega,t,\mx)-\lambda\bigr)$, 
where $u_k$ solves \eqref{PP-1}, satisfies a 
kinetic SPDE in $\cD_{t,\mx,\lambda}'$, $\prob$--a.s., 
of the form
\begin{equation}\label{kinetic-intro}
	\begin{split}
 		& dh_k+\Div_{\mx} 
 		\bigl(\mff'(\mx,\lambda)h_k\bigr)\,dt 
 		\\ & \quad 
 		= \Bigl[\Div_{\mx} \left( \overline G_k^{(1)}
 		+ G_k^{(1)}\right)
 		+ \partial_{\lambda} \left( \overline G_k^{(2)}
 		+ G_k^{(2)}\right)
 		+ \partial_\lambda\Div_{\mx} \left( \overline G_k^{(3)}
 		+ G_k^{(3)}\right)\Bigr]\,dt
 		+\Bigl[\overline\Phi_k+\partial_\lambda \Phi_k 
 		\Bigr]\, dW_t,
	\end{split}
\end{equation} 
see Lemma \ref{kinetic-L} for the definition 
of the terms on the right-hand side.
We note that this equation is not entirely straightforward to 
derive because of It\^{o}'s formula and the appearance of 
two temporal differentials in \eqref{PP-1}. 
The terms on the right-hand side of \eqref{kinetic-intro} 
are partial derivatives of processes that are either bounded in 
$L^1_{\omega}\bigl(\mathcal{M}_{t,x,\lambda}\bigr)$, bounded or 
converge to zero in $L^2_{\omega}\bigl(L^2_{t,x,\lambda}\bigr)$, 
or bounded or converge to zero in 
$L^2_{\omega}\bigl(L^2_{t,x,\lambda,\loc}\bigr)$. 
Regarding stochastic velocity averaging results, only 
a few are available, see 
\cite{Gess:2018ab,Lions:2013ab,Nariyoshi-vel-av:2020,Punshon-Smith:2018aa}, 
neither of which can be applied to the problem at hand.

One central part of the paper consists of deriving 
stochastic velocity averaging results for kinetic SPDEs---like 
\eqref{kinetic-intro}---with (non-smooth) spatially dependent coefficients. 
These results are of independent interest as they can be applied to numerous 
other singular limit problems for stochastic conservation laws. 
The primary tool for achieving this goal will be a stochastic variant of 
$H$-measures (see Theorem \ref{thm:H-measure}) and the 
Skorokhod-Jakubowski representation theorem \cite{Jakubowski:1997aa} 
applied to some particular quasi-Polish spaces. 
We briefly recall that $H$-measures were 
introduced independently by Tartar in \cite{Tartar:1990mq} 
and by Gerard in \cite{Gerard:1991kl}, with many different 
variants appearing later on, cf.~\cite{AEL,AM-1,LM2,MM,Pan,rind}.
Here, relying on the most up to date result \cite{LM5}, 
we shall derive a version adapted to 
the stochastic situation.  

The $H$-measures rely crucially on Fourier 
transformation techniques, which are inherently problematic 
to apply in the probability and temporal variables 
given the stochastic structure of \eqref{kinetic-intro}.
At the moment, there is no deterministic $H$-measure 
theorem that applies to a general measure space. 
The natural a priori estimates supply weak 
convergence in all variables $(\omega,t,\mx)$, including 
the probability variable $\omega$. First, to sidestep 
the problem with the probability variable in the construction of the 
$H$-measure, we will replace the processes in 
\eqref{kinetic-intro} by almost surely convergent 
versions, defined on a different probability space 
$\bigl(\tilde \Omega,\tilde{\mathcal{F}},\tilde{\prob}\bigr)$. 
Such constructions, linked to tightness and 
weak compactness of the probability laws,  date 
back to the work of Skorokhod 
(see e.g.~\cite[Theorem 2.4]{DaPrato:2014aa}), 
for processes taking values in a Polish (separable 
completely metrizable) space. In our context, the 
Skorokhod theorem is not directly applicable 
because we have to work in specific spaces 
equipped with a weak topology, including 
$L^2$ and the space $\cM$ of Radon measures.  
Therefore we use a recent variant of the Skorokhod 
theorem that applies to so-called quasi-Polish 
spaces, see Jakubowki \cite{Jakubowski:1997aa}, where 
quasi-Polish refers to a Hausdorff space $\cX$ 
that exhibits a continuous injection 
into a  Polish space. Separable Banach spaces endowed 
with the weak topology and dual 
spaces of separable Banach spaces (with the 
weak-$\star$ topology) are quasi-Polish.  For other 
SPDE applications of the Skorokhod-Jakubowki theorem, we refer to 
\cite{Breit:2018aa,Brzezniak:2013aa,Brzezniak:2011aa,
Brzezniak:2013ab,Ondrejat:2010aa,Punshon-Smith:2018aa}, 
to mention a few examples.

However, the above manoeuvre does not resolve the predicament  
with the time variable $t$. Indeed, applying the Skorokhod-Jakubowski
procedure we arrive at the weak convergence of $h_k$ in $L^2_{t,x}$, 
almost surely, and the weak convergence in $t$ does 
not allow us to adapt deterministic $H$-measure techniques. 
The idea put forward here is to replace the 
space $(L^2_{t,\mx,\lambda})_w$ with the locally convex space 
$L^2_t\bigl((L^2_{\mx,\lambda})_w\bigr)$, where $(L^2_{\mx,\lambda})_w$ 
refers to the $L^2$ space in the variables $\mx,\lambda$ 
with the weak topology. Note carefully that the topology 
in $t$, on the other hand, is strong. One can verify 
that $L^2_t\bigl((L^2_{\mx,\lambda})_w\bigr)$ is quasi-Polish, thus 
the Skorokhod-Jakubowki theorem can be applied 
in this space. Let us mention here that the solutions 
$h_k$ of the kinetic SPDE \eqref{kinetic-intro} are 
not uniformly (H\"older) continuous in time, even if 
the topology in $\mx,\lambda$ is weak. This fact, which is due to 
the singular nature of the source terms of \eqref{kinetic-intro}, prevents 
the use of a more traditional approach based 
on the quasi-Polish space $C_t\bigl((L^2_{\mx,\lambda})_w\bigr)$, 
see, e.g., \cite{Brzezniak:2011aa,Ondrejat:2010aa}.

Of course, this leaves the question 
of how to establish the tightness of the probability 
laws $\prob\circ h_k^{-1}$ on $L^2_t\bigl((L^2_{\mx,\lambda})_w\bigr)$, 
which is no longer straightforward. Our main observation here is 
that this follows from the fact that $h_k$ is pointwise 
bounded ($-1\leq h_k\leq 1$) and that the kinetic 
SPDE \eqref{kinetic-intro} can be used to 
derive the temporal translation estimate
$$
E \left[\sup_{\tau\in (0,\vartheta)}
\norm{h_k(\cdot,\cdot+\tau,\cdot,\cdot)
-h_k}_{L^1(0,T-\tau;H^{-N}(M\times L))}\right]
\lesssim_L \vartheta^{1/2}, 
\quad L\subset\subset \R,
$$
where the right-hand side does not depend on $k$, provided $N$ 
is chosen sufficiently large. Denote by $\tilde h_k$ 
the Skorokhod-Jakubowki a.s.~representation of 
$h_k$ on $L^2_t\bigl((L^2_{\mx,\lambda})_w\bigr)$. 
Using the ideas just outlined, it follows that there is 
a limit $\tilde h$ such that $\tilde h_k
\tok \tilde h$ in $L^2_t\bigl((L^2_{\mx,\lambda})_w\bigr)$, 
almost surely. We can now adapt $H$-measure ideas 
from the deterministic situation to deduce 
that, passing if necessary to a subsequence,
$$
\tilde h_k \tok \tilde h 
\quad \text{in $L^2_{\tilde \omega,t,\mx}
\bigl((L^2_{\lambda})_w\bigr)$},
$$
where the $(L^2_{\lambda})_w$ convergence happens 
locally on $\R$. Hence, as required, the velocity averages 
of $\tilde h_k$ converge strongly in $L^2_{\tilde \omega,t,\mx}$.

The paper is organized as follows. 
Section \ref{sec:prelim} is devoted to the 
gathering of relevant background material. 
We establish a variant of the $H$-measure 
in Section \ref{sec:H-measure}. In Section \ref{sec:well-posed} 
we prove Theorem \ref{unique_sol_par}. 
In Section \ref{sec:velocity-averaging} 
we supply the required velocity 
averaging result, and in 
Section \ref{sec:singular-limit} we 
conclude the proof of Theorem \ref{main-thm}.
 
\section{Preliminary material}\label{sec:prelim}

This section briefly reviews some basic aspects of 
stochastic analysis \cite{DaPrato:2014aa,Chow:2015aa}, 
harmonic analysis \cite{Grafakos:2008,Stein:1970pr}, 
and differential geometry \cite{Lee:2003aa}. 

\subsection{Stochastics}

We refer to \cite[Chapter 1]{Chow:2015aa} and 
\cite[Chapter 2]{Breit:2018aa} for basic background 
material on stochastic analysis, including stochastic 
integrals, the It\^o chain rule, and martingale inequalities. 
An example of the latter is the 
Burkholder-Davis-Gundy (BDG) inequality, which will be used 
to bound moments of It\^{o} integrals in terms 
of their quadratic variation. Let $Y=\left(Y_t\right)_{t\in [0,T]}$ be a 
continuous local martingale (taking values in $\R$) with $Y_0=0$. 
Then, for any stopping time $\tau \leq T$,
\begin{equation}\label{eq:BDG}
	E\left[\left(\sup_{t\in[0,\tau]}\abs{Y_t}\right)^p\,\right] 
	\leq C_p\,E \left[
	\left\langle Y,Y\right\rangle_\tau^{\frac{p}{2}}
	\right],
	\qquad p\in (0,\infty),
\end{equation}
where $C_p$ is a universal constant.

We shall frequently use Bochner spaces like 
$L^r_{\prob}\bigl(\Omega;L^p(0,T;B)\bigr)$, 
for $p,r\in [1,\infty]$, where $B$ is a (separable) Banach space, for 
example $B=C(M)$, $B=L^q(M)$, $B=W^{k,r}(M)$. We will often 
drop the subscript $\prob$ on the outer space. 
For basic properties of Bochner spaces, we 
refer to \cite[Chapters 1 \& 2]{BanachI:2016} 
and also \cite{Ewards:Book65}.

We note that $M$ is a compact orientable 
Riemannian manifold, so that $C(M)=C_0(M)=C_c(M)$,
where $C_0(M)$ is the space of continuous 
functions vanishing at the boundary, and $C_c(M)$ 
the space of continuous functions with compact support.
Moreover, we use the notation $\mathcal{D}'(M)=\mathcal{D}(M)'$, 
$\mathcal{D}(M)=C^\infty(M)$. Note that 
$L^1(M)\subset \mathcal{D}'(M)$ via the identification 
$L^1(M)\ni u\mapsto \left[\mathcal{D}(M)\ni \varphi \mapsto 
\int_M f \varphi\, dV \right]$, where $dV=dV_g$ 
denotes the Riemannian volume density $\sqrt{\det g_{ij}}$.

\medskip

In this paper, we work with kinetic SPDEs 
containing stochastic objects that exhibit 
limited temporal regularity. These objects 
can be viewed as random variables $X:\Omega \to \cX$ 
taking values in spaces like $\cX=L^2$ 
equipped with the weak topology or $\cX=\cM$ 
equipped with the weak-$\star$ topology. 
The lack of temporal regularity makes interpreting 
these objects' measurability (adaptivity) subtle. 
Therefore, often without explicitly mentioning it, we will 
analyse these objects as random distributions, 
a concept recently proposed and 
developed \cite[Section 2.2]{Breit:2018aa} 
to handle problems with limited temporal regularity.

\begin{definition}[random distribution]\label{2.1}
Let $\bigl(\Omega,{\cal F},\prob\bigr)$ be 
a complete probability space. A random mapping 
$$
X:\Omega \to \cD'((0,T)\times M)
$$
is called a random distribution if 
$\innb{X,\varphi}:\Omega \to \R$ 
is a measurable function for any 
test function $\varphi \in \cD((0,T)\times M)
:=C^\infty_c((0,T)\times M)$.
\end{definition} 

Although being very general objects, random distributions come 
with most of the important concepts from stochastic calculus. 
We will make use of the following definition.

\begin{definition}
Let $X$ be a random distribution in $\cD'((0,T)\times M)$. Then
\begin{itemize}
	\item $X$ is adapted to a 
	filtration $\Seqb{\F_t}_{t\in (0,T)}$ if 
	$\innb{X,\varphi}$ is $\F_t$-measurable 
	for any $\varphi \in \cD((-\infty,t)\times M)$, 
	for all $t\in (0,T)$;
	
	\item the family of $\sigma$-algebras 
	$\Seqb{\sigma_t(X)}_{t\in (0,T)}$ given as
	$$
	\sigma_t(X):= \bigcap_{s<t} 
	\sigma\Bigl(\, \bigcup_{\varphi 
	\in \cD((-\infty,s)\times M)} 
	\Seqb{\innb{X,\varphi}<1}
	\cup \Seqb{N\in \F, \; \prob(N)=0}\Bigr)
	$$ 
	is called the history of $X$.
\end{itemize}
\end{definition} 

For a random distribution $X$, the history 
of $X$ is a natural filtration on 
the probability space linked to $X$. 
Consider a random mapping $X:\Omega\to \cX$ 
taking values in a topological vector space $\cX$ that 
is continuously embedded in $\cD'((0,T)\times M)$, 
such that the law of $X$ is tight (see next subsection). 
Then \cite[Theorem 2.2.3]{Breit:2018aa} shows
that the measurability of $X$ is determined 
by its measurability in the sense of distributions.

Equality in law between two 
random distributions is defined as follows:

\begin{definition}
We say that two random distributions $X$ 
and $\tilde X$, defined respectively on 
$\bigl(\Omega,\F,\prob\bigr)$ and 
$\bigl(\tilde\Omega,\tilde\F,\tilde \prob\bigr)$, 
coincide in law, denoted by 
$X\simd \tilde X$ (or $X\sim \tilde X$) if 
for the laws $\cL_{\R^k}$ we have
\begin{equation}\label{equiv-prob}
	\cL_{\R^k} \bigl(
	\inn{X,\varphi_1}, \ldots, 
	\inn{X,\varphi_k}\bigr)
	=\cL_{\R^k} \left(
	\innb{\tilde X,\varphi_1}, \ldots, 
	\innb{\tilde X,\varphi_k}  \right),
\end{equation} 
for any $k$-tuple of test functions 
$\varphi_1,\ldots,\varphi_k \in 
\cD((0,T)\times M)$. Equivalently,
$$
E\Bigl[F\bigl( \inn{X,\varphi_1}, 
\ldots,\inn{X,\varphi_k}\bigr)\Bigr]
= \tilde E \Bigl[F\bigl( \inn{X,\varphi_1}, 
\ldots,\inn{X,\varphi_k}\bigr)\Bigr], 
\quad F\in C_b(\R^k).
$$
\end{definition} 
We will also write $\cL_{\prob}\sim \cL_{\tilde{\prob}}$ 
instead of \eqref{equiv-prob}. According 
to \cite[Theorem 2.2.12.]{Breit:2018aa}, for random variables $X$ 
as above with laws that are Radon measures, it 
is irrelevant which topology we use to compare their laws.

\subsection{Quasi-Polish spaces and a.s.~representations}
Jakubowski \cite{Jakubowski:1997aa} 
defines a quasi-Polish space to be 
a topological space $\bigl(\cX,\tau\bigr)$ for which there 
is a countable family $\Seq{f_\ell:\cX
\to [-1,1]}_{\ell\in \N}$ of $\tau$-continuous 
functions $f_\ell$ that separate points of $X$. 
This implies that $\bigl(\cX,\tau_f\bigr)$ and 
thus $\bigl(\cX,\tau\bigr)$ are Hausdorff (the latter 
not necessarily regular). 
Both topologies coincide on $\tau$-compact 
sets, and thus $\tau$-compacts are metrisable.

Denote by $\cB(\cX)$ the $\sigma$-algebra of 
Borel subsets of $\bigl(\cX,\tau\bigr)$ 
and by $\cB_f$ the $\sigma$-algebra generated 
by the separating sequence $\Seq{f_\ell}_{\ell\in \N}$.
Then, every compact subset $K\subset \cX$ 
is $\cB_f$-measurable and thus metrisable. 
In general, $\cB_f$ may be smaller than 
the Borel $\sigma$-algebra, $\cB_f\subset \cB(\cX)$. 
However, as is shown in \cite{Jakubowski:1997aa}, 
this is not a problem as long as we work with random 
mappings with tight laws. 

\begin{definition}[tight probability measures]
\label{def:tight-measures}
We say that a sequence $\Seq{\mu_n}$ 
of probability measures on $\bigl(\cX,\cB\bigr)$, 
where $\cB$ is a $\sigma$-algebra (e.g.~$\cB(\cX)$ 
or $\cB_f$), is tight if, for 
any $\kappa>0$, there exists 
a compact set $K_\kappa \subset \cX$ such that
$$
\mu_n(K_\kappa)>1-\kappa, 
\quad \text{uniformly in $n$}.
$$
\end{definition}

Consider a random variable $X:\bigl(\Omega,\prob,\F\bigr)\to 
\bigl(\cX,\cB\bigr)$, $\Omega\ni \omega \mapsto 
X(\omega)\in \cX$, where $\bigl(\Omega,\prob,\F\bigr)$ 
is a probability space and $\cB$ is a 
$\sigma$-algebra (e.g.~$\cB(\cX)$ or $\cB_f$). 
On $\bigl(\cX,\cB\bigr)$ define 
the probability measure $\cL$ by
\begin{equation*}
	\cL(A)=P(X^{-1}(A)), \quad A\in \cB,
\end{equation*} 
i.e., $\cL$ is the pushforward of $\prob$ 
by $X$---often denoted by $\cL=\prob\circ X^{-1}$. 
We refer to $\cL$ as the law of $X$ (on $\cB$). 

If $X:\Omega\to \cX$ 
is $\cB_f$-measurable, if the law $\cL$ of $X$ 
is tight as a probability measure on $\cB_f$,
and if $F:\bigl(\cX,\tau\bigr)\to \R$ is continuous, 
then $F(X)$ is Borel measurable if we 
replace $\F$ by its $\prob$-completion 
$\overline{\F}$ \cite{Jakubowski:1997aa}. 
The law $\cL$, viewed as a probability measure 
on $\bigl(\cX,\cB_f\bigr)$, has a unique 
Radon extension to $\cB(\cX)\supset \cB_f$ 
\cite{Jakubowski:1997aa}. 

The following theorem is the 
main result of Jakubowski \cite{Jakubowski:1997aa} 
(see \cite{Breit:2018aa,Brzezniak:2013aa,Brzezniak:2011aa,
Brzezniak:2013ab,Ondrejat:2010aa} for 
the first applications of this theorem to SPDEs).

\begin{theorem}[Skorokhod--Jakubowski]\label{thm:Jak-Skor}
Let $\Seq{\mu_n}$ be a tight sequence of probability 
measures on $\bigl(\cX,\cB_f\bigr)$, where 
$\bigl(\cX,\tau\bigr)$ is a quasi-Polish space with 
separating sequence $\Seq{f_\ell}$ and 
$\sigma$-algebra $\cB_f\subset \cB(\cX)$ generated 
by $\Seq{f_\ell}_{\ell\in \N}$. Then there exist 
a subsequence $\Seq{n_k}$ and $\cX$-valued 
Borel measurable random variables $\Seq{X_k}$, 
$X$---all defined on the standard probability space 
$\bigl([0,1],\cB([0,1]),\operatorname{Leb}\bigr)$---such 
that the law of $X_k$ is $\mu_{n_k}$ and 
$X_k \to X$ almost surely, in the topology of $\cX$. 
\end{theorem}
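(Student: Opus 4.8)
The plan is to reduce the statement to the classical Skorokhod representation theorem on a compact metric space, using the separating sequence $\Seq{f_\ell}$ to manufacture such a space, and then to transport everything back to $\cX$ while exploiting that $\tau$ is metrizable on compacta. Concretely, first I would form the map $f=(f_\ell)_{\ell\in\N}:\cX\to\mathcal{C}$, where $\mathcal{C}:=[-1,1]^{\N}$ is the Hilbert cube, a compact metrizable (hence Polish) space. Since each $f_\ell$ is $\tau$-continuous and the family separates points, $f$ is a continuous injection, and its restriction to any $\tau$-compact subset is a homeomorphism onto the image, because there $\tau$ and the initial topology $\tau_f$ coincide (as already recorded in the excerpt). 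Pushing the measures forward, set $\nu_n:=\mu_n\circ f^{-1}$, a sequence of Borel probability measures on $\mathcal{C}$.

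Next I would transfer tightness. Choosing compacts $K_m\subset\cX$ with $\mu_n(K_m)>1-1/m$ uniformly in $n$ (and arranging them nested), the images $A_m:=f(K_m)$ are compact in $\mathcal{C}$, and injectivity of $f$ gives $\nu_n(A_m)\ge\mu_n(K_m)>1-1/m$. As $\mathcal{C}$ is Polish, Prokhorov's theorem yields a subsequence with $\nu_{n_k}\weak\nu$ weakly; by the portmanteau theorem applied to the closed sets $A_m$ one gets $\nu(A_m)\ge 1-1/m$, so both $\nu$ and every $\nu_{n_k}$ are concentrated on the $\sigma$-compact set $S:=\bigcup_m A_m\subset f(\cX)$. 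Now the classical Skorokhod representation theorem on $\mathcal{C}$ furnishes $\mathcal{C}$-valued Borel random variables $Y_k,Y$ on $\bigl([0,1],\cB([0,1]),\mathrm{Leb}\bigr)$ with laws $\nu_{n_k},\nu$ and $Y_k\to Y$ almost surely in $\mathcal{C}$.

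I would then pull everything back through $g:=(f|_S)^{-1}:S\to\cX$. On each compact level $A_m$ the restriction $f|_{K_m}$ is a homeomorphism onto $A_m$, so $g$ is continuous on each $A_m$ and hence Borel on $S$; moreover $f_\ell\circ g$ equals the $\ell$-th coordinate projection, so $g$ is $\cB_f$-measurable into $\cX$. Setting $X_k:=g(Y_k)$ and $X:=g(Y)$ (and fixing them arbitrarily off the full-measure set where $Y_k,Y\in S$), these are $\cX$-valued random variables on $[0,1]$. That the law of $X_k$ is $\mu_{n_k}$ follows on $\cB_f$ from $\nu_{n_k}=\mu_{n_k}\circ f^{-1}$ together with $\mu_{n_k}(f^{-1}(S))=1$, and then on $\cB(\cX)$ by the uniqueness of the Radon extension recorded earlier in this section.

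The main obstacle is the final step: upgrading the convergence $Y_k\to Y$ in $\mathcal{C}$---which only gives $X_k\to X$ in the weak initial topology $\tau_f$---to convergence in the original, generally finer, topology $\tau$. Coordinatewise convergence alone is insufficient, since the pullbacks may escape to ever higher compact levels while remaining $\mathcal{C}$-close. The remedy is to run the Skorokhod construction compatibly with the compact exhaustion $\Seq{K_m}$, using the uniform bound $\nu_{n_k}(A_m^{c})<1/m$ to keep all but a vanishing fraction of the mass inside a common compact $K_m$, and then to invoke that $\tau$ and $\tau_f$ agree and are metrizable on each $K_m$. Carrying this out (passing to a further subsequence and diagonalizing if needed) confines $\Seq{X_k(\omega)}$ and $X(\omega)$ almost surely to a single compact level, where $\tau_f$-convergence is genuine $\tau$-convergence; this yields $X_k\to X$ almost surely in $\tau$ and completes the proof.
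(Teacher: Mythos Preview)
The paper does not prove Theorem~\ref{thm:Jak-Skor}; it is quoted verbatim as ``the main result of Jakubowski \cite{Jakubowski:1997aa}'' and used as a black box throughout. There is therefore no proof in the paper to compare your attempt against.

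That said, your outline is precisely Jakubowski's strategy: embed $\cX$ into the Hilbert cube $\mathcal{C}=[-1,1]^{\N}$ via $f=(f_\ell)$, push the tight family forward, invoke Prokhorov and the classical metric Skorokhod theorem on $\mathcal{C}$, and pull back along $g=(f|_S)^{-1}$ on the $\sigma$-compact image $S=\bigcup_m f(K_m)$. You also correctly isolate the one nontrivial step---promoting $\tau_f$-convergence (i.e.\ coordinatewise convergence in $\mathcal{C}$) to $\tau$-convergence---and recognise that this hinges on the interaction between the Skorokhod construction and the compact exhaustion.

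Your proposed resolution of that step, however, is not correct as written. You claim one can ``confine $\{X_k(\omega)\}$ and $X(\omega)$ almost surely to a single compact level''. The uniform bound $\mu_{n_k}(K_m^c)<1/m$ holds for each fixed $m$ but is not summable over $k$, so no Borel--Cantelli or diagonal argument forces the entire sequence $\{X_k(\omega)\}_{k\in\N}$ into one $K_{m(\omega)}$ almost surely; a further subsequence with $\mu_{n_{k_j}}(K_j^c)<2^{-j}$ only yields $X_j(\omega)\in K_j$ eventually, with the compacts \emph{growing} in $j$. Coordinatewise convergence together with $X_j(\omega)\in K_j$ and $X(\omega)\in K_{m_0(\omega)}$ does not by itself force $\tau$-convergence: the sequence could in principle escape every fixed compact while still converging in $\mathcal{C}$. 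Jakubowski closes this gap not by a naive pull-back of an off-the-shelf Skorokhod representation on $\mathcal{C}$, but by building the representations on $[0,1]$ in a way that couples the constructions across the compacts $K_m$; this is what your phrase ``run the Skorokhod construction compatibly with the compact exhaustion'' gestures at, but the actual argument requires more than a subsequence-plus-diagonalisation and should be spelled out if you intend this as a proof rather than a sketch.
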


In this work, we will need to use 
non-metric spaces like
\begin{equation}\label{eq:qp-spaces-tmp1}
	\bigl(\cX,\tau\bigr)
	=\bigl(L^2((0,T)\times M\times L),\tau_w\bigr),
	\, 
	\bigl(\cM((0,T)\times M\times L),\tau_{w\star}\bigr),
\end{equation}
where $L\subset \subset \R$ and $\tau_w$ ($\tau_{w\star})$ 
refers to the weak (weak--$\star$) topology. 
These are not Polish spaces, but quasi-Polish 
(so that Theorem \ref{thm:Jak-Skor} applies). 
Indeed, for a given separable Banach space $\bS$, the 
dual space $\bigl(\bS^\star,\tau_{w\star}\bigr)$---equipped 
with weak--$\star$ topology $\tau_{w\star}$---is quasi-Polish.  
Applying this claim, it follows that 
$\bigl(L^2,\tau_w\bigr)$ and $\bigl(\cM,\tau_{w\star}\bigr)$ 
are both quasi-Polish spaces. 

For completeness, let us demonstrate the claim that 
$\bigl(\bS^\star,\tau_{w\star}\bigr)$ is 
quasi-Polish (if $\bS$ is a separable Banach space). 
There are several ways one can do this. 
Set $\Bbb{B}:=\bS^\star$ and denote 
by $\norm{\cdot}_{\Bbb{B}}$ the norm on 
the Banach space $\Bbb{B}$. We must 
supply a sequence of separating functions 
on $\Bbb{B}$. We start by expressing 
$\Bbb{B}= \bigcup_{j=1}^\infty \Bbb{B}_j$, where
each $\Bbb{B}_j:= \Seq{L\in \Bbb{B}: 
\norm{L}_{\Bbb{B}}\le j}$ is a separable 
metric space that is compact (closed) 
in the subspace topology induced by $\tau_{w\star}$ 
(by the Banach-Alaoglu theorem). 
For each $j$, there is a sequence 
$f_{\ell}^{(j)}:\Bbb{B}_j\to \R$, $\ell\in\N$, 
of continuous functions that separate points. 
Each function $f_{\ell}^{(j)}:\Bbb{B}_j \to \R$ 
can be extended to a continuous function 
$\overline{f}_{\ell}^{(j)}:\Bbb{B}\to \R$ 
by the Tietze-Urysohn-Brouwer extension theorem, which states 
that a continuous function on a closed 
subset of a normal topological space 
can be extended to the entire space (preserving boundedness).
The countable collection of 
continuous functions (separating points on $\Bbb{B}$) 
is then provided by $\Seq{\overline{f}_{\ell}^{(j)}}_{(\ell,j) \in \N^2}$.

For later use, note in particular that $L^1$ 
belongs to the $\sigma$-algebra $\cB_f$ for 
$\bigl(\cM,\tau_{w\star}\bigr)$. 
Indeed, we can write $L^1=\bigcup_{j\in \N} \Bbb{B}_j$, 
where $\Bbb{B}_j$ is the closed ball in $L^1$ 
with radius $j$. Each $\Bbb{B}_j$ is a compact subset 
of $\bigl(\cM,\tau_{w\star}\bigr)$. 
the continuity of the injection $f(u):=\Seq{f_\ell(u)}_{\ell\in \N}$ 
into $[-1,1]^{\N}$ implies that $f(\Bbb{B}_j)$ is compact (and thus 
Borel measurable) in $[-1,1]^{\N}$. 
As a result, $\Bbb{B}_j=f^{-1}\bigl(f(\Bbb{B}_j)\bigr) 
\in \cB_f$. This implies $L^1\in \cB_f$.

\medskip

To simplify the notation, we will write 
$L^2_w([0,T]\times M\times L)$ instead of \eqref{eq:qp-spaces-tmp1}. 
We will also use the local space $L^2_{\operatorname{loc},w}
([0,T]\times M\times \R)$, which is defined 
by $z\in L^2_{\operatorname{loc},w}([0,T]\times M\times \R)$ 
if and only if $z\in L^2_w((0,T)\times M\times L)$ 
for all $L\subset\subset \R$. The local space is 
also quasi-Polish. Local spaces are 
systematically used in, e.g., 
\cite{Brzezniak:2013aa,Brzezniak:2011aa}.

In this paper, we are going to make crucial use 
of the (topological) space
\begin{equation}\label{eq:L2tL2x-weak}
	L^2_t\bigl(L^2_w(M\times L)\bigr)
	=L^2\bigl(0,T;L^2_w(M\times L)\bigr), 
	\quad L\subset \subset \R.
\end{equation}
To define this space consider 
the classical Bochner space of equivalence 
classes of measurable functions $z:[0,T]\to L^2(M\times L)$ 
for which $\norm{z}_{L^2(M\times L)}\in L^2(0,T)$, 
denoted by $L^2_t(L^2(M\times L))
=L^2(0,T;L^2(M\times L))$. We equip this 
space with the locally 
convex topology generated by the seminorms
$$
L^2_t(L^2(M\times L))\ni 
z\mapsto \left(\, \int_0^T \abs{\, \iint_{M\times L}
z(t,\mx,\lambda)\phi(\mx,\lambda) \,d \mx\, d\lambda}^2 
\,d t\right)^{\frac{1}{2}},
\quad \phi \in L^2(M\times L).
$$ 
We denote the resulting topological 
space by $L^2_t\bigl(L^2_w(M\times L)\bigr)$, 
cf.~\eqref{eq:L2tL2x-weak}. Arguing as for \eqref{eq:qp-spaces-tmp1}, 
we can verify that $L^2_t\bigl(L^2_w(M\times L)\bigr)$ 
is a quasi-Polish space. We will also utilise the local variant 
$L^2_t\bigl(L^2_{\operatorname{loc},w}(M\times \R)\bigr)$. 
Note carefully that the topology of 
$L^2_t\bigl(L^2_{\operatorname{loc},w}(M\times \R)\bigr)$ 
is strong in time $t$ and weak in $(\mx,\lambda)$.
Finally, since many of the upcoming calculations 
take place in ``local charts", we will also use global (in $\mx$) 
versions of the spaces appearing in this subsection, for example 
$L^2_t\bigl(L^2_{\operatorname{loc},w}(\R^d\times \R)\bigr)$.

\medskip

Polish spaces are quasi-Polish, and (countable) 
products of quasi-Polish spaces are quasi-Polish.
Herein, we will work with a path space of the 
form $\cX=\cX_1\times\ldots 
\times \cX_N$, where each factor space 
$\cX_i$, $i=1,\ldots,N$, is 
either Polish or quasi-Polish. The space $\cX$ is 
equipped with the product topology. Note 
that for a quasi-Polish space $\cX$ the Borel 
$\sigma$-algebra $\cB(\cX)$ for the 
product topology is likely to differ from 
the product of the individual Borel $\sigma$-algebras 
$\cB(\cX_i)$, $i=1,\ldots,N$ (although they coincide 
if $\cX$ is Polish). Indeed, we have $\cB_f\subset 
\bigotimes_{i=1}^N \cB(\cX_i)\subset \cB(\cX)$ (where 
$f$ refers to the separating sequence of $\cX$).

\medskip

Finally, we recall the Lusin-Suslin theorem 
stating that for a continuous injection 
$F:\cX\to \cY$ between Polish spaces, 
one knows that $F(\cX) \subset \cY$ is Borel.
A variant of this result for quasi-Polish 
spaces $\cX$ can be found in 
\cite[Corollary A.2]{Ondrejat:2010aa} 
and \cite[Proposition C.2]{Brzezniak:2013ab}: 
If $\cY$ is a Polish space and $F:\cY\to \cX$ 
is a continuous injection, then $F(B)$
is a Borel set whenever $B$ is Borel in $\cY$. 

\subsection{Harmonic analysis}
Let us recall the notion of Fourier 
multiplier operator and the Marcinkiewicz 
multiplier theorem. It is possible to use 
the H\"ormander-Mikhlin theorem as well, but 
the notation is slightly simpler with 
the Marcinkiewicz theorem.

\begin{definition}[multiplier operator \cite{Stein:1970pr}]
\label{multiplier} 
A multiplier operator $\cA_\psi:L^2(\R^d)\to L^2(\R^d)$ 
associated to a function $\psi\in C_b(\R^d)$ 
is a mapping given by
$$
\cA_\psi(u)=\F^{-1}
\bigl(\psi \F(u)\bigr),
$$
where $\F(u)(\mxi)=\hat{u}(\mxi)
=\int_{\R^d}e^{-2\pi i \mx\cdot
\mxi}\, u(x)\, dx$ is the Fourier transform of $u$, 
while $\F^{-1}$ is the inverse Fourier transform. 
If the multiplier operator $\cA_\psi$ satisfies
$$
\norm{\cA_\psi (u)}_{L^p(\R^d)} 
\leq C \norm{u}_{L^p(\R^d)}, 
\qquad u\in L^2(\R^d)\cap L^p(\R^d), 
\quad \ p>1,
$$ 
where $C$ is a positive constant, then
the function $\psi$ is called 
an $L^p(\R^d)$-multiplier.
\end{definition}

The next result is taken from 
\cite[Corollary 6.2.5]{Grafakos:2008}.
\begin{theorem}[Marcinkiewicz multipliers]\label{m1} 
Suppose $\psi\in C^{d}\bigl(\R^d\setminus\bigcup_{j=1}^d
\{\xi_j= 0\}\bigr)$ is a bounded function such that 
for some constant $C>0$,
\begin{equation*}
	\abs{\mxi^{\malpha} 
	\partial^{\malpha}\psi(\mxi)}\leq C,
	\quad\mxi\in \R^d \backslash 
	\bigcup_{j=1}^d\{\xi_j= 0\},
\end{equation*}
for every multi-index 
$\malpha=(\alpha_1,\dots,\alpha_d) \in \N_0^d$ 
such that $|\malpha|=\alpha_1+\dots+\alpha_d\leq d$. 
Then $\psi$ is an $L^p$-multiplier for 
any $p\in (1,\infty)$, and the operator 
norm of $\mathcal{A}_\psi$ equals $C_{d,p} \cdot C$, 
where $C_{d,p}$ depends only on $p$ and $d$. 
In particular, for any $\psi\in C^d(\bS^{d-1})$, 
the function $\psi\left(\mxi/\abs{\mxi}\right)$ satisfies 
the conditions of the theorem.
\end{theorem}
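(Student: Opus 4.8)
The hypothesis as stated controls $\abs{\mxi^{\malpha}\pa^{\malpha}\psi(\mxi)}$ for every multi-index with $\abs{\malpha}\le d$, and in particular for every $\malpha\in\{0,1\}^d$; since these are exactly the mixed derivatives entering the classical Marcinkiewicz condition, the plan is to prove the estimate under this (weaker) family of hypotheses via Littlewood--Paley theory. First I would decompose the frequency region $\R^d\setminus\bigcup_j\{\xi_j=0\}$ into signed dyadic rectangles $R_{\mathbf k}$, each a product of one-dimensional signed dyadic intervals of the form $\pm[2^{k_j},2^{k_j+1})$, and introduce the associated multi-parameter Littlewood--Paley projections $\Delta_{\mathbf k}$ (smoothed versions, so that the attached square function is $L^p$-bounded). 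The two ingredients I would then assemble are the square-function equivalence $\norm{f}_{L^p}\approx\norm{(\sum_{\mathbf k}\abs{\Delta_{\mathbf k}f}^2)^{1/2}}_{L^p}$ for $1<p<\infty$, and a quantitative control of the oscillation of $\psi$ on each rectangle $R_{\mathbf k}$ supplied by the derivative bounds.

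The one-dimensional building block runs as follows. On a single dyadic interval $I_j$ the bound $\abs{\xi\psi'(\xi)}\le C$ forces $\int_{I_j}\abs{\psi'}\,d\xi\le C\log 2$, so $\psi$ has total variation $\lesssim C$ on $I_j$. Writing $\psi$ on $I_j$ as its value at the left endpoint plus $\int\psi'$, and summing by parts against the pieces $\Delta_j f$, the multiplier action of $\psi$ is reduced to the uniform $L^p$-boundedness of the smoothly truncated partial sums and of the sign-sums $\sum_j\epsilon_j\Delta_j f$ over all choices $\epsilon_j=\pm1$. That uniform bound is precisely what the Littlewood--Paley square function delivers, after a randomization (Khintchine's inequality applied to Rademacher averages of the signs $\epsilon_j$). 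This yields the theorem for $d=1$ with a constant of the shape $C_{1,p}\cdot C$.

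To pass to $\R^d$ I would iterate the one-dimensional argument coordinate by coordinate, replacing the scalar square function by its vector-valued ($\ell^2$-valued) analogue so that the Littlewood--Paley inequality can be reapplied at each stage. The mixed bounds $\abs{\mxi^{\malpha}\pa^{\malpha}\psi}\le C$ with $\malpha\in\{0,1\}^d$ are exactly what is needed to control the iterated (rectangular) variation of $\psi$ over each $R_{\mathbf k}$, so that the summation-by-parts performed in all $d$ variables again produces a sign-sum $\sum_{\mathbf k}\epsilon_{\mathbf k}\Delta_{\mathbf k}f$ whose $L^p$ norm is uniformly controlled. Combining this with the multi-parameter square-function equivalence gives $\norm{\cA_\psi f}_{L^p}\le C_{d,p}\cdot C\,\norm{f}_{L^p}$, with $C_{d,p}$ depending only on $d$ and $p$, as claimed.

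Finally, for the ``in particular'' assertion I would check the hypotheses directly for $\psi(\mxi)=\psi_0(\mxi/\abs{\mxi})$ with $\psi_0\in C^d(\bS^{d-1})$: such a $\psi$ is homogeneous of degree $0$ and smooth away from the origin, so $\pa^{\malpha}\psi$ is homogeneous of degree $-\abs{\malpha}$, giving $\abs{\pa^{\malpha}\psi(\mxi)}\le C_{\malpha}\abs{\mxi}^{-\abs{\malpha}}$ and hence $\abs{\mxi^{\malpha}\pa^{\malpha}\psi(\mxi)}\le\abs{\mxi}^{\abs{\malpha}}C_{\malpha}\abs{\mxi}^{-\abs{\malpha}}=C_{\malpha}$ for every $\abs{\malpha}\le d$. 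The main obstacle throughout is the third step: establishing the multi-parameter Littlewood--Paley square-function theorem and rigorously justifying the iterated summation-by-parts and randomization that convert the pointwise mixed-variation bounds on $\psi$ into a uniform operator estimate. The careful bookkeeping of the vector-valued inequalities across the $d$ coordinate directions is where the real work lies; the one-dimensional reduction and the homogeneity verification are comparatively routine.
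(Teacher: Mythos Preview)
The paper does not supply its own proof of this statement: it is quoted verbatim as a known result from Grafakos, \emph{Classical Fourier Analysis} (Corollary 6.2.5), and used as a black box. Your sketch is essentially the standard Littlewood--Paley argument that underlies that reference, so there is nothing to compare; your outline is correct and aligns with the cited source.
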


We refer to \cite[Chapter 5]{Stein:1970pr} for 
background material on Riesz potentials.

\begin{theorem}[Riesz potentials]\label{riesz} 
The Riesz potential, denoted by $\cT_{-s}$, $s>0$, 
is the Fourier multiplier operator 
with the symbol $\frac{1}{\abs{\mxi}^s}$, $\mxi \in \R^d$, 
which is a continuous mapping 
$L^p(\R^d)\to W_{\loc}^{s,p}(\R^d)$, 
$p\in (1,\infty)$. In particular, for 
any $u_n \tonweak 0$ in $L^p(\R^d)$, we 
have $\cT_{-s}(u_n) \ton 0$ 
strongly in $L^p_{\loc}(\R^d)$ 
along a subsequence.
\end{theorem}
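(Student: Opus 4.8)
The plan is to establish the mapping property $\cT_{-s}\colon L^p(\R^d)\to W^{s,p}_{\loc}(\R^d)$ first, and then to read off the compactness statement from it by a Rellich-type argument. I would begin by splitting the symbol at frequency one: fix $\phi\in C^\infty_c(\R^d)$ with $\phi\equiv 1$ on $\{\abs{\mxi}\le 1/2\}$ and $\supp\phi\subset\{\abs{\mxi}\le 1\}$, and write
\[
\abs{\mxi}^{-s}=\underbrace{(1-\phi(\mxi))\,\abs{\mxi}^{-s}}_{=:\,m_\infty(\mxi)}+\underbrace{\phi(\mxi)\,\abs{\mxi}^{-s}}_{=:\,m_0(\mxi)},
\]
so that $\cT_{-s}=T_\infty+T_0$, where $T_\infty,T_0$ are the multiplier operators with symbols $m_\infty,m_0$. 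The high-frequency piece is the easy one. Recalling that the Bessel-potential space $W^{s,p}$ consists of those $v$ for which $(1-\Delta)^{s/2}v\in L^p$, I would note that $(1-\Delta)^{s/2}T_\infty$ is the multiplier operator with symbol $(1+\abs{\mxi}^2)^{s/2}m_\infty(\mxi)$. This symbol is smooth on all of $\R^d$ (the origin singularity is cancelled by $1-\phi$), is bounded, and satisfies $\abs{\mxi^\malpha\,\partial^\malpha[(1+\abs{\mxi}^2)^{s/2}m_\infty]}\le C$ for all multi-indices with $\abs{\malpha}\le d$; hence by Theorem \ref{m1} it is an $L^p$-multiplier for every $p\in(1,\infty)$. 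Consequently $T_\infty\colon L^p(\R^d)\to W^{s,p}(\R^d)$ is bounded, even globally.

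The low-frequency piece $T_0$ is where the work lies. Since $m_0$ is compactly supported, $T_0u=G\ast u$ with $G=\F^{-1}m_0$ a smooth (band-limited) kernel whose large-scale behaviour is dictated by the homogeneous singularity $\abs{\mxi}^{-s}$ at the origin, so that $G(\mx)\sim c\,\abs{\mx}^{s-d}$ as $\abs{\mx}\to\infty$ (and each derivative $\partial^\malpha G$ decays one power faster). I would prove $T_0\colon L^p\to W^{s,p}_{\loc}$ by a cutoff-and-Young estimate: for $\chi\in C^\infty_c(\R^d)$ one bounds the relevant derivatives of $\chi\,T_0u$ by decomposing $G$ into a compactly supported part, which lies in $L^1$ and is handled by Young's inequality, and a tail, which lies in $L^{p'}$ and is handled by H\"older duality. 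This splitting is exactly what fixes the natural range $sp<d$ and, together with the previous paragraph, yields the asserted continuity $\cT_{-s}\colon L^p(\R^d)\to W^{s,p}_{\loc}(\R^d)$.

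For the ``in particular'' statement, assume $u_n\tonweak 0$ in $L^p(\R^d)$; then $\sup_n\norm{u_n}_{L^p(\R^d)}<\infty$, and by the mapping property the sequence $\{\cT_{-s}u_n\}$ is bounded in $W^{s,p}(B)$ for every ball $B\subset\R^d$. Because $s>0$, the embedding $W^{s,p}(B)\hookrightarrow L^p(B)$ is compact (Rellich--Kondrachov for fractional Sobolev spaces), so along a subsequence $\cT_{-s}u_n$ converges strongly in $L^p(B)$. To identify the limit I would test against $\psi\in C^\infty_c(B)$: since the symbol $\abs{\mxi}^{-s}$ is even, $\cT_{-s}$ is formally self-adjoint, whence $\int(\cT_{-s}u_n)\,\psi=\int u_n\,(\cT_{-s}\psi)$ with $\cT_{-s}\psi\in L^{p'}$, so the right-hand side tends to $0$ and the strong limit must be $0$. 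A diagonal extraction over an exhaustion $B_1\subset B_2\subset\cdots$ then furnishes a single subsequence along which $\cT_{-s}u_n\ton 0$ in $L^p_{\loc}(\R^d)$, as claimed.

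I expect the low-frequency analysis of $T_0$ to be the main obstacle. The multiplier theorem is useless there because the symbol is unbounded at the origin, the convolution kernel $G$ is genuinely long-range, and controlling it inside the correct window $sp<d$ --- while still landing in $W^{s,p}_{\loc}$ rather than merely $L^p_{\loc}$ --- is precisely what forces the ``$\loc$'' in the target space, reflecting the failure of $\cT_{-s}$ to be bounded $L^p\to L^p$ globally. By contrast, the high-frequency estimate and the concluding compactness argument are routine.
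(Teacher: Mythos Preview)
The paper does not prove this statement; it is recorded as background and attributed to \cite[Chapter~5]{Stein:1970pr}. Your proposal therefore supplies what the paper leaves to the reference, and the argument you outline is essentially correct.

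The high/low-frequency splitting is the standard move, and your treatment of $T_\infty$ via Theorem~\ref{m1} is clean. For $T_0$ your kernel analysis is right, and the restriction $sp<d$ you isolate is genuine rather than an artefact of the method: for $sp\ge d$ the Riesz potential of a general $L^p$ function need not even be locally finite, so the theorem as stated in the paper should be read with this implicit hypothesis (which holds in all of the paper's applications, where $s\le 1$). One point worth tightening: your phrase ``the relevant derivatives of $\chi T_0 u$'' is vague for fractional $s$. The cleanest fix is to observe that the same near/far decomposition applies to $\partial^\alpha G$, which decays like $\abs{\mx}^{s-d-\abs{\alpha}}$ and is smooth near the origin; hence $\chi\,\partial^\alpha(T_0 u)\in L^p$ for every $\alpha$ once $sp<d$, and by Leibniz $\chi T_0 u\in W^{N,p}$ for every integer $N$. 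Then $W^{N,p}\hookrightarrow W^{s,p}$ for $N\ge s$ gives the claim without any fractional commutator calculus.

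Your compactness argument is correct: Rellich--Kondrachov gives subsequential strong $L^p(B)$ convergence, and the duality computation $\int(\cT_{-s}u_n)\psi=\int u_n(\cT_{-s}\psi)$ with $\cT_{-s}\psi\in L^{p'}$ (again requiring $sp<d$) identifies the limit as zero. The diagonal extraction over an exhaustion is routine.
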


\subsection{Differential geometry}\label{subsec:geometry}
We denote by $\X(M)\equiv\mathcal{T}^1_0(M)$ 
the set of vector fields over $M$, 
by $\Omega^1(M)\equiv\mathcal{T}_1^0(M)$ the 
set of one-forms over $M$, and by 
$\mathcal{T}^r_s(M)$ the set of $(r,s)$-tensors 
over $M$ (whose regularity is to 
be specified individually).

If $X\in \mathfrak{X}(M)$ is a $C^1$ vector field on
$M$ with local representation $X=X^i\frac{\pa}{\pa x^i}$, 
then its divergence $\Div X\in C(M)$ 
is given locally by
\begin{equation*}
	\Div X = \frac{\pa X^k}{\pa x^k}
	+\Gamma^j_{kj}X^k,
\end{equation*}
where $\Gamma^i_{kj}$ denote the Christoffel symbols 
of $g$ (Einstein's summation convention is used here and below). 
The same expression holds for a distributional vector 
field $X$, and similarly for the formulae given below, which  
we formulate in the smooth case with the 
understanding that they carry over 
by continuous extension also to the 
distributional setting \cite{GKOS,Mar}. 

Using abstract index notation \cite{Lee:2003aa} 
for a vector field $X^a$,
$$
\Div X = \nabla_a X^a.
$$
If $\omega\in \Omega^1(M)$ is a $C^1$ one-form 
that is locally given by $\omega=\omega_i\, dx^i$, then its 
divergence is defined as the metric contraction 
of its covariant differential $\nabla \omega\in \mathcal{T}^0_2(M)$, 
so that $\Div \omega$ becomes $\nabla^b\omega_b$ 
in the abstract index notation, or
\begin{equation*}
	\Div \omega=g^{ij} \pa_i\omega_j
	-\Gamma^k_{il} g^{il}\omega_k.
\end{equation*} 
in local coordinates (where $g^{ij}$ are the components 
of the inverse metric). If $T={T^a}_b\in \tensoroo(M)$, then 
$\Div T=\nabla_a  {T^a}_b\in \Omega^1(M)$. Locally, 
if $T=T^k_i \frac{\pa}{\pa x^k}\otimes dx^i$, then 
$\Div T=(\Div T)_i \, dx^i$, where
\begin{equation*}
	(\Div T)_i=\pa_j T^j_i+\Gamma^j_{jl} T^l_i
	-\Gamma^l_{ji} T^j_l.
\end{equation*}
If $T\in \tensoroo(M)$ is $C^2$, the explicit 
form of $\Div(\Div(T))\in C(M)$ in terms of 
local coordinates can be found in \cite{Graf:2017aa} 
(it will not be needed here).

The metric $g$ induces scalar products 
on any of the tensor spaces $(T_{\mx}M)^r_s$, 
which in the abstract index notation are given by
$$
\innb{S,T}=S^{a_1\dots a_r}{}_{b_1\dots b_s} 
T_{a_1\dots a_r}{}^{b_1\dots b_s}.
$$
We will denote the corresponding norms 
uniformly by $\|\,\|_g$, irrespective 
of $r$ and $s$. Furthermore, for 
any tensor field $T\in \mathcal{T}^r_s(M)$, we set
\begin{equation*}
	\norm{T}_{L^\infty(M)}:=\sup_{\mx \in M} 
	\norm{T(\mx)}_g,
\end{equation*}
and
\begin{equation*}
	\norm{T}_{L^p(M)} := \left(\,\int_M 
	\norm{T(\mx)}^p_g\, dV(\mx) \right)^{\frac{1}{p}},
	\qquad p\in [1,\infty),
\end{equation*}
where $dV$ is the oriented 
Riemannian volume measure, which, in any 
chart of the oriented atlas, is given by 
$dV=\sqrt{\det\, (g_{ij})} 
\,dx^1\wedge \dots \wedge dx^n$.

\medskip

We also need to fix the space where 
we will look for the Galerkin approximations. 
To this end, let us recall some fundamentals 
on Sobolev spaces on manifolds that we 
shall need in the sequel. As above, let $(M,g)$ 
be a compact oriented Riemannian manifold 
of dimension $d$, and set
$$
\Delta u:= \ddiv \grad(u), 
\quad 
u\in \mathcal{D}'(M).
$$ 
Defining $\Lambda^s:=(I-{\Delta})^{s/2}$, $s>0$, we 
have $(\Lambda^s)^{-1}=\Lambda^{-s}$ 
and by \cite[Theorem 8.5]{CP-PDE-1982}, 
$$
\Lambda^s: H^r(M)\to H^{r-s}(M), 
\quad r\in \R,
$$ 
is a linear isomorphism with 
inverse $\Lambda^{-s}$.

Since $\Lambda^{-s}:L^2(M)\to H^s(M)$ is an 
isomorphism, we may introduce on $H^s(M)$ 
an equivalent scalar product given by 
\begin{equation*}
	\inn{u,v}_s :=\langle \Lambda^s u, 
	\Lambda^s v \rangle_{L^2}.
\end{equation*}
As $\Lambda^{-1}: L^2(M)\to H^1(M)
\hookrightarrow L^2(M)$ is compact (as well 
as positive and self-adjoint), the spectral 
theorem implies the existence of a countable 
orthonormal basis $\{e_k\}_{k\in \N}$ of $L^2(M)$ 
consisting of eigenfunctions of $\Lambda^{-1}$, and 
we denote the eigenvalue of $e_k$ by 
$\lambda_k^{-1} \in \R$. 
Then $\Lambda e_k=\lambda_k e_k$ and, due 
to the ellipticity of $\Lambda$, we obtain 
$e_k \in C^\infty(M)$. Furthermore, 
$\lambda_k\to \infty$ as $k\to\infty$. 
With respect to $\langle\cdot,\cdot \rangle_s$, 
$\{e_k\}_{k\in \N}$ is an orthogonal system:
\begin{align*}
	\inn{e_k,e_l}_s 
	=\inn{\Lambda^s e_k,\Lambda^s e_l}_{L^2}
	=\inn{\Lambda^{2s} e_k,e_l}_{L^2}
	=\inn{\lambda_k^{2s} e_k,e_l}_{L^2}
	=\lambda_k^{2s}\delta_{kl}, 
	\quad k,l\in \N,
\end{align*} 
with corresponding orthonormal 
system $\left\{e^{(s)}_k\right\}_{k\in \N} 
:=\left\{\lambda_k^{-s}e_k\right\}_{k\in\N}$. 
Moreover,
\begin{equation*}
	\begin{split}
		& u\in H^s(M) \, 
		\Longleftrightarrow \Lambda^s(M) u\in L^2(M) 
		\\ & \quad \Longleftrightarrow 
		\sum\limits_{k\in \N} 
		\left| \left\langle \Lambda^s u,e_k 
		\right \rangle_{L^2} \right|^2
		=\sum\limits_{k\in \N} 
		\left|\langle u,e_k \rangle_{L^2}\right|^2
		\lambda_k^{2s}<\infty.
	\end{split}
\end{equation*}
A convenient consequence is that projections 
onto $\operatorname{span}\left\{e_k\right\}_{k=1}^n$ do 
not depend on the Sobolev-index:
\begin{equation*}
	\sum_{k=1}^n \left\langle u,
	e_k^{(s)} \right \rangle_{s} e^{(s)}_k
	=\sum_{k=1}^n \left\langle u,
	e_k \right \rangle_{L^2} e_k.
\end{equation*}

\section{A tool for representing weak limits}
\label{sec:H-measure}

We are going to introduce a variant of the $H$-measure adapted 
to kinetic SPDEs, see \cite{Tartar:1990mq} and 
\cite{AEL,AM-1,Gerard:1991kl,LM2,LM5,MM,Pan,rind} for 
some background material. First, however, we will gather 
some notations and present a version of the first 
commutation lemma \cite{Tartar:1990mq}.

We denote by $\chi_K$ the characteristic function of 
a measurable set $K$. We write  $L^p_c(\R^d)$ for 
the collection of compactly supported $L^p$ functions, whereas 
$L^\infty_{\omega,t,\mx}$ is short-hand for 
$L^\infty(\Omega\times (0,T)\times\R^d)$, and similarly 
for other spaces. By $\bar{z}$, we denote the complex conjugate of $z$. 
For subsets $X$ and $Y$ of the Euclidean spaces $\R^d$ 
and $\R^k$, respectively, we use the Bochner space 
$L^2\bigl(X;C_0(Y)\bigr)$. By \cite[p.~695]{Ewards:Book65}, 
the dual is $L^2_{w\star}(X;\cM(Y))$, where $\cM(Y)$ is the 
space of Radon measure on $Y$, and ``$w\star$" 
refers to weak-$\star$ measurable mappings. 

In what follows, we denote by $\F_{t,\mx}$, $\F_t$ 
and $\F_{\mx}$ the Fourier transforms 
with respect to $(t,\mx)$, $t$, and $\mx$, 
respectively, although $\F_t$ conflicts with the notation used 
elsewhere in the paper for the stochastic filtration. 
The strong-weak space $L^2_t\bigl(L^2_w(\R^d)\bigr)$ used 
below is defined as in \eqref{eq:L2tL2x-weak} 
with $M\times L$ replaced by $\R^d$.

The following is a version of Tartar's first commutation 
lemma (see \cite[Lemma 1.7]{Tartar:1990mq} and \cite{AMM}).

\begin{lemma}[first commutation lemma]\label{com-lemma}
Let $\Seq{u_n}_{n\in \N}$ be a sequence 
of functions $u_n:\Omega\times (0,T)\times \R^d\to \R$ satisfying 
\begin{equation}\label{convergence}
	u_n \ton 0 \quad 
	\text{in $L^2_t\bigl(L^2_w(\R^d)\bigr)$, a.s.},
	\quad 
	\norm{u_n}_{L^\infty_{\omega,t,\mx}}\lesssim 1,
\end{equation}
that is, for every $\varphi \in L^2(\R^d)$,
$$
\norm{\int_{\R^d} u_n(\omega,\cdot,\mx)\varphi(\mx) 
\,d\mx}_{L^2([0,T])} \ton 0 \quad 
\text{for $\prob$-a.e.~$\omega\in \Omega$},
$$
specifically, the convergence is strong in $t$ and weak 
in $\mx$, see \eqref{eq:L2tL2x-weak}.
 
Let $\psi \in C^d(\R^{d})$ be such that
\begin{equation}\label{marcin}
	\forall \malpha \in \N^{d}_0, 
	\, \abs{\malpha} \leq d \implies
	\abs{\pa^{\malpha} \psi(\mxi)} 
	\lesssim \abs{\mxi}^{-\abs{\malpha}}.
\end{equation} 
Then, for every $\varphi \in L^p_c(\R^d)$, 
$p>1$, and $K\subset\subset \R^d$,
\begin{equation}\label{commutation}
	\lim\limits_{n\to \infty}
	\int_0^T \norm{\varphi
	\cA_{\psi}\bigl(\chi_K u_n(t)\bigr)
	-\cA_{\psi}\bigl(\varphi\chi_K u_n(t)\bigr)}_{L^2(\R^d)}^2
	\,dt=0, \quad \text{a.s.},
\end{equation} 
where $\cA_{\psi}: L^p(\R^d) \to L^p(\R^d)$ is the
multiplier operator with respect to the variable $\mx$, 
see Definition \ref{multiplier}.
\end{lemma}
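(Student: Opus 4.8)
The plan is to reduce \eqref{commutation} to the compactness, on $L^2(\R^d)$, of the single-variable commutator $C_\varphi:=M_\varphi\cA_\psi-\cA_\psi M_\varphi$ (all operators acting in $\mx$, with $M_\varphi$ multiplication by $\varphi$), and then to exploit the \emph{strong-in-time} half of the topology in \eqref{convergence}. Writing $v_n(t):=\chi_K\, u_n(\omega,t,\cdot)$, the integrand in \eqref{commutation} is exactly $\norm{C_\varphi v_n(t)}_{L^2(\R^d)}^2$. The bound $\norm{u_n}_{L^\infty_{\omega,t,\mx}}\lesssim 1$ together with $K\subset\subset\R^d$ gives a uniform pointwise-in-time estimate $\norm{v_n(t)}_{L^2(\R^d)}\le C\abs{K}^{1/2}$, while \eqref{convergence}, applied to the test functions $\chi_K g$ with $g\in L^2(\R^d)$, yields $\int_0^T\abs{\inn{v_n(t),g}_{L^2}}^2\,dt\to 0$ as $n\to\infty$, for $\prob$-a.e.~$\omega$. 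Since both properties hold on a full-measure set $\Omega_0$, I fix $\omega\in\Omega_0$ and argue deterministically; as the constants below are uniform, the a.s.~conclusion follows.

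The analytic core is the classical first commutation lemma: $C_\varphi$ is compact on $L^2(\R^d)$. I would start from the kernel identity (valid away from the diagonal, $k=\F^{-1}\psi$ being the Calder\'on--Zygmund kernel of $\cA_\psi$)
$$
C_\varphi f(\mx)=\int_{\R^d}k(\mx-\my)\bigl(\varphi(\mx)-\varphi(\my)\bigr)f(\my)\,d\my ,
$$
in which the factor $\varphi(\mx)-\varphi(\my)$ tempers the singularity of $k$. For smooth, compactly supported $\varphi$ this cancellation gains one derivative, so $C_\varphi$ is locally smoothing and hence compact on $L^2(\R^d)$ by Rellich's theorem, using the compact support of $\varphi$ and the decay of $k$ to handle the tails. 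For general $\varphi\in L^p_c(\R^d)$, $p>1$, I would approximate $\varphi$ by smooth compactly supported $\varphi_j$ and pass to the limit, the $L^p\to L^p$ boundedness of $\cA_\psi$ furnished by Theorem \ref{m1} (via \eqref{marcin}) being what keeps the error commutators $C_{\varphi-\varphi_j}$ under control in the $L^2\to L^2$ operator norm.

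Granting compactness, I would conclude by a finite-rank/strong-time argument. Fix $\eta>0$ and choose a finite-rank operator $R=\sum_{i=1}^N\inn{\cdot,g_i}_{L^2}\,h_i$ with $g_i,h_i\in L^2(\R^d)$ and $\norm{C_\varphi-R}_{L^2\to L^2}<\eta$. Then
$$
\int_0^T\norm{C_\varphi v_n(t)}_{L^2}^2\,dt
\le 2\int_0^T\norm{R\,v_n(t)}_{L^2}^2\,dt+2\eta^2\int_0^T\norm{v_n(t)}_{L^2}^2\,dt ,
$$
where the last term is $\le 2\eta^2\,T\,C^2\abs{K}$ by the uniform bound on $\norm{v_n(t)}_{L^2}$. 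For the first term, expanding $R$ and applying Cauchy--Schwarz in the finite sum,
$$
\int_0^T\norm{R\,v_n(t)}_{L^2}^2\,dt
\le N\sum_{i=1}^N\norm{h_i}_{L^2}^2\int_0^T\abs{\inn{v_n(t),g_i}_{L^2}}^2\,dt\xrightarrow[n\to\infty]{}0 ,
$$
since each $g_i\in L^2(\R^d)$ and the strong-in-time/weak-in-space convergence from \eqref{convergence} makes every integral vanish. Letting $n\to\infty$ and then $\eta\downarrow 0$ shows $\int_0^T\norm{C_\varphi v_n(t)}_{L^2}^2\,dt\to 0$ for the fixed $\omega$, whence \eqref{commutation} holds a.s.

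I expect the main obstacle to lie in the compactness step at low integrability: when $p\le 2$ the two terms $\varphi\,\cA_\psi(\chi_K u_n)$ and $\cA_\psi(\varphi\chi_K u_n)$ need not separately belong to $L^2$, so one cannot split the commutator and must genuinely exploit the cancellation in $\varphi(\mx)-\varphi(\my)$; relatedly, the approximation $\varphi_j\to\varphi$ must be made quantitatively compatible with convergence of $C_{\varphi_j}\to C_\varphi$ in the $L^2\to L^2$ operator norm, which is precisely the delicate estimate that the refined commutation result in the spirit of \cite{LM5} is designed to supply.
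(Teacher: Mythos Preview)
Your argument is correct and takes a genuinely different route from the paper. The paper first reduces to $\varphi\in C^1_c(\R^d)$ by approximation (exactly as you propose), but then proceeds on the Fourier side: by Plancherel it rewrites $\int_0^T\norm{C_\varphi v_n(t)}_{L^2}^2\,dt$ in the $\mxi$-variable, splits the frequency domain into a ball $B_R$ and its complement, and treats the two pieces separately. On $B_R$ it uses the strong-in-$t$ half of \eqref{convergence} to get pointwise-in-$\mxi$ decay of $\F_{\mx}(\chi_K u_n)$ and $\F_{\mx}(\varphi\chi_K u_n)$, then dominated convergence; on $B_R^c$ it invokes Tartar's original high-frequency estimate \cite[Lemma 1.7]{Tartar:1990mq}, which uses the $C^1$ regularity of $\varphi$ and \eqref{marcin} to produce a bound $\lesssim 1/R$ uniform in $n$. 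You instead package the smooth-$\varphi$ case as compactness of $C_\varphi$ on $L^2(\R^d)$ and then run a finite-rank approximation, which is conceptually cleaner and makes the role of the $L^2_t(L^2_w)$ topology completely transparent: the strong-in-time part kills the finite-rank piece, the uniform $L^\infty$ bound (plus $K$ compact) controls the remainder. The paper's approach is more hands-on and self-contained; yours relies on the classical commutation lemma as a black box but then gives a shorter endgame. Regarding the low-integrability obstacle you flag for $1<p\le 2$: the paper's approximation step is equally terse there (it just asserts the replacement error is uniformly small via Marcinkiewicz and the $L^\infty$ bound on $u_n$, which indeed works cleanly only for $p>2$), and in the application (Theorem~\ref{thm:H-measure}) one has $\phi_1\in L^r$ with $r>2$, so this range is all that is actually needed.
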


\begin{proof}
By an approximation argument, we may assume 
that $\varphi \in C^1_c(\R^d)$. Indeed, any $\varphi \in L^p_c(\R^d)$ 
can be approximated by a function $\bar \varphi \in C^1_c(\R^d)$ (in 
the $L^p(\R^d)$--norm). Since $\psi$ satisfies the conditions 
of the Marcinkiewicz multiplier Theorem \ref{m1} and $u_n$ 
is uniformly bounded \eqref{convergence}, 
replacing $\varphi$ by $\bar \varphi$ in \eqref{commutation} 
induces an error that is small uniformly with respect to $n$. 
Therefore, in the sequel, we assume that $\varphi\in C^1_c(\R^d)$.  

By Plancherel's identity,
\begin{equation}\label{step11}
	\begin{split}
		& I:=\lim\limits_{n\to \infty}
		\norm{\varphi
		\cA_{\psi}\bigl(\chi_K u_n \bigr)
		-\cA_{\psi}\bigl(\varphi 
		\chi_K u_n \bigr)}^2_{L^2((0,T)\times \R^d)}
		\\ & \quad 
		=\lim\limits_{n\to \infty}
		\norm{\F_{\mx}(\varphi)\star \bigl(
		\psi\F_{\mx}\left(\chi_K u_n\right)\bigr)
		-\psi\F_{\mx}\left(\varphi 
		\chi_K u_n\right)}^2_{L^2([0,T]\times\R^d_{\mxi})}
		\\ & \quad
		= \lim\limits_{n\to \infty}
		\Biggl(\norm{\F_{\mx}(\varphi)
		\star \bigl({\psi}
		\F_{\mx}\left(\chi_K u_n\right)\bigr)-\psi 
		\F_{\mx}\left(\varphi \chi_K u_n\right)}^2_{L^2([0,T] \times B_R)}
		\\ & \quad \qquad\qquad\quad 
		+\norm{\F_{\mx}(\varphi)
		\star \bigl(\psi\F_{\mx}\left(\chi_K u_n \right)\bigr)
		-\psi\F_{\mx}\left(\varphi \chi_K u_n(t,\cdot)
		\right)}^2_{L^2([0,T]\times B_R^c)} \Biggr)
		\\ & \quad = \lim\limits_{n\to \infty}
		\Bigl( I^{(1)}_{R,n}+I^{(2)}_{R,n}\Bigr),
	\end{split}
\end{equation} 
where $B_R\subset\R^d$ is the ball centered 
at the origin with radius $R$ and $B_R^c$ is its complement. 
In this proof, the symbol $\mxi$ denotes the Fourier 
variable corresponding to $\mx$ ($t$ is not included).
According to the assumption \eqref{convergence} 
and the definition of the Fourier transform, we have
\begin{equation}\label{B-M}
	\lim\limits_{n\to \infty} I^{(1)}_{R,n}=0, 
	\quad \text{a.s., for each fixed $R$.}
\end{equation}
To justify this claim, notice that
\begin{equation}\label{comm-1}
	\begin{split}
		I^{(1,2)}_{R,n} & :=\norm{\psi
		\F_{\mx}\bigl(\varphi\chi_K 
		u_n\bigr)}^2_{L^2([0,T]\times B_R)}
		\\ & 
		=\int_{B_R}\underbrace{\int_{0}^T
		\abs{\psi(\mxi)\int_{\R^d} e^{2\pi i \mx\cdot \mxi}
		\varphi \chi_K u_n(t,\mx)
		\, d\mx }^2\, dt}_{=:\overline{I}^{(1,2)}_n(\mxi)}
		\, d\mxi \ton 0, \quad \text{a.s.},
	\end{split}
\end{equation} 
Indeed, $\overline{I}^{(1,2)}_n(\mxi)\ton 0$ for every 
$\mxi \in \R^{d}$ (and thus in $B_R$) due to the 
strong-weak convergence assumption \eqref{convergence}. 
Besides, $\overline{I}^{(1,2)}_n(\mxi)$ is bounded by a 
constant $C$ depending on the (uniform) bounds of $u_n$, $\psi$ 
and $\varphi$, as well as the set $K$. 
Thus, by Lebesgue's dominated convergence 
theorem, the claim \eqref{comm-1} follows. 
Similarly, using also Young's inequality for convolutions,
\begin{equation*}
	\begin{split}
		I^{(1,1)}_{R,n} &:=\norm{\F_{\mx}(\varphi)\star
		\bigl(\psi
		\F_{\mx}\left(\chi_K u_n\right)
		\bigr)}_{L^2([0,T]\times B_R)}^2
		\\ & 
		\leq \norm{\F_{\mx}(\varphi)}_{L^1(B_R)}
		\norm{\psi \F_{\mx}
		\left(\chi_K u_n\right)}_{L^2([0,T]\times B_R)}^2
		\ton 0, \quad \text{a.s.}
	\end{split}
\end{equation*}
As $I^{(1)}_{R,n}\leq I^{(1,1)}_{R,n}+I^{(1,2)}_{R,n}$ by 
the triangle inequality, we conclude that \eqref{B-M} holds.

By \eqref{step11} and \eqref{B-M}, 
$I=\lim\limits_{n\to \infty}I^{(2)}_{R,n}$, for 
any $R>0$. Using \eqref{marcin} and arguing as in 
\cite[Lemma 1.7]{Tartar:1990mq}, a.s.,
$$
I^{(2)}_{R,n}\lesssim 1/R 
\overset{R\uparrow\infty}{\longrightarrow} 
0, \quad \text{uniformly in $n$}.
$$ 
It is in this last step that the assumption $\varphi \in C^1_c(\R^d)$ 
is used. This concludes the proof of \eqref{commutation}.
\end{proof}

The following theorem is the main 
result of this section.

\begin{theorem}[a variant of the $H$-measure]
\label{thm:H-measure}
Let $\Seq{u_n}_{n\in\N}=\Seq{u_n(\omega,t,\mx,\lambda)}_{n\in\N}$ 
be a sequence of functions $u_n:\Omega\times 
(0,T)\times \R^{d}\times \R^m\to \R$ satisfying 
\begin{equation}\label{convergence-incl-lambda}
	u_n \ton 0 \quad 
	\text{in $L^2_t\bigl(L^2_w(\R^d\times \R^m)\bigr)$, a.s.},
	\quad
	\norm{u_n}_{L^\infty_{\omega,t,\mx}}\lesssim 1.
\end{equation}
Moreover, for some $K\subset\subset \R^d$, 
$\supp\bigl(u_n(\omega,t,\cdot,\lambda)\bigr)\subset K$,
uniformly in $n$ and $\omega,t,\lambda $.
Consider four functions $\phi\in L^1(\R)$, 
$\phi_1 \in L^r(\R^d\times\R^m)$ (with $r>2$), 
$\phi_2 \in C_c(\R^d\times \R^m)$, and $\psi\in C^d(\bS^d)$, 
where $\bS^d$ denotes the unit sphere in $\R^{d+1}$. 
There exists a subsequence of $\Seq{u_n}$ (not relabeled) 
and a deterministic, time-independent functional
$$
\mu=\mu(\mp,\mq,\mx,\mxi)
\in \left( L^1_{\xi_0}(\R)\times 
L^2_{\mp,\mq,\mxi'}(\R^{2m+d})\times  
C_{\mxi}(\bS^d)) \right)',
$$
where $\mxi=(\xi_0,\dots,\xi_d)
=(\xi_0,\mxi')\in \bS^d$, such that
\begin{align}\label{rev11}
	\begin{split}
		& \int_{\R^{2m}}   
		\Bigl\langle \mu(\mp,\mq,\cdot,\cdot),
		\phi \otimes\phi_1(\cdot,\mp) \overline{\phi_2(\cdot,\mq)} 
		\otimes \psi \Bigr\rangle 
		\,d\mp\, d\mq 
		\\ & \quad
		=\lim\limits_{n\to\infty}
		E \Biggl[ \,\,\,  \int\limits_{\R^{2m+d+1}} 
		\psi\left(\frac{\mxi}{\abs{\mxi}}\right) 
		\, \phi(\xi_0) 
		\\ & \quad \qquad \qquad \qquad \times
		\F_t\Bigl( \F_{\mx}\bigl(\phi_1(\cdot, \mp)
		u_n(\cdot,\cdot,\cdot,\mp)\bigr)
		\overline{ \F_{\mx}\bigl(\phi_2(\cdot,\mq) 
		u_n(\cdot,\cdot,\cdot,\mq)
		\bigr)}\Bigr)(\mxi)\, d\mxi \, d\mp \, 
		d\mq \,\Biggr].
	\end{split}	
\end{align}
We call $\mu$ the generalised 
$H$-measure generated by $\Seq{u_n}$.
\end{theorem}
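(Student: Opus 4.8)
\emph{Strategy.} The plan is to realise the bracketed integral on the right of \eqref{rev11} (before the limit) as a family of functionals $\mu_n$, to prove a bound on $\mu_n$ that is uniform in $n$, and then to extract a weak-$\star$ subsequential limit $\mu$. The uniform bound shows that each $\mu_n$, and hence the limit, extends continuously to a trilinear functional on $L^1_{\xi_0}(\R)\times L^2_{\mp,\mq,\mxi'}(\R^{2m+d})\times C_\mxi(\bS^d)$; since $L^1$, $L^2$ and $C(\bS^d)$ are separable, the relevant predual is separable, so Banach--Alaoglu supplies a subsequence along which $\mu_n\to\mu$ weak-$\star$, and \eqref{rev11} then holds by the very definition of the limit. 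It remains to (i) prove the uniform bound, so that $\mu$ lies in the stated dual, and (ii) check that $\mu$ is deterministic, time-independent, and depends on the test data only through the stated product structure.

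\emph{Uniform boundedness.} First I would pass to the Fourier side. Writing, for fixed $\mp,\mq$, $a_n=\F_{\mx}\bigl(\phi_1(\cdot,\mp)u_n(\cdot,\cdot,\cdot,\mp)\bigr)$ and $b_n=\F_{\mx}\bigl(\phi_2(\cdot,\mq)u_n(\cdot,\cdot,\cdot,\mq)\bigr)$, the integrand is $\int \psi(\mxi/\abs{\mxi})\,\phi(\xi_0)\,\F_t\bigl(a_n\overline{b_n}\bigr)(\mxi)\,d\mxi$. Since $\psi$ is bounded by $\norm{\psi}_{C(\bS^d)}$ on $\bS^d$ and $\phi\in L^1_{\xi_0}$ pairs against the $\F_t$-output, I estimate $\abs{\F_t(a_n\overline{b_n})(\xi_0,\mxi')}\le\int_0^T\abs{a_n(t,\mxi',\mp)}\,\abs{b_n(t,\mxi',\mq)}\,dt$ and apply Plancherel in $\mx$, obtaining a bound by $\int_0^T\norm{\phi_1(\cdot,\mp)u_n(t,\cdot,\mp)}_{L^2(\R^d)}\norm{\phi_2(\cdot,\mq)u_n(t,\cdot,\mq)}_{L^2(\R^d)}\,dt$. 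The bound $\norm{u_n}_{L^\infty_{\omega,t,\mx}}\lesssim 1$ together with the uniform compact support $\supp u_n(\cdot,\cdot,\cdot,\lambda)\subset K$ controls the $L^2_\mx$-norms, and Cauchy--Schwarz in $\mxi'$ followed by Hölder in the velocity variables --- using $\phi_1\in L^r$ with $r>2$ and $\phi_2\in C_c$ --- renders the integration $\int d\mp\,d\mq$ finite. Taking $E$ gives $\abs{\mu_n(\phi,\phi_1,\phi_2,\psi)}\lesssim\norm{\phi}_{L^1(\R)}\,\norm{\phi_1}_{L^r}\norm{\phi_2}_{L^\infty}\,\norm{\psi}_{C(\bS^d)}$, uniformly in $n$, which is exactly the continuity required for $\mu$ to belong to the stated dual. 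When $\psi(\mxi/\abs{\mxi})$ is treated as a genuine multiplier on $\R^{d+1}$ rather than a mere $L^\infty$ symbol, the Marcinkiewicz theorem (Theorem~\ref{m1}) guarantees it is a bounded space-time Fourier multiplier.

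\emph{Localisation and the limit.} To see that $\mu$ factors through the product $\phi_1\overline{\phi_2}$, so that the symbol enters only via the direction $\mxi/\abs{\mxi}\in\bS^d$, I would invoke the first commutation lemma (Lemma~\ref{com-lemma}). In the strong-in-$t$, weak-in-$\mx$ topology \eqref{convergence-incl-lambda} furnished by the Skorokhod--Jakubowski representation, Lemma~\ref{com-lemma} lets one commute the multiplier $\cA_\psi$ with multiplication by the spatial test factors up to an error vanishing as $n\to\infty$; this is the localisation principle underlying $H$-measures, and it forces the limiting bilinear form to depend on $(\phi_1,\phi_2)$ only through $\phi_1\overline{\phi_2}$. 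The interchange of $\lim_n$, the expectation $E[\cdot]$, and the velocity integrals is justified by dominated convergence --- the dominating functions coming from the uniform bounds of the previous step --- together with the a.s.\ convergence $u_n\ton 0$ in $L^2_t\bigl(L^2_w\bigr)$. Finally, $\mu$ is deterministic because $E[\cdot]$ returns a number for each test tuple, and time-independent because $t$ has been integrated out through $\F_t$ and the $\xi_0$-integration.

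\emph{Main obstacle.} The hard part is the coupling, through the single symbol $\psi(\mxi/\abs{\mxi})$, of the time-frequency $\xi_0$ (where the topology is strong) with the space-frequency $\mxi'$ (where it is weak): unlike in the classical $H$-measure the variables are not on an equal footing, and it is Lemma~\ref{com-lemma}, adapted to this mixed strong-weak topology, that makes the decoupling of multiplier and multiplication --- and hence the passage to the limit --- go through. Keeping the velocity integrations $\int d\mp\,d\mq$ under uniform control throughout, using only $\phi_1\in L^r$ ($r>2$) and the compact support of $\phi_2$, is the other delicate point.
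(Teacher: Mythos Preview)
Your overall architecture is the right one (define bilinear/quadratic functionals $\mu_n$, bound uniformly, extract a weak-$\star$ limit, use the first commutation lemma), but the order in which you use the commutation lemma and the bound is reversed, and this creates a genuine gap.

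You estimate the \emph{uncommuted} expression and arrive at
$\abs{\mu_n(\phi,\phi_1,\phi_2,\psi)}\lesssim\norm{\phi}_{L^1}\,\norm{\phi_1}_{L^r}\norm{\phi_2}_{L^\infty}\,\norm{\psi}_{C(\bS^d)}$.
This is a bound on a \emph{four}-linear functional of $(\phi,\phi_1,\phi_2,\psi)$; it is \emph{not} the trilinear $L^1\times L^2(\R^{2m+d})\times C(\bS^d)$ continuity required by the statement, and it cannot be upgraded to that continuity a posteriori merely by knowing that the limit ``factors through $\phi_1\overline{\phi_2}$''.  (Knowing $\mu$ depends only on $\Phi=\phi_1\phi_2$ together with $\abs{\mu(\Phi)}\lesssim\norm{\phi_1}_{L^r}\norm{\phi_2}_{L^\infty}$ for \emph{some} factorisation does not yield $\abs{\mu(\Phi)}\lesssim\norm{\Phi}_{L^2}$.)  Moreover, your pre-commutation estimate does not obviously close: after separating the $\mp$- and $\mq$-integrals you are left with $\int_{\R^m}\norm{\phi_1(\cdot,\mp)}_{L^2(K)}\,d\mp$, an $L^1_\mp(L^2_\mx)$-norm that is not controlled by $\norm{\phi_1}_{L^r(\R^{d+m})}$ for $r>2$ on the unbounded $\mp$-domain.

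The paper fixes both issues by applying Lemma~\ref{com-lemma} \emph{first}, at the level of $\mu_n$ and for each fixed $\xi_0$ (treating $\psi\bigl((\xi_0,\cdot)/\abs{(\xi_0,\cdot)}\bigr)$ as a spatial multiplier), to move $\phi_2$ across so that the expression reads $\F_{\mx}(\phi_2\phi_1 u_n(\mp))\,\overline{\F_{\mx}(u_n(\mq))}$ plus an error vanishing as $n\to\infty$.  Only \emph{after} this commutation is the Plancherel/Cauchy--Schwarz estimate carried out, and then it naturally produces $\norm{\phi_2\phi_1}_{L^2(\R^{2m+d})}$ because both test factors sit on the same $u_n$, while the bare $u_n(\mq)$ contributes only $\abs{K}^{1/2}$ via the compact support.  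The Banach--Alaoglu step is then taken in the correct dual.  Two smaller remarks: the strong-in-$t$, weak-in-$(\mx,\lambda)$ convergence is a \emph{hypothesis} of the theorem, not something supplied here by Skorokhod--Jakubowski; and the commutation lemma is applied with $\xi_0$ frozen (spatial multiplier), not to $\psi$ as a full space--time multiplier on $\R^{d+1}$.
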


\begin{remark}
Although the variables $\mp$, $\mq$ take part in the duality 
bracket involving the functional $\mu$, it makes sense to 
additionally integrate over $\mp$, $\mq$ 
because, as we shall see in Corollary \ref{cor:extension-1}, we have 
that $\action{\mu}{\phi}=f(\mp,\mq,\mx,\mxi)\, d\nu(\mx,\mxi)$, 
for an integrable function $f$ and a Radon measure 
$\nu \in L_{\operatorname{loc}}^1(\R^d;\cM(\bS^d))$ 
that is independent of $\mp$ and $\mq$.
\end{remark}

\begin{remark}\label{rem:motivate}
It is a priori unclear whether the assumption \eqref{convergence} 
can be fulfilled. Indeed, \eqref{convergence} asks for 
strong convergence in the temporal variable $t$ 
and a.e.~convergence in the probability variable $\omega$. 
In applications, the  natural a priori estimates imply the 
uniform boundedness  of $u_n$ in $L^2_{\omega,t,\mx,\lambda}$.  
Consequently, we may assume that $u_n\weak u$ in 
$L^2_{\omega,t,\mx,\lambda}$. Upon replacing $u_n$ 
by $u_n-u$, we may as well assume that  $u_n\weak 0$ 
in $L^2_{\omega,t,\mx,\lambda}$.  Unfortunately, no 
$H$-measure theorem applies to all the variables $(\omega,t,x,\lambda)$, 
which jointly belong to a general measure space. 
One key contribution of this paper is to improve this 
weak convergence in all the variables to the mixed 
strong-weak convergence in \eqref{convergence}, 
which will allow us to adapt deterministic $H$-measure 
techniques. We refer to Section \ref{sec:singular-limit} 
for details.
\end{remark}

\begin{remark}\label{rem-all}
In \eqref{rev11}, the test functions $\phi_1$ and $\phi_2$ 
may depend on $\mp,\mq$ and $\mx$. However, since we aim to prove 
that $\mu\equiv 0$, we will eventually 
specify $\phi_1=\phi_2=\varphi(\mx)\rho(\mp)$ and $\psi\equiv 1$. 
Thus, by Plancherel's theorem,
\begin{align*}
	0 & =\lim\limits_{n\to\infty}
	E\left[ \,\,\, \int\limits_{\R^{2m+d+1}} 
	\phi(\xi_0) \, \F_t\left( 
	\F_{\mx}\bigl(\varphi(\cdot)\rho(\mp)
	u_n(\cdot,\cdot,\cdot,\mp)\bigr)
	\,\overline{\F_{\mx} \big(\varphi(\cdot)\rho(\mq) 
	u_{n}(\cdot,\cdot,\cdot,\mq)\big)}\right)\!(\mxi) 
	\, d\mxi \, d\mp\, d\mq\,\right]
	\\ & =\lim\limits_{n\to\infty}
	E \left[\, \, \, \int\limits_{\R^{d+1}} 
	\F_t\left(\F_t^{-1}\bigl(\overline{\phi}\bigr)\right)(\xi_0)
	\F_t\left(\abs{\, \int_{\R^m}\F_{\mx}\bigl(\varphi(\cdot)\rho(\mp)
	u_{n}(\cdot,\cdot,\cdot,\mp)\bigr)(t,\mxi')\, d\mp}^2
	\, \right)\, d\mxi \,\right]
	\\ & = \lim\limits_{n\to\infty}
	E \left[\, \, \,  \int\limits_{0}^T 
	\overline{\F_t^{-1}(\phi)(t)} \, 
	\norm{\, \varphi(\cdot)\int_{\R^m}\rho(\mp)
	u_{n}(\cdot,t,\cdot,\mp)
	\, d\mp}^2_{L^2(\R^{d})}\, dt\, \right].
\end{align*}
By the arbitrariness of $\phi$ and $\varphi$, this yields the 
$L^2(\Omega\times (0,T)\times K)$ convergence 
of $\int_{\R^m}\rho(\mp)u_n(\omega,t,\mx,\mp)\, d\mp$, 
for any $K\subset \subset \R^d$, which is the 
velocity averaging result of 
Section \ref{sec:velocity-averaging}.
\end{remark}

\begin{proof}
We may assume that $\phi\in L^1(\R)$ 
is compactly supported (by the density of such 
functions in $L^1(\R)$). We will divide 
the proof into two main steps:

\medskip

\noindent \textit{Step 1 (application of commutation lemma).}

\medskip

By Lemma \ref{com-lemma}, keeping in mind 
the assumption 
$\supp \bigl(u_n(\omega,t,\cdot,\lambda)\bigr)\subset K$,
\begin{equation}\label{step1}
	\begin{split}
		& I:=\lim\limits_{n\to \infty}
		\int\limits_{\Omega}\int\limits_{\R^{2m+d+1}}
		\psi\left(\frac{\mxi}{\abs{\mxi}}\right) 
		\, \phi(\xi_0) \, 
		\F_t\Bigl( \F_{\mx}\big(\phi_1(\cdot,\mp)
		u_n(\omega,\cdot,\cdot,\mp)\big)
		\\ & \qquad \qquad \qquad \qquad \qquad \qquad \times
		\overline{\F_{\mx}\bigl(\phi_2(\cdot,\mq) 
		u_n(\omega,\cdot,\cdot,\mq)\bigr)}
		\Bigr)(\mxi) \, d\mxi \, d\mp \, d\mq \, d\prob(\omega)
		\\ & \qquad = \lim\limits_{n\to \infty}
		\int\limits_{\Omega}
		\int\limits_{\R^{2m+d+1}} 
		\psi\left(\frac{\mxi}{\abs{\mxi}}\right) \, \phi(\xi_0) 
		\F_t\Bigl( \F_{\mx}\big(\phi_2(\cdot,\mq)
		\phi_1(\cdot,\mp)u_n(\omega,\cdot,\cdot,\mp)\bigr)
		\\& \qquad\qquad\qquad\qquad\qquad\qquad 
		\times \overline{\F_{\mx} 
		\bigl( u_n(\omega,\cdot,\cdot,\mq)\bigr)}\Bigr)(\mxi)
		\, d\mxi \, d\mp\, d\mq \, d\prob(\omega)
		=: \lim_{n\to \infty}\innb{\mu_n, 
		\phi \otimes \phi_2\, \phi_1 \otimes \psi}.
	\end{split}
\end{equation}
To verify the equality in \eqref{step1}, 
notice that the Plancherel theorem implies
\begin{equation*}
	\begin{split}
		I & = \lim\limits_{n\to \infty}
		\int\limits_{\Omega}\int\limits_{\R^{2m+d+1}}\phi(\xi_0) \, 
		\F_t\Bigl( \mathcal{A}_\psi\bigl(\phi_1(\cdot,\mp)
		u_n(\omega,\cdot,\cdot,\mp)\bigr)(\mx)
		\\ & \quad\quad \qquad \qquad \qquad 
		\qquad \qquad \qquad \times 
		\phi_2(\mx,\mq)u_n(\omega,\cdot,\mx,\mq)
		\Bigr)(\xi_0)
		\, d\mx \, d\mp \, d\mq 
		\, d\xi_0\, d\prob(\omega),
	\end{split}
\end{equation*} 
where $\mathcal{A}_\psi=
\mathcal{A}_{\psi\left(\frac{(\xi_0,\cdot)}{\abs{(\xi_0,\cdot)}}\right)}$ 
is an $L^p(\R^d_{\mx})$-multiplier operator, so that in the 
Fourier variable $\mxi=(\xi_0,\mxi')$ 
the temporal part $\xi_0$ is kept fixed. 
Adding and subtracting equal terms gives
\begin{align*}
	& \mathcal{A}_\psi\bigl(\phi_1(\cdot,\mp)
	u_n(\omega,\cdot,\cdot,\mp)\bigr)(\mx)
	\phi_2(\mx,\mq)u_n(\omega,\cdot,\mx,\mq)
	\\ & \qquad= \mathcal{A}_\psi\bigl(\phi_2(\cdot,\mq)
	\phi_1(\cdot,\mp)u_n(\omega,\cdot,\cdot,\mp)\bigr)(\mx)
	u_n(\omega,\cdot,\mx,\mq)+\Delta_n,
\end{align*}
where
\begin{align*}
	\Delta_n & =
	\Bigl(\mathcal{A}_\psi\bigl(\phi_1(\cdot,\mp)u_n(\omega,\cdot,\cdot,\mp)
	\bigr)(\mx)\phi_2(\mx,\mq)
	\\ & \qquad -\mathcal{A}_\psi\bigl(\phi_2(\cdot,\mq)
	\phi_1(\cdot,\mp)u_n(\omega,\cdot,\cdot,\mp)\bigr)(\mx)\Bigr)
	u_n(\omega,\cdot,\mx,\mq).
\end{align*}
Consequently, referring to \eqref{step1}, we obtain 
$I = \lim_{n\to \infty}\innb{\mu_n, 
\phi \otimes \phi_2\, \phi_1 \otimes \psi}
+\mathrm{error}$, where the claim is that
\begin{equation}\label{step1-inbetween}
	\mathrm{error} :=\lim\limits_{n\to \infty}
	\int\limits_{\Omega}\int\limits_\R
	\left[\,\,\,\, \underbrace{\int_{\R^{2m+d}}
	\phi(\xi_0) \, \F_t\bigl(\Delta_n\bigr)
	(\omega,\xi_0) \, d\mx \, d\mp \, d\mq}_{=:I_n(\omega,\xi_0)} 
	\, \right] \, d\xi_0\, d\prob(\omega)=0.
\end{equation}
Indeed, we have 
\begin{equation}\label{ptw-cv}
	I_n(\omega,\xi_0)\ton 0
	\quad \text{for a.e.~$(\omega,\xi_0)$},
\end{equation}
and then \eqref{step1-inbetween} follows from Lebesgue's dominated 
convergence theorem. To see that \eqref{ptw-cv} holds (and also 
to supply a dominating function), note that the symbol 
$\psi\left(\frac{(\xi_0,\mxi')}{\abs{(\xi_0,\mxi')}}\right)$
satisfies the assumption \eqref{marcin} of 
the first commutation lemma, for every fixed $\xi_0$. 
Exploiting the Hausdorf--Young 
and Cauchy--Schwarz inequalities,
\begin{align}
	&\abs{I_n(\xi_0,\omega)} 
	\leq \abs{\phi(\xi_0)}
	\int_0^T \Biggl| \, \int_{\R^{m+d}} 
	\Biggl( \mathcal{A}_{\psi}\left(\phi_2(\cdot,\mq)
	\int_{\R^m} \phi_1(\cdot,\mp)u_n(\omega,\cdot,\cdot,\mp)
	\, d\mp\right)(\mx)
	\notag \\ & \qquad\qquad
	-\phi_2(\mx,\mq)\mathcal{A}_{\psi}\left(\int_{\R^m}\phi_1(\cdot,\mp)
	u_n(\omega,\cdot,\cdot,\mp) d\mp \right)(\mx) \Biggr)
	u_n(\omega,t,\mx,\mq)\Biggr| \, d\mx \, d\mq \, dt
	\notag \\ & \qquad \leq \abs{\phi(\xi_0)}
	\int_0^T \Biggl\|
	\mathcal{A}_{\psi}\left(\phi_2(\cdot,\mq)
	\int_{\R^m} \phi_1(\cdot,\mp)u_n(\omega,\cdot,\cdot,\mp)
	\, d\mp\right)(\mx)
	\notag \\ & \qquad\qquad
	-\phi_2(\mx,\mq)\mathcal{A}_{\psi}\left(\int_{\R^m}\phi_1(\cdot,\mp)
	u_n(\omega,\cdot,\cdot,\mp) d\mp \right)(\mx) 
	\Biggr\|_{L^2(\R^{m+d}_{\mx,\mq})}
	\norm{u_n(\omega,t,\mx,\mq)}_{L^2(\R^{m+d}_{\mx,\mq})} \, dt
	\notag \\ & \qquad \lesssim_{T,K} 
	\abs{\phi(\xi_0)}\Biggl(\int_0^T \Biggl\|
	\mathcal{A}_{\psi}\left(\phi_2(\cdot,\mq)
	\int_{\R^m} \phi_1(\cdot,\mp)u_n(\omega,\cdot,\cdot,\mp)
	\, d\mp\right)(\mx)
	\notag\\ & \qquad\qquad\qquad\qquad\qquad
	-\phi_2(\mx,\mq)\mathcal{A}_{\psi}\left(\int_{\R^m}\phi_1(\cdot,\mp)
	u_n(\omega,\cdot,\cdot,\mp) d\mp \right)(\mx) 
	\Biggr\|_{L^2(\R^{m+d}_{\mx,\mq})}^2\, dt\Biggr)^{\frac12}
	\label{eq:conv-rhs},
\end{align}
where we have used the second part of \eqref{convergence-incl-lambda} 
and the compact support assumption to uniformly 
bound $u_n$ in $L^2$. Observe that, by \eqref{convergence-incl-lambda}, 
$(t,\mx)\mapsto \int_{\R^m} \phi_1(\mx,\mp)u_n(\omega,t,\mx,\mp)\, d\mp$ 
satisfies the strong-weak convergence assumption 
\eqref{convergence} of the commutation lemma. Hence, 
by Lemma \ref{com-lemma} and the dominated convergence 
theorem (in $\mq$), the term \eqref{eq:conv-rhs} 
converges to zero as $n\to\infty$.

The previous calculation also provides 
a dominating function for $\abs{I_n(\omega,\xi_0)}$ of the form 
$C\phi(\xi_0)\in L^1(\Omega\times \R)$, 
where the constant C bounds the final term in 
\eqref{eq:conv-rhs}; it depends on the $L^2$ 
norms of $\phi_1$ and $\phi_2$ as 
well as on the $L^\infty$ norms of 
$\psi$ and $u_n$, cf.~\eqref{convergence-incl-lambda}, 
and the support set $K$ of $u_n$.

\medskip

\noindent \textit{Step 2 (compactness and construction 
of $H$-measure functional).}

\medskip

The functionals $\mu_n$, $n\in \N$ are clearly linear 
on $L^2(0,T)\otimes L^2(\R^{2m+d}) \otimes C(\bS^d)$. 
They are also uniformly bounded on the same space. 
Indeed, by the Cauchy-Schwarz inequality and 
Plancherel's theorem,
\begin{align*}
	& \abs{\innb{\mu_n, \phi \otimes \phi_2
	\, \phi_1 \otimes \psi}}
	\\ & \quad \leq \norm{\psi}_{C(\bS^d)}
	\int_{\Omega} \int_{\R^{2m+d}}
	\norm{\phi}_{L^1(\R)}
	\\ & \qquad \quad \times 
	\norm{\F_t \left(\F_{\mx}
	\bigl(\phi_2(\cdot,\mq)\phi_1(\cdot,\mp)
	u_n(\omega,\cdot,\cdot,\mp)\bigr)\,
	\overline{\F_{\mx}
	\bigl(u_n(\omega,\cdot,\cdot,\mq)\bigr)}
	\right)(\cdot,\mxi')}_{L^\infty(\R)} 
	\, d\mxi'\, d\mp\, d\mq \, d\prob(\omega)
	\\ & \quad 
	\leq \norm{\psi}_{C(\bS^d)} \norm{\phi}_{L^1(\R)}
	\int_{\Omega}
	\int_{\R^{2m}}\int_0^T\int\limits_{\R^d}
	\Bigl| \F_{\mx}\bigl(\phi_2(\cdot,\mq)
	\phi_1(\cdot,\mp) u_n(\omega,t,\cdot,\mp)\bigr)(\mxi')
	\\ & \qquad \qquad\qquad\qquad \qquad \qquad
	\qquad\qquad \times 
	\overline{\F_{\mx}\bigl(u_n(\omega,t,\cdot,\mq)\bigr)(\mxi')}
	\Bigr| \, d\mxi' \, dt \, d\mp\, d\mq \, d\prob(\omega)
	\\ & \quad\leq 
	\norm{\psi}_{C(\bS^d)} \norm{{\phi}}_{L^1(\R)} 
	\int_{\Omega} \int_{\R^{2m}}\int_0^T
	\norm{\F_{\mx} \bigl(\phi_2(\cdot,\mq)
	\phi_1(\cdot,\mp)u_n(\omega,t,\cdot,\mp)\bigr)}_{L^2(\R^d)}
	\\ & \qquad \qquad \qquad\qquad\qquad 
	\qquad \qquad \qquad \times 
	\norm{\F_{\mx} \bigl( u_{n}(\omega,t,\cdot,\mq)
	\bigr)}_{L^2(\R^d)} \, dt\, d\mp\, d\mq \, d\prob(\omega)
	\\ & \quad = \norm{\psi}_{C(\bS^d)}\norm{\phi}_{L^1(\R)}
	\int_{\Omega}\int_{\R^{2m}}   
	\int_0^T\norm{\phi_2(\cdot,\mq)\phi_1(\cdot,\mp)
	u_n(\omega,t,\cdot,\mp)}_{L^2(\R^d)} 
	\\ & \qquad \qquad \qquad\qquad\qquad 
	\qquad \qquad \qquad \times
	\norm{u_{n}(\omega,t,\,\cdot\,,\mq)}_{L^2(\R^d)} 
	\, dt\, d\mp\, d\mq \, d\prob(\omega)
	\\ & \quad \leq C_T \norm{\psi}_{C(\bS^d)} 
	\norm{\phi}_{L^1(\R)} 
	\norm{\phi_2 \phi_1}_{L^2(\R^{2m+d})},
\end{align*}
where $C_T$ is a constant depending on $T$ 
and $\norm{u_n}_{L^\infty_{\omega,t,x,\lambda}}\lesssim 1$.

Thus, the sequence $\Seq{\mu_n}\subset  
\bigl(L^1(\R)\otimes L^2(\R^{2m+d})\otimes
C( \bS^d)\bigr)'$ of linear functionals 
is bounded independently of $n$. 
By the Banach-Alaoglu theorem, there exist a subsequence 
of $\Seq{\mu_n}$ (not relabeled) and 
a limit functional $\mu$ such that 
$\mu_n \weakstar \mu$, which---via \eqref{step1}---implies 
that $\mu$ satisfies \eqref{rev11}.
\end{proof}

We will need to show that the limit
functional $\mu$ can be extended to the space 
$\bigl(L^1(\R)\otimes L^2(\R^{2m+d};C_0( \bS^d))\bigr)'$, 
where by $L^1(\R) \otimes L^2(\R^{2m+d};C_0(\bS^d))$ 
we understand the linear space spanned by functions of the 
form $\phi(\xi_0)\varphi(\lambda,\mx,\mxi)$, with
$\phi \in L^1(\R)$, $\varphi \in L^2(\R^{2m+d};C_0(\bS^d))$.  
In other words, for any $\phi \in L^1(\R)$, we have
$$
\innb{\mu,\phi} \in 
\big(L^2(\R^{2m+d};C_0(\bS^d))\big)'
=L^2_{w\star}(\R^{2m+d};\cM(\bS^d)).
$$ 
Unlike the situation with the standard H-measure, here 
we do not know the non-negativity of the 
multilinear functional $\mu$ (so that Schwartz's 
lemma on nonnegative distributions cannot be applied). 
Hence, we need to prove directly that $\mu$ can be 
extended t<<<o a larger space.

\begin{corollary}[extension of $H$-measure functional $\mu$]
\label{cor:extension} 
Suppose the assumptions of Theorem \ref{thm:H-measure} hold. 
The deterministic linear functional $\mu$---defined 
by \eqref{rev11}---can be continuously extended to the 
space $\bigl(L^1(\R)\otimes 
L^2(\R^{2m+d};C_0(\bS^d))\bigr)'$. 
\end{corollary}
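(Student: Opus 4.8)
The statement is an extension-by-continuity result, and the whole point is that the bound witnessing continuity must be expressed in the genuine Bochner norm of $L^2(\R^{2m+d};C_0(\bS^d))$, not in a larger cross-norm. Since $\bS^d$ is compact, $C(\bS^d)=C_0(\bS^d)$, and the algebraic tensor product $L^2(\R^{2m+d})\otimes C_0(\bS^d)$ is dense in $L^2(\R^{2m+d};C_0(\bS^d))$ in the Bochner norm; consequently $L^1(\R)\otimes\bigl(L^2(\R^{2m+d})\otimes C_0(\bS^d)\bigr)$ is dense in $L^1(\R)\otimes L^2(\R^{2m+d};C_0(\bS^d))$. The plan is therefore to verify that $\mu$, initially defined on the tensor product through \eqref{rev11}, obeys
\begin{equation*}
	\abs{\innb{\mu,\phi\otimes\Psi}}
	\le C_T\,\norm{\phi}_{L^1(\R)}\,
	\norm{\Psi}_{L^2(\R^{2m+d};C_0(\bS^d))}
\end{equation*}
for every $\phi\in L^1(\R)$ and every $\Psi$ in $L^2(\R^{2m+d})\otimes C_0(\bS^d)$, with $C_T$ the constant from Theorem \ref{thm:H-measure}; the bounded-linear-extension theorem then produces a unique continuous extension to $\bigl(L^1(\R)\otimes L^2(\R^{2m+d};C_0(\bS^d))\bigr)'$, and for fixed $\phi$ the slice $\innb{\mu,\phi}$ lands in $\bigl(L^2(\R^{2m+d};C_0(\bS^d))\bigr)'=L^2_{w\star}(\R^{2m+d};\cM(\bS^d))$, by the duality recalled at the start of this section.

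To obtain the displayed estimate I would work at the level of the approximating functionals $\mu_n$ and re-examine the boundedness computation of Step 2 in the proof of Theorem \ref{thm:H-measure}. There the sphere-function $\psi$ entered only through the bound $\norm{\cA_\psi}_{L^2\to L^2}\le\norm{\psi}_{C(\bS^d)}$, which was pulled out before applying Plancherel and Cauchy--Schwarz. For a general (non-factorising) $\Psi$ this factorisation is unavailable, so instead I keep $\Psi\bigl(\mp,\mq,\mx,\mxi/\abs{\mxi}\bigr)$ inside the integral in \eqref{rev11} and majorise it pointwise in the remaining variables by its angular supremum $\norm{\Psi(\mp,\mq,\mx,\cdot)}_{C_0(\bS^d)}$. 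Applying Plancherel's theorem and then the Cauchy--Schwarz inequality in $(\mp,\mq,\mx)$ and $t$ exactly as before, the quadratic expression in $u_n$ is controlled in $L^2$ uniformly in $n$ by \eqref{convergence-incl-lambda} together with the compact-support hypothesis $\supp u_n(\omega,t,\cdot,\lambda)\subset K$, while the remaining factor is $\bigl(\int_{\R^{2m+d}}\norm{\Psi(\mp,\mq,\mx,\cdot)}_{C_0(\bS^d)}^2\,d\mp\,d\mq\,d\mx\bigr)^{1/2}=\norm{\Psi}_{L^2(\R^{2m+d};C_0(\bS^d))}$. This yields the bound for $\mu_n$ uniformly in $n$, and since $\mu_n\weakstar\mu$ on the tensor product, the estimate passes to $\mu$.

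The hard part is precisely this pointwise-supremum manoeuvre, which is forced by the fact---emphasised in the discussion preceding the corollary---that we cannot invoke nonnegativity of $\mu$ and Schwartz's lemma. A term-by-term treatment of a finite sum $\Psi=\sum_k g_k\otimes\psi_k$ via the bilinear bound of Theorem \ref{thm:H-measure} would only deliver the projective cross-norm $\sum_k\norm{g_k}_{L^2(\R^{2m+d})}\norm{\psi_k}_{C_0(\bS^d)}$, which dominates but does not equal $\norm{\Psi}_{L^2(C_0)}$; keeping $\Psi$ intact and taking the supremum over $\mxi\in\bS^d$ before integrating in $(\mp,\mq,\mx)$ is exactly what recovers the correct Bochner norm and thereby shows that the bilinear form associated with $\mu$ is of integral type. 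Once the Bochner-norm bound is in hand, the extension and the identification of $\innb{\mu,\phi}$ as a weak-$\star$ measurable $\cM(\bS^d)$-valued $L^2$ map follow routinely.
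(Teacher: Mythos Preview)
There is a genuine gap. Your proposed manoeuvre ``keep $\Psi(\mp,\mq,\mx,\mxi/\abs{\mxi})$ inside the integral in \eqref{rev11} and majorise by the angular supremum'' cannot be carried out as stated, because the integral in \eqref{rev11} is over $(\mxi,\mp,\mq)\in\R^{2m+d+1}$ and contains \emph{no} $\mx$-variable: the $\mx$-dependence of the test functions sits inside $\F_{\mx}(\phi_1 u_n)$, $\F_{\mx}(\phi_2 u_n)$, while the sphere factor $\psi(\mxi/\abs{\mxi})$ lives outside in Fourier space. A non-factorising $\Psi(\mp,\mq,\mx,\mxi)$ therefore has no slot in \eqref{rev11}; one would have to replace the Fourier multiplier $\cA_\psi$ by a genuine pseudodifferential operator with $\mx$-dependent symbol, and then ``Plancherel + pointwise sup in $\mxi$'' no longer gives an $L^2\to L^2$ bound. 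In short, the pre-limit functionals $\mu_n$ are themselves only defined on the tensor product, so one cannot bound $\langle\mu_n,\phi\otimes\Psi\rangle$ for general $\Psi$ before the extension is constructed.

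The paper's argument bypasses this by testing $\mu$ only on simple functions $\Psi=\sum_{k=1}^N\psi_k^N(\mxi)\chi_k^N(\mx,\mp,\mq)$, where the $\chi_k^N$ are characteristic functions of \emph{pairwise disjoint} sets. Two points make this work: (a) idempotence $(\chi_k^N)^2=\chi_k^N$ lets one place $\chi_k^N$ in both the $\phi_1$ and $\phi_2$ slots (via the commutation lemma, as in Remark~\ref{rem-all}), so that the $k$th summand reads $\psi_k^N(\mxi/\abs{\mxi})\,\F_{\mx}(\chi_k^N u_n^{\mp})\,\overline{\F_{\mx}(\chi_k^N u_n^{\mq})}$; and (b) after applying the \emph{discrete} Cauchy--Schwarz inequality in $k$, Plancherel, and the disjointness of the $\chi_k^N$, the right-hand side recombines exactly into $\norm{\sum_k\psi_k^N\chi_k^N}_{L^2(\R^{2m+d};C(\bS^d))}$, i.e.\ the Bochner norm. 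Density of simple functions in $L^2(\R^{2m+d};C_0(\bS^d))$ then gives the extension. You correctly diagnosed that a term-by-term projective-norm bound is insufficient; the missing ingredient is precisely this disjoint-simple-function device, which replaces the unavailable ``pointwise sup in $\mxi$'' by a discrete Cauchy--Schwarz that respects the Fourier structure.
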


\begin{proof}
Any $\varphi \in L^2(\R^{2m+d};C_0( \bS^d))$ 
can be approximated by functions of the form
$$
\sum\limits_{k=1}^N \psi^N_{k}(\mxi)
\chi^N_k(\mx,\mp,\mq), \quad N=1,2,\ldots,
$$ 
where $\psi^N_{k} \in C_0(\bS^d)$, and 
$\chi^N_k(\mx,\mp,\mq)$, $k=1,\ldots,N$, 
are characteristic functions of mutually disjoint sets 
in $\R^{2m+d}$. If we prove that the functional $\mu$ 
is uniformly bounded in 
$\bigl(L^1(\R)\otimes L^2(\R^{2m+d};C_0(\bS^d))\bigr)'$ 
with respect to functions of the 
form $\phi(\xi_0)\sum\limits_{k=1}^N 
\psi^N_{k}(\mxi) \chi^N_k(\mx,\mp,\mq)$, $\phi \in L^1(\R)$, 
the conclusion of the theorem follows.

Using that $(\chi_k^N)^2=\chi_k^N$ and arguing 
along the lines of Remark \ref{rem-all}, we obtain 
\begin{align*}
	&\int_{\R^{2m}}\inn{\mu(\mp,\mq,\cdot,\cdot),
	\phi \otimes \left(\,\sum\limits_{k=1}^N \psi^N_{k}(\mxi)
	\chi^N_k(\mx,\mp,\mq) \right)} \,d\mp\, d\mq 
	\\ & \quad = \lim\limits_{n\to\infty}
	E \Biggl[ \,\,\,\int_{\R^{2m+d+1}} 
	\sum\limits_{k=1}^N \psi_k^N
	\left(\frac{\mxi}{\abs{\mxi}}\right) \phi(\xi_0) 
	\F_t\Bigl( \F_{\mx}\bigl(\chi^N_k(\cdot,\mp,\mq)
	u_n(\cdot,\cdot,\cdot,\mp)\bigr)
	\\&\quad\qquad\qquad
	\qquad\qquad\qquad \,\times
	\overline{\F_{\mx} \bigl(\chi^N_k(\cdot,\mp,\mq)
	u_n(\cdot,\cdot,\cdot,\mq)\bigr)}\Bigr)(\mxi)
	\, d\mxi\, d\mp\, d\mq\Biggr]
	\\ & \quad \leq \lim\limits_{n\to\infty}
	E\Biggl[\norm{\phi}_{L^1(\R)}
	\int_{\R^{2m+d}}\int_0^T\sum\limits_{k=1}^N 
	\norm{\psi_k^N}_{C(\bS^d)}
	\, \Bigl| \F_{\mx}\bigl(\chi^N_k(\cdot,\mp,\mq)
	u_n(\cdot,t,\cdot,\mp)\bigr)(\mxi')
	\\&\quad\qquad\qquad
	\qquad\qquad \qquad \times
	\overline{\F_{\mx}\bigl(\chi^N_k(\cdot,\mp,\mq)
	u_n(\cdot,t,\cdot,\mq)\bigr)(\mxi')}
	\Bigr| \,dt\,d\mxi' \, d\mp\, d\mq\Biggr]
	\\ & \quad \leq \lim\limits_{n\to\infty}
	E \Biggl[ \norm{\phi}_{L^1(\R)} 
	\int\limits_{\R^{2m+d}} \int_0^T  
	\left(\sum\limits_{k=1}^N \norm{\psi_k^N}_{C(\bS^d)}^2
	\abs{\F_{\mx}\bigl(\chi^N_k(\cdot,\mp,\mq)
	u_n(\cdot,t,\cdot,\mp)\bigr)(\mxi')}^2\right)^{1/2}
	\\ & \quad\qquad\qquad\qquad\qquad\qquad
	\times \left(\sum\limits_{k=1}^N
	\abs{\F_{\mx}\bigl(\chi^N_k(\cdot,\mp,\mq)
	u_n(\cdot,t,\cdot,\mq)\bigr)(\mxi')}^2 
	\right)^{1/2}\, dt\, d\mxi' \, d\mp\, d\mq\Biggr],
\end{align*}
where, in the last step, we have used the discrete 
Cauchy-Schwarz inequality. To proceed, we apply the Cauchy-Schwarz 
inequality in the $L^2$-setting and Plancherel's theorem to obtain
\begin{align*}
	&\left|\int_{\R^{2m}}\inn{\mu( 
	\mp,\mq,\cdot,\cdot),\phi \otimes
	\left(\sum\limits_{k=1}^N \psi^N_{k}(\mxi) 
	\chi^N_k(\mx,\mp,\mq)\right)} \,d\mp\, d\mq \right|
	\\ & \quad \leq \lim\limits_{n\to\infty}
	E\left[ \norm{\phi}_{L^1(\R)}  
	\norm{\sum\limits_{j=1}^N
	\norm{\psi_k^N}_{C(\bS^d)} \, 
	\chi^N_k \, u_n}_{L^2((0,T)\times \R^{2m+d})}
	\norm{\sum\limits_{k=1}^N 
	\chi^N_k \, u_n}_{L^2((0,T)\times \R^{2m+d})} \right]
	\\ & \quad \leq C \norm{\phi}_{L^1(\R)}  
	\norm{\sum\limits_{k=1}^N\psi_k^N
	\, \chi^N_k}_{L^2(\R^{2m+d};C(\bS^d))},
\end{align*}
where $C$ depends on $T$ and 
$\norm{u_n}_{L^\infty_{\omega,t,x,\lambda}}
\lesssim 1$. The obtained bound is independent of $N$, 
which concludes the proof of the theorem. 
\end{proof} 

Based on the proof of Corollary \ref{cor:extension}, we 
may characterise the functional $\mu$ more precisely 
as follows:

\begin{corollary}[final characterisation of 
$H$-measure functional $\mu$]\label{cor:extension-1}
Suppose that the assumptions of Theorem \ref{thm:H-measure} 
hold. For each fixed $\phi \in L^1(\R)$, there exists 
a functional $\mu_\phi \in 
\left(L^2(\R^{2m};C_0(\R^d\times \bS^d))\right)'$ such that
\begin{equation}\label{rev11-1}
	\begin{split}
		&\int_{\R^{2m}}   
		\inn{\mu_\phi(\mp,\mq,\cdot,\cdot), 
		\phi_1(\cdot,\mp) \overline{\phi_2(\cdot,\mq)} 
		\otimes \psi} \,d\mp\, d\mq 
		\\ & \qquad 
		= \lim\limits_{n\to\infty}E \Biggl[ \,\,\, 
		\int\limits_{\R^{2m+d+1}}
		\psi\left(\frac{\mxi}{\abs{\mxi}}\right)
		\phi(\xi_0)
		\\ & \qquad \qquad \qquad \qquad\times
		\F_t\Bigl( \F_{\mx}\bigl(\phi_1(\cdot, \mp)
		u_n(\cdot,\cdot,\cdot,\mp)\bigr) 
		\overline{\F_{\mx}\bigl(\phi_2(\cdot,\mq)
		u_n(\cdot,\cdot,\cdot,\mq)\bigr)}
		\Bigr)(\mxi)\, d\mxi \, d\mp \, d\mq
		\, \Biggr].
	\end{split}
\end{equation} 
Moreover, $\mu_\phi$ has the form 
\begin{equation}\label{representation}
	\mu_\phi=f(\mp,\mq,\mx,\mxi)\, d\nu(\mx,\mxi),
\end{equation} 
where $\nu \in L_{\operatorname{loc}}^1(\R^d;\cM(\bS^d))$, 
so $\nu$ is regular with respect 
to $\mx\in \R^d$, and $f\in L^2(\R^{2m};L^1_{\nu}(\R^d\times \bS^d))$, 
i.e., $\int_{\R^{2m}}\abs{\int_{\R^d\times \bS^d}
\abs{f(\mp,\mq,\mx,\mxi)}\, d\nu(\mx,\mxi)}^2 
\, d\mp \, d\mq<\infty$.
\end{corollary}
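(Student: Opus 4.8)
The plan is to first read off \eqref{rev11-1} by restriction, and then to produce the disintegration \eqref{representation} by a weighted total-variation construction that sidesteps the missing nonnegativity of $\mu$. First I fix $\phi\in L^1(\R)$ and define $\mu_\phi$ on products $\phi_1(\cdot,\mp)\overline{\phi_2(\cdot,\mq)}\otimes\psi$ through the right-hand side of \eqref{rev11}, so that $\inn{\mu_\phi,\cdot}=\inn{\mu,\phi\otimes\cdot}$ and \eqref{rev11-1} holds by construction. By Corollary \ref{cor:extension} this functional is bounded on $L^2(\R^{2m+d};C_0(\bS^d))$ with norm $\lesssim\norm{\phi}_{L^1(\R)}$; hence, by the duality $\bigl(L^2(\R^{2m+d};C_0(\bS^d))\bigr)'=L^2_{w\star}(\R^{2m+d};\cM(\bS^d))$ recalled at the start of this section, $\mu_\phi$ is represented by a weak-$\star$ measurable family $(\mp,\mq,\mx)\mapsto\sigma_{\mp,\mq,\mx}\in\cM(\bS^d)$ with $\int_{\R^{2m+d}}\norm{\sigma_{\mp,\mq,\mx}}_{\cM(\bS^d)}^2\,d\mp\,d\mq\,d\mx<\infty$. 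The uniform support hypothesis $\supp\bigl(u_n(\omega,t,\cdot,\lambda)\bigr)\subset K$ forces $\sigma_{\mp,\mq,\mx}=0$ for $\mx\notin K$; I stress that it is precisely the $L^2$-in-$\mx$ character of this bound that will eventually yield the $\mx$-regularity of $\nu$.

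Next I would repackage this family into a measure on phase space. For a.e.\ $(\mp,\mq)$ I set $d\tau_{\mp,\mq}(\mx,\mxi):=d\sigma_{\mp,\mq,\mx}(\mxi)\,d\mx$, a complex Radon measure on $\R^d\times\bS^d$ that is absolutely continuous in $\mx$ with total variation $d\abs{\tau_{\mp,\mq}}=d\abs{\sigma_{\mp,\mq,\mx}}\,d\mx$. Since $\mx$ effectively ranges over the compact set $K$, Cauchy--Schwarz gives $\norm{\tau_{\mp,\mq}}_{\cM(\R^d\times\bS^d)}^2=\bigl(\int_K\norm{\sigma_{\mp,\mq,\mx}}_{\cM(\bS^d)}\,d\mx\bigr)^2\le\abs{K}\int_K\norm{\sigma_{\mp,\mq,\mx}}_{\cM(\bS^d)}^2\,d\mx$, so $(\mp,\mq)\mapsto\tau_{\mp,\mq}$ belongs to $L^2_{w\star}(\R^{2m};\cM(\R^d\times\bS^d))=\bigl(L^2(\R^{2m};C_0(\R^d\times\bS^d))\bigr)'$. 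This already exhibits $\mu_\phi$ as an element of the dual space named in the corollary.

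To extract a common measure I would fix a strictly positive weight $g\in L^1(\R^{2m})\cap L^2(\R^{2m})$, say a Gaussian, and define the nonnegative measure $\nu:=\int_{\R^{2m}}\abs{\tau_{\mp,\mq}}\,g(\mp,\mq)\,d\mp\,d\mq$ on $\R^d\times\bS^d$. Cauchy--Schwarz bounds $\norm{\nu}_{\cM}\le\norm{g}_{L^2}\bigl(\int_{\R^{2m}}\norm{\tau_{\mp,\mq}}_{\cM}^2\,d\mp\,d\mq\bigr)^{1/2}<\infty$, and since each $\abs{\tau_{\mp,\mq}}$ is absolutely continuous in $\mx$ so is $\nu$; thus $\nu\in L^1_{\loc}(\R^d;\cM(\bS^d))$ with $\supp\nu\subset K\times\bS^d$, which is the regularity in $\mx$ asserted in \eqref{representation}. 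Because $g>0$, any Borel set $A$ with $\nu(A)=0$ satisfies $\abs{\tau_{\mp,\mq}}(A)=0$ for a.e.\ $(\mp,\mq)$; running this over a countable algebra generating the Borel $\sigma$-algebra of the Polish space $\R^d\times\bS^d$ and using outer regularity, I would upgrade this to $\tau_{\mp,\mq}\ll\nu$ for a.e.\ $(\mp,\mq)$ simultaneously for all $A$. The Radon--Nikodym theorem then supplies $f(\mp,\mq,\cdot,\cdot):=d\tau_{\mp,\mq}/d\nu\in L^1_\nu(\R^d\times\bS^d)$, depending measurably on $(\mp,\mq)$, with $\mu_\phi=f\,d\nu$ as in \eqref{representation}. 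Finally the total-variation identity $\int_{\R^d\times\bS^d}\abs{f(\mp,\mq,\cdot,\cdot)}\,d\nu=\norm{\tau_{\mp,\mq}}_{\cM(\R^d\times\bS^d)}$ together with the $L^2$ bound of the previous paragraph gives $\int_{\R^{2m}}\bigl(\int_{\R^d\times\bS^d}\abs{f}\,d\nu\bigr)^2\,d\mp\,d\mq<\infty$, that is, $f\in L^2\bigl(\R^{2m};L^1_\nu(\R^d\times\bS^d)\bigr)$.

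I expect the main obstacle to be the measure theory of the last step rather than any hard analysis: choosing a single $(\mp,\mq)$-null set off which $\tau_{\mp,\mq}\ll\nu$ holds for every Borel set at once, and verifying that the Radon--Nikodym densities assemble into a jointly measurable, $L^1_\nu$-valued map of $(\mp,\mq)$ so that $f$ is a genuine element of $L^2(\R^{2m};L^1_\nu)$. These rest on the separability of $C_0(\R^d\times\bS^d)$ and a measurable-selection argument; throughout, the uniform compact support in $\mx$ is what keeps $\nu$ finite and, more importantly, keeps the $\mx$-marginals absolutely continuous, which is the feature distinguishing this functional from a classical $H$-measure.
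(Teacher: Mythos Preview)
Your proof is correct and close in spirit to the paper's, which is itself very brief: the paper says the first part follows by repeating the simple-function approximation of Corollary~\ref{cor:extension} with $\phi$ held fixed, and defers the representation \eqref{representation} entirely to \cite[Proposition~12]{LM2}. Your route to the first part is a mild but pleasant variation: rather than redo the approximation of Corollary~\ref{cor:extension} with test functions $\psi_k^N(\mx,\mxi)\chi_k^N(\mp,\mq)$, you take Corollary~\ref{cor:extension} as a black box to land in $L^2_{w\star}(\R^{2m+d};\cM(\bS^d))$, then use the compact $\mx$-support and Cauchy--Schwarz to upgrade to $L^2_{w\star}(\R^{2m};\cM(\R^d\times\bS^d))$. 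For the second part, your weighted-total-variation dominating measure plus Radon--Nikodym is precisely the mechanism behind \cite[Proposition~12]{LM2}, so you have effectively reconstructed what the paper only cites; your remark that the delicate point is the joint measurability of the densities (handled via the separability of $C_0(\R^d\times\bS^d)$) is accurate and matches how that argument is carried out.
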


\begin{proof}
The proof of the first part of the theorem is 
the same as the proof of Corollary \ref{cor:extension}. 
We note that being fixed, the function $\phi$ does not 
influence the estimates leading to the conclusion. 
The proof of \eqref{representation} 
is now the same as the one of \cite[Proposition 12]{LM2}.
\end{proof}

\section{Well-posed pseudo-parabolic SPDE} 
\label{sec:well-posed} 

This section contains a proof of  Theorem \ref{unique_sol_par}, 
divided into three subsections. The existence part of the 
theorem is based on establishing convergence of 
suitably constructed  Galerkin approximations, 
relying on a  Cauchy convergence argument. 
The uniqueness part makes use of $L^2$ 
stability techniques. 

In what follows, we drop the $k$-index, writing $\delta, 
\eps, u$ instead of $\delta_k,\eps_k,u_k$. 
This is justified as Theorem \ref{unique_sol_par} addresses
the question of well-posedness of the 
pseudo-parabolic SPDE for a fixed value 
of $k$ (so that $\delta_k,\eps_k>0$). 
In other words, we wish to solve 
the initial value problem
\begin{align}\label{PP-1'}
	d\left(u-\delta \Delta u\right)
	+\Div {\mathfrak f}(\mx, u)\, dt 
	& =\eps \Delta u dt 
	+\Phi(\mx, u)\, dW_t,\\
	\label{PP-ID'}
	u|_{t=0}&=u_0\in H^2(M), 
	\ \ \mx \in M,
\end{align} 
where we assume that 
\begin{equation}\label{assump-ed}
	\text{$1/2\geq \eps>\eps_0>0$ 
	and $1/2\geq \delta>\delta_0>0$.}
\end{equation} 

\subsection{Approximate solutions}

The approximate solution has the form
\begin{equation*}
	u_n(t,\mx)=\sum\limits_{k=1}^n 
	\akn(t) e_k(\mx),
\end{equation*} 
where $\{e_k\}$ is the orthonormal 
basis of $L^2(M)$ introduced earlier, 
and the coefficients $\akn=\akn(\omega,t)$, $k=1,\dots,n$, 
are adapted continuous stochastic processes to 
be determined. Note that
\begin{align*}
	\norm{u_n(t)}^2_{H^1(M)} 
	& = \sum\limits_{k=1}^n 
	\lambda_k^2 \left|\akn(t)\right|^2
	\\ \quad \text{and} \quad 
	\Delta e_k & =\left(1-\lambda_k^2\right) e_k 
	\ \ \text{for every $k\in \N$}.
\end{align*}
We insert the approximation into \eqref{PP-1'}, 
multiply the obtained relation by a 
basis function $e_k$ and integrate over $M$, 
eventually ariving at
\begin{equation}\label{alpha-sys}
	\begin{split}
		& d \left( \left(1-\delta+\delta \lambda^2_k\right)
		\akn\right) +\eps \left(\lambda^2_k-1\right)
		\akn\, dt =
		\int_M  \mff(\mx,u_n)\cdot \nabla e_k
		\, dV(\mx) \, dt
		\\ & \qquad \quad
		+\int_M \Phi(\mx,u_n) e_k\, dV(\mx) \, dW_t,
		\quad k=1,\ldots,n.
	\end{split}
\end{equation} 
We supplement the SDE system \eqref{alpha-sys} 
with the initial data 
\begin{equation}\label{id-sde}
	\akn(0,\cdot)=\alpha_{0,k},
\end{equation} 
where $\alpha_{0,k}$ are the coefficients 
in the expansion of the initial data \eqref{PP-ID'}:
$$
u_0(\mx)=\sum\limits_{k=1}^\infty 
\alpha_{0,k} e_k(\mx).
$$ 
To proceed, we denote 
\begin{align*}
	&\mF_{k}^{(n)}(\malpha)
	=\int_M  \mff\Bigl(\mx,\sum\limits_{j=1}^n 
	\ajn e_j(\mx)\Bigr)\cdot 
	\nabla e_k(\mx) \,dV(\mx), 
	\\ & 
	\mPhi_{k}^{(n)}(\malpha)=\int_M 
	\Phi\Bigl(\mx,\sum\limits_{j=1}^n 
	\ajn e_j(\mx)\Bigr) e_k(\mx) \,dV(\mx).
\end{align*}
A straightforward calculation, 
based on ($C_f$--3) and ($C_\Phi$--1), gives
$$
\abs{\mF_{k}^{(n)}(\malpha)}
+\abs{\mPhi_{k}^{(n)}(\malpha)} 
\leq C_n \bigl(1+\left|\malpha\right|\bigr), 
\quad  k=1,\ldots,n,
$$ 
Moreover, the functions $\mF_{k}^{(n)}$, 
$\mPhi_{k}^{(n)}$ are locally Lipschitz continuous. 
According to, e.g., \cite[Theorem 3.1.1]{Liu:2015vb}, 
the SDE system \eqref{alpha-sys}, \eqref{id-sde} admits 
a unique global solution $\malpha=\malpha (\omega, t)$, 
see also the global $n$-independent bound 
\eqref{uniform} below.

Thus, we obtain a sequence 
$\left\{u_n\right\}_{n\in\N}$ of approximate weak 
solutions to \eqref{PP-1'}, \eqref{PP-ID'} 
in the sense that, for any 
$\varphi_n \in V_n= \spann\{e_k\}_{k=1}^n$, 
\begin{align*}
	& d\int_M \left(u_n-\delta \Delta u_n\right)
	\varphi_n(\mx) \,dV(\mx) 
	+\int_M \Div\, {\mathfrak f}(\mx, u_n) 
	\varphi_n(\mx) \,dV(\mx) \, dt
	\\ & \qquad 
	=\eps \int_M \Delta u_n(t,\mx) 
	\varphi_n(\mx) \,dV(\mx) dt
	+\int_M \Phi(\mx, u_n)
	\varphi_n(\mx)\,dW_t,
	\\ & u_n|_{t=0}=u^n_0(\mx)
	:=\sum\limits_{k=1}^n
	\alpha_{0,j}e_k(\mx).
\end{align*}

\subsection{Convergence \& existence}
We will prove that $\left\{u_n\right\}$ 
is a Cauchy sequence in $L^2_{\prob}\left(\Omega; 
C([0,T];L^2(M))\right)$. We start by deriving a priori 
bounds in $L^2_{\prob}\left(\Omega;
L^\infty(0,T;H^1(M))\right)$, depending 
on $\delta$ and $\eps$ but not $n$, and also 
in the larger space $L^2_{\prob}\left(\Omega; 
L^\infty(0,T;L^2(M))\right)$, independent 
of $\delta$, $\eps$ and $n$.
 
By the SDEs \eqref{alpha-sys} 
and It\^o's formula,
\begin{align*}
	&\left(1-\delta+\delta \lambda^2_k\right) 
	d\left(\akn\right)^2
	= 2\eps (1-\lambda_k^2) \left(\akn\right)^2\, dt  
	\\ & \qquad 
	+ 2\int_M \mff(\mx,u_n) 
	\nabla \left(\akn e_k\right) \,dV(\mx) \, dt 
	\\ & \qquad \qquad 
	+\frac{1}{1-\delta+\delta \lambda_k^2}
	\left( \int_M \Phi(\mx,u_n) e_k \,dV\right)^2 \,dt 
	\\ & \qquad \qquad \qquad 
	+ 2\int_M \Phi(\mx,u_n)\left(\akn  e_k\right) 
	\,dV(\mx) \, dW_t.
\end{align*}  
Summing over $k=1,\dots, n$, noting that 
$$
\lara{\nabla e_k}{\nabla e_l}_{L^2} 
= -\lara{e_k}{\Delta e_l}_{L^2} 
= -\lara{e_k}{(I - \Lambda^2) e_l}_{L^2} 
= \delta_{kl}(\lambda_k^2-1)
$$
and 
$$\left\|\nabla u_n\right\|^2_{L^2(M)}
=\sum\limits_{k=1}^n
\left(\akn\right)^2\left(\lambda_k^2-1
\right),
$$ we get
\begin{align*}
	& \sum_{k=1}^n
	\left(1-\delta+\delta \lambda^2_k\right)
	\left(\akn(t)\right)^2
	+ 2\eps  \int_0^t \left\|\nabla u_n(t')
	\right\|^2_{L^2(M)}\, dt' 
	\\ & \qquad 
	=\sum_{k=1}^n
	\left(1-\delta+\delta \lambda^2_k\right)
	\left(\akn(0)\right)^2
	+2 \int_0^t \int_M \mff(\mx,u_n) 
	\nabla u_n \, dV(\mx)\, dt'
	\\ & \qquad \quad \quad 
	+\int_0^t \sum\limits_{k=1}^n 
	\frac{1}{1-\delta+\delta \lambda^2_k}
	\left(\int_M \Phi(\mx,u_n) e_k 
	\,dV(\mx)\right)^2 \,dt'
	\\ & \qquad \quad \quad\quad 
	+ 2\int_0^t \int_M u_n\Phi(\mx,u_n)
	\,dV(\mx) \, dW_{t'}.
\end{align*}
By the definitions of the $L^2$ 
and $H^1$ norms of $u_n$,
\begin{equation}\label{eq:energy-tmp1}
	\begin{split}
		&\left \| u_n(t)\right\|_{L^2(M)}^2
		+\delta\left \| u_n(t)\right\|_{H^1(M)}^2
		+2\eps  \int_0^t \left\|\nabla u_n(t')
		\right\|^2_{L^2(M)}\, dt' 
		\\ & \qquad 
		= \left \| u_n(0)\right\|_{L^2(M)}^2
		+\delta\left \| u_n(0)\right\|_{H^1(M)}^2
		+2 \int_0^t \int_M \mff(\mx,u_n) 
		\nabla u_n \, dV(\mx)\, dt'
		\\ & \qquad \quad \quad 
		+\int_0^t \sum\limits_{k=1}^n 
		\frac{1}{1-\delta+\delta \lambda^2_k}
		\left(\int_M \Phi(\mx,u_n) e_k 
		\,dV(\mx)\right)^2 \,dt'
		\\ & \qquad \quad \quad\quad 
		+2\int_0^t \int_M u_n\Phi(\mx,u_n)
		\,dV(\mx) \, dW_{t'}.
	\end{split}
\end{equation}
Denote the third and fourth terms on 
the right-hand side by $I_\mff$ and 
$I_\Phi$, respectively. By Young's product  
inequality and ($C_f$--3),
\begin{equation}\label{eq:If-est-alpha}
	\abs{I_f} \le 
	C_\kappa\int_0^t \left(1+\left\|u_n
	\right\|_{L^2(M)}^2\right)\, dt' 
	+\kappa\int_0^t \left\|\nabla u_n
	\right\|_{L^2(M)}^2\, dt', 
	\quad \kappa\in (0,1).
\end{equation}
By ($C_\Phi$--1) and since $0<\lambda_k 
\tok \infty$, 
$$
\abs{I_\Phi} \le 
C\int_0^t \left(1+\left\|u_n
\right\|_{L^2(M)}^2\right) \, dt'.
$$
Given $u_0\in H^1(M)$, inserting the 
last two bounds into \eqref{eq:energy-tmp1}, we obtain
\begin{equation}\label{eq:energy-tmp2}
	\begin{split}
		\left \| u_n(t)\right\|_{H^1(M)}^2
		&\le C+C\int_0^t \left\|u_n\right\|_{H^1(M)}^2 \, dt'
		\\ & \qquad 
		+2\int_0^t \int_M u_n\Phi(\mx,u_n)
		\,dV(\mx) \, dW_{t'},
	\end{split}
\end{equation}
for some $n$-independent constant $C=C(\delta,\eps)$. 
Taking the expectation $E(\cdot)$ and 
applying the Gronwall inequality, 
the final result becomes 
\begin{equation}\label{eq:energy-tmp3}
	\sup_{t\in [0,T]}
	E\left[\norm{u_n(t)}_{H^1(M)}^2\right] 
	\le C_T,
\end{equation}
where $C_T$ is a constant independent of $n$ (but 
dependent on $\delta$, $\eps$). This proves 
that $u_n$ is uniformly bounded in
$L^\infty\left(0,T;L^2_{\prob}(\Omega;H^1(M))
\right)$. Replacing $H^1$ by $L^2$, we can 
derive a bound that is also independent 
of $\delta$ and $\eps$. Indeed, absorbing the last 
term on the right-side of \eqref{eq:If-est-alpha} (taking 
$\kappa$ small enough) into the $\eps$-term on 
the left-hand side of \eqref{eq:energy-tmp1}, we obtain
\begin{equation}\label{eq:energy-tmp2-new}
	\begin{split}
		\norm{u_n(t)}_{L^2(M)}^2
		&\le C+C\int_0^t \norm{u_n}_{L^2(M)}^2 \, dt'
		\\ & \qquad +2\int_0^t \int_M u_n\Phi(\mx,u_n)
		\,dV(\mx) \, dW_{t'},
	\end{split}
\end{equation}
assuming that $u_0\in L^2(M)$ and $\delta 
\left\| u_0\right\|_{H^1(M)}^2 \lesssim 1$. 
As before, via Gronwall's inequality, this supplies the bound
\begin{equation*}
	 \sup_{t\in [0,T]} 
	 E\left[\norm{u_n(t)}_{L^2(M)}^2\right] 
	 \le C_T,
\end{equation*}
where $C_T$ is a constant independent of $n$, 
$\delta$ and $\eps$. 

It is possible to improve the 
estimate \eqref{eq:energy-tmp3}
by applying a martingale (BDG) inequality. 
Indeed, take $\sup_{t\in[0,\tau]}$ in 
\eqref{eq:energy-tmp2} and then the 
expectation $E(\cdot)$. 
Using \eqref{eq:energy-tmp3} 
in the resulting inequality, we arrive at  
\begin{equation}\label{eq:BDG-est1}
	E \left[\sup_{t\in [0,\tau]} 
	\norm{u_n(t)}_{H^1(M)}^2\right]
	\le C_T \bigl(1+I_m\bigr),
	\quad \tau\in [0,T], 
\end{equation}
where $C_T$ is $n$-independent and 
$$
I_m:=E \left|\, \sup_{t\in[0,\tau]}\int_0^t 
\int_M u_n\Phi(\mx,u_n)\,dV(\mx) \, dW_{t'}\right|.
$$
Making use of the BDG inequality, the Cauchy-Schwarz inequality, 
$\left|\Phi(\mx,u_n)\right|\lesssim 1
+\left|u_n\right|$, cf.~($C_\Phi$--1), and Young's 
product inequality, we obtain
\begin{align*}
	I_m & \le 
	C E\left[ \left(\int_0^{\tau} \left(\int_M u_n
	\Phi(\mx,u_n)\,dV(\mx)\right)^2 
	\, dt'\right)^{\frac12}\right]
	\\ & \le 
	\kappa E \left[\sup_{t\in [0,\tau]} 
	\norm{u_n(t)}_{L^2(M)}^2\right]
	+C_\kappa \int_0^{\tau} E\left\| u_n(t')
	\right\|_{L^2(M)}^2\, dt'+C_{\kappa,T,M}
	\\ & \le 
	\kappa E \left[\sup_{t\in [0,\tau]} 
	\norm{u_n(t)}_{H^1(M)}^2\right]
	+C_\kappa \int_0^{\tau} E \left[\sup_{t\in [0,t']}
	\left\| u_n(t')\right\|_{H^1(M)}^2\right]\, dt'
	+C_{\kappa,T,M},
\end{align*}
for any $\kappa\in (0,1)$, where the first term 
on the right-hand side can be absorbed into the 
term on the left-hand side of \eqref{eq:BDG-est1} 
(for small enough $\kappa$). An application of Gronwall's 
inequality then gives
\begin{equation}\label{eq:energy-tmp4}
	E\left[\sup_{t\in [0,T]}
	\norm{u_n(t)}_{H^1(M)}^2\right]\le C_T.
\end{equation}
This establishes the $L^2_{\prob}
\left(\Omega;L^\infty(0,T;H^1(M))\right)$ bound. We remark that 
the constant $C_T$ depends on $\delta$, $\eps$ (but not $n$). 
Replacing $H^1$ by $L^2$ in \eqref{eq:energy-tmp4}, 
the resulting constant becomes independent 
also of $\delta$, $\eps$. Indeed, applying   
the BDG inequality as before, this time starting 
from \eqref{eq:energy-tmp2-new}, we obtain 
\begin{equation}\label{eq:energy-tmp4-L2}
	E \left[\sup_{t\in [0,T]}
	\norm{u_n(t)}_{L^2(M)}^2\right] \le C_T,
\end{equation}
where $C_T$ is a constant independent of 
$n$, $\delta$ and $\eps$. Since 
$$
\norm{u_n}_{L^2(M)}^2
=\sum\limits_{k=1}^n \left|\akn\right|^2,
$$ 
we conclude that
\begin{equation}\label{uniform}
	E\left[\norm{\akn}_{L^\infty_t([0,T])}^2\right]
	\leq c, \quad k=1,\ldots,n,
\end{equation} 
for a constant $c>0$ that is 
independent of $n$ (and $\delta$, $\eps$). 
As a result, $\akn(t)$ is finite almost surely, 
for any $t\in [0,T]$, cf.~the SDEs \eqref{alpha-sys}.

\medskip

In what follows, we consider two approximations $u_n$ 
and $u_m$ with $m>n$. Denote by 
$\mff_n=\mff(\mx,u_n)$, $\mff_m=\mff(\mx,u_m)$ 
and $\Phi_n=\Phi(\mx,u_n)$, $\Phi_m=\Phi(\mx,u_m)$. 
Given \eqref{alpha-sys}, we can write 
SDEs for the differences $\akn-\akm$, $k\leq n<m$. 
Applying It\^o's formula gives
\begin{align*}
	&\left(1-\delta+\delta \lambda^2_k\right) 
	d\left(\akn-\akm\right)^2
	= 2\eps (1-\lambda_k^2) \left(\akn-\akm\right)^2\, dt  
	\\ & \qquad 
	+ 2\int_M \left(\mff_n-\mff_m\right) 
	\nabla \left( \left(\akn-\akm\right)e_k
	\right) \,dV(\mx) \, dt 
	\\ & \qquad \qquad 
	+\frac{1}{1-\delta+\delta \lambda_k^2}
	\left( \int_M \left(\Phi_n
	-\Phi_m\right) e_k \,dV\right)^2 \,dt 
	\\ & \qquad \qquad \qquad 
	+ 2\int_M \left(\Phi_n-\Phi_m\right)
	\left(\left(\akn-\akm\right)e_k\right)
	\,dV(\mx) \, dW_t.
\end{align*}  
Summing over $k=1,\dots, n$, denoting 
by 
$$
u_m(t,\cdot)\big|_{V_n}
=\sum\limits_{k=1}^n\akm e_k
$$ 
the projection of $u_m(t,\cdot)$ 
onto $V_n$ (recall $m>n$), we obtain
\begin{equation}\label{eq:energy-tmp5}
	\begin{split}
		& \left \| u_n(t,\cdot)-u_m(t,\cdot)
		\big|_{V_n}\right\|_{L^2(M)}^2
		+\delta\left \| u_n(t,\cdot)-u_m(t,\cdot)
		\big|_{V_n}\right\|_{H^1(M)}^2
		\\ & \quad 
		+2\eps  \int_0^t \left\|\nabla \left(
		u_n(t,\cdot)-u_m(t,\cdot)\big|_{V_n}
		\right)\right\|^2_{L^2(M)}\, dt' 
		\\ & \quad \qquad 
		= \left \| u_n(t,0)-u_m(0,\cdot)
		\big|_{V_n}\right\|_{L^2(M)}^2
		+\delta\left \| u_n(0,\cdot)-u_m(0,\cdot)
		\big|_{V_n}\right\|_{H^1(M)}^2
		\\ & \quad \qquad \qquad 
		+2 \int_0^t \int_M \left(\mff_n-\mff_m\right) 
		\nabla \left(u_n(t',\cdot)-u_m(t',\cdot)
		\big|_{V_n} \right) \, dV(\mx)\, dt'
		\\ & \quad \qquad \qquad \qquad 
		+\int_0^t \sum\limits_{k=1}^n 
		\frac{1}{1-\delta+\delta \lambda^2_k}
		\left(\int_M \left(\Phi_n-\Phi_m\right) e_k 
		\,dV(\mx)\right)^2 \,dt'
		\\ & \quad \qquad \qquad \qquad \qquad
		+2\int_0^t \int_M \left(u_n-u_m\right)
		\left(\Phi_n-\Phi_m\right)\,dV(\mx) \, dW_{t'}.
	\end{split}
\end{equation}
Denote by $D_f$ and $D_\Phi$ 
the third and fourth terms on the right-hand side 
of \eqref{eq:energy-tmp5}. By Young's product 
inequality and ($C_f$--2),
\begin{align*}
	\left| D_f\right| & \lesssim 
	C_\kappa\int_0^t \left\| u_n(t',\cdot)-u_m(t',\cdot)
	\right\|_{L^2(M)}^2\, dt' 
	\\ & \qquad 
	+\kappa\int_0^t \left \|\nabla \left(u_n(t',\cdot)
	-u_m(t',\cdot)\big|_{V_n}\right)
	\right\|_{L^2(M)}^2\, dt', 
	\quad \kappa\in (0,1),
\end{align*}
where the second term can be absorbed into 
the $\eps$-term on the left-hand side 
of \eqref{eq:energy-tmp5}. By the Cauchy-Schwarz 
inequality, ($C_\Phi$--2), and since 
$0<\lambda_k\to \infty$ as $k\to \infty$,
$$
\left| D_\Phi\right| \lesssim 
\int_0^t \left\| u_n(t',\cdot)-u_m(t',\cdot)
\right\|_{L^2(M)}^2\, dt'.
$$

Recall the $n$-independent $H^1$ estimate 
\eqref{eq:energy-tmp4}. One can use this to 
demonstrate that the remainder term 
$u_m(t,\cdot)-u_m(t,\cdot)\big|_{V_n}$ is 
uniformly (in $n$) small. Indeed,
\begin{align*}
	\left\|u_m(t,\cdot)-u_m(t,\cdot)
	\big|_{V_n}\right\|_{L^2(M)}^2 
	& = \left\|u_m(t,\cdot)\big|_{V_n^\bot}
	\right\|_{L^2(M)}^2
	\leq \frac{1}{\lambda_n^2}
	\left\|u_m(t,\cdot)\right\|_{H^1(M)}^2,
\end{align*}
where $\lambda_n\ton \infty$. 
Given $u_0\in H^2(M)$, cf.~\eqref{PP-ID}, 
we also have 
$$
\left\|u_m(0,\cdot)-u_m(0,\cdot)
\big|_{V_n}\right\|_{H^1(M)}^2
\lesssim \frac{1}{\lambda_n^2}
\left\|u_0\right\|_{H^2(M)}^2.
$$

Inserting the previous bounds into 
\eqref{eq:energy-tmp5} yields
\begin{equation}\label{eq:energy-tmp6}
	\begin{split}
		&\norm{u_n(t,\cdot)-u_m(t,\cdot)}_{L^2(M)}^2 
		\\ & \qquad 
		\le \norm{u_n(0,\cdot)-u_m(0,\cdot)}_{L^2(M)}^2
		+\delta\norm{u_n(0,\cdot)-u_m(0,\cdot)}_{H^1(M)}^2
		\\ & \qquad \quad
		+C\int_0^t \left\|u_n(t',\cdot)-u_m(t',\cdot)
		\right\|_{L^2(M)}^2\, dt'
		\\ &  \qquad \quad\quad
		+\frac{1}{\lambda_n^2}
		\left\|u_m(t,\cdot)\right\|_{H^1(M)}^2
		+\frac{1}{\lambda_n^2}
		\left\|u_0\right\|_{H^2(M)}^2
		\\ & \qquad\quad\quad\quad
		+2\int_0^t \int_M \left(u_n-u_m\right)
		\left(\Phi_n-\Phi_m\right)\,dV(\mx) \, dW_{t'},
	\end{split}
\end{equation}
for some $n$-independent constant $C=C(\delta,\eps)$.  
Taking the supremum over $t$ in \eqref{eq:energy-tmp6} 
and then the expectation, followed by an application of the BDG 
martingale inequality (as before), we arrive 
eventually at 
\begin{align*}
	& E\left[\sup_{t\in [0,\tau]}
	\norm{u_n(t,\cdot)-u_m(t,\cdot)}_{L^2(M)}^2\right]
	\\ & \qquad \le 
	\norm{u_n(0,\cdot)-u_m(0,\cdot)}_{L^2(M)}^2
	+\delta \norm{u_n(0,\cdot)-u_m(0,\cdot)}_{H^1(M)}^2
	\\ &  \qquad \quad
	+\frac{1}{\lambda_n^2}
	E\left[\sup_{t\in [0,T]}\left\|u_m(t,\cdot)
	\right\|_{H^1(M)}^2 \right]
	+\frac{1}{\lambda_n^2}\left\|u_0\right\|_{H^2(M)}^2
	\\ & \qquad\quad\quad
	+C\int_0^\tau E\left[\left\|u_n(t',\cdot)-u_m(t',\cdot)
	\right\|_{L^2(M)}^2\right]\, dt', 
	\quad \tau\in [0,T],
\end{align*}
for some $n$-independent constant $C$. 
Given $u_0\in H^2(M)$ and the 
estimate \eqref{eq:energy-tmp4}, an 
application of Gronwall's inequality finally gives
$$
\lim_{n,m\to \infty} 
E\left[\sup_{t\in [0,T]}
\norm{u_n(t,\cdot)-u_m(t,\cdot)}_{L^2(M)}^2\right]=0.
$$
In other words, $\left\{u_n\right\}_{n\in \N}$ 
is a Cauchy sequence in $L^2_{\prob}\left(\Omega;
C([0,T];L^2(M))\right)$, which is a Banach space. 
Hence, there is an adapted limit process $u$ such that
$u_n\to u$ in $L^2_{\prob}\left(\Omega;
C([0,T];L^2(M))\right)$. As a result, $u$ 
belongs to $C([0,T];L^2(M))$, almost surely.
Besides, cf.~\eqref{eq:energy-tmp4}, 
$u\in L^2_{\prob}\left(\Omega;
L^\infty(0,T;H^1(M))\right)$. In other words, 
$u$ satisfies \eqref{eq:pseudo-spaces}, 
and, thanks to \eqref{eq:energy-tmp4-L2}, 
also \eqref{eq:L2-uest-thm}. Given the strong 
convergence of $u_n$ to $u$, it is not 
difficult to show that $u$ is a weak solution 
to \eqref{PP-1}, \eqref{PP-ID}, in the 
sense of \eqref{mild-ppde}. This 
concludes the proof of the existence part 
of Theorem \ref{unique_sol_par}. Let us 
now turn to the uniqueness part of the theorem.  

\subsection{Uniqueness}
We will derive a $L^2_{\prob}\left(\Omega; 
C([0,T];L^2(M))\right)$
stability result for weak solutions, which at once implies 
the uniqueness part of Theorem \ref{unique_sol_par}. 
More precisely, we are going to prove that 
\begin{equation}\label{eq:uniq-tmp0}
	E\left[\sup_{t\in[0,T]}\norm{u(t)-v(t)}_{L^2(M)}^2\right]
	\lesssim \norm{u_0-v_0}_{L^2(M)}^2,
\end{equation}
where $u$, $v$ are two weak solutions 
of \eqref{PP-1'} with initial data 
$u_0$ and $v_0$, respectively. 
The difference satisfies (in the weak sense)
\begin{equation}\label{*-1}
	\begin{split}
		& d\left(u-v\right)
		+\mathrm{div}\, 
		\bigl({\mathfrak f}(\mx, u)
		-{\mathfrak f}(\mx, v)\bigr) 
		\, dt
		\\ & \qquad 
		=\eps \Delta \left(u-v\right) \, dt
		+ \delta d \Delta\left(u-v\right) 
		+ \bigl(\Phi(\mx,u)- \Phi(\mx,v)\bigr)
		\,dW_t.
	\end{split}
\end{equation} 
The two processes $u$, $v$ can be 
represented in terms of the $L^2(M)$-basis 
introduced earlier:
$$
u(\omega,t,\mx)=
\sum\limits_{k=1}^\infty 
\alpha_k(\omega,t)e_k(\mx), 
\quad v(\omega, t,\mx)
=\sum\limits_{k=1}^\infty 
\beta_k(\omega, t)e_k(\mx), 
$$ 
where $\left\{\alpha_k(\omega,t)
=\int_{M}u(\omega,t,\mx) e_k(\mx)\,dV(\mx)\right\}_k$, 
$\left\{\beta_k(\omega, t)
=\int_{M}v(\omega,t,\mx) e_k(\mx)\,dV(\mx)\right\}_k$. 
In passing, we note that this implies 
that $\alpha_k$ and $\beta_k$ are adapted.

By testing \eqref{*-1} with $e_k$, 
and then using the It\^{o} formula 
for every $k\in \N$, we obtain
\begin{align*}
	&\frac12\left(1-\delta+\delta \lambda^2_k\right) 
	d \bigl(\alpha_k(t)-\beta_k(t)\bigr)^2
	+\eps(\lambda^2_k-1) 
	\bigl(\alpha_k(t)-\beta_k(t)\bigr)^2 \, dt 
	\\& \quad = \int_M \bigl({\mathfrak f}(\mx,u)
	-{\mathfrak f}(\mx, v)\bigr) 
	\cdot  \nabla \bigl(\left(\alpha_k(t)
	-\beta_k(t) \right) e_k\bigr) \, dV(\mx) \, dt
	\\& \quad \qquad
	+\frac{1}{2\bigl(1-\delta+\delta \lambda^2_k\bigr)}
	\left(\, \int_M (\Phi(\mx, u)- \Phi(\mx, v))e_k 
	\,dV(\mx)\right)^2
	\\ & \quad \qquad\qquad
	+\int_M \bigl(\Phi(\mx, u)-\Phi(\mx, v)\bigr)
	\bigl(\left(\alpha_k(t)-\beta_k(t) \right) e_k 
	\,dV(\mx)\, dW_t
\end{align*} 
We divide this expression by 
$1-\delta+\delta \lambda^2_k$ and 
note that $\frac{\eps(\lambda^2_k-1)}
{1-\delta+\delta \lambda^2_k}
\ge-\frac{\eps}{\delta}$, 
in view of \eqref{assump-ed} and
since $0\leq \lambda_k \to \infty$ 
as $k\to \infty$. The result is  
\begin{align*}
	&\frac12 d \bigl(\alpha_k(t)-\beta_k(t)\bigr)^2
	-\frac{\eps}{\delta}
	\bigl(\alpha_k(t)-\beta_k(t)\bigr)^2 \, dt 
	\\& \quad 
	\le \frac{1}{1-\delta+\delta \lambda^2_k}
	\int_M \bigl({\mathfrak f}(\mx,u)
	-{\mathfrak f}(\mx, v)\bigr) 
	\cdot  \nabla \bigl(\left(\alpha_k(t)
	-\beta_k(t) \right) \, e_k\bigr) \, dV(\mx) \, dt
	\\& \quad \quad
	+\frac{1}{2\bigl(1-\delta+\delta \lambda^2_k\bigr)^2}
	\left(\, \int_M (\Phi(\mx, u)- \Phi(\mx, v))e_k 
	\,dV(\mx)\right)^2
	\\ & \quad \quad\quad
	+ \frac{1}{1-\delta+\delta \lambda^2_k}
	\int_M \bigl(\Phi(\mx, u)-\Phi(\mx, v)\bigr)
	\bigl(\left(\alpha_k(t)-\beta_k(t) \right) e_k 
	\,dV(\mx)\, dW_t.
\end{align*} 
Summing over $k\in \N$, taking into account that
$$
\sum_{k=0}^\infty  
\bigl(\alpha_k(t)-\beta_k(t)\bigr)^2
=\left\|u(t)-v(t)\right\|_{L^2(M)}^2
\le c(\omega),
$$
and integrating in time, we obtain
\begin{align*}
	& \norm{u(t)-v(t)}_{L^2(M)}^2
	\le \norm{u_0-v_0}_{L^2(M)}^2
	+\frac{2\eps}{\delta}\int_0^t 
	\norm{u(t')-v(t')}_{L^2(M)}^2\, dt' 
	\\& \quad 
	+2\int_0^t\int_M \bigl(\mff(\mx,u(t'))
	-{\mathfrak f}(\mx, v(t'))\bigr) 
	\\ & \quad \qquad\qquad\qquad 
	\times \sum_{k\in \N} 
	\frac{1}{1-\delta+\delta \lambda^2_k}
	\bigl(\alpha_k(t')
	-\beta_k(t')\bigr) \nabla e_k \, dV(\mx) \, dt'
	\\& \quad
	+\int_0^t\sum_{k\in \N}
	\frac{1}{\bigl(1-\delta+\delta \lambda^2_k\bigr)^2}
	\left(\, \int_M (\Phi(\mx,u(t'))-\Phi(\mx,v(t')))
	\, e_k \,dV(\mx)\right)^2\, dt'
	\\ & \quad
	+\int_0^t\sum_{k\in \N}
	\frac{2}{1-\delta+\delta \lambda^2_k}
	\int_M \bigl(\Phi(\mx,u(t'))-\Phi(\mx,v(t'))\bigr)
	\\ & \quad \qquad\qquad\qquad 
	\times \bigl(\alpha_k(t')-\beta_k(t') \bigr)\, e_k 
	\,dV(\mx)\, dW_{t'},
\end{align*} 
We denote the five terms on the 
right-hand side by $I_i(t)$, $i=1,\ldots,5$. 
By Young's product inequality and \eqref{assump-ed} again, 
\begin{align*}
	\left|I_2(t) \right\|
	& \le \int_0^t\left\|{\mathfrak f}(\cdot, u(t'))
	-{\mathfrak f}(\cdot, v(t'))\right\|_{L^2(M)}^2\, dt'
	\\ & \quad \qquad
	+\int_0^t\sum_{k\in \N} 
	\frac{1}{\left(1-\delta+\delta \lambda_k^2\right)^2} 
	\bigl(\alpha_k(t')-\beta_k(t')\bigr)^2 
	\left\|\nabla e_k\right\|_{L^2(M)}^2\, dt'
	\\ & \overset{\text{($C_f$--2)}}{\leq} 
	C_f \int_0^t\left\|u(t',\cdot)-v(t',\cdot)
	\right\|_{L^2(M)}^2\,dt'
	\\ & \quad\qquad 
	+\int_0^t\sum_{k\in \N} 
	\frac{\lambda_k^2-1}
	{\bigl(\delta\left(\lambda_k^2-1\bigr)+1\right)^2}
	\bigl(\alpha_k(t')-\beta_k(t')\bigr)^2\, dt' 
	\\ &
	\le \left(C_f + \frac{1}{\delta}\right)
	\int_0^t\left\|u(t')-v(t')\right\|_{L^2(M)}^2\, dt',
\end{align*}
and, by \eqref{assump-ed} 
and Bessel's inequality, 
\begin{align*}
	\left|I_3(t)\right\|
	& \le \frac{1}{\delta^2}
	\int_0^t\left\|\Phi(\mx,u(t'))
	-\Phi(\mx,v(t'))\right\|_{L^2(M)}^2\, dt'
	\\ & 
	\overset{\text{($C_\Phi$--2)}}{\le} 
	\frac{C_\Phi}{\delta^2} 
	\int_0^t\left\|u(t',\cdot)-v(t',\cdot)
	\right\|_{L^2(M)}^2\,dt'
\end{align*}
Consequently, taking the supremum over $t\in [0,\tau]$, 
$\tau\in [0,T]$, $T>0$, and then 
the expectation $E$, 
\begin{align}
	& E \left[\sup_{t\in [0,\tau]}
	\norm{u(t)-v(t)}_{L^2(M)}^2\right]
	\le \norm{u_0-v_0}_{L^2(M)}^2
	+C_\delta E\left[\, \int_0^\tau\left\|u(t',\cdot)-v(t',\cdot)
	\right\|_{L^2(M)}^2\,dt' \right]
	\notag \\ & \qquad 
	+ E\Biggl[\, \sup_{t\in [0,\tau]}\Biggl| \int_0^t\sum_{k\in \N}
	\frac{2}{1-\delta+\delta \lambda^2_k}
	\int_M \bigl(\Phi(\mx,u(t'))-\Phi(\mx,v(t'))\bigr)
	\notag
	\\ & \quad \qquad \qquad\qquad\qquad\qquad \times 
	\bigl( \alpha_k(t')-\beta_k(t') \bigr)\, e_k 
	\,dV(\mx)\, dW_{t'}\Biggr|\, \Biggr].
	\label{eq:uniq-tmp}
\end{align}

Let us estimate the last term of 
\eqref{eq:uniq-tmp}, which we 
denote by $M(t)$. By the BDG 
martingale inequality,
\begin{align*}
	M(t) &\lesssim E\Biggl[\,
	\int_0^\tau \Biggl(\, \sum_{k\in \N}
	\frac{2}{1-\delta+\delta \lambda^2_k}
	\int_M \bigl(\Phi(\mx,u(t'))-\Phi(\mx,v(t'))\bigr)
	\\ & \quad\qquad\qquad\qquad\qquad\qquad
	\times \bigl(\alpha_k(t')-\beta_k(t') \bigr)\, e_k 
	\,dV(\mx) \Biggr)^2
	\, dt' \Biggr]^{\frac12}.
\end{align*}
Given \eqref{assump-ed}, we can bound 
$\frac{2}{1-\delta+\delta \lambda^2_k}$ 
by $2/\delta^2$. An application of 
the Cauchy-Schwarz inequality then gives
$$
M(t)\lesssim_\delta
E\left [\, \int_0^\tau 
\left\|\Phi(\mx,u(t'))-\Phi(\mx,v(t'))\right\|_{L^2(M)}^2
\left\|u(t')-v(t')\right\|_{L^2(M)}^2
\, dt'\right]^{\frac12}. 
$$
and so, by ($C_\Phi$--2) and 
Young's product inequality, 
\begin{align*}
	M(t) & \lesssim_\delta 
	E \Biggl[\, \sup_{t\in [0,\tau]}
	\left\|u(t))-v(t)\right\|_{L^2(M)}
	\left(\int_0^\tau
	\left\|u(t')-v(t')\right\|_{L^2(M)}^2
	\, dt'\right)^{\frac12}\, \Biggr]
	\\ & \le
	\frac12 E \left[\sup_{t\in [0,\tau]}
	\left\|u(t))-v(t)\right\|^2_{L^2(M)}\right]
	+\frac12 \int_0^\tau E\left[
	\left\|u(t')-v(t')\right\|_{L^2(M)}^2\right]\, dt'.
\end{align*}
Inserting this bound into \eqref{eq:uniq-tmp} 
and then applying Gronwall's inequality, 
we finally arrive at the sought-after 
stability result \eqref{eq:uniq-tmp0}.

\section{Stochastic velocity averaging}
\label{sec:velocity-averaging}

Our aim is to prove the $L_{\loc}^1((0,T)\times M)$--convergence 
of the sequence $\left\{u_n\right\}$ of solutions to the 
pseudo-parabolic SPDE \eqref{PP-1}. The strategy is to 
rewrite \eqref{PP-1} in the kinetic formulation and 
then to use the velocity averaging result to be proved in this section. 
Roughly speaking, the kinetic formulation 
of \eqref{PP-1} has the form (see Section \ref{subsec:kinetic}) 
\begin{align*}
	d h_n+\Div^{g}_\mx \bigl(
	F(\mx,\lambda)h_n\bigr)\, dt
	=H_{n}\, dt+\Psi_{n} \, dW_t,
\end{align*} 
where $\Div^{g}$ is the divergence on 
the previously specified Riemannian manifold with 
metric $g$, whereas $H_n$ is a 
$\cD'((0,T)\times M\times \R)$-valued random variable. 
With the help of a smooth partition of unity subordinate to a finite atlas, 
we may consider localised versions of this equation “pulled back” to $\R^d$. 
``Gluing" together the solutions of the localised  equations will yield 
a global solution on $M$. We can produce a convergent 
subsequence of global solutions by establishing 
sub-sequential convergence of the solutions to the 
localised equations. In other words, since 
compactness is a local property, 
it is enough to consider the former kinetic 
SPDE on a single chart, i.e., the equation 
given in local coordinates. This means that the 
left-hand side of the above equation becomes (writing 
$\Div_\mx$ for the standard Euclidean divergence)
\begin{equation*}
	d h_n+\Div_\mx \bigl(F(\mx,\lambda)h_n\bigr)\, dt
	+\sum\limits_{j,k=1}^d\Gamma^j_{kj}F_k(\mx,\lambda)h_n\, dt,
\end{equation*}
with a similar modification of the right-hand side, noting 
that $H_n$ contains divergence and gradient operators, see Sections 
\ref{subsec:geometry} and \ref{subsec:kinetic}. 
Since $\sum\limits_{j,k=1}^d\Gamma^j_{kj}
F_k(\mx,\lambda)h_n$ is locally bounded, we can incorporate 
this term into the right-hand side of the equation
and then end up with a Euclidean SPDE of the form
\begin{equation*}
	d h_n+\Div_\mx\big(F(\mx,\lambda)h_n\bigr)\, dt
	=\bar{H}_{n}\, dt +\Phi_{n} \, dW_t, 
	\quad t\in (0,T), \,\, 
	\mx \in \R^d,\,\, \lambda \in \R^m.  
\end{equation*} 

\medskip

We shall now specify the kinetic SPDE and the 
velocity averaging problem more precisely. To this end, 
let $\Seq{h_n=h_n(\omega,t,\mx,\lambda)}_{n\in \N}$ be 
a sequence of $L^\infty(\R^d\times \R^m)$-valued 
stochastic processes, and suppose each $h_n$ solves 
a stochastic transport (continuity) 
equation of the form
\begin{equation}\label{eq-1}
	\begin{split}
		& d h_n 
		+\Div_\mx \bigl(F(\mx,\lambda)h_n\bigr)\, dt
		= \bigl( g_{n}+G_{n}\bigr)\, dt 
		+\Phi_{n} \, dW_{n}(t), 
		\\ & 
		h_n\big|_{t=0}=h_0(\mx,\lambda),
	\end{split}
\end{equation} 
where $(t,\mx,\lambda)\in (0,T)\times 
\R^d\times \R^m$ and $h_0\in L^\infty(\R^d\times \R^m)$ 
is a given initial function. Later we will
impose further conditions, see \eqref{eq:hk-L2t-L2weak} 
and the conditions (a)-(e).

In the SPDE \eqref{eq-1}, $F=F(\mx,\lambda)$ is 
a deterministic function (with a potentially 
discontinuous $\mx$-dependency) 
and $g_{n}=g_{n}(\omega,t,\mx,\lambda)$, $G_{n}
=G_{n}(\omega,t,\mx,\lambda)$ are 
distribution-valued random variables satisfying, 
for each $\rho \in C^{\infty}_c(\R^m)$,
\begin{equation}\label{notation-rho}
	\begin{split}
		g_{n,\rho} & :=\bigl\langle g_{n}, \rho\bigr \rangle
		= (-1)^{m_1} \int_{\R^m} \overline{g}_{n}
		\frac{\partial^{m_1}\rho}{\pa\lambda^{m_1}} \, d\lambda,
		\qquad m_1\in \N,
		\\ 
		G_{n,\rho} & := \bigl\langle G_{n},\rho \bigr \rangle
		= (-1)^{m_2} \int_{\R^m} \overline{G}_{n}
		\frac{\partial^{m_2}\rho}{\pa\lambda^{m_2}}\, d\lambda,
		\qquad m_2\in \N,
	\end{split}
\end{equation} 
for some distribution-valued random variables
$\overline{g}_{n}$, $\overline{G}_{n}$ such that
$g_{n,\rho}$ takes values in $L^1(0,T;W^{-1,r}_{\loc}(\R^d))$ 
(for $r>1$) and $G_{n,\rho}$ takes 
values in $\cM_{\loc}((0,T)\times \R^d)$.

\begin{remark}
Throughout the paper, the subscript ``\rm{loc}" 
refers to the unbounded part of a domain; 
for example, $G_{n,\rho} \in \cM_{\loc}((0,T)\times \R^d)$ 
means that $G_{n,\rho} \in \cM((0,T)\times K)$ 
for any $K\subset \subset \R^d$.
\end{remark}

\begin{remark}\label{rem:source-structure}
The specific structure \eqref{notation-rho} 
of the source terms $g_{n,\rho}$, $G_{n,\rho}$
is not important in this section (the relevant 
conditions (c)-(d) are gathered later on). However, this structure 
is supplied by the application in 
Section \ref{sec:singular-limit}, see the 
kinetic SPDE \eqref{eq:tilde-kinetic}.
\end{remark}

The noise amplitude $\Phi_{n}=\Phi_{n}(\omega,t,\mx,\lambda)$ 
is an adapted $L^2(\R^d\times \R^m)$ 
stochastic process, for which 
we frequently use the notation
\begin{equation}\label{eq:Phi-n-rho-average}
	\Phi_{n,\rho}:= \left\langle \Phi_{n}, \rho\right \rangle
	=\int_{\R^m} \Phi_n \rho\, d\lambda.
\end{equation}
As usual, ``adapted" means ``weakly adapted" in the sense
that, for every $\varphi \in L^2(\R^d\times \R^m)$, the 
real-valued process $(\omega,t)\mapsto 
\int_{\R^d}\int_{\R}\Phi_{n} \varphi(\mx,\lambda)
\, d\lambda\, d\mx $ is adapted with respect 
to a fixed filtered probability space 
$\bigl(\Omega,\F,\prob,\Seq{\F_t^n}\bigr)$. 
Finally, $W_n$ is a real-valued Wiener process defined 
on $\bigl(\Omega,\F,\prob,\Seq{\F_t^n}\bigr)$. 

The SPDE \eqref{eq-1} is interpreted 
in the $(t,\mx,\lambda)$-weak sense:
\begin{equation}\label{weak-sense-tx}
	\begin{split}
		&\int_0^T\int_{\R^{d+m}} h_n \pa_t \varphi
		\rho \, d\lambda \, d\mx \, dt
		+\int_{\R^{d+m}}  h_0(\mx,\lambda) 
		\varphi(0,\mx)\rho(\lambda)  \, d\lambda \, d\mx
		\\ & \qquad +\int_0^T\int_{\R^{d+m}} h_n F(\mx,\lambda)
		\cdot \nabla_{\mx}\varphi \rho \, d\lambda\, d\mx\,dt
		\\ & \qquad\qquad 
		= m_{n,\rho}(\varphi)
		+\int_0^T\int_{\R^{d+m}} \Phi_{n}
		\varphi\rho \,d\lambda \,d\mx \, dW_{n}(t), 
		\quad \text{almost surely},
	\end{split}	
\end{equation}
for all $\varphi\in C^\infty_c([0,T)\times \R^d)$ 
and $\rho \in C^{\infty}_c(\R^m)$, where 
\begin{equation}\label{eq:measure-m-def}
	m_{n,\rho}(\varphi)
	:=\bigl\langle g_{n,\rho}+ G_{n,\rho}, 
	\varphi\bigr\rangle
	=\int_{0}^{T} \int_{\R^d}
	\bigl( g_{n,\rho} +  G_{n,\rho} \bigr) 
	\varphi(t,\mx)\, d\mx \,dt.
\end{equation}
If $\varphi(t,\mx)=\phi(\mx)\gamma(t)$, we write 
$m_{n,\rho}(\phi\gamma)=:m_{n,\rho}(\phi)(\gamma)$. 
If $\gamma=\chi_{[t_1,t_2]}$, we also 
write $m_{n,\rho}(\phi)\bigl([t_1,t_2]\bigr)$.

For some upcoming computations involving the stochastic 
product rule, we need to turn the $(t,\mx,\lambda)$-weak 
formulation \eqref{weak-sense-tx} 
into a formulation that is weak in $(\mx,\lambda)$  
and pointwise in time $t$. The assumptions of 
the next lemma are related to (but weaker than) 
the properties derived for the kinetic formulation of the 
pseudo-parabolic SPDE \eqref{PP-1}, see 
the kinetic SPDE \eqref{eq:tilde-kinetic}, the 
estimates \eqref{eq:tilde-kinetic-est}, and the sign relation 
\eqref{eq:tilde-sign-functions}.

\begin{lemma}[weak formulation in $(\mx,\lambda)$, 
pointwise in $t$]\label{ito-eq}
Fix any integer $n$, and let $h_n \in L^\infty_{\omega,t,\mx,\lambda}:=
L^\infty(\Omega\times (0,T)\times \R^d \times \R^m)$ 
be a weak solution to \eqref{eq-1} in 
the sense of \eqref{weak-sense-tx}. For any 
$\rho \in C^\infty_c(\R^{d+m})$, suppose
\begin{itemize}
	\item $F\in L^1_{\loc}(\R^d\times \R^m;\R^d)$,
	
	\item $g_{n,\rho}$ is bounded in 
	$L^1_{\prob}\bigl(\Omega;L^1(0,T;W^{-1,r}_{\loc}(\R^d)\bigr)$, 
	for some $r>1$,

	\item $G_{n,\rho}$ is bounded in 
	$L^1_{\prob}\bigl(\Omega;\cM_{\loc}((0,T)\times \R^d)\bigr)$, and
 
	\item $\Phi_{n,\rho}$ is bounded in 
	$L^2_{\prob}\bigl(\Omega;L^2_{\loc}((0,T)\times \R^d)\bigr)$.
\end{itemize}
Then, for every $\varphi \in L^1(\R^{d+m})$, the function 
$t\mapsto \int_{\R^{d+m}}h_n(t) \varphi
\, d\lambda\, d\mx$ admits left and right 
limits at every instant of time (a.s.); more precisely, 
there exist functions $h_n^{\pm}\in L^\infty_{\omega,t,\mx,\lambda}$ 
such that, for any $\bar{t}\in (0,T)$,
\begin{equation}\label{l-r}
	\begin{split}
		&\lim_{t\downarrow \bar{t}} \int_{\R^{d+m}} h_n(t)  
		\varphi \, d\lambda\, d\mx
		=\int_{\R^{d+m}} h^+_n(\bar{t})  
		\varphi \, d\lambda\, d\mx, 
		\quad \text{a.s.},
		\\ &
		\lim_{t\uparrow \bar{t}} \int_{\R^{d+m}} h_n(t)  
		\varphi\, d\lambda\, d\mx
		=\int_{\R^{d+m}} h^-_n(t)
		\varphi\, d\lambda\, d\mx, 
		\quad \text{a.s.},
	\end{split}
\end{equation} 
and the set of times $t\in [0,T]$ for which  
\begin{equation}\label{l-r-1}
	\int_{\R^{d+m}} h^+_n(t)
	\varphi \, d\lambda\, d\mx
	\neq \int_{\R^{d+m}} h^-_n(t)
	\varphi \, d\lambda\, d\mx,
	\quad \text{a.s.},
\end{equation} 
is at most countable.

For almost all $0\leq t_1<t_2\leq T$ and all $(\phi,\rho)\in 
C^\infty_c(\R^d)\times C^\infty_c(\R^m)$,
\begin{equation}\label{weak-sense}
	\begin{split}
		& \int_{\R^{d+m}} h_n(t_2)\phi \rho \, d\lambda\, d\mx 
		-\int_{\R^{d+m}} h_n(t_1) \phi \rho \, d\lambda\, d\mx 
		\\ & \qquad 
		-\int_{t_1}^{t_2}\int_{\R^{d+m}}h_n F(\mx,\lambda)
		\cdot \nabla_{\mx}\phi\rho \,d\lambda\,d\mx\, dt
		\\ & \qquad\qquad 
		= m_{n,\rho}(\phi)\bigl([t_1,t_2]\bigr)
		+\int_{t_1}^{t_2}\int_{\R^{d}} 
		\Phi_{n,\rho}  \phi\,d\mx  \, dW_{n}(t),
		\quad \text{a.s.}
	\end{split}
\end{equation}
\end{lemma}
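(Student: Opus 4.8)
The plan is to reduce the space--time weak formulation \eqref{weak-sense-tx} to a family of scalar It\^o processes by testing against tensorised functions, and then to read off the pathwise structure of these processes. First I would fix $\phi\in C^\infty_c(\R^d)$ and $\rho\in C^\infty_c(\R^m)$ and introduce the scalar process $Y_n(t):=\int_{\R^{d+m}}h_n(t)\,\phi\,\rho\,d\lambda\,d\mx$, which lies in $L^\infty(\Omega\times(0,T))$ since $h_n$ is bounded. Inserting $\varphi(t,\mx)=\gamma(t)\phi(\mx)$ with $\gamma\in C^\infty_c([0,T))$ into \eqref{weak-sense-tx} yields a distributional identity in $t$ that identifies $Y_n$, up to its initial value $\langle h_0,\phi\rho\rangle$, as a sum of three explicit contributions: a drift $t\mapsto\int_{\R^{d+m}}h_n(t)\,F\cdot\nabla_\mx\phi\,\rho$, which is bounded in $t$ (hence its time integral is absolutely continuous) thanks to $F\in L^1_{\loc}$, the compact support of $\nabla\phi\,\rho$, and the $L^\infty$ bound on $h_n$; the time--measure $t\mapsto m_{n,\rho}(\phi)([0,t])$ from \eqref{eq:measure-m-def}, whose $g_{n,\rho}$--part is absolutely continuous (by $g_{n,\rho}\in L^1(0,T;W^{-1,r}_{\loc})$) and whose $G_{n,\rho}$--part is a finite signed measure on $(0,T)$ (by $G_{n,\rho}\in\cM_{\loc}$); and the It\^o integral $t\mapsto\int_0^t\!\int_{\R^d}\Phi_{n,\rho}\,\phi\,d\mx\,dW_n$, a well-defined continuous $L^2$-martingale because $\Phi_{n,\rho}\in L^2_{\prob}(\Omega;L^2_{\loc})$ is adapted.

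Next I would use this decomposition to produce a regulated version of $Y_n$ (admitting one-sided limits everywhere). The absolutely continuous drift and the continuous martingale are continuous, while the measure term has left and right limits at every instant, with jumps exactly at the at most countably many atoms of the $G_{n,\rho}(\phi)$--measure; hence $Y_n$ has left and right limits at every time, a.s. To upgrade this to the $L^\infty$-valued statement \eqref{l-r}, I would fix a countable family $\{(\phi_i,\rho_j)\}$ whose tensor products are dense in $C^\infty_c(\R^d)\times C^\infty_c(\R^m)$ and separate points of $L^1(\R^{d+m})$, carry out the above analysis simultaneously for all of them on one full-probability event, and for each $\bar t$ define $h_n^\pm(\bar t)$ by the weak-$\star$ limits $\lim_{t\to\bar t^\pm}\langle h_n(t),\varphi\rangle$. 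These limits exist for every $\varphi\in L^1$ by density and the uniform bound $\|h_n\|_\infty$, and they define elements of $L^\infty=(L^1)^*$ with $\|h_n^\pm\|_\infty\le\|h_n\|_\infty$. The exceptional set \eqref{l-r-1} where $h_n^+(t)\neq h_n^-(t)$ is then the countable union over $\{(\phi_i,\rho_j)\}$ of the (countable) atom sets, hence countable.

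Finally, the increment identity \eqref{weak-sense} follows by evaluating the identified semimartingale between two instants: for the regulated version, $Y_n(t_2)-Y_n(t_1)$ equals the increment of the drift, plus $m_{n,\rho}(\phi)([t_1,t_2])$, plus the It\^o increment $\int_{t_1}^{t_2}\!\int_{\R^d}\Phi_{n,\rho}\,\phi\,d\mx\,dW_n$, which is precisely \eqref{weak-sense}. Since $h_n(t)$ is only an equivalence class in $t$, this holds for the actual values $h_n(t_1),h_n(t_2)$ at all $t_1<t_2$ outside the null set of non-Lebesgue points and jump times, i.e.\ for almost all $t_1<t_2$; density in $(\phi,\rho)$ together with the uniform bounds makes this exceptional time set independent of the test pair.

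I expect the main obstacle to be the careful bookkeeping of the measure (jump) contribution: because the $G_{n,\rho}$--part is only a measure in time, $Y_n$ genuinely has jumps, so one must justify that these are the \emph{only} jumps and that they are countable, build the one-sided limits as honest $L^\infty$ functions uniformly over a separating family, and ensure the single exceptional set of times in \eqref{weak-sense} is independent of $(\phi,\rho)$. A secondary technical point is the rigorous identification of the martingale part---verifying adaptedness of $\langle\Phi_{n,\rho},\phi\rangle$ and that the distributional identity pins down this specific It\^o integral rather than merely its quadratic variation---which is where the weak-adaptedness hypothesis on $\Phi_n$ enters, and where the sign conventions relating \eqref{weak-sense-tx} and \eqref{weak-sense} must be tracked.
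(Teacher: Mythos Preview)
Your proposal is correct and follows essentially the same approach as the paper: decompose the scalar process $Y_n(t)=\langle h_n(t),\phi\rho\rangle$ into a BV part (drift plus measure) and a continuous martingale, deduce one-sided limits, upgrade to $L^\infty$-valued traces via a countable separating family and weak-$\star$ compactness, identify the jumps with the atoms of the $G_{n,\rho}(\phi)$ measure, and recover \eqref{weak-sense} by evaluating increments. The paper carries out two steps slightly more concretely than you do---it builds $h_n^\pm(\bar t)$ via Ces\`aro averages $\tfrac{1}{\kappa}\int_{\bar t}^{\bar t+\kappa}h_n(t)\,dt$ (which makes the uniqueness and $\omega$-measurability of the weak-$\star$ limit transparent) and it passes from the distributional identity to the pointwise jump and increment formulas by inserting explicit piecewise-linear cutoffs $\gamma_\kappa(t)$ into \eqref{weak-sense-tx} and sending $\kappa\to0$---but these are exactly the standard manoeuvres you allude to.
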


\begin{proof}
The proof borrows heavily from 
\cite[Proposition 8]{Debussche:2010fk}, see 
also \cite[Section 2.3]{Dotti:2018aa} 
and \cite[Lemma 1.3.1]{Dafermos:2016aa}. 

First, consider the the real-valued 
stochastic process
\begin{equation}\label{eq:Iphirho}
	I_{\phi,\rho}(t):=\int_{\R^{d+m}} h(t) 
	\phi(\mx) \rho(\lambda)\,d\lambda\, d\mx, 
	\quad \phi\in C^\infty_c(\R^{d}), 
	\quad \rho\in C^\infty_c(\R^{m}),
\end{equation}
where we have dropped the $n$-index 
(as it is kept fixed in the lemma). 
Moreover, set
$$
H_{\phi,\rho}(t):=I_{\phi,\rho}(t)
-\int_0^{t}\int_{\R^{d+m}} 
\Phi(t)\phi(\mx) \rho(\lambda)\,d\lambda\, d\mx \, dW(t)
=:I_{\phi,\rho}(t)-J_{\phi,\rho}(t).
$$

In what follows, it is enough to select $\phi$ and $\rho$ 
from countable collections $\Seq{\phi_\ell}$ 
and $\Seq{\rho_\ell}$ of test functions 
$\phi_\ell\in C^\infty_c(\R^{d})$ 
and $\rho_\ell\in C^\infty_c(\R^{m})$ 
that are dense in $L^1(\R^{d})$ and $L^1(\R^{m})$, 
respectively, recalling that 
the algebraic tensor product $L^1(\R^{d})\otimes 
L^1(\R^{m})$ is dense in $L^1(\R^{d}\times \R^{m})$.

The weak $(t,\mx,\lambda)$-formulation 
\eqref{weak-sense-tx} implies that
\begin{equation}\label{weak-sense-tmp1}
	\begin{split}
		& \int_0^T H_{\phi,\rho}(t) \gamma'(t)\, dt
		+\int_{\R^{d+m}}h_0(\mx,\lambda)\phi(\mx) 
		\rho(\lambda)\gamma(0)\,d\lambda\, d\mx
		\\ & \qquad 
		=\int_{0}^{T}\int_{\R^{d+m}} 
		h(t)F(\mx,\lambda)
		\cdot \nabla_{\mx}\phi(\mx) \, \rho(\lambda) 
		\gamma(t)\, d\lambda \, d\mx \, dt
		+m_\rho(\phi)(\gamma),
	\end{split}	
\end{equation}
for any $\gamma\in C^1_c([0,T))$. In view of our assumptions 
on $F$, $g_\rho$, $G_\rho$ and since 
$h\in L^\infty_{\omega,t,\mx,\lambda}$, the equation
\eqref{weak-sense-tmp1} implies that $H_{\phi,\rho}(t)$ 
is a.s.~of bounded total variation on $[0,T]$. 
As a result, $H_{\phi,\rho}(t)$ admits left 
and right limits at all times $\bar{t}\in [0,T]$. 
Since $J_{\phi,\rho}$ is continuous, 
it follows that $I_{\phi,\rho}$ admits 
left and right limits at all times $\bar{t}\in [0,T]$, 
that is, the limits
$$
I_{\phi,\rho}(\bar{t}+)=
\lim_{t\downarrow \bar{t}}I_{\phi,\rho}(t),
\quad
I_{\phi,\rho}(\bar{t}-)=
\lim_{t\uparrow \bar{t}}I_{\phi,\rho}(t)
$$ 
exist at all times $\bar{t}$, for all 
$\omega\in \Omega_1$, where $\Omega_1$ 
is of full $\prob$-measure. 

Fix $\omega\in \Omega_1$. We have 
$$
I_{\phi,\rho}(\omega,\bar{t}+)=\lim_{\kappa\downarrow 0}
\frac{1}{\kappa}\int_{\bar{t}}^{\bar{t}+\kappa} 
I_{\phi,\rho}(\omega,t)\,dt
= \lim_{\kappa\downarrow 0}\int_{\R^{d+m}} 
\bar{A}_\kappa(\omega,\cdot)
\phi\,d\lambda\, d\mx,
$$
where $\bar{A}_\kappa(\omega,\cdot):=\frac{1}{\kappa}
\int_{\bar{t}}^{\bar{t}+\kappa} h(\omega,t) \,dt$. Noting 
that $\bar{A}_\kappa(\omega,\cdot):\R^{d+m}\to \R$ 
is bounded uniformly in $\kappa$ 
(and also in $\omega$, $\bar{t}$), that is, 
$\norm{\bar{A}_\kappa(\omega,\cdot)}_{L^\infty(\R^{d+m})}
\le C$, there exist a subsequence 
$\left\{\bar{A}_{\kappa_j}(\omega,\cdot)\right\}_j
\subset \left\{\bar{A}_\kappa(\omega,\cdot)\right\}_\kappa$ and 
a limit $h^+(\bar{t})=h^+(\bar{t})(\omega,\cdot)
\in L^\infty(\R^{d+m})$ such that 
$\bar{A}_{\kappa_j}(\omega,\cdot)\to h^+(\bar{t})(\omega,\cdot)$ 
weak-$\star$ in $L^\infty(\R^{d+m})$. 

The limit $h^+(\bar{t})(\omega,\cdot)$ 
is unique. Indeed, if there is another 
subsequence $\left\{\bar{A}_{\kappa_k}(\omega,\cdot)\right\}_k
\subset \left\{\bar{A}_\kappa(\omega,\cdot)\right\}_\kappa$ 
and another limit $\tilde{h}^+(\omega,\cdot)\in L^\infty(\R^{d+m})$ 
such that $\bar{A}_{\kappa_k}(\omega,\cdot)\to \tilde{h}^+$ 
weak-$\star$ in $L^\infty(\R^{d+m})$, then necessarily 
$\int_{\R^{d+m}} h^+(\bar{t})(\omega,\cdot)\phi \rho
\,d\lambda\, d\mx=I_{\phi,\rho}(\bar{t}+)
=\int_{\R^{d+m}}\tilde{h}^+(\omega,\cdot) 
\phi\rho \,d\lambda\, d\mx$ for all $\phi\in \{\phi_\ell\}$, $\rho 
\in \Seq{\rho_\ell} $, which is enough 
to conclude that $h^+(\bar{t})(\omega,\cdot)
=\tilde{h}^+(\omega,\cdot)$ a.e.~in $\R^{d+m}$. 
Consequently, the entire sequence 
$\left\{\bar{A}_\kappa(\omega,\cdot)\right\}_\kappa$ converges 
weakly-$\star$ in $L^\infty(\R^{d+m})$ to 
$h^+(\bar{t})(\omega,\cdot)$, 
which implies that the subsequence $\left\{\kappa_j\right\}$ 
does not depend on (the fixed) $\omega\in \Omega_1$. 
In particular, this ensures that whenever $h^+(\bar{t})$ 
is viewed as a function of $(\omega,\mx,\lambda)$, $h^+(\bar{t})$ 
is measurable; indeed, $E\int_{\R^{d+m}} 
\bar{A}_\kappa\phi \rho \,d\lambda\, d\mx
\to E \int_{\R^{d+m}}h^+(\bar{t})
\phi \rho\,d\lambda\, d\mx$. 

Summarising, for any $\bar{t}\in [0,T)$, there 
exists $h^+(\bar{t})\in L^\infty(\Omega\times \R^{d+m})$ 
such that
$$
\lim_{t\downarrow \bar{t}}\int_{\R^{d+m}} 
h(t) \phi \rho\,d\lambda\, d\mx
=\int_{\R^{d+m}} h^+(\bar{t}) 
\phi \rho \,d\lambda\, d\mx, 
\quad \text{$\forall \phi\in \Seq{\phi_\ell}$, 
$\forall \rho \in \Seq{\rho_\ell}$},
$$ 
for all $\omega\in \Omega_1$, where $\Omega_1$ is 
of full $\prob$-measure. By a density argument, this equation 
continues to hold with $\phi\rho$ replaced by any $\varphi=\varphi(\mx,\lambda) 
\in L^1(\R^{d+m})$. Similarly, for any $\bar{t}\in [0,T]$, there 
exists $h^-(\bar{t})\in L^\infty(\Omega\times \R^{d+m})$ 
such that ($\forall \omega\in \Omega_1$)
$$
\lim_{t\uparrow \bar{t}}
\int_{\R^{d+m}} h(t) \phi\rho\,d\lambda\, d\mx
= \int_{\R^{d+m}} h^-(\bar{t})
\phi \rho\,d\lambda\, d\mx, 
\quad \text{$\forall \phi\in \Seq{\phi_\ell}$, 
$\forall \rho \in \Seq{\rho_\ell}$},
$$ 
and this equation continues to hold with $\phi\rho$ 
replaced by any $\varphi=\varphi(\mx,\lambda) 
\in L^1(\R^{d+m})$.

Given the existence of these weak (left and right) traces, 
using \eqref{weak-sense-tmp1} with $\gamma$ 
of the form ($\kappa>0$)
$$
\gamma(t)=\gamma_\kappa(t)=
\begin{cases}
	\frac{t-\bar{t}+\kappa}{\kappa}, 
	& \bar{t}-\kappa \leq t \leq \bar{t},
	\\ 
	\frac{\bar{t}-t+\kappa}{\kappa}, 
	& \bar{t}\leq t \leq \bar{t}+\kappa,
	\\ 0, & \text{otherwise}
\end{cases}
$$
and then sending $\kappa\to 0$, we obtain
$$
\int_{\R^{d+m}} h^+(\bar{t}) \phi \rho \,d\lambda\, d\mx
-\int_{\R^{d+m}} h^- (\bar{t})\phi\, \rho\,d\lambda\, d\mx
=m_\rho(\phi)\!\left(\bigl\{\bar{t} \bigr\}\right), 
\quad \text{a.s.}
$$
The set of $\bar{t}$ for which 
$m_\rho(\phi)\!\left(\bigl\{\bar{t} \bigr\}\right)\neq 0$ 
a.s.~is at most countable, recalling that 
$\bigl\langle G_\rho, \phi \bigr\rangle$ 
is a finite measure on $[0,T]$ (so the set of atoms is countable), 
while $\bigl\langle g_\rho, \phi \bigr\rangle$ is integrable on $[0,T]$ 
(thus no atoms). This concludes the proof of \eqref{l-r} and \eqref{l-r-1}.

Finally, let us prove the relation \eqref{weak-sense}, which is a 
consequence of the obtained regularity. 
Using \eqref{weak-sense-tmp1} with $\gamma$ 
of the form ($\kappa>0$)
$$
\gamma(t)=\gamma_\kappa(t)=
\begin{cases}
	\frac{t-t_1+\kappa}{\kappa}, 
	& t_1-\kappa \leq t \leq t_1,
	\\ 
	1, & t_1<t<t_2,
	\\
	\frac{t_2-t+\kappa}{\kappa}, 
	& t_2 \leq t \leq t_2+\kappa,
	\\ 
	0, & \text{otherwise}
\end{cases}
$$
and then sending $\kappa\to 0$, we obtain
\begin{equation}\label{weak-sense-tmp2}
	\begin{split}
		& H_{\phi,\rho}(t_2+)-H_{\phi,\rho}(t_1-)
		=\int_{t_1}^{t_2}\int_{\R^{d+m}} 
		h(t)F(\mx,\lambda)
		\cdot \nabla_{\mx}\phi\rho 
		\, d\lambda \, d\mx \, dt
		+m(\phi)\bigl([t_1,t_2]\bigr).
	\end{split}
\end{equation}
Combining \eqref{weak-sense-tmp2} with \eqref{l-r-1} and  
the continuity of $J_{\phi,\rho}(t)$, we 
arrive at \eqref{weak-sense}.
\end{proof}

\begin{remark}
According to Lemma \ref{ito-eq}, a weak solution $h$ of the 
kinetic SPDE \eqref{eq-1}---in the sense of \eqref{eq:weak-sol-tmp1}---admits 
weak left and right limits at each instant of time $t$. 
Consequently, upon replacing $h$ by its right-continuous 
version (we do not change the notation), we may assume that 
the It\^{o} equation \eqref{weak-sense} holds 
with $t_0=0$ and $t_2=t$ for all $t\in (0,T)$.  
Recall that there are general theorems \cite{Revuz:1999wi} 
ensuring that many real-valued stochastic processes $X(t)$ (discontinuous 
semimartingales) have a right-continuous version that 
necessarily has left-limits everywhere. 
Right-continuous processes with left-limits everywhere are referred to 
as \textit{c{\`a}dl{\`a}g}. In view of Lemma \ref{ito-eq}, whenever 
convenient, we may assume that $X(t)=\action{h(t)}{\phi\rho}$, 
cf.~\eqref{eq:Iphirho}, is c{\`a}dl{\`a}g. This property and the 
resulting It\^{o} equation \eqref{weak-sense} (with $t_0=0$ 
and $t_2=t$)  will allow us to perform 
some temporary computations involving the real-valued 
It\^{o} chain rule, or more precisely the product rule. 

The (discontinuous) real-valued stochastic processes 
$X(t)=\action{h(t)}{\phi\rho}$ is of the form 
$X(t)=A(t)+M(t)$, cf.~\eqref{weak-sense}, where $A(t)$ is a 
finite variation process and $M(t)$ is a continuous martingale. 
Moreover, we have that $A(0)=\action{h_0}{\varphi}
-m(\phi)(\set{0})$ and $M(0)=0$. The fact that 
the initial condition $X(0)=\action{h_0}{\phi\rho}$ 
fails to be satisfied (in general) is of no 
consequence for the question of compactness analysed here. 
In the proof below, we need to determine the equation 
satisfied by the product of two such processes $X_1,X_2$. 
Noting that
$$
X_1(t)X_2(t)=A_1(t)A_2(t)+A_1(t)M_2(t)+A_2(t)M_1(t)
+M_1(t)M_2(t),
$$ 
we can calculate the first three terms using 
standard calculus, while the fourth term can be computed 
using the It\^{o} product formula for continuous martingales. 
Alternatively, one may use the It\^{o} product formula 
for discontinuous semimartingales to compute 
$d\bigl(X_1(t)X_2(t)\bigr)$.
\end{remark}

\medskip

In what follows, we will impose 
the following conditions:  

\medskip

\begin{itemize} 
\item[(a)] $\norm{h_n}_{L^\infty(\Omega\times
\R_+\times\R^d\times\R^m)}\lesssim 1$, and  
\begin{equation}\label{eq:hk-L2t-L2weak}
	h_n \ton 0 \quad 
	\text{in $L^2_t\bigl(L^2_{\loc,w}(\R^d\times \R^m)\bigr)$, 
	almost surely},
\end{equation}
where the ``strong-in-$t$, weak in $\mx,\lambda$" space 
$L^2_t\bigl(L^2_{\loc,w}(\R^d\times \R^m)\bigr)$ 
is defined as in \eqref{eq:L2tL2x-weak}.

\item[(b)] $F\in L^q_{\loc}(\R^d\times \R^m;\R^d)$, 
for some $q>2$ satifying $\frac{1}{2}+\frac{1}{q}<1$.

\item[(c)] for each $\rho=\rho(\lambda)\in C^\infty_c(\R^m)$,
\begin{equation}\label{gk-a}
	E\left[\norm{g_{n,\rho}}_{L^2(0,T;W^{-1,r}_{\loc}(\R^d))}
	\right] \ton 0, 
\end{equation} 
for some $r>1$.

\item[(d)] for each $\rho=\rho(\lambda) \in C^\infty_c(\R^m)$,
\begin{equation}\label{gk-b-1-new}
	E\left[\norm{G_{n,\rho}}_{\cM_{\loc}((0,T)\times \R^d)}\right]
	\lesssim_{T,\rho} 1;
\end{equation}

\item[(e)] Each $\Phi_{n}$ is adapted, and 
for every $\rho\in C_c(\R^m)$,
$$
E\left[\norm{\Phi_{n,\rho}}_{L^2_{\loc}((0,T)\times \R^d)}^2\right]
\lesssim_{T,\rho} 1.
$$
\end{itemize}

\begin{remark}
The assumptions (a)--(e) of are modelled 
on the properties derived for 
the kinetic SPDE \eqref{eq:tilde-kinetic}, see the 
convergences \eqref{eq:tilde-conv}, the 
estimates \eqref{eq:tilde-kinetic-est}, 
and the sign property \eqref{eq:tilde-sign-functions}.
\end{remark}

Fix some $s\in (0,1)$ and $K\subset\subset \R^d$. 
By a simple adaptation of the arguments in 
\cite[Theorem 6, p.~7]{Evans:1990wt}, the space 
$\cM((0,T)\times K)$ is compactly embedded in 
$W^{-s,r}((0,T)\times K)$ for any 
$r\in \bigl[1,\frac{d+1}{d+1-s}\bigr)$. 
Consequently, the assumption \eqref{gk-b-1-new} implies that
\begin{equation}\label{gk-b-1}
	E\left[\norm{G_{n,\rho}}_{W^{-s,r}_{\loc}((0,T)\times \R^d)}
	\right]\lesssim_{\rho, T} 1,
\end{equation} 
for some $s<1$, $r>1$, and a constant 
$C$ independent of $n$.

We will now examine the compactness  properties of 
the sequence $\Seq{h_n}$ of $(t,\mx,\lambda)$-weak solutions 
of the kinetic SPDEs \eqref{eq-1}.  By Lemma \ref{ito-eq}, we may 
assume that $h_n$ satisfies the  It\^{o} equation \eqref{weak-sense} 
with $t_1=0$  and $t_2=t$ for any $t\in (0,T)$, which is the 
form of the equation to be used below. The main 
tool of investigation will be the ``stochastic" variant 
of the $H$-measure  given by Theorem \ref{thm:H-measure}.  
To set the scene for using the $H$-measure, we need 
to reformulate \eqref{eq-1} a bit. 

In what follows, to avoid the proliferation of symbols, we will 
assume that $h_n$ is compactly supported 
in $\mx$, as required by Theorem \ref{thm:H-measure}. We 
can make such an assumption without loss of generality. 
Indeed, since the kinetic SPDE \eqref{eq-1} is linear, 
we can multiply the equation by a cutoff function 
$\chi\in C_c^\infty(\R_\mx^d)$ and  end up with an equation 
of the same type, but with solutions $\chi h_n$ that 
are compactly supported with respect to $\mx$. 

First, we fix localisation functions $\varphi_1,\varphi_2 
\in C^\infty_c(\R^d)$ and $\rho_1,\rho_2\in C^\infty_c(\R^m)$. 
From \eqref{eq-1}, via Lemma \ref{ito-eq}, it follows 
that (replacing $\lambda$ by $\mp$)
\begin{equation}\label{p}
	\begin{split}
		& d\int_{\R^m}  
		\bigl(h^\mp_n \varphi_1\bigr) \, \rho_1(\mp) \,d\mp
		+\Div_\mx \int_{\R^m}
		\bigl(F(\mx,\mp) h^{\mp}_n \varphi_1\bigr)\,
		\rho_1(\mp) \,d\mp\, dt
		\\& \quad -\nabla_\mx \varphi_1\cdot  
		\int_{\R^m}\bigl(F(\mx,\mp) h^\mp_n \bigr)\, 
		\rho_1(\mp)\, d\mp\, dt
		= \bigl(g_{n,\rho_1}
		+G_{n,\rho_1}\bigr) \varphi_1  \, dt 
		+\bigl(\Phi_{n,\rho_1} \varphi_1
		\bigr)\, dW_n(t), 
	\end{split}
\end{equation} 
where $h^\mp_n:=h_n(\omega,t,\mx,\mp)$.

From \eqref{p}, replacing $\varphi_1, \rho_1, \mp$ by 
$\varphi_2, \rho_2,\mq$, respectively, we obtain
\begin{equation}\label{q}
	\begin{split}
		& d \int_{\R^m} \bigl(\varphi_2 h^\mq_n\bigr)
		\, \rho_2(\mq) \, d\mq
		+\Div_\mx \int_{\R^m} \bigl(F(\mx,\mq) 
		h^\mq_n\varphi_2 \bigr)\, 
		\rho_2(\mq)\, d\mq\, dt
		\\& \quad 
		-\nabla_\mx \varphi_2\cdot
		\int_{\R^m}\bigl(F(\mx,\mq) h^\mq_n 
		\bigr)\, \rho_2(\mq) \, d\mq \, dt
		= \bigl(g_{n,\rho_2}
		+G_{n,\rho_2}\bigr) 
		\varphi_2 \, dt
		+\bigl(\Phi_{n,\rho_2}\varphi_2\bigr)\, dW_n(t).
	\end{split}
\end{equation} 

In the sequel, we also use the notation 
$F^{\mp}:=F(\mx,\mp)$ and $F^{\mq}:=F(\mx,\mq)$.
By applying the spatial Fourier transform 
$\F_{\mx}$ in \eqref{p} and \eqref{q}, we 
obtain the following SDEs
\begin{equation*}
	\begin{split}
		& d\int_{\R^m}\F_{\mx} 
		\bigl(h^\mp_n \varphi_1 \bigr)
		\,\rho_1(\mp) \, d\mp
		-2\pi i \mxi' \cdot \int_{\R^m}
		\F_{\mx} \bigl(F^{\mp} h^{\mp}_n 
		\varphi_1\bigr)\,\rho_1(\mp)\,d\mp \, dt 
		\\ & \quad
		- \int_{\R^m} \F_{\mx}
		\left(\nabla_\mx \varphi_1\cdot 
		\bigl(F^\mp h^\mp_{n} \bigr)\right)\,
		\rho_1(\mp)\, d\mp\, dt
		= \F_{\mx}\left(\bigl(g_{n,\rho_1}
		+G_{n,\rho_1}\bigr)\varphi_1\right)\, dt 
		+\F_{\mx}\bigl(\Phi_{n,\rho_1} \varphi_1\bigr) 
		\, dW_n(t),
	\end{split}
\end{equation*} 
and
\begin{equation*}
	\begin{split}
		&d\int_{\R^m}
		\F_{\mx}\bigl( h^\mq_n \varphi_2\bigr)\,
		\rho_2(\mq)\,  d\mq
		-2\pi i \mxi' \cdot \int_{\R^m}
		\F_{\mx}\bigl(F^{\mq} h^{\mq}_n\varphi_2 \bigr)
		\, \rho_2(\mq)\,d\mq\, dt
		\\ & \quad 
		- \int_{\R^m}\F_{\mx}\left(\nabla_\mx \varphi_2
		\cdot  \bigl(F^\mq h^\mq_n\bigr)\right)
		\,\rho_2(\mq) \, d\mq\, dt
		= \F_{\mx}\left(\bigl(g_{n,\rho_2}
		+G_{n,\rho_2}\bigr) \varphi_2\right)\, dt 
		+\F_{\mx}\bigl(\Phi_{n,\rho_2} \varphi_2\bigr) 
		\, dW_n(t).
	\end{split}
\end{equation*} 
By the It\^o product formula, 
\begin{equation*}
	\begin{split}
		&d \int_{\R^{2m}} 
		\F_{\mx}\bigl(h^\mp_n \varphi_1 \bigr) 
		\, \overline{\F_{\mx}\bigl(h^\mq_n \varphi_2 \bigr)}
		\, \rho_1(\mp) \rho_2(\mq)\,\,d\mp \,d\mq
		\\ & \qquad
		= \int_{\R^{2m}} 2\pi i\mxi' \cdot 
		\F_{\mx}\bigl(F^{\mp} h^{\mp}_n\varphi_1\bigr) 
		\, \overline{\F_{\mx}\bigl(h^\mq_n\varphi_2\bigr)} 
		\, \rho_1(\mp) \rho_2(\mq) \,d\mp \,d\mq \, dt 
		\\ & \qquad\qquad  
		+ \int_{\R^{2m}}\overline{2\pi i \mxi'
		\cdot \F_{\mx}\bigl(F^{\mq} h^{\mq}_n\varphi_2\bigr)}
		\, \F_{\mx}\bigl(h^\mp_n\varphi_1\bigr)
		\, \rho_1(\mp) \rho_2(\mq) \,d\mp\, d\mq \, dt  
		\\ & \qquad\qquad
		-\int_{\R^{2m}} \F_{\mx}\left(\nabla_\mx \varphi_1
		\cdot  \bigl(F^\mp h^\mp_n \bigr)\right) 
		\, \overline{\F_{\mx}\bigl(h^\mq_n \varphi_2 \bigr)}
		\, \rho_1(\mp)\rho_2(\mq) \, d\mp\, d\mq\, dt
		\\ & \qquad\qquad 
		-\int_{\R^{2m}}\overline{\F_{\mx}
		\left(\nabla_\mx \varphi_2\cdot 
		\bigl(F^\mq h^\mq_n  \bigr)\right)} 
		\, \F_{\mx}\bigl(h^\mp_n \varphi_1 \bigr)
		\, \rho_1(\mp)\rho_2(\mq)\, d\mp \,d\mq\, dt
		\\ & \qquad\qquad 
		+\F_{\mx}\bigl(\Phi_{n,\rho_1} 
		\varphi_1\bigr)\,\overline{\F_{\mx}
		\bigl(\Phi_{n,\rho_2} \varphi_2\bigr)} \, dt 
		\\ & \qquad\qquad 
		+ \int_{\R^m}\F_{\mx}
		\left(\bigl(g_{n,\rho_1}+G_{n,\rho_1}\bigr)
		\varphi_1\right)
		\,\overline{\F_{\mx}\bigl(h^\mq_n \varphi_2 \bigr)} 
		\, \rho_2(\mq)\, d\mq\, dt
		\\ & \qquad\qquad 
		+ \int_{\R^m}\overline{\F_{\mx}
		\left(\bigl(g_{n,\rho_2}+G_{n,\rho_2}\bigr)
		\varphi_2\right)}
		\,\F_{\mx}\bigl(h^\mp_n \varphi_1 \bigr) 
		\, \rho_1(\mp)  \, d\mp\, dt
		\\& \qquad \qquad 
		+\int_{\R^{m}} 
		\F_{\mx}\bigl(h^\mp_n\varphi_1\bigr) 
		\, \overline{\F_{\mx}
		\bigl(\Phi_{n,\rho_2} \varphi_2\bigr)}
		\, \rho_1(\mp)\, d\mp\, dW_n(t)
		\\ & \qquad \qquad 
		+\int_{\R^{m}}
		\overline{\F_{\mx}\bigl(h^\mq_n\varphi_2\bigr)}
		\, \F_{\mx}\bigl(\varphi_1\Phi_{n,\rho_1}
		\bigr) \, \rho_2(\mq) \,d\mq \, dW_n(t).
	\end{split}
\end{equation*}
We will manipulate this equation by performing 
the following series of operations:

\smallskip

--- for fixed $\xi_0\in \R$ and $\mxi'\in \R^d$, 
multiply by $\chi_M(\mxi)
\Psi(\mxi)$ where, as before, $\mxi=(\xi_0,\mxi')$, and 
$0\leq\chi_M\leq 1$ is a smooth function, 
$\supp \chi_M \subset B(0,M)^c$, i.e.,
$\chi_M$ is supported out of the ball 
$B(0,M)\subset \R^{d+1}$, and $\chi_M\equiv 1$ 
in $B(0,M+1)^c$, for $M>1$ Moreover,
\begin{equation}\label{eq:Psi-symbol}
	\Psi(\mxi)=\frac{\phi(\xi_0) \psi\bigl(\mxi
	/\abs{\mxi}\bigr)}{\abs{\mxi}}, 
	\qquad \phi\in L^2(\R)\cap L^1(\R), 
	\quad \psi\in C^d(\bS^d);
\end{equation}
	
--- integrate over $\mxi'\in \R^d$; 

--- multiply the resulting equation 
by $e^{-2\pi i t \xi_0} \varphi(t)$, for some 
$\varphi\in C^\infty((0,T))$ with 
$\supp(\varphi)\subset (0,T)$ (i.e., we 
apply the Fourier transform in 
time), then integrate over 
$\xi_0 \in \R$, and then apply $E[\cdot]$. 

\smallskip

To simplify the presentation in what follows, we write 
multiple integrals as a single integral without 
a domain of integration, and use 
the short-hand differential notation
$$
dV_{\omega,t,\mxi}
:=dt \, d\mxi\, dP(\omega).
$$ 
For example,
$$
\int \cdots \,d\mp\, d\mq 
\, dV_{\omega,t,\mxi} 
\quad \text{means}\quad
E\int\limits_{\R^{d+1+2m}}\int\limits_0^T \cdots 
\,dt \,d\mp\, d\mq \, d\mxi.
$$
After performing the steps listed above and utilising the 
simplifying notation, we arrive at 
\begin{equation}\label{weak-sense-1}
	\begin{split}
		& \underbrace{\int 2\pi ie^{-2\pi i t \xi_0} 
		\varphi(t) \, \chi_M(\mxi)\, \Psi(\mxi)
		\,\xi_0\, \F_{\mx}\bigl(h^\mp_n \varphi_1\bigr) 
		\, \overline{\F_{\mx}\bigl(h^\mq_n \varphi_2 \bigr)} 
		\, \rho_1(\mp) \rho_2(\mq) 
		\,d\mp\, d\mq \, dV_{\omega,t,\mxi}}_{=:\cI_{\xi_0}} 
		\\ & \qquad 
		-\underbrace{\int e^{-2\pi i t \xi_0} \varphi'(t) \,
		\chi_M(\xi)\, \Psi(\mxi)
		\, \F_{\mx}\bigl(h^\mp_n\varphi_1\bigr)
		\,\overline{\F_{\mx}\bigl(h^\mq_n\varphi_2\bigr)} 
		\, \rho_1(\mp) \rho_2(\mq)
		\,d\mp\, d\mq \, dV_{\omega,t,\mxi}}_{=:I_0}
		\\ & \quad 
		= \underbrace{\int  e^{-2\pi i t \xi_0} \varphi(t)
		\, 2\pi i \, \chi_M(\mxi)\Psi(\mxi)
		\, \mxi'\cdot 
		\F_{\mx}\bigl(F^{\mp} h^{\mp}_n\varphi_1\bigr)
		\, \overline{\F_{\mx}\bigl(h^\mq_n\varphi_2\bigr)}
		\, \rho_1(\mp)\rho_2(\mq)
		\,d\mp\, d\mq \, dV_{\omega,t,\mxi}}_{=:\cI_{\mxi',1}}
		\\ & \qquad 
		+\underbrace{\int e^{-2\pi i t \xi_0} \varphi(t) 
		\,  \overline{2\pi i \, 
		\chi_M(\mxi)\Psi(\mxi)\, \mxi'
		\cdot \F_{\mx}\bigl(F^{\mq} h^{\mq}_n\varphi_2\bigr)}
		\, \F_{\mx}\bigl(h^\mp_n\varphi_1\bigr)
		\, \rho_1(\mp)\rho_2(\mq)
		\,d\mp\, d\mq \, dV_{\omega,t,\mxi}}_{=:\cI_{\mxi',2}}
		\\ & \qquad 
		-\underbrace{\int e^{-2\pi i t \xi_0} \varphi(t) \, 
		\chi_M(\mxi)\Psi(\mxi)
		\, \F_{\mx}\left(\nabla_\mx \varphi_1
		\cdot \bigl(F^\mp h^\mp_n\bigr)\right) 
		\, \overline{\F_{\mx}\bigl(h^\mq_n\varphi_2\bigr)}
		\, \rho_1(\mp)\rho_2(\mq)
		\,d\mp\, d\mq \, dV_{\omega,t,\mxi}}_{=:I_1}
		\\ & \qquad
		-\underbrace{\int e^{-2\pi i t \xi_0} \varphi(t)
		\, \chi_M(\mxi)\Psi(\mxi) 
		\, \overline{\F_{\mx}\left(\nabla_\mx \varphi_2
		\cdot  \bigl(F^\mq h^\mq_n \bigr)\right)} 
		\, \F_x\bigl(h^\mp_n \varphi_1\bigr)
		\, \rho_1(\mp)\rho_2(\mq)
		\,d\mp\, d\mq \, dV_{\omega,t,\mxi}}_{=:I_2}
		\\ & \qquad 
		+ \underbrace{\int e^{-2\pi i t \xi_0} \varphi(t) 
		\chi_M(\mxi)\Psi(\mxi) 
		\, \F_{\mx}\bigl(\Phi_{n,\rho_1}\varphi_1\bigr)
		\,\overline{\F_{\mx}\bigl(\Phi_{n,\rho_2}
		\varphi_2\bigr)} \, dV_{\omega,t,\mxi}}_{=:I_3} 
		\\ & \qquad
		+\underbrace{\int e^{-2\pi i t \xi_0} \varphi(t) 
		\,\chi_M(\mxi)\Psi(\mxi)
		\, \F_{\mx}\left(\bigl(g_{n,\rho_1}
		+G_{n,\rho_1}\bigr) \varphi_1\right)
		\,\overline{\F_{\mx}\bigl(h^\mq_n\varphi_2\bigr)} 
		\, \rho_2(\mq) \, d\mq 
		\, dV_{\omega,t,\mxi}}_{=:I_{4,1}+I_{4,2}}
		\\ & \qquad
		+ \underbrace{\int e^{-2\pi i t \xi_0} \varphi(t) 
		\, \chi_M(\mxi)\Psi(\mxi)
		\,\overline{\F_{\mx}\left(\bigl(g_{n,\rho_2}
		+G_{n,\rho_2}\bigr) \varphi_2\right)}
		\,\widehat{h^\mp_n \varphi_1}\, \rho_1(\mp) 
		\,d\mp \, dV_{\omega,t,\mxi}}_{=:I_{5,1}+I_{5,2}};
	\end{split}
\end{equation}
or, slightly reorganised,
\begin{equation}\label{weak-sense-2}
	\begin{split}
		& \cI_{\mxi_0}-\cI_{\mxi,1}-\cI_{\mxi,2} 
		= I_0-I_1-I_2+I_3+I_{4,1}+I_{4,2}
		+I_{5,1}+I_{5,2}.
	\end{split}
\end{equation} 

This equation will eventually allow us to use the 
$H$-measures. First, however, let us further 
simplify by estimating the terms 
on the right-hand side of \eqref{weak-sense-2}. 
We claim that 
\begin{equation}\label{<M}
	I_0,I_1,I_2,I_3 \leq 
	\frac{C}{M},
\end{equation} 
for some constants $C, \alpha >0$ independent of $n$.
We will only establish the bound for $I_3$; the 
other three terms in \eqref{<M} 
can be handled in the same way. 
Note that 
$$
\int_{\R}\abs{\chi_M(\mxi)
\Psi(\mxi)}\,d\xi_0
\lesssim \frac{\norm{\psi}_{C(\bS^d)}
\norm{\phi}_{L^1(\R)}}{M}. 
$$
Using this, the Cauchy-Schwarz inequality and 
Plancherel's theorem, we deduce that
\begin{equation*}
	\begin{split}
		\abs{I_3} & \leq 
		\frac{\norm{\psi}_{C(\bS^d)}
		\norm{\phi}_{L^1(\R)}}{M} 
		\, E\Biggl[\int_0^T \abs{\varphi(t)}
		\, \norm{\F_\mx\bigl(\Phi_{n,\rho_1}
		\varphi_1 \bigr)(t)}_{L^2(\R^{d})} 
		\, \norm{\F_\mx\bigl(\Phi_{n,\rho_2} 
		\varphi_2\bigr)}_{L^2(\R^{d})} \,dt \Biggr] 
		\\ & \leq 
		\frac{\norm{\psi}_{C(\bS^d)}
		\norm{\phi}_{L^1(\R)}
		\norm{\varphi}_{L^\infty(0,T)}}{M}
		E \Biggl[ 
		\int_0^T\norm{\bigl(\Phi_{n,\rho_1}
		\varphi_1\bigr)(t)}_{L^2(\R^d)} 
		\, \norm{\bigl(\Phi_{n,\rho_2} 
		\varphi_2\bigr)(t)}_{L^2(\R^d)}\, dt\Biggr]
		\\ & \lesssim_{\psi,\phi,\varphi} 
		\frac{1}{M} \left(E\left[ \norm{\Phi_{n,\rho_1} 
		\varphi_1}_{L^2((0,T)\times \R^d)}^2\right]
		+E\left[\norm{\Phi_{n,\rho_2}
		\varphi_2}_{L^2((0,T)\times \R^d)}^2\right]\right)
		\\ &
		\lesssim_{\varphi_1,\varphi_2,\rho_1,\rho_2,T}
		\frac{1}{M}, \quad \text{by assumption (e)}. 
	\end{split}
\end{equation*} 

Next, to estimate $I_{4,1}$, let us write
\begin{align*}
	J & := e^{-2\pi i t \xi_0} \varphi(t)
	\,\chi_M(\mxi)\Psi(\mxi)
	\, \F_{\mx}\bigl(g_{n,\rho_1}\varphi_1\bigr)
	\,\overline{\F_{\mx}\bigl(h^\mq_n\varphi_2\bigr)}  
	\\ & = 
	\F_t\circ \F_t^{-1}(\phi)(\xi_0)\, e^{-2\pi i t \xi_0} 
	 \varphi(t) 
	\\ & \qquad \times 
	\F_{\mx}\circ \F_\mx^{-1}\Bigl(
	\chi_M(\mxi)\psi(\cdot)
	\, \overline{\F_{\mx}\bigl(h^\mq_n\varphi_2\bigr)}\Bigr)
	\, \F_{\mx}\circ \F_{\mx}^{-1}
	\left(\frac{1}{\abs{\mxi}}
	\F_{\mx}\bigl(g_{n,\rho_1}\varphi_1\bigr)\right)
	\\ & =  
	\F_t\circ \F_t^{-1}(\phi)(\xi_0)\, e^{-2\pi i t \xi_0} \varphi(t)\,
	\overline{\F_{\mx}\Bigl(\cA_{\chi_M(\cdot,\cdot)\psi(\cdot)}
	\bigl(h^{\mq}_n\varphi_2\bigr)
	\Bigr)}(t,\mxi')\, \F_{\mx}\Bigl(\cT_{-1}
	\bigl(g_{n,\rho_1}\varphi_1\bigr)
	\Bigr)(t,\mxi'),
\end{align*} 
where $\psi(\cdot)=\psi\bigl(\mxi
/\abs{\mxi}\bigr)$. We refer to 
Definition \ref{multiplier} and Theorem \ref{riesz} 
for information about the multiplier operator 
$\cA_{\chi_M(\cdot)\psi(\cdot)}$ 
and the Riesz potential $\cT_{-1}$, noting also 
that the multiplier $\chi_M(\cdot)\psi(\cdot)$ 
is a bounded continuous function. Integrating 
this relation, first with respect to $t$ and then $\xi_0$, 
making use of the Plancherel theorem 
($\int f\, \overline{g}
=\int \widehat{f}\,\overline{\widehat{g}}$) 
in the $\xi_0$-integral, we obtain
$$
\iint J \,dt\,d\tau 
=\int \F_t^{-1}(\phi)(t)\, \varphi(t)\, 
\F_\mx\left(\cA_{\chi_M(\cdot,\cdot)\psi(\cdot)}
\bigl(h^\mq_n\varphi_2\bigr)\right)\!(t,\mxi')
\, \F_\mx\left(\cT_{-1}
\bigl(g_{n,\rho_1}\varphi_1\bigr)\right)\!(t,\mxi')\,dt.
$$
Integrating with respect to $\mxi'$ and again 
using the Plancherel theorem, we arrive at
$$
\iint J \,dt\,d\mxi
=\iint \F_t^{-1}(\phi)(t)\, \varphi(t)
\,\overline{\cA_{\chi_M(\cdot)\psi(\cdot)}
\, \bigl(h^\mq_n\varphi_2 \varphi_1\bigr)}(t,\mx)
\, \cT_{-1} \bigl(g_{n,\rho_1}\bigr)
(t,\mx)\,dt\,d\mx.
$$ 
Inserting this into $I_{4,1}$, 
using H\"older's inequality with 
$\frac{1}{r'}+\frac{1}{r}=1$ and 
$r$ defined in \eqref{gk-a}, and 
the Marcinkiewicz multiplier 
theorem (cf.~Theorem \ref{m1}),
\begin{equation}\label{gk}
	\begin{split}
		\abs{I_{4,1}}& =
		\abs{E\left[\int_0^T \int_{\R^d}\F_t^{-1}(\phi)(t)
		\, \varphi(t)\, 
		\overline{\cA_{\chi_M(\cdot,\cdot)\psi(\cdot)}
		\left(\int_{\R^m} \bigl(h^\mq_n \varphi_2\bigr)
		\,\rho_2(\mq)\, d\mq \right)}
		\cT_{-1}\bigl(g_{n,\rho_1}
		\, \varphi_1\bigr)\,d\mx \,dt\right]}  
		\\ & \leq 
		\norm{\F_t^{-1}(\phi)\varphi}_{L^\infty(0,T)}
		E\Biggl[\norm{\cA_{\chi_M(\cdot,\cdot)\psi(\cdot)}
		\left(\int_{\R^m} \bigl(h^\mq_n \varphi_2\bigr)  
		\, \rho_2(\mq) \, d\mq 
		\right)}_{L^{r'}((0,T)\times \R^d)} 
		\\ & \qquad \qquad \qquad 
		\qquad \qquad \qquad \qquad \times
		\norm{\cT_{-1}\bigl(g_{n,\rho_1} 
		\varphi_1 \bigr)}_{L^r((0,T)\times \R^{d})} \Biggr] 
		\\ & \lesssim_{\varphi,\phi} 
		E\left[\norm{\int_{\R^m} \bigl(h^\mq_n \varphi_2\bigr)  
		\, \rho_2(\mq) \, d\mq}_{L^{r'}((0,T)\times \R^d)} 
		\norm{g_{n,\rho_1}
		\varphi_1}_{W^{-1,r}((0,T)\times \R^d)}\right] 
		\\ &  \lesssim T \norm{\varphi_1}_{L^\infty(\R^d)}
		\norm{\rho_2}_{L^1(\R^m)}\norm{\varphi_2}_{L^{r'}(\R^d)} 
		E\left[\norm{g_{n,\rho_1}}_{W^{-1,r}((0,T)\times K)}\right]
		\\ & \lesssim_{T,\varphi_1,\varphi_2,\rho_2}
		E\left[
		\norm{g_{n,\rho_1}}_{W^{-1,r}((0,T)\times K)}
		\right]
		=: \mathbf{o}_{n\to\infty}(1)
		\overset{(c), \eqref{gk-a}}
		{\longrightarrow} 0 \quad \text{as $n\to \infty$}, 
	\end{split}
\end{equation} 
where we have also used 
that $\norm{h_n}_{L^\infty_{\omega,t,\mx,\mxi}}
\lesssim 1$, cf.~assumption (a). We proceed similarly 
to estimate $I_{4,2}$, this time splitting the multiplier 
$\chi_M\psi(\cdot)/\abs{\mxi}$ into the product of two parts:
$$
\frac{\chi_M(\mxi)\psi(\mxi)}{\abs{\mxi}}
= \frac{\chi_M(\mxi)\psi(\mxi)}{\abs{\mxi}^{1-s}}
\frac{1}{\abs{\mxi}^s},
$$ where $s$ is defined in \eqref{gk-b-1}. 
Repeating the calculations leading to \eqref{gk}, 
we deduce that
\begin{equation*}
	\begin{split}
		\abs{I_{4,2}} &\lesssim_{\varphi,\phi}
		E\Biggl[\norm{\cA_{\frac{\chi_M(\mxi)
		\psi(\cdot)}{\abs{\mxi}^{1-s}}}
		\left(\int_{\R^m} \bigl(h^\mq_n \varphi_2\bigr)
		\, \rho_2(\mq) \, d\mq 
		\right)}_{L^{r'}((0,T)\times\R^d)} 
		\\ & \qquad \qquad \qquad 
		\qquad \qquad \qquad \qquad \times
		\norm{\cA_{\frac{1}{\abs{\mxi}^s}}
		\bigl(G_{n,\rho_1}\varphi_1
		\bigr)}_{L^r((0,T)\times \R^{d})} \Biggr]
		\\ & \leq E\left[\frac{1}{M^{1-s}}
		\norm{\int_{\R^m} \bigl(h^\mq_n \varphi_2\bigr)  
		\, \rho_2(\mq) \, d\mq}_{L^{r'}((0,T)\times \R^d)}
		\norm{G_{n,\rho_1}
		\varphi_1}_{W^{-s,r}((0,T)\times \R^{d})}\right].
		\\ & \lesssim_{T,\varphi_1,\varphi_2,\rho_2} 
		\frac{1}{M^{1-s}}
		E\left[\norm{G_{n,\rho_1}}_{W^{-s,r}((0,T)\times K)}\right]
		\overset{(d), \eqref{gk-b-1}}{\lesssim}
		\frac{1}{M^{1-s}}.
	\end{split}
\end{equation*}

Similarly, we prove that
\begin{equation*}
	\abs{I_{5,1}} \lesssim 
	\mathbf{o}_{n\to\infty}(1),
	\qquad
	\abs{I_{5,2}} \lesssim 
	\frac{1}{M^{1-s}}.
\end{equation*}

We summarise our findings in a lemma.

\begin{lemma}\label{lem:H-measure-tmp}
With $\cI_{\xi_0}=\cI_{\xi_0}\bigl(\chi_M;n\bigr)$, 
$\cI_{\mxi',1}=\cI_{\mxi',1}\bigl(\chi_M;n\bigr)$, 
and $\cI_{\mxi',2}=\cI_{\mxi',2}\bigl(\chi_M;n\bigr)$ 
defined in \eqref{weak-sense-1}, i.e., 
\begin{align*}
	& \cI_{\xi_0}\bigl(\chi_M;n\bigr) 
	=\int 2\pi ie^{-2\pi i t \xi_0} 
	\varphi(t) \, \chi_M(\mxi)\, \Psi(\mxi)
	\,\xi_0\, \F_{\mx}\bigl(h^\mp_n \varphi_1\bigr) 
	\, \overline{\F_{\mx}\bigl(h^\mq_n \varphi_2 \bigr)} 
	\, \rho_1(\mp) \rho_2(\mq) 
	\,d\mp\, d\mq \, dV_{\omega,t,\mxi},
	\\ & 
	\cI_{\mxi',1}\bigl(\chi_M;n\bigr) =
	\int  e^{-2\pi i t \xi_0} \varphi(t)
	\, 2\pi i \, \chi_M(\mxi)\Psi(\mxi)
	\, \mxi'\cdot 
	\F_{\mx}\bigl(F^{\mp} h^{\mp}_n\varphi_1\bigr)
	\, \overline{\F_{\mx}\bigl(h^\mq_n\varphi_2\bigr)}
	\, \rho_1(\mp)\rho_2(\mq)
	\,d\mp\, d\mq \, dV_{\omega,t,\mxi},
	\\ & 
	\cI_{\mxi',2}\bigl(\chi_M;n\bigr) =
	\int e^{-2\pi i t \xi_0} \varphi(t) 
	\,  \overline{2\pi i \, 
	\chi_M(\mxi)\Psi(\mxi)\, \mxi' 
	\cdot \F_{\mx}\bigl(F^{\mq} h^{\mq}_n\varphi_2\bigr)}
	\, \F_{\mx}\bigl(h^\mp_n\varphi_1\bigr)
	\, \rho_1(\mp)\rho_2(\mq)
	\,d\mp\, d\mq \, dV_{\omega,t,\mxi},
\end{align*}
we have
$$
\abs{\cI_{\xi_0}\bigl(\chi_M;n\bigr)
-\cI_{\mxi,1}\bigl(\chi_M;n\bigr)
-\cI_{\mxi,2}\bigl(\chi_M;n\bigr)}\lesssim 
\mathbf{o}_{n\to\infty}(1)+\frac{1}{M^{1-s}},
$$
where $\mathbf{o}_{n\to\infty}(1)\to 0$ 
as $n\to \infty$, uniformly in $M\to \infty$, 
and $s\in (0,1)$ is fixed in \eqref{gk-b-1}. 
\end{lemma}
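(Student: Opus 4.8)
The plan is to read the claim directly off the identity \eqref{weak-sense-2}, which was obtained by applying the It\^o product formula to the Fourier-transformed kinetic equations \eqref{p} and \eqref{q} and then performing the three operations listed before \eqref{weak-sense-1} (multiplication by $\chi_M(\mxi)\Psi(\mxi)$, integration in $\mxi'$, and multiplication by $e^{-2\pi it\xi_0}\varphi(t)$ followed by integration in $\xi_0$ and application of $E[\cdot]$). Once \eqref{weak-sense-2} is available, the quantity to be controlled equals $I_0-I_1-I_2+I_3+I_{4,1}+I_{4,2}+I_{5,1}+I_{5,2}$, so it suffices to estimate each of these eight terms and to sort the resulting errors into the two admissible types, $\mathbf{o}_{n\to\infty}(1)$ and $1/M^{1-s}$.

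For the four ``high-frequency'' terms $I_0,I_1,I_2,I_3$ the key observation is the elementary bound $\int_{\R}\abs{\chi_M(\mxi)\Psi(\mxi)}\,d\xi_0\lesssim \norm{\psi}_{C(\bS^d)}\norm{\phi}_{L^1(\R)}/M$, which follows from the factor $\abs{\mxi}^{-1}$ in $\Psi$ together with $\supp\chi_M\subset B(0,M)^c$. Combining this with the Cauchy--Schwarz and Plancherel identities and the uniform bounds at our disposal---assumption (e) for the noise--noise term $I_3$, assumptions (a), (b) and the compact support of $h_n$ for the localisation-error terms $I_1,I_2$, and the temporal cutoff for $I_0$---yields $\abs{I_0},\abs{I_1},\abs{I_2},\abs{I_3}\le C/M$, i.e.\ precisely \eqref{<M}. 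Since $M>1$ and $s\in(0,1)$, each of these is in turn dominated by $1/M^{1-s}$.

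The genuine work lies in the source terms. For the $g$-contributions $I_{4,1}$ and $I_{5,1}$ the strategy is to factor the symbol as $\chi_M(\mxi)\psi(\mxi/\abs{\mxi})\cdot\abs{\mxi}^{-1}$, absorbing the first factor into the Marcinkiewicz multiplier $\cA_{\chi_M\psi}$ acting on the velocity average of $h_n$ and the second into the Riesz potential $\cT_{-1}$ acting on $g_{n,\rho}$ (cf.\ Definition \ref{multiplier} and Theorem \ref{riesz}); H\"older's inequality with exponents $r,r'$ and Theorem \ref{m1} then reduce the bound to $E[\norm{g_{n,\rho}}_{W^{-1,r}((0,T)\times K)}]$, which tends to $0$ by assumption (c), \eqref{gk-a}, giving $\mathbf{o}_{n\to\infty}(1)$ as in \eqref{gk}. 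For the measure-valued $G$-contributions $I_{4,2}$ and $I_{5,2}$ one instead splits $\chi_M\psi/\abs{\mxi}=(\chi_M\psi/\abs{\mxi}^{1-s})\cdot\abs{\mxi}^{-s}$, retaining the extra decay $M^{-(1-s)}$ from the first factor while the second is handled through the $W^{-s,r}$ bound \eqref{gk-b-1}, yielding $1/M^{1-s}$. Assembling the eight estimates and merging the two $o(1)$ terms and the several $O(1/M^{1-s})$ terms produces the stated inequality, with the $\mathbf{o}_{n\to\infty}(1)$ part uniform in $M$. The main obstacle is the treatment of $I_{4,2},I_{5,2}$: because $G_{n,\rho}$ is only a bounded family of measures, the Riesz potential $\cT_{-1}$ cannot be applied directly, and it is precisely the fractional splitting of the multiplier together with the compact embedding $\cM\hookrightarrow W^{-s,r}$ that trades the missing regularity for the decay $M^{-(1-s)}$.
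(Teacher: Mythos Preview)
Your proposal is correct and follows essentially the same approach as the paper: the lemma is a summary of the estimates derived in the text preceding it, and you have accurately identified the identity \eqref{weak-sense-2} as the starting point, the bound \eqref{<M} for $I_0,I_1,I_2,I_3$, the Marcinkiewicz/Riesz-potential argument \eqref{gk} for $I_{4,1},I_{5,1}$, and the fractional splitting of the multiplier combined with \eqref{gk-b-1} for $I_{4,2},I_{5,2}$. Your observation that $C/M\le C/M^{1-s}$ (since $M>1$, $s\in(0,1)$) correctly explains why the four high-frequency terms can be absorbed into the $1/M^{1-s}$ error.
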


Given Lemma \ref{lem:H-measure-tmp}, we 
are going to derive strong compactness 
of the velocity averages $\inn{h_n,\rho}$ 
using the $H$-measure defined 
in Theorem \ref{thm:H-measure} and the non-degeneracy 
condition \eqref{non-deg}. The following theorem 
is the main result of this section. 

\begin{theorem}[stochastic velocity averaging on $\R^d$]\label{thm-1} 
Let $\Seq{h_n}$ be a sequence of weak solutions to 
the kinetic SPDE \eqref{eq-1} in the sense of \eqref{weak-sense}. 
Suppose the assumptions $(a)$---$(e)$ 
and the non-degeneracy condition \eqref{non-deg} hold. 
There exists a (not relabeled) 
subsequence $\Seq{h_n}$ such that
\begin{equation}\label{res-1-new}
	E\left[\norm{h_n}_{L^2_{\loc}((0,T)\times \R^d;
	L^2_{\loc,w}(\R^m))}^2\right]\ton 0,
\end{equation}
i.e.,
$$
E\left[ \int_0^T\int_K\abs{\int_L h_n\rho \, d\lambda}^2
\, d\mx \, dt \right]\ton 0, 
\qquad \forall \rho\in L^2(L), 
\quad \forall L\subset\subset \R^m, 
\quad \forall K \subset\subset \R^d.
$$
\end{theorem}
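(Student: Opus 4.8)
The plan is to prove that the generalised $H$-measure $\mu$ generated by $\Seq{h_n}$ vanishes identically, and then to turn $\mu\equiv0$ into the strong convergence \eqref{res-1-new} through the computation already indicated in Remark \ref{rem-all}. Since \eqref{res-1-new} is local in $\mx$ and in $\lambda$, I first multiply the linear kinetic SPDE \eqref{eq-1} by a cutoff $\chi\in C^\infty_c(\R^d_\mx)$, so that $\chi h_n$ is compactly supported in $\mx$ and solves an equation of the same form; the extra term $\nabla\chi\cdot F h_n$ is an $L^q_{\loc}$ quantity that can be absorbed into the $g_n$-slot and still satisfies (c). Assumption (a) is precisely the hypothesis \eqref{convergence-incl-lambda}, so Theorem \ref{thm:H-measure} applies and, along a subsequence, produces $\mu$; Corollary \ref{cor:extension-1} then supplies the representation $\mu_\phi=f(\mp,\mq,\mx,\mxi)\,d\nu(\mx,\mxi)$, with $\nu\in L^1_{\loc}(\R^d;\cM(\bS^d))$ absolutely continuous in $\mx$ and $f\in L^2(\R^{2m};L^1_\nu(\R^d\times\bS^d))$.

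Next I would read off the localisation of $\mu$ from the equation. Beginning with the It\^{o}-product identity \eqref{weak-sense-2} together with Lemma \ref{lem:H-measure-tmp}, I send $n\to\infty$ and afterwards $M\to\infty$: the right-hand side terms $I_0,\ldots,I_{5,2}$ are of order $\mathbf{o}_{n\to\infty}(1)+\frac{1}{M^{1-s}}$ and disappear, whereas the left-hand side $\cI_{\xi_0}-\cI_{\mxi',1}-\cI_{\mxi',2}$ converges to a duality pairing of $\mu$ with the transport symbol. The delicate point is that the flux sits inside the Fourier transforms in $\cI_{\mxi',1}$ and $\cI_{\mxi',2}$; for each fixed velocity parameter I would invoke the first commutation lemma (Lemma \ref{com-lemma}), applied to multiplication by $F(\cdot,\mp)\in L^q_\mx$ (assumption (b), $q>2$) and the Marcinkiewicz symbol $\mxi'\psi(\mxi/\abs{\mxi})/\abs{\mxi}$, to replace $\F_\mx\bigl(F^{\mp}h^\mp_n\varphi_1\bigr)$ by $F(\mx,\mp)\,\F_\mx\bigl(h^\mp_n\varphi_1\bigr)$ modulo an error that vanishes a.s.\ in $L^2_t(L^2_\mx)$; this is legitimate because the corresponding velocity averages of $h_n$ obey the strong-weak convergence \eqref{eq:hk-L2t-L2weak}. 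Passing to the limit in \eqref{rev11} and letting $\phi$ range over real and imaginary parts of $L^1\cap L^2$ functions while $\psi$ ranges over $C^d(\bS^d)$ then yields the localisation
\begin{equation*}
	f(\mp,\mq,\mx,\mxi)\,\bigl(\xi_0+\bigl\langle F(\mx,\mp),\mxi'\bigr\rangle\bigr)=0
	\quad\text{for $\nu$-a.e.\ $(\mx,\mxi)$ and a.e.\ $\mp,\mq$},
\end{equation*}
the precise sign inside the bracket being immaterial for what follows.

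Finally I would kill $\mu$ using the non-degeneracy condition. Fix $(\mx,\mxi)$ in the support of $\nu$, with $\mxi=(\xi_0,\mxi')\in\bS^d$ and $\mx$ among the full-measure set on which the $\mx$-integrand in \eqref{non-deg} vanishes. The localisation forces $f(\cdot,\mq,\mx,\mxi)$ to be supported, as a function of $\mp$, on $\bigl\{\mp:\xi_0+\bigl\langle F(\mx,\mp),\mxi'\bigr\rangle=0\bigr\}$, a set of zero Lebesgue measure in $\mp$ by Definition \ref{def-non-deg} (taken with $C=0$, uniformly over $\mxi\in\bS^d$). Since $f$ is an $L^2$ density in $\mp$, it must vanish; hence $f\equiv0$ and $\mu\equiv0$. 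Specialising \eqref{rev11} as in Remark \ref{rem-all} (with $\phi_1=\phi_2=\varphi(\mx)\rho(\mp)$ and $\psi\equiv1$), the identity $\mu\equiv0$ is exactly the $L^2(\Omega\times(0,T)\times K)$ convergence of $\int_{\R^m}\rho\,h_n\,d\mp$ to zero, which is \eqref{res-1-new}.

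I expect the core difficulty to lie in the second paragraph: identifying the limits of $\cI_{\mxi',1}$ and $\cI_{\mxi',2}$ with pairings of $\mu$ while commuting the low-regularity flux $F\in L^q_{\loc}$ past the Fourier multipliers, and justifying the exchange of the iterated limits $n\to\infty$ and $M\to\infty$, is the genuinely delicate part; by contrast, the reduction via cutoffs and the concluding measure-theoretic step are comparatively routine.
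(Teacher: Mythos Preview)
Your overall strategy---localise the $H$-measure via the kinetic SPDE and then kill it with non-degeneracy---is the paper's, but the localisation identity you write down is not what the equation actually produces. The bilinear structure of the $H$-measure carries two copies of $h_n$, one at velocity $\mp$ and one at $\mq$. In \eqref{weak-sense-2} the flux therefore appears twice: $\cI_{\mxi',1}$ contains $F^\mp$ and in the limit pairs with $\mu_\phi$, while $\cI_{\mxi',2}$ contains $F^\mq$ and, because of the complex conjugate on the second factor, pairs with $\bar\mu_\phi$. Passing to the limit yields \eqref{localization1}, and splitting $\mu_\phi=\mu_1+i\mu_2$ gives the pair
\[
\bigl(\xi_0+(F^\mp+F^\mq)\cdot\mxi'\bigr)\,d\mu_1=0,\qquad
\bigl(\xi_0+(F^\mp-F^\mq)\cdot\mxi'\bigr)\,d\mu_2=0,
\]
not the single equation $f\,\bigl(\xi_0+F(\mx,\mp)\cdot\mxi'\bigr)=0$ that you state; your version silently drops the $\cI_{\mxi',2}$ contribution and the conjugate measure. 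The non-degeneracy step then has to be run separately for $\mu_1$ and $\mu_2$: for $\nu$-a.e.\ $(\mx,\mxi)$ and a.e.\ $\mq$, the set $\bigl\{\mp:\xi_0+(F^\mp\pm F^\mq)\cdot\mxi'=0\bigr\}$ is $\mp$-null by Definition~\ref{def-non-deg} (freeze $\mx,\mq,\mxi$ and take $C$ accordingly). The paper makes this quantitative by testing against $A/(\abs{A}^2+\delta)$ and sending $\delta\to0$. Your final paragraph survives with this adjustment, but as written the claimed identity is incorrect.

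A smaller point: no separate commutation step is needed to push $F$ past the Fourier multiplier. Assumption (b), $F\in L^q_{\loc}$ with $q>2$, is there precisely so that $F(\cdot,\mp)\varphi_1$ fits the $L^r$-slot (with $r>2$) of the test function $\phi_1$ in \eqref{rev11}; the limit of $\cI_{\mxi',1}$ is read off directly from Theorem~\ref{thm:H-measure} and Corollary~\ref{cor:extension-1} with $\phi_1=F^\mp\varphi_1$. The commutation you flag as the ``genuinely delicate part'' is already absorbed into the proof of Theorem~\ref{thm:H-measure}. The exchange of limits in $n$ and $M$ is also handled cleanly: for fixed $M$ the $(1-\chi_M)$-pieces of $\cI_{\xi_0},\cI_{\mxi',1},\cI_{\mxi',2}$ vanish as $n\to\infty$ by dominated convergence (pointwise convergence of Fourier transforms plus the $L^\infty$ bound on $h_n$), so one may replace $\chi_M$ by $1$ in Lemma~\ref{lem:H-measure-tmp} before sending $M\to\infty$.
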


\begin{proof}
By assumption (a), there exists a non-relabeled 
subsequence $\Seq{h_n}$ such that
$$
h_{n}(\omega,t,\cdot,\cdot) 
\overset{n \uparrow 0}{\longrightarrow} 0 
\quad\text{weakly in $L^2_{\loc}(\R^{d+m})$, 
for a.e.~$(\omega,t)\in \Omega\times (0,T)$}.
$$
By properties of the Fourier transform, this 
implies that the following convergences hold a.s.:
$$
\int_{\R^m} \F_{\mx}\bigl(h^\mp_n\varphi_1\bigr)(t,\mxi') 
\, \rho_1(\mp)\, d\mp \ton 0,
\quad 
\int_{\R^m}\F_{\mx}\bigl(h^\mq_n\varphi_2\bigr)
(t,\mxi')\, \rho_2(\mq)\, d\mq \ton 0,
$$ 
for a.e.~$\mxi'\in \R^d$, $t\in (0,T)$. 
By (a), $\norm{h_k}_{L^\infty}\lesssim 1$ and so
$\abs{\F_{\mx}\bigl(h^\mp_n\varphi_1\bigr)(t,\mxi)}
\leq \norm{\bigl(h^\mp_n\varphi_1\bigr)
(t,\cdot)}_{L^1(\R^d)}\lesssim 1$.  
Thus, applying the dominated convergence 
theorem in $(t,\mxi)$, 
\begin{align*}
	\cI_{\xi_0}\bigl(1-\chi_M;n\bigr)\ton 0, \quad
	\cI_{\mxi',1}\bigl(1-\chi_M;n\bigr)\ton,\quad
	\cI_{\mxi',2}\bigl(1-\chi_M;n\bigr)\ton 0,
\end{align*}
for each fixed $M>1$. Replacing $\chi_M$ by the 
constant $1$, we will simplify the 
notation as follows: 
$$
\cI_{\xi_0}(n):= \cI_{\xi_0}\bigl(1;n\bigr),
\quad
\cI_{\mxi',1}(n):=\cI_{\mxi',1}\bigl(1;n\bigr),
\quad
\cI_{\mxi',2}(n):=\cI_{\mxi',2}\bigl(1;n\bigr).
$$
It thus follows that, for any fixed $M>1$, 
$$
\lim_{n\to\infty}\cI_{\xi_0}(n)=
\lim_{n\to\infty}\cI_{\xi_0}\bigl(\chi_M;n\bigr)
+\lim_{n\to\infty}\cI_{\xi_0}\bigl(1-\chi_M;n\bigr)
=\lim_{n\to\infty}\cI_{\xi_0}\bigl(\chi_M;n\bigr)
$$
Similarly,
\begin{align*}
	\lim_{n\to\infty}\cI_{\mxi',1}(n)
	=\lim_{n\to\infty}\cI_{\mxi',1}\bigl(\chi_M;n\bigr),
	\qquad
	\lim_{n\to\infty}\cI_{\mxi',2}(n)
	=\lim_{n\to\infty}\cI_{\mxi',2}\bigl(\chi_M;n\bigr).
\end{align*}
Thanks to Lemma \ref{lem:H-measure-tmp}, this 
implies that  
$$
\lim_{n\to \infty}
\abs{\cI_{\xi_0}(n)-\cI_{\mxi',1}(n)
-\cI_{\mxi',2}(n)}\lesssim \frac{1}{M^{1-s}}.
$$
Since the right-hand side can be made arbitrary small 
by taking $M$ sufficiently large, we conclude that 
\begin{equation}
\label{localization-prep}
\lim_{n\to \infty}\abs{\cI_{\xi_0}(n)
-\cI_{\mxi',1}(n)-\cI_{\mxi',2}(n)}=0.
\end{equation}

Recall that $\varphi$ is a $C^\infty_c(0,T)$ function 
and $\phi\in L^1(\R)\cap L^2(\R)$, see \eqref{eq:Psi-symbol}. 
Clearly, the sequence $\Seq{\sqrt{\varphi}\, h_n}$ 
satisfies the conditions of Corollary \ref{cor:extension-1}, and 
so there exists a subsequence---still denoted by 
$\Seq{\sqrt{\varphi} \, h_n}$---and a 
corresponding $H$-measure $\mu_\phi$ such that
\begin{align*}
	& \lim_{n\to \infty}\cI_{\xi_0}(n)
	\\ & \quad 
	=\lim_{n\to \infty} \int 2\pi ie^{-2\pi i t \xi_0}
	\varphi(t) \, \chi_M(\mxi)\, \Psi(\mxi)
	\,\xi_0\, \F_{\mx}\bigl(h^\mp_n \varphi_1\bigr) 
	\, \overline{\F_{\mx}\bigl(h^\mq_n \varphi_2 \bigr)} 
	\, \rho_1(\mp) \rho_2(\mq) 
	\,d\mp\, d\mq \, dV_{\omega,t,\mxi}
	\\ & \quad
	= \lim_{n\to \infty}
	\, \int\, \phi(\xi_0)\psi\left(\frac{\mxi}{\abs{\mxi}}\right)
	\,\frac{\xi_0}{\abs{\mxi}}
	\, \F_t\left(\F_{\mx}\bigl(h^\mp_n\sqrt{\varphi}\varphi_1\bigr) 
	\, \overline{\F_{\mx}\bigl(h^\mq_n\sqrt{\varphi}\varphi_2 \bigr)}
	\right)\, \rho_1(\mp) \rho_2(\mq)
	\,d\mp\, d\mq \, dV_{\omega,\xi_0,\mxi}
	\\ & \quad 
	=\int_{\R^{2m}}\Bigl\langle \mu_\phi(\mp,\mq,\cdot,\cdot),
	\, \varphi_1 \varphi_2 \rho_1(\mp)\rho_2(\mq)
	\otimes \xi_0\psi \Bigr\rangle \,d\mp\, d\mq,
\end{align*}
\begin{align*}
	& \lim_{n\to \infty}\cI_{\mxi',1}(n)
	\\ & \quad 
	=\lim_{n\to \infty}\int 2\pi i e^{-2\pi i t \xi_0} \varphi(t)
	\, \chi_M(\mxi)\Psi(\mxi)\, \mxi'\cdot 
	\F_{\mx}\bigl(F^{\mp} h^{\mp}_n\varphi_1\bigr)
	\, \overline{\F_{\mx}\bigl(h^\mq_n\varphi_2\bigr)}
	\, \rho_1(\mp)\rho_2(\mq)\,d\mp\, d\mq \, dV_{\omega,t,\mxi}
	\\ & \quad
	=\lim_{n\to \infty}\int \phi(\xi_0)
	\psi\left(\frac{\mxi}{\abs{\mxi}}\right)
	\, \F_t\left(\mxi'\cdot 
	\F_{\mx}\bigl(F^{\mp} h^{\mp}_n \sqrt{\varphi} 
	\varphi_1\bigr)
	\, \overline{\F_{\mx}\bigl(h^\mq_n \sqrt{\varphi}
	\varphi_2\bigr)}\right)\, \rho_1(\mp)\rho_2(\mq)
	\,d\mp\, d\mq \, dV_{\omega,\xi_0,\mxi}
	\\ & \quad 
	=\int_{\R^{2m}}\Bigl\langle \mu_\phi(\mp,\mq,\cdot,\cdot), 
	\, \varphi_1\varphi_2\rho_1(\mp)\rho_2(\mq)
	\otimes \mxi' \cdot F^\mp \,\psi \Bigr\rangle
	\,d\mp\, d\mq,
\end{align*}
and
\begin{align*}
	& \lim_{n\to \infty}\cI_{\mxi',2}(n)
	\\ & \quad 
	=\lim_{n\to \infty} \int e^{-2\pi i t \xi_0} \varphi(t)
	\,  \overline{2\pi i \, \chi_M(\mxi)\Psi(\mxi)\, \mxi'
	\cdot \F_{\mx}\bigl(F^{\mq} h^{\mq}_n \varphi_2\bigr)}
	\, \F_{\mx}\bigl(h^\mp_n \varphi_1\bigr)
	\, \rho_1(\mp)\rho_2(\mq)
	\,d\mp\, d\mq \, dV_{\omega,t,\mxi}
	\\ & \quad
	=\lim_{n\to \infty}\int\phi(\xi_0) 
	\, \F_t \left(\, \overline{2\pi i \, 
	\psi\left(\frac{\mxi}{\abs{\mxi}}\right)\, \mxi'
	\cdot \F_{\mx}\bigl(F^{\mq} h^{\mq}_n 
	\sqrt{\varphi} \varphi_2\bigr)}
	\, \F_{\mx}\bigl(h^\mp_n \sqrt{\varphi} 
	\varphi_1\bigr) \right)
	\, \rho_1(\mp)\rho_2(\mq)
	\,d\mp\, d\mq \, dV_{\omega,t,\mxi}
	\\ & \quad 
	=\int_{\R^{2m}}
	\Bigl\langle \overline{\mu_\phi(\mp,\mq,\cdot,\cdot)}, 
	\, \varphi_1 {\varphi_2} \rho_1(\mp)\rho_2(\mq)
	\otimes \mxi' \cdot F^\mq \,\psi  \Bigr\rangle
	\,d\mp\, d\mq.
\end{align*}
Inserting this into \eqref{localization-prep}, 
keeping in mind that $\mu_\phi$ can be represented 
by a measure (see Corollary \ref{cor:extension-1}), we 
obtain the identity
\begin{equation}\label{localization1}
	\varphi_1 \varphi_2 \otimes \rho_1(\mp)\rho_2(\mq) 
	\otimes \psi\left((\xi_0+F^\mp \cdot \mxi')\, d\mu_\phi
	+F^\mp \cdot \mxi' \, d\bar{\mu}_\phi \right)=0,
\end{equation}
where $\bar{\mu}_\phi$ denotes the conjugate of the complex 
measure $\mu_\phi$. If we write $\mu_\phi=\mu_1+i \mu_2$ and 
use the arbitrariness of $\varphi$, $\varphi_1$, $\varphi_2$, 
$\rho_1$, $\rho_2$, $\psi$, we 
conclude from \eqref{localization1} that
\begin{equation}\label{localization2}
	\begin{split}
		& \left(\xi_0+\left(F^\mp+F^\mq\right)\cdot \mxi'\right)\, d\mu_1=0,
		\\ & 
		\left(\xi_0+\left(F^\mp-F^\mq\right)\cdot \mxi'\right)\, d\mu_2=0.
	\end{split}
\end{equation}

Finally, we will prove that \eqref{localization2} implies
$\mu_1\equiv \mu_2\equiv 0$, i.e., $ \mu \equiv 0$. By 
Corollary \ref{cor:extension-1}, $\mu_\varphi = f\, d\nu$, 
cf.~\eqref{representation}, where $f$ is complex. Accordingly, 
let us write $\mu_i=f_i(\mp,\mq,\mx,\mxi)\, d\nu(\mx,\mxi)$, 
where $\nu \in L_{\operatorname{loc}}^1(\R^d;\cM(\bS^d))$ 
and $f_i\in L^2(\R^{2m};L^1_{\nu}(\R^d\times \bS^d))$, $i=1,2$. 
We repeat the procedure from \cite[page 253]{LM2}. 
To this end, set
$$
A(\mp,\mq,\mx,\mxi)=\xi_0
+\left(F^\mp+F^\mq\right)\cdot \mxi',
$$ 
and test \eqref{localization2} against
$$
\rho(\mp)\rho(\mq)\varphi(\mx)
\frac{A(\mp,\mq,\mx,\mxi)}{\abs{A(\mp,\mq,\mx,\mxi)}^2+\delta},
$$ 
for some fixed smooth functions $\rho$ and $\varphi$. 
Taking into account Corollary \ref{cor:extension-1}, 
\begin{equation}\label{loc-3}
	\int_{\R^d \times \bS^d} \int_{\R^{2m}}
	\rho(\mp)\rho(\mq) \varphi(\mx)
	\frac{\abs{A(\mp,\mq,\mx,\mxi)}^2}{\abs{A(\mp,\mq,\mx,\mxi)}^2+\delta} 
	\, f_1(\mp,\mq,\mx,\mxi)\, d\mp \, d\mq\, d\nu(\mx,\mxi)=0,
\end{equation}
where the measure $\nu$ is regular 
with respect to $\mx\in \R^d$. From the non-degeneracy 
condition \eqref{non-deg}, 
$$
\mp\mapsto \abs{A(\mp,\mq,\mx,\mxi)}^2>0
\quad \text{a.e.},
$$ 
for a.e.~$\mq\in \R^m$ and 
$\nu$-a.e.~$(\mx,\mxi) \in \R^d\times \bS^d$. Therefore, 
for a.e.~$\mq\in \R^m$ and 
$\nu$-a.e.~$(\mx,\mxi) \in \R^d\times \bS^d$,
$$
\int_{\R^{m}} \rho(\mp)\rho(\mq) \varphi(\mx)
\frac{\abs{A(\mp,\mq,\mx,\mxi)}^2}{\abs{A(\mp,\mq,\mx,\mxi)}^2+\delta}
\, f_1(\mp,\mq,\mx,\mxi)\, d\mp 
\to \int_{\R^{2m}} \rho(\mp)\rho(\mq) 
\varphi(\mx) \, f_1(\mp,\mq,\mx,\mxi)\, d\mp=0,
$$
as $\delta\to 0$. By the dominated convergence theorem, sending 
$\delta \to 0$ in \eqref{loc-3} yields
\begin{equation*}
	\int_{\R^d \times \bS^d} \int_{\R^{2m}} 
	\rho(\mp)\rho(\mq) \varphi(\mx)
	\, f_1(\mp,\mq,\mx,\mxi)\, d\mp 
	\, d\mq \, d\nu(\mx,\mxi)=0,
\end{equation*}
which implies that $\mu_1=0$. Similarly, putting 
$A(\mp,\mq,\mx,\mxi)=\xi_0-\left(F^\mp +F^\mq\right)
\cdot \mxi$ and repeating the previous 
procedure, we arrive at the conclusion $\mu_2=0$.

The vanishing of the $H$-measure $\mu_\varphi$ 
immediately implies the strong convergence
$$
\int_{\R^m} \sqrt{\varphi} h_n^\mp \rho(\mp) \,d\mp
\ton 0 \quad 
\text{in $L^2\bigl(\Omega;L^2_{\loc}((0,T)\times \R^d)\bigr)$},
$$ 
along a subsequence, a fact that 
follows by specifying in \eqref{rev11-1} the functions 
$\psi\equiv 1$, $\phi=\overline{\F_t^{-1}(\varphi)}$,
$\phi_1=\varphi_1$, $\phi_2=\varphi_2$, and $\varphi_1
=\varphi_2=\chi_K(\mx)\rho(\lambda)$ (after an 
approximation argument), $K\subset\subset \R^d$, and $\rho(\lambda)
\in L^2_{\loc}(\R^m)$, see Remark \ref{rem-all}. 
By the arbitrariness of $\varphi$, $K$ and $\rho$, the 
convergence statement \eqref{res-1-new} holds.
\end{proof} 

\begin{remark}
It is enough that conditions (c) 
and (d) from Theorem \ref{thm-1} hold 
for $\rho$ from a countable 
dense subset $Q$ of $C^\infty_c(\R^m)$.
\end{remark}

We will now remove the zero convergence assumption 
from the last theorem. We start with the following crucial 
lemma, which shows that we can pass 
to the limit in the SPDE \eqref{eq-1} for $h_n$ 
under rather weak convergence assumptions.

\begin{lemma}[limit kinetic SPDE]
\label{lem:weak-limit-kinetic}
Suppose the following conditions hold:

\noindent (i) for each $n\in \N$, $h_n$ satisfies 
the $(t,\mx,\lambda)$-weak formulation \eqref{weak-sense-tx}, 
$\norm{h_n}_{L^\infty_{\omega,t,\mx,\lambda}}\lesssim 1$, and 
$$
h_n\ton h \quad 
\text{in $L^2_t\bigl(L^2_{\loc,w}(\R^d\times \R^m)\bigr)$, a.s.};
$$

\noindent (ii) for each $\rho=\rho(\lambda)\in C^\infty_c(\R^m)$, 
there exists a random variable $g_{\rho}$
such that
\begin{equation*}
	 g_{n,\rho}  \ton g_\rho \quad 
	 \text{in $L^2(0,T;W^{-1,r}_{\loc}(\R^d))$, a.s.}, 
\end{equation*} 
for some $r>1$, and 
$$
E\left[\norm{g_{n,\rho}}_{L^2(0,T;W^{-1,r}(K))}^2\right]
\lesssim_{T,K} 1, \quad K\subset \subset \R^d;
$$

\noindent (iii) for each $\rho=\rho(\lambda) \in C^\infty_c(\R^m)$ 
there exists a random variable $G_{\rho}$
such that
\begin{equation*}
	G_{n,\rho} \overset{n \uparrow \infty}{\dscon}
	G_\rho \quad \text{in $\cM_{\loc}((0,T)\times \R^d)$, a.s.},
\end{equation*}
and 
$$
E\left[\norm{G_{n,\rho}}_{\cM((0,T)\times K)}^2\right]
\lesssim_{T,K} 1, \quad K\subset \subset \R^d;
$$ 
 
\noindent (iv) Each $\Phi_{n,\rho}$ is adapted and there exists 
$\Phi_\rho$ such that
$$
\Phi_{n,\rho} \overset{n \uparrow \infty}{\dscon}
\Phi_\rho \quad 
\text{in $L^2_t\bigl(L^2_{\loc,w}(\R^d)\bigr)$, a.s.},
$$
and
$$
E\left[\norm{\Phi_{n,\rho}}_{L^2((0,T)\times K)}^2\right]
\lesssim_{T,K} 1, 
\quad K\subset \subset \R^d.
$$ 
Finally, $W_n$ converges to a Wiener process $W$ in $C([0,T])$, a.s., 
where $W$ and $\Phi_\rho$ are defined on a filtered space 
$\bigl(\Omega,\F,\prob,\Seq{\F_t}\bigr)$.

\medskip

Then, almost surely, for all $(\varphi,\rho)\in 
C^\infty_c([0,T)\times \R^d)\times C^{\infty}_c(\R^m)$,
\begin{equation}\label{weak-sense-tx-limit}
	\begin{split}
		&\int_0^T\int_{\R^{d+m}} h \pa_t \varphi
		\rho \, d\lambda \, d\mx \, dt
		+\int_{\R^{d+m}}  h_0(\mx,\lambda) 
		\varphi(0,\mx)\rho(\lambda)  \, d\lambda \, d\mx
		\\ & \qquad +\int_0^T\int_{\R^{d+m}} h F(\mx,\lambda)
		\cdot \nabla_{\mx}\varphi \rho \, d\lambda\, d\mx\,dt
		= m_{\rho}(\varphi)
		+\int_0^T\int_{\R^{d+m}} \Phi_{\rho}
		\varphi\,d\lambda \,d\mx \, dW(t),
	\end{split}	
\end{equation} 
where $m_\rho$ is defined similarly 
to \eqref{eq:measure-m-def}, that is, 
$m_{\rho}(\phi)\bigl([t_1,t_2]\bigr)
=m_{\rho}\bigl(\phi\chi_{[t_1,t_2]}\bigr)$ and
\begin{equation}\label{eq:measure-m-def-limit}
	m_{\rho}(\varphi)
	:=\bigl\langle g_{\rho}+G_{\rho},\varphi\bigr\rangle
	=\int_{0}^{T} \int_{\R^d}
	\bigl(g_{\rho}+G_{\rho} \bigr) 
	\varphi(t,\mx)\, d\mx \,dt,
\end{equation}
for any $\varphi=\varphi(t,\mx)$ for which the 
right-hand side of \eqref{eq:measure-m-def-limit} makes sense.
\end{lemma}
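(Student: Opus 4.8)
The plan is to take the $(t,\mx,\lambda)$-weak formulation \eqref{weak-sense-tx} for each fixed $n$---which holds almost surely---and to pass to the limit $n\to\infty$ term by term, exploiting that all four convergences (i)--(iv) are almost sure. Writing \eqref{weak-sense-tx} schematically as $A_n+B+C_n=D_n+E_n$, where $A_n=\int_0^T\int h_n\,\pa_t\varphi\,\rho\,d\lambda\,d\mx\,dt$, $B=\int h_0\,\varphi(0,\cdot)\,\rho\,d\lambda\,d\mx$ is independent of $n$, $C_n=\int_0^T\int h_n\,F\cdot\nabla_\mx\varphi\,\rho\,d\lambda\,d\mx\,dt$, $D_n=m_{n,\rho}(\varphi)$, and $E_n=\int_0^T\int\Phi_n\varphi\rho\,d\lambda\,d\mx\,dW_n(t)$, I would first show that $A_n,C_n,D_n$ converge a.s.\ to their natural limits $A,C,D$; this forces $E_n$ to converge a.s.\ to $A+B+C-D$, and the decisive point is then to identify this limit as the It\^o integral $E=\int_0^T\int\Phi_\rho\varphi\,d\lambda\,d\mx\,dW$.

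For $A_n$ and $C_n$ I would use the strong-in-$t$, weak-in-$(\mx,\lambda)$ convergence $h_n\to h$ in $L^2_t(L^2_{\loc,w})$ from (i). The subtlety is that the test densities $\pa_t\varphi(t,\mx)\rho(\lambda)$ and $F(\mx,\lambda)\cdot\nabla_\mx\varphi(t,\mx)\rho(\lambda)$ depend on $t$, so I cannot pair directly against a single fixed element of $L^2(\R^{d+m})$. I would approximate these densities, uniformly on their (compact) support, by finite sums of tensor products $\gamma(t)\phi(\mx)\rho(\lambda)$, pair each factor using that $t\mapsto\langle h_n(t),\phi\rho\rangle\to\langle h(t),\phi\rho\rangle$ in $L^2(0,T)$, and control the approximation error uniformly in $n$ via $\norm{h_n}_{L^\infty}\lesssim 1$ together with dominated convergence. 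Here I use that $F\cdot\nabla_\mx\varphi\,\rho\in L^2(\R^{d+m})$ with compact support, which holds since $F\in L^q_{\loc}$ with $q>2$ (assumption (b)), so $F$ is locally $L^2$. The initial term $B$ carries no $n$. For the measure term $D_n=\langle g_{n,\rho}+G_{n,\rho},\varphi\rangle$ I would pass to the limit using (ii) and (iii): $g_{n,\rho}\to g_\rho$ strongly in $L^2(0,T;W^{-1,r}_{\loc})$ pairs against $\varphi\in C^\infty_c\subset L^2(0,T;W^{1,r'}_0)$, while $G_{n,\rho}\dscon G_\rho$ weak-$\star$ in $\cM_{\loc}$ pairs against $\varphi\in C_c$, so that $D_n\to m_\rho(\varphi)$ a.s.

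The hard part is the stochastic integral $E_n=\int_0^T\Psi_n\,dW_n$ with $\Psi_n(t):=\int_{\R^d}\Phi_{n,\rho}(t,\mx)\varphi(t,\mx)\,d\mx$, where both integrand and integrator vary with $n$. The genuine difficulty is that weak convergence of the integrand is \emph{insufficient} to pass to the limit in an It\^o integral: the quadratic variation $\int_0^\cdot\Psi_n^2\,ds$ is not weakly continuous, so the weak limit of $E_n$ need not be $\int\Psi\,dW$. I would therefore combine $W_n\to W$ in $C([0,T])$ a.s.\ from (iv) with the crucial upgrade that $\Psi_n\to\Psi:=\int_{\R^d}\Phi_\rho\varphi\,d\mx$ \emph{strongly} in $L^2(0,T)$ (a.s.); in the present setting this strong convergence of the spatial averages is supplied by the strong compactness already obtained for the velocity averages (cf.\ Theorem~\ref{thm-1}) together with the structure of $\Phi_{n,\rho}$, and it is exactly the role played by condition ($C_\Phi$--3). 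Given strong convergence of $\Psi_n$ and $W_n\to W$, I would invoke the standard convergence lemma for It\^o integrals: using adaptedness of $\Phi_{n,\rho}$ to the filtration carrying $W_n$, the cross-variation $\langle M_n,W_n\rangle_t=\int_0^t\Psi_n\,ds\to\int_0^t\Psi\,ds$ and the quadratic variation $\langle M_n\rangle_t=\int_0^t\Psi_n^2\,ds\to\int_0^t\Psi^2\,ds$ pass to the limit, and the L\'evy/martingale characterization identifies the limit martingale as $\int_0^\cdot\Psi\,dW$.

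Finally, I would record that $\Psi$ is adapted to the limit filtration $\Seqb{\F_t}$, inherited from adaptedness of the $\Phi_{n,\rho}$ and the a.s.\ convergence, so that $E$ is a well-defined It\^o integral against the limiting Wiener process $W$. Combining the four limits gives $A+B+C=D+E$ a.s.\ for each fixed admissible $(\varphi,\rho)$; restricting first to countable dense families of test functions and then extending by density and continuity yields a single full-measure event on which \eqref{weak-sense-tx-limit} holds for all $(\varphi,\rho)$. I expect the passage in the stochastic integral---specifically securing the strong $L^2(0,T)$ convergence of $\Psi_n$ and hence the convergence of the quadratic variation---to be the main obstacle, the deterministic terms being routine once the $t$-dependence of the test densities is handled by tensor-product approximation.
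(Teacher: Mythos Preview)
Your treatment of the deterministic terms is fine. The gap is in the stochastic integral. You correctly observe that weak convergence of $\Psi_n$ is insufficient for a pathwise limit and that the equation forces $E_n$ to converge a.s.; but your identification of that limit---via strong $L^2(0,T)$ convergence of $\Psi_n$, allegedly supplied by Theorem~\ref{thm-1} and condition ($C_\Phi$--3)---is not available under the hypotheses of this lemma. Assumption (iv) gives only \emph{weak} $L^2_{\loc}$ convergence of $\Phi_{n,\rho}$; Theorem~\ref{thm-1} concerns velocity averages of $h_n$, not of $\Phi_{n,\rho}$, so it says nothing about $\Psi_n$; and ($C_\Phi$--3) is not among the hypotheses here (it enters only later, in Lemma~\ref{lem:weak-sol}, \emph{after} strong convergence of $\tilde u_k$ has already been secured by velocity averaging). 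Without strong convergence of $\Psi_n$, the quadratic-variation identification you sketch cannot be completed.

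The paper sidesteps this by abandoning a.s.\ convergence of the stochastic term and instead testing the whole identity against $\chi_A(\omega)$ for arbitrary measurable $A\subset\Omega$, reducing the task to showing $\int_\Omega\chi_A\,\cI\,d\prob=0$. The deterministic terms converge in $L^1(\Omega)$ (a.s.\ convergence plus Vitali). For the stochastic term the decisive observation---which you are missing---is that the It\^o map $v\mapsto\int_0^\cdot v\,dW$ is linear and bounded from the closed (hence weakly closed) subspace of adapted processes in $L^2(\Omega\times(0,T))$ into $L^2(\Omega\times(0,T))$, and is therefore \emph{weakly} continuous; thus $M_n\rightharpoonup M$ weakly in $L^2_{\omega,t}$, which is exactly what is needed after pairing with $\chi_A$. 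For a genuinely varying sequence $W_n\to W$ this argument needs an extra regularisation step, for which the paper defers to \cite{Frid:2021us} and \cite{GHKP-inviscid}.
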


\begin{remark}
The assumptions (i)--(iv) are tailored 
to the kinetic SPDE \eqref{eq:tilde-kinetic} linked to the 
pseudo-parabolic SPDE \eqref{PP-1}, see 
\eqref{eq:tilde-conv}, \eqref{eq:tilde-kinetic-est}, 
and \eqref{eq:tilde-sign-functions}. However, to deduce the limit equation 
\eqref{weak-sense-tx-limit}, the first assumption 
can be relaxed. Indeed, (i) may be replaced 
by weak $L^2$ convergence in all the
variables $t,\mx,\lambda$ (strong $L^2$ convergence 
in $t$ is not needed).
\end{remark}

\begin{remark}
Recall that the source terms $g_{n,\rho}$, $G_{n,\rho}$ 
have the structure \eqref{notation-rho}, which 
is motivated by the kinetic SPDE \eqref{eq:tilde-kinetic}. 
The convergences supplied by the forthcoming Lemma \ref{lem:as-repr} 
imply that the limits $g_{\rho}$, $G_{\rho}$ 
exhibit the same structure, i.e.,
\begin{align*}
	g_{\rho}=(-1)^{m_1} \int_{\R^m} \overline{g}
	\frac{\partial^{m_1}\rho}{\pa\lambda^{m_1}} \, d\lambda,
	\quad m_1\in \N,
	\qquad 
	G_{\rho}=(-1)^{m_2} \int_{\R^m} \overline{G}
	\frac{\partial^{m_2}\rho}{\pa\lambda^{m_2}}\, d\lambda,
	\quad m_2\in \N,
\end{align*}
for some distribution-valued random 
variables $\overline{g}$, $\overline{G}$. Nevertheless, 
this structure is not needed in this section, see also 
the related Remark \ref{rem:source-structure}.
\end{remark}

\begin{proof}
Fix test functions $(\varphi,\rho)\in C^\infty_c([0,T)\times \R^d) 
\times C^{\infty}_c(\R^m)$, and let us express 
the $(t,\mx,\lambda)$-weak formulation 
\eqref{weak-sense-tx} symbolically as 
$$
\cI_n(\omega)=0, 
\quad \text{for $\prob$-a.e.~$\omega\in \Omega$}.
$$
Denote by $\cI=\cI(\omega)$ the (limit) random variable
\begin{equation}\label{weak-sense-tx-limit-Idef}
	\begin{split}
		\cI(\omega) & =\int_0^T\int_{\R^{d+m}} h \pa_t \varphi
		\rho \, d\lambda \, d\mx \, dt
		+\int_{\R^{d+m}}  h_0(\mx,\lambda) 
		\varphi(0,\mx)\rho(\lambda)  \, d\lambda \, d\mx
		\\ & \qquad 
		+\int_0^T\int_{\R^{d+m}} h F(\mx,\lambda)
		\cdot \nabla_{\mx}\varphi \rho \, d\lambda\, d\mx\,dt
		\\ & \qquad\qquad 
		-m_{\rho}(\varphi)
		-\int_0^T\int_{\R^{d}} \Phi_{\rho}
		\varphi \,d\mx \, dW(t), 
		\quad \text{$\omega\in \Omega$},
	\end{split}	
\end{equation}
where $m_{\rho}(\varphi)$ is defined by 
\eqref{eq:measure-m-def-limit}. The random variable 
$\cI$ depends on the test functions $\varphi,\rho$. 
Whenever we need to make that dependency explicit, 
we write $\cI_{\varphi,\rho}=\cI_{\varphi,\rho}(\omega)$.

Given Lemma \ref{ito-eq}, it is enough 
to prove that $\cI=0$ a.s. To this end, we will 
verify that
\begin{equation}\label{eq:lim-IminusIn}
	\lim_{n\to \infty} \int_{\Omega} \chi_A(\omega) 
	\bigl(\cI(\omega)-\cI_n(\omega)\bigr)\, d\prob(\omega)=0, 
\end{equation}
for any measurable set $A\in \F$. Without 
loss of generality, we assume that $\varphi\rho$ 
is supported in $(0,T)\times K\times L$, for 
some $K\subset\subset \R^d$, $L\subset\subset \R^m$.  

By assumption (i) 
$$
\int_{(0,T)\times \R^{d+m}}  h_n \partial_t \varphi \rho 
\, d\lambda\, d\mx\, dt
\ton
\int_{(0,T)\times \R^{d+m}}  h \partial_t \varphi \rho 
\, d\lambda\, d\mx\,dt, 
\quad \text{a.s.},
$$
and, by Vitali's convergence theorem, there 
is convergence also in $L^1(\Omega)$. 
In particular,
\begin{align*}
	&\lim_{n\to \infty}
	\int_{\Omega} \chi_A(\omega) 
	\int_{(0,T)\times \R^{d+m}} h_n \partial_t\varphi \rho 
	\, d\lambda \, d\mx \, dt\, d\prob(\omega)
	\\ & \qquad =\int_{\Omega} \chi_A(\omega) 
	\int_{(0,T)\times \R^{d+m}} h\partial_t\varphi \rho 
	\, d\lambda \, d\mx \, dt\, d\prob(\omega).
\end{align*}
Similarly, using assumption (b), which 
implies that $F\cdot \nabla_{\mx}\varphi \rho
\in L^2((0,T)\times K\times L)$,
\begin{align*}
	& \lim_{n\to \infty}
	\int_{\Omega} \chi_A(\omega) 
	\int_{(0,T)\times \R^{d+m}} 
	h_n F(\mx,\lambda)\cdot \nabla_{\mx}\varphi \rho 
	\, d\lambda \, d\mx \, dt\, d\prob(\omega)
	\\ & \qquad 
	=\int_{\Omega} \chi_A(\omega) 
	\int_{(0,T)\times \R^{d+m}} 
	h F(\mx,\lambda)\cdot \nabla_{\mx}\varphi \rho 
	\, d\lambda \, d\mx \, dt\, d\prob(\omega).
\end{align*}

Next, by assumption (iii),
$$
\action{G_{n,\rho}}{\varphi}
\ton \action{G_{\rho}}{\varphi} 
\,\, \text{a.s.},
\quad 
E\left[\abs{\action{G_{n,\rho}}{\varphi}}^2\right]
\lesssim_{T,K,\varphi} 1.
$$
Thus, by the Vitali convergence theorem,
$\action{G_{n,\rho}}{\varphi}\ton 
\action{G_{\rho}}{\varphi}$ in $L^1(\Omega)$. 
Similarly, we can prove that 
$\action{g_{n,\rho}}{\varphi}
\ton \action{g_{\rho}}{\varphi}$ in $L^1(\Omega)$. 
Combining last two convergences, 
$$
m_{n,\rho}(\varphi)\ton
m_{\rho}(\varphi)\quad 
\text{in $L^1(\Omega)$},
$$
which, in particular, implies $\int_{\Omega} 
\chi_A m_{n,\rho}(\varphi)\, d\prob\ton 
\int_{\Omega} \chi_A m_{\rho}(\varphi)\, d\prob$.

To conclude that \eqref{eq:lim-IminusIn} holds, it remains 
to prove convergence of the random variable $M_n(T)$, where
$$
M_n(t):=\int_0^t\int_{\R^d} 
\Phi_{n,\rho}\varphi \,d\mx \, dW_n(t).
$$
This follows from assumption (i)
and \cite[Lemma 2.1]{Debussche:2011aa}.

\medskip

Summarising, we have proved that $\cI_{\varphi,\rho}=0$, 
a.s., for arbitrary but fixed test functions 
$\varphi\in C^\infty_c([0,T)\times \R^d)$ 
and $\rho \in C^\infty_c(\R^m)$, where 
$\cI=\cI_{\varphi,\rho}$ is defined by \eqref{weak-sense-tx-limit-Idef}. 
Using the separability of $C^\infty_c$, the 
exceptional set may be chosen independent of 
the particular test functions $\varphi$, $\rho$.
\end{proof}

One important implication of Lemma \ref{lem:weak-limit-kinetic} is 
that we can remove the zero convergence assumption (a) of Theorem \ref{thm-1}, 
cf.~\eqref{eq:hk-L2t-L2weak}.

\begin{corollary}[stochastic velocity averaging on $\R^d$, 
non-zero limit]\label{cor:velocity-average}
Under the assumptions (i)--(iv) of 
Lemma \ref{lem:weak-limit-kinetic}, (b) of Theorem \ref{thm-1}, 
and the non-degeneracy condition \eqref{non-deg}, we have
\begin{equation}\label{res-1-nonzero}
 	E \left[\norm{h_n-h}_{L^2_{\loc}((0,T)\times \R^d;
 	L^2_{\loc,w}(\R^m))}^2\right]\ton 0,
\end{equation}
where $h$ denotes the limit from (i), that is,
$$
E\left[\int_0^T\int_K\abs{\int_L \bigl(h_n-h\bigr)\rho 
\, d\lambda}^2\, d\mx \, dt \right]\ton 0, 
\qquad \forall \rho\in L^2(L), 
\quad \forall L\subset\subset \R,
\quad \forall K\subset\subset \R^d.
$$
\end{corollary}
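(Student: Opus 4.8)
The plan is to apply Theorem \ref{thm-1} to the difference $w_n := h_n - h$. First I would note that, by assumption (i), $w_n \ton 0$ in $L^2_t\bigl(L^2_{\loc,w}(\R^d\times\R^m)\bigr)$ almost surely while $\norm{w_n}_{L^\infty_{\omega,t,\mx,\lambda}}\lesssim 1$, so $w_n$ satisfies condition (a) with limit zero. By Lemma \ref{lem:weak-limit-kinetic}, the weak limit $h$ solves the limit kinetic SPDE \eqref{weak-sense-tx-limit} with data $g_\rho,G_\rho,\Phi_\rho$ and driving process $W$; subtracting this from the equation \eqref{weak-sense-tx} satisfied by $h_n$ shows that $w_n$ solves a transport SPDE of the form \eqref{eq-1}, with source terms $g_{n,\rho}-g_\rho$, $G_{n,\rho}-G_\rho$ and a stochastic forcing $\Phi_{n}\,dW_n-\Phi\,dW$ driven by the two Wiener processes $W_n$ and $W$. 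Applying Lemma \ref{ito-eq} separately to $h_n$ (with $W_n$) and to $h$ (with $W$) equips $w_n$ with a pointwise-in-$t$ weak formulation of finite-variation-plus-two-continuous-martingales type, to which the It\^o product rule applies.

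Next I would verify that the new source terms meet the hypotheses of Lemma \ref{lem:H-measure-tmp}. For the $g$-part, assumption (ii) gives $g_{n,\rho}\ton g_\rho$ a.s.~in $L^2(0,T;W^{-1,r}_{\loc}(\R^d))$ together with $E\bigl[\norm{g_{n,\rho}}_{L^2(0,T;W^{-1,r}(K))}^2\bigr]\lesssim 1$; Vitali's convergence theorem then upgrades this to $E\bigl[\norm{g_{n,\rho}-g_\rho}_{L^2(0,T;W^{-1,r}(K))}\bigr]\ton 0$, so the difference source satisfies condition (c). For the $G$-part, assumption (iii) yields $E\bigl[\norm{G_{n,\rho}-G_\rho}_{\cM((0,T)\times K)}\bigr]\lesssim 1$, which is exactly what condition (d) requires. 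Consequently the analogues of the terms $I_{4,1},I_{5,1}$ produced by these sources are $\mathbf{o}_{n\to\infty}(1)$, while those of $I_{4,2},I_{5,2}$ are $O(1/M^{1-s})$, just as in Lemma \ref{lem:H-measure-tmp}.

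The main obstacle is the stochastic forcing, since $w_n$ is driven by two distinct Wiener processes: the It\^o product rule applied to $\F_{\mx}(w_n^\mp\varphi_1)\overline{\F_{\mx}(w_n^\mq\varphi_2)}$ now produces a quadratic-variation correction that splits into four pieces, weighted by $d\langle W_n\rangle$, $d\langle W\rangle$ and the cross-variations $d\langle W_n,W\rangle$ (and its conjugate). The two diagonal terms are handled exactly as the term $I_3$ in Lemma \ref{lem:H-measure-tmp}, using the $L^2$ bounds on $\Phi_{n,\rho}$ and $\Phi_\rho$ from assumption (iv). For the cross terms the key point is the Kunita--Watanabe inequality: since $\langle W_n\rangle_t=\langle W\rangle_t=t$, the signed measure $d\langle W_n,W\rangle$ is absolutely continuous with density bounded by $1$ in modulus, so the Cauchy--Schwarz and Plancherel estimate used for $I_3$ applies verbatim and each cross term is again $O(1/M)$, uniformly in $n$. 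Hence the entire noise contribution is $O(1/M)$, and the presence of two Wiener processes causes no harm.

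With these ingredients the conclusion of Lemma \ref{lem:H-measure-tmp} holds for $w_n$, giving the localisation identity \eqref{localization-prep} and hence \eqref{localization2} for the $H$-measure $\mu$ generated by $\{\sqrt{\varphi}\,w_n\}$ via Corollary \ref{cor:extension-1}. The non-degeneracy argument following \eqref{localization2} then forces $\mu\equiv 0$ verbatim as in the proof of Theorem \ref{thm-1}, using the non-degeneracy condition \eqref{non-deg} and assumption (b). The vanishing of $\mu$ yields the strong convergence $\int_{\R^m}\sqrt{\varphi}\,w_n^\mp\rho(\mp)\,d\mp \ton 0$ in $L^2\bigl(\Omega;L^2_{\loc}((0,T)\times\R^d)\bigr)$, and by the arbitrariness of $\varphi$, $K$ and $\rho$ this is precisely the asserted convergence \eqref{res-1-nonzero}.
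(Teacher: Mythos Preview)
Your proposal is correct and follows essentially the same route as the paper: form the difference $w_n=h_n-h$, use Lemma~\ref{lem:weak-limit-kinetic} to obtain its kinetic SPDE, verify that the difference sources satisfy conditions (c)--(d), and then rerun Theorem~\ref{thm-1}. The paper dispatches the two-martingale issue in one sentence (``this does not affect the validity of Theorem~\ref{thm-1}, which only relies on the martingale property of the stochastic integral and condition (e)''), whereas you spell out the It\^o product computation and control the cross-variation via Kunita--Watanabe; this extra detail is sound and in fact clarifies what the paper leaves implicit.
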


\begin{proof}
Recall that $h_n$ is a weak solution of \eqref{eq-1} in 
the sense of \eqref{weak-sense-tx}. By Lemma \ref{lem:weak-limit-kinetic}, 
the limit $h$ satisfies \eqref{weak-sense-tx-limit}. Consequently, 
the difference $\bar h_n:=h_n-h$ is a $(t,\mx)$-weak solution of 
\begin{equation}\label{eq:bar-h-eqn}
	d \action{\bar h_n}{\rho} 
	+\Div_\mx \bigl(\action{ F(\mx,\cdot)\bar h_n}{\rho}\bigr)\, dt
	= \bigl(\bar{g}_{n,\rho}+\bar G_{n,\rho}\bigr)\, dt
	+M_{n,\rho},
\end{equation}
where 
$\bar g_{n,\rho}:=g_{n,\rho}-g_{\rho}$, 
$\bar G_{n,\rho}:=G_{n,\rho}-G_{\rho}$, and 
$$
M_{n,\rho}(t)=\Phi_{n,\rho} \, dW_{n}(t)-\Phi_\rho \, dW(t).
$$
The SPDE \eqref{eq:bar-h-eqn} is essentially of 
the form \eqref{eq-1}, where the conditions {\em (i)}, 
{\em (ii)}, {\em (iii)} of Lemma \ref{lem:weak-limit-kinetic} imply 
conditions (a), (c), (d) of Theorem \ref{thm-1} for 
$\bar h_n$, $\bar{g}_{n,\rho}$, and $\bar G_{n,\rho}$, 
respectively. The difference in form lies in the appearance of two martingale 
terms in $M_n(t)$, whereas \eqref{eq-1} 
contains one such term. However, this does 
not affect the validity of Theorem \ref{thm-1}, which only
relies on the martingale property of the stochastic integral 
and condition (e) ($\Phi_{n,\rho}$, $\Phi_{\rho}$ 
satisfy this condition by {\em (iv)}). 
As a result, Theorem \ref{thm-1} yields the 
claim \eqref{res-1-nonzero}.
\end{proof}

It is elementary to amend the proof of
Corollary \ref{cor:velocity-average} so that it 
applies to a compact Riemannian manifold.

\begin{corollary}[stochastic velocity averaging on $M$]
\label{cor:velocity-average-M}
Suppose the kinetic SPDE \eqref{eq-1} is defined on a 
compact Riemannian manifold $M$, and that the assumptions 
of Corollary \ref{cor:velocity-average} hold in each chart from a 
finite collection of charts covering $M$. Then there exists 
a subsequence $\Seq{h_n}$ (not relabelled) such that  
\begin{equation*}
	E\left[\norm{h_n-h}_{L^2((0,T)\times M;
	L^2_{\loc,w}(\R^m))}^2\right]\ton 0,
\end{equation*}
where $h$ denotes the limit from the assumption (i) of 
Lemma \ref{lem:weak-limit-kinetic}, that is,
$$
E\left[\int_0^T\int_M\abs{\int_L \bigl(h_n-h\bigr)\rho 
\, d\lambda}^2\, dV(\mx) \, dt\right]\ton 0, 
\qquad \forall \rho\in L^2(L), 
\quad \forall L\subset\subset \R.
$$
\end{corollary}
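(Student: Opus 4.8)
The plan is to reduce the assertion to the Euclidean Corollary \ref{cor:velocity-average} by a localisation argument, using that velocity averaging is a local property (as stressed at the start of this section). First I would fix a finite atlas $\Seq{(U_\alpha,\kappa_\alpha)}_{\alpha=1}^N$ covering the compact manifold $M$, together with a subordinate smooth partition of unity $\Seq{\chi_\alpha}_{\alpha=1}^N$ satisfying $\sum_{\alpha}\chi_\alpha\equiv 1$ and $\supp\chi_\alpha\subset\subset U_\alpha$. Pulling the kinetic SPDE \eqref{eq-1} back through each chart $\kappa_\alpha$, the manifold divergence $\Div^g_\mx$ differs from the Euclidean one only by the locally bounded term $\sum_{j,k}\Gamma^j_{kj}F_k h_n$, which is absorbed into the right-hand side exactly as in the preamble to this section; the pulled-back $h_n$ therefore solves an equation of the form \eqref{eq-1} on $\kappa_\alpha(U_\alpha)\subset\R^d$. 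By hypothesis, the assumptions (i)--(iv) of Lemma \ref{lem:weak-limit-kinetic}, assumption (b), and the non-degeneracy condition \eqref{non-deg} all hold in each chart, so these localised equations fall squarely within the scope of Corollary \ref{cor:velocity-average}.

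I would then invoke Corollary \ref{cor:velocity-average} chart by chart. In the $\alpha$-th chart it supplies, along a subsequence, the local convergence $E\bigl[\norm{h_n-h}^2_{L^2_{\loc}((0,T)\times\R^d;\,L^2_{\loc,w}(\R^m))}\bigr]\to 0$, i.e.\ convergence of the velocity averages on every $K\subset\subset\kappa_\alpha(U_\alpha)$. Since there are only finitely many charts, a finite diagonal extraction produces a single common subsequence (not relabelled) along which this convergence holds simultaneously for all $\alpha=1,\dots,N$. For each $\alpha$ I pick a compact set $K_\alpha\subset\subset\kappa_\alpha(U_\alpha)$ with $\kappa_\alpha(\supp\chi_\alpha)\subset K_\alpha$, so that the local convergence is available on $K_\alpha$.

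It remains to glue. Writing the Riemannian integral through the partition of unity, $\int_M\abs{\,\cdot\,}^2\,dV=\sum_\alpha\int_M\chi_\alpha\abs{\,\cdot\,}^2\,dV$, and transporting each summand into the corresponding coordinate patch, the volume density becomes $\chi_\alpha\sqrt{\det g_{ij}}\,dx$ supported in $K_\alpha$; on $\supp\chi_\alpha$ the factor $\sqrt{\det g_{ij}}$ is bounded above, so this weight is dominated by Lebesgue measure there. Consequently, for any fixed $\rho\in L^2(L)$ with $L\subset\subset\R$,
$$
E\left[\int_0^T\!\!\int_M \abs{\int_L (h_n-h)\rho\,d\lambda}^2\,dV\,dt\right]
\lesssim \sum_{\alpha=1}^N E\left[\int_0^T\!\!\int_{K_\alpha}\abs{\int_L (h_n-h)\rho\,d\lambda}^2\,dx\,dt\right]\ton 0,
$$
which is exactly the stated convergence on $M$. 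The only delicate point, and the one I expect to require the most care, is the bookkeeping of the passage between the manifold measure $dV$ and Lebesgue measure under the charts, together with checking that absorbing the Christoffel terms genuinely preserves each hypothesis of Corollary \ref{cor:velocity-average} in every chart---in particular the structural conditions (c)--(d) on the source terms and the chartwise non-degeneracy \eqref{non-deg}.
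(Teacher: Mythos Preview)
Your proposal is correct and follows essentially the same route as the paper: fix a finite atlas with a subordinate partition of unity, apply Corollary \ref{cor:velocity-average} chartwise (relying on the discussion at the start of the section to absorb the Christoffel terms), extract a common subsequence by a finite diagonal argument, and glue via the partition of unity, bounding the Riemannian density by Lebesgue measure on the compact support of each $\chi_\alpha$. The paper's proof is a bit terser about the volume-form bookkeeping and the preservation of hypotheses under the chart pullbacks, but the argument is the same.
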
 

\begin{proof}
We fix a finite atlas $\Seq{(M_i,\alpha_i)}_{i=1}^I$ 
covering the manifold $M$. From the discussion at the 
beginning of this section, the sequences 
$\Seq{h_n\circ \alpha_i}_{n\in \N}$, $i=1,\ldots,I$, satisfy 
kinetic SPDEs of the type \eqref{eq-1}. 
By Corollary \ref{cor:velocity-average} and a diagonal argument, 
there is a subsequence $\Seq{h_n}_{n\in\N}$ (not relabelled) 
such that
\begin{equation}\label{local-cv}
	\lim_{n\to \infty}
	E\left[\norm{\int_{L} \bigl(h_n\circ\alpha_i
	-h\circ\alpha_i\bigr)\rho\,d\lambda}^2_{L^2((0,T)
	\times \alpha_i^{-1}(M_i))}\right]=0,
	\quad  \forall \rho\in L^2_{\loc}(\R^m),
\end{equation}  
for all $i=1,\ldots,I$ and $L\subset\subset \R^m$. 
Using a partition of unity $\Seq{\zeta_i}$ 
subordinate to the atlas $\Seq{(M_i,\alpha_i)}$, 
\begin{align*}
	&\lim_{n\to \infty}
	E\left[\int_0^T\int_{M}\abs{\int_{L} \bigl(h_n-h\bigr) 
	\rho\,d\lambda}^2 \,dV(\mx)\, dt\right]
	\\ & \,\, 
	= \lim_{n\to \infty}\sum_i 
	E \left[\int_0^T\int_{M_i} \zeta_i(\mx) 
	\abs{\int_{L} \bigl(h_n(t,\mx,\lambda)-h(t,\mx,\lambda)\bigr) 
	\rho(\lambda)\, d\lambda}^2\, dV(\mx)\, dt\right]
	\\ & \,\,
	=\lim_{n\to \infty}\sum_i
	E \left[\int_0^T\int_{\alpha_i^{-1}(M_i)} 
	\zeta_i(\alpha_i(\mx)) 
	\abs{\int_L \bigl(h_n\circ \alpha_i(\mx,\lambda)
	-h\circ \alpha_i(\mx,\lambda)\bigr)
	\rho(\lambda)\, d\lambda}^2 
	\, J_i(\mx)\, d\mx\right]
	\overset{\eqref{local-cv}}{=}0,
\end{align*} 
where $\Seq{J_i}$ are the corresponding Jacobians.
\end{proof}

\section{Vanishing diffusion-capillarity limit}
\label{sec:singular-limit}

This section investigates the vanishing 
diffusion-capillarity limit of \eqref{PP-1}, 
intending to prove Theorem \ref{main-thm}. According to 
Theorem \ref{unique_sol_par}, the pseudo-parabolic 
SPDE \eqref{PP-1}, \eqref{PP-ID} possesses 
a unique solution $u_k \in L_{\prob}^2(\Omega;L^2((0,T);$ $H^1(M)))$. 
We will deduce some additional a priori 
estimates (dependent on $k$) related 
to higher regularity and derive a kinetic formulation 
of the pseudo-parabolic SPDE. Finally, using 
Skorokhod-Jakubowski representations of random 
variables in quasi-Polish spaces and 
Theorem \ref{thm-1} (stochastic velocity averaging), 
we prove strong convergence towards a 
weak solution of a stochastic conservation 
law with discontinuous flux.

\subsection{Additional a priori estimates}

We need an auxiliary result. 

\begin{lemma}\label{lem:gc_and_integral}
Suppose $\mff\in C^1(M\times \R)$ satisfies the 
geometry compatibility condition \eqref{gc}
and let $u\in H^1(M)$. Then 
$\int_M \larab{\mff(\mx,u(\mx))}{\nabla u(\mx)}
\, dV(\mx) = 0$.
\end{lemma}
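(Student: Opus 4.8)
The plan is to reduce the statement to the divergence theorem on the closed manifold $M$ by exhibiting the integrand as an exact divergence. First I would introduce a $\lambda$-primitive of the flux, namely the vector field
$$
\mathbf{G}(\mx,\lambda):=\int_0^\lambda \mff(\mx,s)\,ds,
$$
so that $\pa_\lambda \mathbf{G}(\mx,\lambda)=\mff(\mx,\lambda)$. Since the $\mx$-divergence commutes with the $s$-integration (both $\mff$ and $\mff'$ being continuous), the geometry compatibility condition \eqref{gc} gives $\Div_\mx \mathbf{G}(\mx,\lambda)=\int_0^\lambda (\Div \mff)(\mx,s)\,ds=0$ for every $\lambda$, where $\Div_\mx$ means the divergence taken with $\lambda$ frozen.

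Next I would apply the chain rule for the divergence of the composite vector field $\mx\mapsto \mathbf{G}(\mx,u(\mx))$, which for smooth $u$ reads
$$
\Div_\mx\bigl[\mathbf{G}(\mx,u(\mx))\bigr]
=(\Div_\mx \mathbf{G})(\mx,u(\mx))
+\larab{\mff(\mx,u(\mx))}{\nabla u(\mx)},
$$
the first term on the right being the frozen-$\lambda$ divergence evaluated at $\lambda=u(\mx)$. By the previous step that term vanishes, so the integrand $\larab{\mff(\mx,u)}{\nabla u}$ is precisely $\Div_\mx[\mathbf{G}(\mx,u(\mx))]$. Integrating over the compact, oriented, boundaryless manifold $M$ and invoking the divergence theorem then yields $\int_M \larab{\mff(\mx,u)}{\nabla u}\,dV=0$.

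The only genuine difficulty is regularity: the chain-rule computation above is classical for smooth $u$, whereas the lemma only assumes $u\in H^1(M)$. I would therefore approximate $u$ by a sequence $u_j\in C^\infty(M)$ with $u_j\to u$ in $H^1(M)$, establish the identity for each $u_j$, and pass to the limit. Passing to the limit needs $\mff(\cdot,u_j)\to \mff(\cdot,u)$ in $L^2(M)$ so that the $L^2$-pairing with $\nabla u_j\to\nabla u$ survives; this follows from the $C^1$-regularity of $\mff$ together with the standing growth hypotheses on $\mff$ and the compactness of $M$ (up to a subsequence one may also assume $u_j\to u$ a.e., and then apply dominated convergence). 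The hard part will thus not be the geometric identity, which is essentially algebraic, but justifying the chain rule and the divergence theorem at this reduced ($W^{1,1}$) regularity of the composite field $\mathbf{G}(\mx,u(\mx))$ — precisely where the approximation and the convergence of the composition do the work.
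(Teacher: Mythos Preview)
Your proposal is correct and follows essentially the same approach as the paper: introduce the $\lambda$-primitive $\mathbf{G}(\mx,\lambda)=\int_0^\lambda \mff(\mx,s)\,ds$ (the paper writes this directly as $X(\mx)=\int_0^{u(\mx)}\mff(\mx,\lambda)\,d\lambda$), use the geometry compatibility condition together with the chain rule to identify the integrand as $\Div X$, apply the divergence theorem on the closed manifold, and handle the $H^1$ case by smooth approximation. The only cosmetic difference is that the paper carries out the chain-rule step explicitly in local coordinates (keeping track of the Christoffel terms), whereas you state it abstractly; and the paper places the approximation step first rather than last.
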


\begin{proof}
Let $u_m\in \cinfty(M)$ be such that 
$u_m\to u$ in $H^1(M)$ as $m\to \infty$. Then 
$$
\int_M \larab{\mff(\mx,u_m(\mx))}{\nabla u_m(\mx)}
\, dV(\mx) \to \int_M 
\larab{\mff(\mx,u(\mx))}{\nabla u(\mx)}
\, dV(\mx),
$$
so it suffices to prove the claim for smooth $u$. 
Assuming $u\in C^\infty(M)$, let 
$X(\mx):=\int_0^{u(\mx)} f(\mx,\lambda)\,d\lambda$. 
Then due to \eqref{gc}, in local 
coordinates we have
\begin{align*}
	\ddiv X &= \frac{\pa X^l}{\pa x^l} 
	+\Gamma^j_{lj} X^l = \mff^l(x,u)\pa_lu 
	+\int_0^u \frac{\pa}{\pa x^l}\mff^l(\mx,\lambda)
	+\Gamma^j_{lj}(\mx) \mff^l(x,\lambda)\,d\lambda
	\\& = \mff^l(x,u)\pa_lu
	+\int_0^u \ddiv \mff(\mx,\lambda)\,d\lambda 
	= \mff^l(x,u)\pa_lu 
	= \mff^l(x,u) g_{lm}(g^{mr}\pa_r u)
	= \lara{\mff}{\nabla u}.
\end{align*}
Hence, by Stokes' theorem, 
$\int_M \lara{\mff}{\nabla u}\, dV 
= \int_M \ddiv X\, dV = 0$.
\end{proof}

We proceed with the a priori estimates that 
will put us in a position to apply the 
functional analytic tools developed before.

\begin{lemma}\label{osnov}
In addition to the conditions required 
by Theorem \ref{unique_sol_par}, suppose that 
the flux $\mff$ satisfies ($C_f$--4). 
Moreover, assume that $\delta_k\leq \eps_k \le 1/2$. 
For each fixed $k\in \N$, the solution $u_k$ 
of the pseudo-parabolic SPDE 
\eqref{PP-1}, \eqref{PP-ID} satisfies, 
for any $t\in [0,T]$,
\begin{equation}\label{prva}
	\begin{split}
		&\frac{1}{2}(1-\delta_k)
		E\left[\norm{u_k(t)}^2_{L^2(M)}\right]
		+\frac{1}{2}\delta_k E\left[
		\norm{u_k(t)}^2_{H^1(M)}\right]
		\\ & \qquad \qquad \qquad 
		+\eps_k E\left[\int_0^t 
		\norm{\nabla u_k(t')}_{L^2(M)}^2
		\, dt'\right]\le C_0,
	\end{split}
\end{equation}

\begin{equation}\label{cetvrta}
	\begin{split}
		&\frac{1}{2}\delta_k (1-\delta_k)
		E\left[\norm{u_k(t,\cdot)}^2_{H^1(M)}\right]
		+\frac{1}{2}\delta_k^2 E\left[
		\norm{u_k(t,\cdot)}^2_{H^2(M)}\right] 
		\\ & \qquad \qquad \quad
		+\frac{\eps_k\delta_k}{2}  
		E\left[\int_0^t 
		\norm{u_k(t',\cdot)}_{H^2(M)}^2\, dt'\right] 
		\leq \tilde{C}_0,
	\end{split}
\end{equation}
where
\begin{align*}
	C_0 & = C_0\left(t,\norm{u_0}_{L^2_x}^2,
	\delta_k \norm{u_0}_{H^1}^2\right)
	\\ & =e^{C_\Phi t}\left(C_\Phi t+
	\frac{1}{2}(1-\delta_k)\norm{u_0}^2_{L^2(M)} 
	+\frac{1}{2}\delta_k \norm{u_0}^2_{H^1(M)}\right),
	\\ \tilde{C}_0 
	& = \tilde{C}_0\left(t,\norm{u_0}_{L^2_x}^2,
	\delta_k \norm{u_0}_{H^1_{\mx}}^2,
	\delta_k^2 \norm{u_0}_{H^2_{\mx}}^2, 
	\frac{\delta_k}{\eps_k^2}\norm{\mff_k'}_{\infty}^2\right)
	\\ & =\frac{1}{2}\delta_k (1-\delta_k)
	\norm{u_0}_{H^1(M)}^2
	+ \frac{1}{2}\delta_k^2 
	\norm{u_0}^2_{H^2(M)}
	\\ & \qquad 
	+C_0\left(t,\norm{u_0}_{L^2_x}^2,
	\delta_k \norm{u_0}_{H^1_x}^2\right)
	\left(\frac12 +C_\Phi t+\delta_k 
	+\frac{\delta_k}{2\eps_k^2}
	\norm{\mff_k'}_{\infty}^2\right)
	\\ & \qquad\qquad
	+ C_\Phi t+t\frac{\delta_k}{2\eps_k} 
	\norm{\sup\limits_{\lambda\in \R}
	\norm{\mff_k(\cdot,\lambda)}_g}_{L^2(M)}^2,
\end{align*}
and $C_\Phi$ is the constant appearing 
in ($C_\Phi$--1). 

In addition, we have the following higher 
moment bounds: for any finite $p>2$:
\begin{equation}\label{eq:moment-diss}
	E \left[\sup_{t\in [0,T]}
	\norm{u_n(t)}_{L^2(M)}^p \right]
	\leq \overline{C}_0,
	\qquad 
	\eps_k^{p/2} E\left[\abs{\int_0^T\int_M 
	\abs{\nabla u_k}^2 \, d \mx \, dt'}^{p/2}\right]
	\leq \overline{C}_0,
\end{equation}
where $\overline{C}_0 =\overline{C}_0
\left(T,\norm{u_0}_{L^2_x}^2,\delta_k \norm{u_0}_{H^1}^2\right)$ 
is independent of $k$.
\end{lemma}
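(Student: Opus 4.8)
The plan is to establish all three estimates first for the Galerkin approximations $u_n$ constructed in Section~\ref{sec:well-posed} (for which the finite-dimensional It\^o formula of \eqref{alpha-sys} is classical) and then to pass to the limit $n\to\infty$; since $u_n\to u_k$ strongly in $L^2_{\prob}(\Omega;C([0,T];L^2(M)))$ and the bounds below are of energy type, the limit $u_k$ inherits them by lower semicontinuity of the norms and Fatou's lemma. I drop the index $k$ and work in the eigenbasis $\Seq{e_k}$, recalling $\Delta e_k=(1-\lambda_k^2)e_k$ with $\lambda_k\ge 1$, so that $\norm{\nabla u}_{L^2}^2=\sum_k(\lambda_k^2-1)\alpha_k^2$ and $\norm{u}_{H^s}^2=\sum_k\lambda_k^{2s}\alpha_k^2$. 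Writing $c_k:=1-\delta+\delta\lambda_k^2\ge 1$, the SDE reads $d(c_k\alpha_k)+\eps(\lambda_k^2-1)\alpha_k\,dt=\bigl(\int_M\langle\mff(\mx,u),\nabla e_k\rangle\,dV\bigr)dt+\bigl(\int_M\Phi(\mx,u)e_k\,dV\bigr)dW_t$.

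For \eqref{prva} I apply It\^o to $c_k\alpha_k^2$ and sum. Using $\sum_k c_k\alpha_k^2=(1-\delta)\norm{u}_{L^2}^2+\delta\norm{u}_{H^1}^2$, the drift produced by the flux is $\sum_k\alpha_k\int_M\langle\mff,\nabla e_k\rangle\,dV=\int_M\langle\mff(\mx,u),\nabla u\rangle\,dV$, which \emph{vanishes identically} by Lemma~\ref{lem:gc_and_integral} --- this is where ($C_f$--4) is used. The It\^o correction $\sum_k c_k^{-1}(\int_M\Phi e_k\,dV)^2\le\norm{\Phi(\cdot,u)}_{L^2}^2\le C_\Phi(1+\norm{u}_{L^2}^2)$ by $c_k\ge 1$, Bessel's inequality and ($C_\Phi$--1). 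Taking expectations removes the martingale, and Gronwall's inequality yields \eqref{prva} with the stated $C_0$.

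The core of the lemma is \eqref{cetvrta}. Here I apply It\^o to the weighted energy $\sum_k\lambda_k^2c_k\alpha_k^2$ and multiply by $\delta$, noting $\delta\sum_k\lambda_k^2c_k\alpha_k^2=\delta(1-\delta)\norm{u}_{H^1}^2+\delta^2\norm{u}_{H^2}^2$. The dissipation becomes $-2\eps\delta\sum_k\lambda_k^2(\lambda_k^2-1)\alpha_k^2=-2\eps\delta\bigl(\norm{u}_{H^2}^2-\norm{u}_{H^1}^2\bigr)$, so the leading $-2\eps\delta\norm{u}_{H^2}^2$ is the only available dissipation while the lower-order $+2\eps\delta\norm{u}_{H^1}^2$ is controlled by \eqref{prva} (the $\delta C_0$ part of $\tilde C_0$). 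The noise correction $\delta\sum_k\lambda_k^2c_k^{-1}(\int_M\Phi e_k\,dV)^2$ is handled through the key bound $\lambda_k^2c_k^{-1}=\lambda_k^2/(1-\delta+\delta\lambda_k^2)\le 1/\delta$, which after the factor $\delta$ gives again $\le C_\Phi(1+\norm{u}_{L^2}^2)$ and, via ($C_\Phi$--1) and \eqref{prva}, the $C_\Phi t$ and $C_\Phi t\,C_0$ contributions. The decisive term is the flux $2\delta\sum_k\lambda_k^2\alpha_k\int_M\langle\mff,\nabla e_k\rangle\,dV=2\delta\int_M\langle\mff(\mx,u),\nabla\Lambda^2u\rangle\,dV$; splitting $\nabla\Lambda^2u=\nabla u-\nabla\Delta u$, the first piece vanishes by Lemma~\ref{lem:gc_and_integral}, and integrating the second by parts while invoking ($C_f$--4) (which forces $\Div[\mff(\mx,u)]=\langle\mff'(\mx,u),\nabla u\rangle$) reduces it to $2\delta\int_M\langle\mff',\nabla u\rangle\Delta u\,dV$ up to a geometric lower-order remainder $\lesssim\delta\norm{\mff(\cdot,u)}_{L^2}\norm{\nabla u}_{L^2}$. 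A Young split tuned to the weak dissipation absorbs an $\tfrac{\eps\delta}{2}\int_0^t\norm{u}_{H^2}^2$ term on the left, and bounding the remainders by the $\eps$-weighted gradient estimate $\eps E\int_0^t\norm{\nabla u}_{L^2}^2\,dt'\le C_0$ of \eqref{prva} reproduces exactly the $\tfrac{\delta}{\eps^2}\norm{\mff'}_\infty^2C_0$ and $t\tfrac{\delta}{\eps}\norm{\sup_\lambda\norm{\mff(\cdot,\lambda)}_g}_{L^2}^2$ contributions to $\tilde C_0$. Collecting all terms yields \eqref{cetvrta}.

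Finally, for the higher moments \eqref{eq:moment-diss} I start from the $k$-independent $L^2$ energy (the pure-$L^2$ identity behind \eqref{eq:energy-tmp2-new}), set $X(t):=(1-\delta)\norm{u(t)}_{L^2}^2+2\eps\int_0^t\norm{\nabla u}_{L^2}^2\,dt'$, raise to the power $p/2$, take $\sup_{[0,T]}$ and expectation, and estimate the martingale part by the BDG inequality \eqref{eq:BDG}: $E[\sup_t\abs{M}^{p/2}]\lesssim E[(\int_0^T(\int_M u\Phi\,dV)^2\,dt')^{p/4}]$. Cauchy--Schwarz, ($C_\Phi$--1) and Young's inequality bound this by $\kappa\,E[\sup_t\norm{u}_{L^2}^p]+C_\kappa(1+\int_0^TE[\sup_{[0,t']}\norm{u}_{L^2}^p]\,dt')$; choosing $\kappa$ small absorbs the first term, and Gronwall gives $E[\sup_t\norm{u}_{L^2}^p]\le\overline C_0$ with $\overline C_0$ depending only on $T,p,C_\Phi,\norm{u_0}_{L^2}$ and $\delta\norm{u_0}_{H^1}^2\lesssim 1$, hence independent of $k$; since $(2\eps\int_0^T\norm{\nabla u}_{L^2}^2)^{p/2}\le\sup_tX(t)^{p/2}$, the dissipation bound follows from the same estimate. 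The main obstacle is \eqref{cetvrta}: the only $H^2$-dissipation is the weak term $\eps\delta\norm{u}_{H^2}^2$, so every flux contribution must be paired against it through a precisely weighted Young inequality whose remainder scales like $\delta/\eps^2$ --- exactly the borderline quantity kept finite by the hypothesis $\delta\le\eps$ (and, in Theorem~\ref{main-thm}, $\delta/\eps^2$ bounded) --- while keeping careful track of which bounds must be uniform in $k$ (only \eqref{eq:moment-diss}).
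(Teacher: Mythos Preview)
Your proposal is essentially correct and follows the same overall strategy as the paper: expand in the eigenbasis, apply It\^o to $c_j\alpha_j^2$ (resp.\ $\lambda_j^2 c_j\alpha_j^2$), sum, kill or estimate the flux via ($C_f$--4), bound the It\^o correction by $\lambda_j^2/c_j\le 1/\delta$, absorb half the $H^2$-dissipation via Young, and close with Gronwall and \eqref{prva}; the $p$-moment bound via BDG is also the same.

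There is one genuine difference in execution worth flagging. You work with the Galerkin approximations $u_n$ of Section~\ref{sec:well-posed}, so in the flux term $\int_M\langle\mff(\cdot,u_n),\nabla\Lambda^2 u_n\rangle\,dV$ the argument of $\mff$ and the gradient act on the \emph{same} function; hence the piece $\int_M\langle\mff(\cdot,u_n),\nabla u_n\rangle\,dV$ vanishes exactly by Lemma~\ref{lem:gc_and_integral}. The paper instead truncates the \emph{actual solution}, $u_k^{(n)}:=\sum_{j\le n}\alpha_j e_j$, but keeps the full $u_k$ inside $\mff_k$; then $I_1=\delta\,E\int_0^t\!\int_M\langle\mff_k(\cdot,u_k),\nabla u_k^{(n)}\rangle\,dV\,dt'$ does not vanish and must be split by Young, which is exactly where the contributions $\tfrac12 C_0$ and $t\tfrac{\delta}{2\eps}\norm{\sup_\lambda\norm{\mff_k(\cdot,\lambda)}_g}_{L^2}^2$ in the stated $\tilde C_0$ come from. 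Your ``geometric lower-order remainder'' is therefore misattributed: on a closed manifold $\int_M\langle\mff,\nabla\Delta u\rangle\,dV=-\int_M(\Div\mff)\Delta u\,dV$ holds without curvature terms, and in your Galerkin setting the $\nabla u_n$-piece is simply zero. Your route actually yields a slightly smaller $\tilde C_0$ (you may drop those two terms), while the paper's route avoids relying on uniform-in-$n$ $H^2$ bounds for the Galerkin sequence and works directly with the solution. Either way the argument closes.
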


\begin{proof}
We divide the proof into two steps.

\medskip

\noindent \underline{1. Estimates \eqref{prva} 
and \eqref{eq:moment-diss}.} 
First, note that we can represent the solution 
via the $L^2$-orthonormal basis $\Seq{e_j}_{j\in \N}$ 
used for the Galerkin approximations:
\begin{equation*}
	u_k(\omega,t,\mx)
	=\sum\limits_{j=1}^\infty 
	\alpha_j(\omega,t)e_j(\mx),
\end{equation*} 
where each $\alpha_j$ is a continuous adapted 
process on $[0,T]$ (suppressing the dependence 
on $k$). For any $s\in \N_0$, we have
\begin{align}\label{Hs}
	& E\left[\norm{u_k}^2_{L^2(0,T;H^s(M))}\right]
	=E\left[\int_0^T
	\sum\limits_{j=1}^\infty \lambda_j^{2s} 
	\abs{\alpha_j(t)}^2\, dt \right].
\end{align} 
By the orthonormality of $\Seq{e_j}_{j\in \N}$, 
as in \eqref{alpha-sys}, the processes $\alpha_j$ 
satisfy the SDEs
\begin{equation}\label{basic}
	\begin{split}
		& \left(1-\delta_k+\delta_k \lambda^2_j\right) 
		d\alpha_j 
		+\eps_k\left(\lambda^2_{j}-1\right)\alpha_j \, dt
		\\ & \qquad 
		=\int_M  \mff_k(\mx,u_k)\cdot 
		\nabla e_j \,dV(\mx) \, dt
		+\int_M \Phi(\mx,u_k) e_j \,dV(\mx)\, dW_t,
		\quad j\in \N.
	\end{split}
\end{equation} 
By It\^o's formula,  
\begin{equation}\label{1-1}
	\begin{split}
		& \frac{1}{2}\left(1-\delta_k
		+\delta_k \lambda^2_j\right)\, 
		d\abs{\alpha_j}^2
		+\eps_k(\lambda^2_j-1)\abs{\alpha_j}^2\, dt
		\\ & \qquad 
		= \int_M  \mff_k(\mx,u_k)\cdot 
		\nabla \left(\alpha_j e_j(\mx)\right)\,dV(\mx)\, dt
		\\ & \qquad \qquad
		+\frac{1}{2\left(1-\delta_k
		+\delta_k\lambda^2_j\right)}
		\left(\int_M \Phi(\mx,u_k) e_j(\mx) 
		\,dV(\mx)\right)^2\, dt
		\\ & \qquad \qquad \qquad
		+\int_M \Phi(\mx,u_k) \alpha_j
		e_j(\mx) \,dV(\mx)\, dW_t,
	\end{split}
\end{equation}
where we note that $\frac{1}{2\left(1-\delta_k
+\delta_k\lambda^2_j\right)}\le 1$, as $\delta_k\le \frac12$ 
and $0<\lambda_j\to \infty$ as $j\to \infty$. 
We sum \eqref{1-1} with respect to $j\in \N$, 
integrate over $(0,t)$, and use \eqref{Hs} with 
$s=0$ and $s=1$, to obtain
\begin{equation}\label{(a)}
	\begin{split}
		& \frac{1}{2}(1-\delta_k)
		\norm{u_k(t)}^2_{L^2(M)}
		+\frac{1}{2}\delta_k\norm{u_k(t)}^2_{H^1(M)} 
		+\eps_k \int_0^t\norm{\nabla u_k(t')}_{L^2(M)}^2\, dt'
		\\ & \qquad
		\le \frac{1}{2}(1-\delta_k)\norm{u_0}^2_{L^2(M)} 
		+\frac{1}{2}\delta_k \norm{u_0}^2_{H^1(M)}
		\\ & \qquad \qquad
		+\int_0^t \int_M  \bigl\langle \mff_k(\mx,u_k(t')),
		\nabla u_k(t')\bigr\rangle \,dV(\mx) \,dt' 
		\\ & \qquad \qquad\qquad
		+\int_0^t\norm{\Phi(\cdot,u_k(t'))}_{L^2(M)}^2\, dt' 
		+\int_0^t\int_M  u_k(t')\Phi(\mx,u_k(t'))
		\,dV(\mx)\, dW_{t'}.
	\end{split}
\end{equation} 
Using Lemma \ref{lem:gc_and_integral}, the third term on 
the right-hand side is zero. We can bound the fourth term 
by $2C_\Phi\left(t+\int_0^t\norm{u_k(t')}_{L^2(M)}^2\, dt\right)$,  
where $C_\Phi$ is the constant appearing 
in the assumption ($C_\Phi$--1). Applying the 
expectation operator in \eqref{(a)} and 
then Gronwall's inequality, we obtain
\begin{equation*}
	\begin{split}
		& \frac{1}{2}(1-\delta_k)
		E\left[\norm{u_k(t)}_{L^2(M)}^2\right]
		+\frac{1}{2}\delta_k 
		E\left[\norm{u_k(t)}^2_{H^1(M)}\right]
		+\eps_k E\left[\int_0^t 
		\norm{\nabla u_k(t,\cdot,\cdot)}_{L^2(M)}^2
		\,dt'\right]
		\\ & \qquad 
		\le e^{C_\Phi t}\left(C_\Phi t+
		\frac{1}{2}(1-\delta_k)
		\norm{u_0}_{L^2(M)}^2 
		+\frac{1}{2}\delta_k
		\norm{u_0}_{H^1(M)}^2\right)
		=\Bbb{C}_0,
	\end{split}
\end{equation*}
where $\Bbb{C}_0$ depends on 
$\norm{u_0}_{L^2(M)}^2$, 
$\delta_k \norm{u_0}_{H^1(M)}^2$, 
and (monotonously increasing on) $t$. 
This proves the first estimate \eqref{prva}.

Finally, let us prove \eqref{eq:moment-diss}. 
From \eqref{(a)},
\begin{equation}\label{eq:moment-diss-tmp1}
	\begin{split}
		& \norm{u_k(t)}^2_{L^2(M)}
		+\eps_k \int_0^t\int_M\abs{\nabla u_k}^2
		\, \,dV(\mx) \, dt'
		\\ & \qquad 
		\le C_0 +\int_0^t\norm{\Phi(\cdot,u_k(t'))}_{L^2(M)}^2\, dt' 
		+\int_0^t\int_M  u_k(t')\Phi(\mx,u_k(t'))
		\,dV(\mx)\, dW_{t'},  
	\end{split}
\end{equation}
where $C_0=\frac{1}{2}(1-\delta_k)\norm{u_0}^2_{L^2(M)}
+\frac{1}{2}\delta_k \norm{u_0}^2_{H^1(M)}$. 
Raise the inequality \eqref{eq:moment-diss-tmp1} 
to power $p/2$, $p>2$, take 
the supremum in time, and then the expectation. 
Proceeding as in the proof of \eqref{eq:energy-tmp4-L2}, 
applying the BDG inequality and eventually 
Gronwall's inequality, we arrive first at
\begin{equation}\label{eq:moment-diss-tmp2}
	E \left[\sup_{t\in [0,\tau]}
	\norm{u_n(t)}_{L^2(M)}^p\right] \le C,
	\quad \tau\in [0,T],
\end{equation}
where the constant $C$ is independent of $k$ but 
dependent on $C_0$. This proves the first 
part of \eqref{eq:moment-diss}. 
Second, from the same inequality, using \eqref{eq:moment-diss-tmp2}, 
we obtain the second part of \eqref{eq:moment-diss}.

\medskip

\noindent \underline{2. Estimate \eqref{cetvrta}.} In what 
follows, we work with finite sums in order 
to avoid some regularity issues that 
would otherwise appear, setting
$$
u^{(n)}_k(\omega,t,\mx)
:=\sum\limits_{j=1}^n \alpha_{j}(\omega,t)e_j(\mx),
\quad n\in \N,
$$
where each $\alpha_j$ is a continuous adapted 
process on $[0,T]$ (suppressing the dependency 
on $k$). Multiplying \eqref{basic} by $\lambda^2_j$ and 
then applying the It\^o formula, we obtain:
\begin{equation}\label{2-1}
	\begin{split}
		& \frac{1}{2}\left((1-\delta_k)
		\lambda^2_j+\delta_k \lambda^4_j\right)
		\, d\abs{\alpha_j}^2 
		+\eps_k\left(\lambda^4_j-\lambda^2_j\right)
		\abs{\alpha_j}^2 \, dt 
		\\ & \qquad 
		= \int_M \mff_k(\mx,u_k)\cdot 
		 \nabla \left(\alpha_j\lambda^2_j e_j(\mx)\right) 
		\,dV(\mx) \, dt 
		\\ & \qquad\qquad
		+\frac{\lambda^2_j}{2(1-\delta_k+\delta_k \lambda^2_j)}
		\left(\int_M \Phi(\mx,u_k) e_j(\mx) 
		\,dV(\mx)\right)^2 \, dt 
		\\ & \qquad \qquad\qquad 
		+ \int_M \Phi(\mx,u_k) \lambda^2_j 
		\alpha_j e_j(\mx) \,dV(\mx)\, dW_t.
	\end{split}
\end{equation}
where we note that $\frac{\delta_k\lambda^2_j}{2(1-\delta_k
+\delta_k \lambda^2_j)}\le \frac12$.
Next, we multiply \eqref{2-1} by $\delta_k$, sum over 
$j=1,\ldots,n$, and integrate over $(0,t)$:
\begin{equation}\label{(b)}
	\begin{split}
		& \frac{1}{2}\delta_k (1-\delta_k)
		\norm{u^{(n)}_k(t)}_{H^1(M)}^2
		+\frac{1}{2}\delta_k^2
		\norm{u^{(n)}_k(t)}^2_{H^2(M)}
		+\eps_k\delta_k\int_0^t
		\norm{u^{(n)}_k(t')}_{H^2(M)}^2 \, dt'
		\\ & \qquad \leq 
		\frac{1}{2}\delta_k(1-\delta_k)\norm{u_0}^2_{H^1(M)}
		+ \frac{1}{2}\delta_k^2\norm{u_0}^2_{H^2(M)}
		+ \eps_k\delta_k \int_0^t 
		\norm{u^{(n)}_k(t')}_{H^1(M)}^2 \, dt'
		\\ & \qquad \qquad  
		+\delta_k \int_0^t \int_M  
		\mff_k(\mx,u_k(t'))\cdot \nabla \left(u^{(n)}_k
		-\Delta u^{(n)}_k\right)\,dV(\mx)\, dt'
		\\ & \qquad\qquad\qquad
		+\frac12\int_0^t\norm{\Phi(\cdot ,u_k(t'))}_{L^2(M)}^2\, dt'
		\\ & \qquad \qquad \qquad \qquad
		+\delta_k \int_0^t \int_M \Phi(\mx,u_k(t')) 
		\sum\limits_{j=1}^n \lambda_j^2
		\alpha_j e_j(\mx) \,dV(\mx) \, dW_{t'}.
	\end{split}
\end{equation}
Taking the expectation in \eqref{(b)}, we obtain 
\begin{equation}\label{2-2}
	\begin{split}
		& \frac{1}{2} \delta_k (1-\delta_k) 
		E\left[\norm{u^{(n)}_k(t)}_{H^1(M)}^2\right]
		+\frac12\delta_k^2
		E\left[\norm{u^{(n)}_k(t)}_{H^2(M)}^2\right]
		\\& \qquad\qquad
		+\eps_k\delta_k 
		E\left[ \int_0^t 
		\left( \norm{u^{(n)}_k(t')}_{H^2(M)}^2
		-\norm{u^{(n)}_k(t')}_{H^1(M)}^2 \right)
		\,dt'\right] 
		\\ & \qquad 
		\leq \frac{1}{2}\delta_k (1-\delta_k)
		\norm{u_0}_{H^1(M)}^2
		+ \frac{1}{2}\delta_k^2 
		\norm{u_0}^2_{H^2(M)}+I_1+I_2+I_3,
	\end{split}
\end{equation}
where 
\begin{align*}
	I_1 & :=\delta_k E\left[\int_0^t 
	\int_M  \mff_k(\mx,u_k)\cdot 
	\nabla u^{(n)}_k(t')\,dV(\mx)\, dt'\right],
	\\ 
	I_2 & := \delta_k E \left[\int_0^t 
	\int_M  \Div \mff_k(\mx,u_k(t')) 
	\Delta u^{(n)}_k(t') \,dV(\mx)\, dt'\right],
	\\ 
	I_3 &:= \frac12 E\left[\int_0^t 
	\norm{\Phi(\cdot,u_k(t'))}_{L^2(M)}^2\,dt'\right].
\end{align*}
Let us estimate the terms $I_1,I_2,I_3$ separately. 

Using Young's product inequality, ($C_f$--2) 
and \eqref{prva}, we obtain
\begin{equation}\label{31}
	\begin{split}
		I_1 & \leq \frac{\delta_k}{2\eps_k} 
		E\left[
		\norm{\mff_k(\cdot,u_k)}^2_{L^2((0,t)\times M)}
		\right]
		+\frac{\eps_k}{2} E\left[ 
		\norm{\nabla u_k}^2_{L^2((0,t)\times M)}
		\right]
		\\ & \leq t\frac{\delta_k}{2\eps_k} 
		\norm{\sup\limits_{\lambda\in \R}
		\norm{\mff_k(\cdot,\lambda)}_g}_{L^2(M)}^2
		+\frac12 \Bbb{C}_0\left(t,\norm{u_0}_{L^2_{\omega,x}}^2,
		\delta_k \norm{u_0}_{L^2_\omega H^1_x}^2\right).
	\end{split}
\end{equation}

By ($C_f$--2), cf.~\eqref{gc}, we have
$$
\Div \mff_k(\mx,u_k) 
=\left(\Div \mff_k\right)(\mx,\lambda)
\big|_{\lambda=u_k(\mx)}
+\larab{\mff_k'(\mx,u_k)}{\nabla u_k}
=\larab{\mff_k'(\mx,u_k)}{\nabla u_k}.
$$
Therefore, using ($C_f$--2), Young's 
product inequality, and again \eqref{prva}, 
\begin{equation}\label{32}
	\begin{split}
		I_2 & \leq \frac{\delta_k}{2\eps_k}
		\norm{\mff_k'}_{\infty}^2 
		E\left[\norm{\nabla u_k}_{L^2((0,t)\times M)}^2\right]
		+\frac{\eps_k \delta_k}{2}
		E\left[\norm{\Delta u^{(n)}_k}^2_{L^2((0,t)\times M)}\right]
		\\ & \leq \frac{\delta_k}{2\eps_k^2}
		\norm{\mff_k'}_{\infty}^2 
		\Bbb{C}_0\left(t,\norm{u_0}_{L^2_{\omega,x}}^2,
		\delta_k \norm{u_0}_{L^2_\omega H^1_x}^2\right)
		+\frac{\eps_k \delta_k}{2} 
		E\left[\int_0^t \norm{u^{(n)}_k(t')}_{H^2(M)}^2\,dt'\right],
	\end{split}
\end{equation}
where the last term can be absorbed into the left-hand side 
of \eqref{2-2}. 

By ($C_\Phi$--1) and \eqref{prva},
\begin{equation}\label{33}
 	I_3 \leq C_\Phi t\left(1+
 	\Bbb{C}_0\left(t,\norm{u_0}_{L^2_{\omega,x}}^2,
 	\delta_k \norm{u_0}_{L^2_\omega H^1_x}^2\right)\right).
\end{equation} 

Finally, regarding one of the terms on the left-hand 
side of \eqref{2-2},
\begin{equation}\label{34}
	\eps_k\delta_k 
	E\left[ \int_0^t \norm{u^{(n)}_k(t')}_{H^1(M)}^2
	\, dt' \right] \overset{\eqref{prva}}{\leq} 
	\delta_k \Bbb{C}_0\left(t,\norm{u_0}_{L^2_{\omega,x}}^2,
	\delta_k \norm{u_0}_{L^2_\omega H^1_x}^2\right).
\end{equation}

Inserting \eqref{31}, \eqref{32}, \eqref{33}, \eqref{34} 
into \eqref{2-2}, and then letting $n\to \infty$, 
we arrive at \eqref{cetvrta}.
\end{proof}

\subsection{Kinetic formulation of the pseudo-parabolic SPDE}
\label{subsec:kinetic}

The kinetic formulation of \eqref{PP-1}, 
which is the content of the next lemma, is 
needed to apply the stochastic 
velocity averaging result (Theorem \ref{thm-1}) 
to deduce the strong convergence of $u_k$.

\begin{lemma}[kinetic SPDE]\label{kinetic-L}
In addition to the conditions required 
by Theorem \ref{unique_sol_par}, suppose that 
the flux $\mff$ satisfies ($C_f$--4). 
Regarding the diffusion-capillarity parameters 
$\eps_k,\delta_k$ in \eqref{PP-1}, suppose that 
the relation \eqref{neps} holds. 
For each fixed $k\in \N$, let $u_k$ be the unique solution of 
the pseudo-parabolic SPDE \eqref{PP-1}, \eqref{PP-ID}, 
cf.~Theorem \ref{unique_sol_par}. Then the function 
\begin{equation}\label{sol-form}
	h_k=h_k(\omega,t,\mx,\lambda)
	:=\sign\bigl(u_k(\omega,t,\mx)-\lambda\bigr)
\end{equation}
satisfies the kinetic SPDE
\begin{equation}\label{kinetic}
	\begin{split}
 		& dh_k+\Div_{\mx} 
 		\bigl(\mff_k'(\mx,\lambda)h_k\bigr)\,dt 
 		\\ & \quad 
 		= \Bigl[\Div_{\mx} G_k^{(0,1)}
 		-\pa_{\lambda} G_k^{(0,2)}
 		+G_k^{(1)}+\pa_\lambda G_k^{(2)}\Bigr]\,dt
 		+\Bigl[G_k^{(0,3)}-G_k^{(3)} \Bigr]\, dW_t
	\end{split}
\end{equation} 
in $\cD_{t,\mx,\lambda}'$, $\prob$--a.s., i.e.~in the weak 
sense of \eqref{weak-sense-tx}. The 
distribution-valued random variables
$$
G_k^{(0,1)},\, G_k^{(0,2)}, \, G_k^{(0,3)},\, 
G_k^{(1)}, \, G_k^{(2)}, \, G_k^{(3)}:
\Omega\to \cD'_{t,\mx,\lambda}
$$ 
have the following properties:
\begin{itemize}
	\item[(i)] There exist $\bar{G}_k^{(0,1)}$ 
	and $G_k^{(0,1,\lambda)}$ such that 
	$G_k^{(0,1)}=\bar{G}_k^{(0,1)}
	+\pa_{\lambda} G_k^{(0,1,\lambda)}$, where
	$$
	E \left[\norm{\bar{G}_k^{(0,1)}}_{L^2((0,T)
	\times M\times \R)}^2\right], \,
	E \left[
	\norm{G_k^{(0,1,\lambda)}}_{L^2((0,T)
	\times M\times \R)}^2\right] 
	\lesssim \eps_k\tok 0.
	$$
	Furthermore, 
	$$
	E\left[\norm{G_k^{(0,2)}}_{\cM((0,T)\times M\times \R)}^2
	\right]\lesssim 1.
	$$
	For any $\rho\in C_c(\R)$, 
	$\inn{G_k^{(0,3)},\rho}$ is an adapted 
	$L^2(M)$-valued process. There exist 
	$\bar{G}_k^{(0,3)}$ and $G_k^{(0,3,\lambda)}$ 
	such that $G_k^{(0,3)}=\bar{G}_k^{(0,3)}
	+\pa_{\lambda} G_k^{(0,3,\lambda)}$, where
	$$
	E \left[\norm{\bar{G}_k^{(0,3)}}_{L^2((0,T)
	\times M\times \R)}^2\right], \,
	E \left[
	\norm{G_k^{(0,3,\lambda)}}_{L^2((0,T)
	\times M\times \R)}^2\right] 
	\lesssim 1.
	$$

	\item[(ii)] There exists $\bar{G}^{(1)}_k\in 
	\Lip(\R;L^1([0,T]\times M))$ such that 
	${G}^{(1)}_k=\pa^2_{\lambda\lambda} \bar{G}^{(1)}_k$ 
	and, for every $\lambda \in \R$,
	$$
	E\left[\norm{\bar{G}_k^{(1)}}_{L^1_{\operatorname{loc}}
	((0,T)\times M\times \R)}^2\right]\lesssim 1.
	$$
	
	\item[(iii)] We have
	$$
	E\left[\norm{G_k^{(2)}}_{\cM((0,T)\times M\times \R)}^2
	\right]\lesssim 1.
	$$
	
	\item[(iv)] For $\rho\in C_c(\R)$, 
	$\inn{G_k^{(3)},\rho}$ is an adapted 
	$L^2(M)$-valued process. There exist 
	$\bar{G}_k^{(3)}$ and $G_k^{(3,\lambda)}$ 
	such that $G_k^{(3)}=\bar{G}_k^{(3)}
	+\pa_{\lambda} G_k^{(3,\lambda)}$, where
	$$
	E \left[\norm{\bar{G}_k^{(3)}}_{L^2((0,T)
	\times M\times \R)}^2\right], \,
	E \left[
	\norm{G_k^{(3,\lambda)}}_{L^2((0,T)
	\times M\times \R)}^2\right] 
	\lesssim 1.
	$$
\end{itemize}
\end{lemma}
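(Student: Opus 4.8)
The plan is to obtain \eqref{kinetic} by applying the It\^{o} chain rule to smooth approximations of the entropy family $\eta_\lambda(r)=\abs{r-\lambda}$ composed with $u_k$, and then differentiating the resulting entropy balance in the kinetic variable $\lambda$, using that $h_k=\sign(u_k-\lambda)=-\pa_\lambda\abs{u_k-\lambda}$. First I would record the additional regularity from Lemma \ref{osnov}: the bounds \eqref{prva} and \eqref{cetvrta} guarantee (for each fixed $k$) that $u_k\in L^2_{\prob}(\Omega;L^2(0,T;H^2(M)))$ and that $\eps_k\nabla u_k$, $\eps_k\abs{\nabla u_k}^2$, and the $\delta_k$-weighted $H^2$ quantities are controlled independently of the Galerkin index. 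To make every manipulation rigorous I would first carry out the It\^{o} computation on the Galerkin approximations $u_k^{(n)}=\sum_{j=1}^n\alpha_j e_j$, where all spatial operations and the finite-dimensional It\^{o} formula are classical, and only afterwards let $n\to\infty$, passing to the limit in each term by means of these a priori bounds.

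The principal obstacle is the presence of two temporal differentials in \eqref{PP-1}. Writing $w_k:=u_k-\delta_k\Delta u_k$, the SPDE reads $d w_k=\bigl(\eps_k\Delta u_k-\Div\mff_k(\mx,u_k)\bigr)\,dt+\Phi(\mx,u_k)\,dW_t$, so $w_k$ is a genuine It\^{o} process while $u_k=(I-\delta_k\Delta)^{-1}w_k$ involves the nonlocal operator $(I-\delta_k\Delta)^{-1}$, which makes a direct chain rule for $\eta_\lambda(u_k)$ awkward. I would therefore expand $\eta_\lambda'(u_k)\,du_k$ as $\eta_\lambda'(u_k)\,dw_k+\delta_k\,\eta_\lambda'(u_k)\,d\Delta u_k$ and treat the capillary piece by integrating by parts in $\mx$ together with a second It\^{o} expansion; the quadratic variation of $u_k$ must then be reconciled with that of $w_k$, and it is precisely the capillary--noise cross terms, together with the It\^{o} correction $\tfrac12\eta_\lambda''(u_k)\Phi^2$, that generate the stochastic coefficients $G_k^{(0,3)},G_k^{(3)}$ and the term $G_k^{(1)}$. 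Keeping track of all these contributions, and of their signs, is the delicate bookkeeping at the heart of the proof.

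Once the entropy balance is established, I would read off \eqref{kinetic} term by term. The transport term comes from $\Div\mff_k(\mx,u_k)=(\Div\mff_k)(\mx,u_k)+\larab{\mff_k'(\mx,u_k)}{\nabla u_k}$, whose first summand vanishes by the geometry compatibility condition ($C_f$--4), cf.~\eqref{gc}, leaving $\Div_\mx(\mff_k'(\mx,\lambda)h_k)$ after differentiation in $\lambda$. The diffusion term $\eps_k\Delta u_k$, after the $S'$-multiplication and integration by parts, splits into a divergence-form part and the nonnegative dissipation $2\eps_k\abs{\nabla u_k}^2\delta(u_k-\lambda)$; the former yields $G_k^{(0,1)}=\bar G_k^{(0,1)}+\pa_\lambda G_k^{(0,1,\lambda)}$ with the claimed $O(\eps_k)$ bounds (using $E\int\eps_k\abs{\nabla u_k}^2\lesssim1$ from \eqref{prva} and Cauchy--Schwarz), while the latter gives the measure $G_k^{(0,2)}$, bounded in $\cM((0,T)\times M\times\R)$ again by \eqref{prva}. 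The It\^{o} correction $\tfrac12\eta_\lambda''(u_k)\Phi^2(\mx,\lambda)$ furnishes, after one $\lambda$-differentiation, the term $G_k^{(1)}=\pa^2_{\lambda\lambda}\bar G_k^{(1)}$ with $\bar G_k^{(1)}$ of class $\Lip(\R;L^1([0,T]\times M))$ by ($C_\Phi$--1)--($C_\Phi$--2); the remaining second-order capillary contributions, controlled by the $\delta_k^2$- and $\eps_k\delta_k$-weighted $H^2$ bounds in \eqref{cetvrta}, produce the measure $G_k^{(2)}$ and the stochastic coefficients, whose $L^2$-integrability and adaptedness follow from ($C_\Phi$--1)--($C_\Phi$--2) and the established regularity. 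Finally I would verify that the identity holds in $\cD'_{t,\mx,\lambda}$, $\prob$-a.s., in the weak sense of \eqref{weak-sense-tx}, the exceptional null set being chosen independently of the (separable) family of test functions.
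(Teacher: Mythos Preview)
Your outline has the right skeleton (It\^{o} on entropies, then differentiate in $\lambda$), but there is a genuine gap in how you handle the pseudo-parabolic structure, and it leads you to misidentify the terms $G_k^{(1)}$ and $G_k^{(2)}$.

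The decomposition $du_k=dw_k+\delta_k\,d\Delta u_k$ is tautological and does not by itself give you a semimartingale representation of $u_k$; you still need $d\Delta u_k$, which depends on $du_k$. The paper resolves this by \emph{explicitly} inverting $(I-\delta_k\Delta)$ via the eigenbasis: one writes $du_k=\mu\,dt+\sigma\,dW_t$ with $\mu=(I-\delta_k\Delta)^{-1}\bigl(-\Div\mff_k(\mx,u_k)+\eps_k\Delta u_k\bigr)$ and $\sigma=(I-\delta_k\Delta)^{-1}\Phi(\cdot,u_k)$, expanded as $\sum_j \frac{1}{1+\delta_k(\lambda_j^2-1)}\langle\,\cdot\,,e_j\rangle e_j$. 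The It\^{o} correction is then $\tfrac12 S''(u_k)\,\sigma^2$, \emph{not} $\tfrac12 S''(u_k)\,\Phi^2$ as you write. This matters: the correction term, after one $\lambda$-derivative, is exactly the paper's $G_k^{(2)}$ (a bounded measure, controlled only by $\norm{\Phi(\cdot,u_k)}_{L^2}$ and hence by \eqref{prva}), not $G_k^{(1)}$.

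The term $G_k^{(1)}$ instead arises from the \emph{capillary correction to the drift}: using the algebraic splitting $\frac{1}{1+\delta_k(\lambda_j^2-1)}=1-\frac{\delta_k(\lambda_j^2-1)}{1+\delta_k(\lambda_j^2-1)}$, the leading ``$1$'' reproduces the standard viscous terms $G_k^{(0,1)}$, $G_k^{(0,2)}$, while the remainder $-S'(u_k)\sum_j\frac{\delta_k(\lambda_j^2-1)}{1+\delta_k(\lambda_j^2-1)}\langle -\Div\mff_k+\eps_k\Delta u_k,e_j\rangle e_j$ is the source of $G_k^{(1)}=\pa_{\lambda\lambda}^2\bar G_k^{(1)}$. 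Its $L^1_{\loc}$ bound is the hardest estimate in the lemma: it needs $\sqrt{\delta_k(\lambda_j^2-1)}/(1+\delta_k(\lambda_j^2-1))\le 1$, the geometry compatibility \eqref{gc} to rewrite $\Div\mff_k(\mx,u_k)=\langle\mff_k',\nabla u_k\rangle$, the $H^2$ bound from \eqref{cetvrta}, and crucially the scaling hypothesis $\delta_k/\eps_k^2$ bounded from \eqref{neps}. Your sketch attributes none of this to $G_k^{(1)}$ and instead invokes \eqref{cetvrta} for $G_k^{(2)}$, where it is not needed. Similarly, $G_k^{(3)}$ is the capillary correction to the \emph{noise} (the ``$-\frac{\delta_k(\lambda_j^2-1)}{1+\delta_k(\lambda_j^2-1)}$'' part of $\sigma$), not a cross term.
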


\begin{proof}
We need to derive an entropy 
admissibility condition from \eqref{PP-1}. 
Here we cannot simply use 
the It\^o formula since the 
time-derivative appears not only 
on the function $u_k$, but also on the 
Laplacian of $u_k$. Therefore, we need to split 
the equation into separate parts satisfied by $u_k$ 
and $\Delta u_k$.  We first calculate, for 
any $S\in C^{2}(\R)$ with $S'\in C_0^1(\R)$,
\begin{align}\label{f2}
	\begin{split}
		&\Div \left(\int_{-\infty}^{u_k}
		\mff_k'(\mx,\lambda)
		S'(\lambda)\, d\lambda \right)
		\\ & \quad 
		= S'(u_k)(\mff_k')^i(\mx,u_k)
		\pa_i u_k+\int_{-\infty}^{u_k}S'(\lambda) 
		\pa_i (\mff_k')^i(\mx,\lambda)\,d\lambda
		+\int_{-\infty}^{u_k}S'(\lambda) 
		\Gamma^j_{lj} (\mff_k')^l(\mx,\lambda)\,d\lambda
		\\ & \quad
		= S'(u_k)\Div \bigl(\mff_k({\mx},u_k)\bigr)
		-S'(u_k)\Div \bigl(\mff_k(\mx,\lambda))
		\Big|_{\lambda=u_k} 
		\\ &\qquad\qquad
		+\int_{-\infty}^{u_k}
		S'(\lambda) \Div \bigl(\mff_k'(\mx,\lambda)
		\bigr)\,d\lambda
		=S'(u_k)\Div \bigl(\mff_k({\mx}, u_k)\bigr),
	\end{split}
\end{align}
keeping in mind the geometry compatibility 
condition \eqref{gc}. 

Note that
\begin{equation}\label{f3}
	S'(u_k) \Delta u_k
	=\Delta S(u_k)-S''(u_k)\abs{\nabla u_k}^2
	=\Div \bigl( S'(u_k) \nabla u_k \bigr)
	-S''(u_k)\abs{\nabla u_k}^2.
\end{equation}

Next, assume that we have
\begin{equation}\label{genform}
	du_k=\mu\, dt +\sigma\, dW_t,
\end{equation} 
where $\mu,\sigma \in L^1_{\prob}(\Omega;H^1((0,T)\times M))$ 
will be determined below (here we suppress the 
$k$-dependence in the notation). 
We have (in the $\mx$-weak sense, at least)
\begin{equation}\label{delta}
	d\Delta u_k=\Delta \mu\,dt 
	+\Delta \sigma \, dW_t. 
\end{equation}
Combining \eqref{genform} and \eqref{delta} 
with \eqref{PP-1}, we see that 
\begin{align*}
	&\bigl(\mu-\delta_k \Delta \mu\bigr)\, dt
	+\bigl(\sigma-\delta_k\Delta \sigma\bigr)\, dW_t
	=du_k-\delta_k d\Delta u_k
	\\ & \qquad 
	= -\Div \mff_k(\mx, u_k)\, dt
	+\eps_k \Delta u_k\, dt+\Phi(\mx,u_k)\, dW_t.  
\end{align*}
Consequently, we require
\begin{align}
\label{Poiss1}
	\mu-\delta_k \Delta \mu 
	& =-\Div \mff_k(\mx, u_k)
	+\eps_k \Delta u_k,
	\\ \label{Poiss2}
	\sigma-\delta_k \Delta \sigma
	& =\Phi(\mx,u_k).
\end{align} 
Next, we write
$$
\mu=\sum\limits_{j=1}^\infty 
\alpha_j(\omega,t)e_j(\mx), \quad 
\sigma=\sum\limits_{j=1}^\infty 
\beta_j(\omega, t)e_j(\mx),
$$ 
where $\alpha_j$, $\beta_j$ are adapted continuous 
processes, i.e., we represent $\mu$, $\sigma$ 
via the $L^2(M)$ orthonormal basis $\Seq{e_k}_{k\in \N}$ 
and compute the coefficients (as we have done several 
times before). Given \eqref{Poiss1},
\begin{align*}
	\sum\limits_{j=1}^\infty \alpha_j
	\bigl(1+\delta_k(\lambda_j^2-1)\bigr) 
	e_j(\mx) =-\Div\mff_k(\mx, u_k)
	+\eps_k \Delta u_k.
\end{align*}
Multiplying by $e_i$, integrating over $M$, 
and using the orthonormality of $\Seq{e_j}_{j\in \N}$, 
we obtain
\begin{equation}\label{drift}
	\begin{split}
		& \alpha_i
		=\frac{1}{1+\delta_k(\lambda_i^2-1)}
		\int_M \bigl(-\Div \mff_k(\mx, u_k)
		+\eps_k \Delta u_k\bigr) e_i(\mx)\,dV(\mx) 
		\implies
		\\ & 
		\mu=\sum_{j=1}^\infty 
		\frac{e_j}{1+\delta_k(\lambda_j^2-1)}
		\int_M \bigl(-\Div \mff_k(\mx, u_k)
		+\eps_k \Delta u_k\bigr)e_j(\mx) \,dV(\mx).
	\end{split}
\end{equation}
Similarly, in view of \eqref{Poiss2}, we obtain for 
the noise coefficient $\sigma$ that
\begin{equation}\label{diff}
	\begin{split}
		\beta_i & =\frac{1}{1+\delta_k(\lambda_i^2-1)} 
		\int_M \Phi(\mx,u_k) e_i(\mx) \,dV(\mx) 
		\implies
		\\
		\sigma & =\sum_{j=1}^\infty
		\frac{e_j}{1+\delta_k(\lambda_j^2-1)}
		\int_M  \Phi(\mx,u_k) e_j(\mx) \,dV(\mx).
	\end{split}
\end{equation}
Observe that the latter infinite sum is 
convergent in $L^2(M)$ for $d\prob \times dt$--a.e.~$(\omega,t)$, by 
the orthogonality of $\left\{e_k\right\}$ and since the 
squares of the coefficients in the sum satisfy
\begin{align*}
	& \sum_{j=1}^\infty
	\frac{1}{(1+\delta_k(\lambda_j^2-1))^2}
	\Big(\int_M  \Phi(\mx,u_k) e_j(\mx) \,dV(\mx)\Big)^2 
	\\ & \qquad 
	\leq \sum_{j=1}^\infty
	\left(\int_M  \Phi(\mx,u_k) e_j(\mx) \,dV(\mx)\right)^2
	=\norm{\Phi(\cdot,u_k)}_{L^2(M)}^2<\infty. 
\end{align*}
The same argument for finiteness applies to similar 
sums appearing elsewhere.

Given \eqref{genform}, \eqref{delta} 
and \eqref{drift}, \eqref{diff}, we apply 
It\^o's formula to compute $S(u_k)$:
\begin{align*}
	dS(u_k) & =\left(S'(u_k)\mu
	+\frac{1}{2}S''(u_k)\sigma^2 \right)\,dt
	+S'(u_k)\sigma \, dW_t
	\\ & =S'(u_k)\sum_{j=1}^\infty
	\frac{e_j(\mx)}{1+\delta_k(\lambda_j^2-1)}
	\int_M \bigl(-\Div \mff_k(\my, u_k)
	+\eps_k \Delta u_k (\my)\bigr)e_j(\my)\,dV(\my)\,dt
	\\ &\qquad
	+\frac{S''(u_k)}{2}\left(\sum_{j=1}^\infty
	\frac{e_j(\mx)}{1+\delta_k(\lambda_j^2-1)} 
	\int_M \Phi(\my,u_k) e_j(\my) \,dV(\my)\right)^2\, dt
	\\ &\qquad \qquad 
	+ S'(u_k)\sum_{j=1}^\infty 
	\frac{e_j(\mx)}{1+\delta_k(\lambda_j^2-1)}
	\int_M \Phi(\my,u_k) e_j(\my) \,dV(\my)\, dW_t.
\end{align*} 

Taking into account the identity 
$$
\frac{1}{1+\delta_k(\lambda^2_j-1)}
=1-\frac{\delta_k(\lambda^2_j
-1)}{1+\delta_k(\lambda^2_j-1)}, 
$$
we obtain
\begin{align}
	dS(u_k) & = 
	S'(u_k)\sum_{j=1}^\infty  e_j(\mx) 
	\int_M \left(-\Div \mff_k(\my, u_k)
	+\eps \Delta u_k\right)e_j(\my) \,dV(\my)\, dt
	\notag 
	\\ & \qquad
	+ S'(u_k)\sum_{j=1}^\infty e_j(\mx) 
	\int_M \Phi(\my,u_k) e_j(\my) \,dV(\my) \, dW_t
	\notag
	\\ & \qquad 
	-S'(u_k)\sum_{j=1}^\infty e_j(\mx) 
	\frac{\delta_k(\lambda_j^2-1)}{1+\delta_k(\lambda_j^2-1)}
	\notag \\ & \qquad \qquad \qquad \times
	\int_M \bigl(-\Div \mff_k(\my, u_k)
	+\eps_k \Delta u_k\bigr)e_j(\my) \,dV(\my)\,dt
	\notag \\
	&\qquad
	+\frac{S''(u_k)}{2} \left(\sum_{j=1}^\infty
	e_j(\mx)\frac{1}{1+\delta_k(\lambda_j^2-1)}
	\int_M \Phi(\my,u_k) e_j(\my) \,dV(\my)\right)^2 \, dt
	\notag \\ & \qquad 
	-S'(u_k)\sum\limits_{j=1}^\infty  
	e_j(\mx)\frac{\delta_k(\lambda_j^2-1)}
	{1+\delta_k(\lambda_j^2-1)}
	\int_M \Phi(\my,u_k) e_j(\my) \,dV(\my) \, dW_t
	\notag
	\\ & =: I_k^{(1)}\,dt +I_k^{(2)}\,dW_t 
	+ I_k^{(3)}\,dt
	+ I_k^{(4)}\,dt+I_k^{(5)}\, dW_t.
	\label{ec-1}
\end{align}
Recalling that $h_k=\sign (u_k-\lambda)$,
$$
S(u_k) = \innb{\delta(\cdot-u_k),S(\cdot)}
= \frac{1}{2}\innb{\sign(u_k-\cdot),S'(\cdot)}
=\frac{1}{2}\innb{h_k,S'},
$$
so that $dS(u_k)=\frac{1}{2}d\innb{h_k, S'}$. 
Given that $\Seq{e_j}_{j\in \N}$ is 
an orthonormal basis of $L^2(M)$,
\begin{align*}
	I_k^{(1)} & =S'(u_k) \bigl(-\Div\mff_k(\mx,u_k)
	+\eps_k \Delta u_k\bigr)
	\\ & \overset{\eqref{f2},\eqref{f3}}{=}
	-\Div \int_{-\infty}^{u_k} 
	\mff_k'(\mx,\lambda)S'(\lambda)\, d\lambda
	+\Div\bigl(S'(u_k)\,\eps_k \nabla u_k\bigr)
	-S''(u_k)\, \eps_k \abs{\nabla u_k}^2. 
\end{align*}
By \eqref{gc},
\begin{align*}
	\Div \int_{-\infty}^{u_k}
	\mff_k'(\mx,\lambda)S'(\lambda)\, d\lambda 
	&=\Div \int_{\R}
	\frac{1}{2} \bigl(\sign(u_k-\lambda)+1\bigr)
	\mff_k'(\mx,\lambda)S'(\lambda)\, d\lambda
	=\frac{1}{2}\Div \innb{\mff_k'h_k,S'}.
\end{align*}
Furthermore,
\begin{align*}
	\Div\bigl(S'(u_k)\,\eps_k \nabla u_k\bigr)
	-S''(u_k)\, \eps_k \abs{\nabla u_k}^2
	= \frac12 \inn{\Div G_k^{(0,1)},S'}
	-\frac12 \inn{\pa_{\lambda} G_k^{(0,2)},S'},
\end{align*}
where 
$$
G_k^{(0,1)}=2\delta(\lambda-u_k)\,
\eps_k\nabla u_k, \qquad
G_k^{(0,2)}=2\delta(\lambda-u_k)\,
\eps_k\abs{\nabla u_k}^2, 
$$
so that 
\begin{equation}\label{eq:Ik1}
	I_k^{(1)}
	=-\frac{1}{2}\Div \innb{\mff_k'h_k,S'}
	+\frac12 \inn{\Div G_k^{(0,1)},S'}
	-\frac12 \inn{\pa_{\lambda} G_k^{(0,2)},S'}.
\end{equation}

Again by the orthonormality 
of the basis $\Seq{e_j}_{j\in \N}$,  
we have that $I_k^{(2)}=S'(u_k)\Phi(\mx,u_k)$, and
\begin{align*}
	S'(u_k)\Phi(\mx,u_k)
	& =\inn{\delta(\cdot-u_k) \Phi(\mx,\cdot),S'(\cdot)}
	= \frac12\inn{G_k^{(0,3)},S'},
\end{align*} 
where 
$$
G_k^{(0,3)}= 2\delta(\lambda-u_k)
\Phi(\mx,\lambda). 
$$
Therefore, 
\begin{equation}\label{eq:Ik2}
	I_k^{(2)}=\frac12 \inn{G_k^{(0,3)},S'}.
\end{equation}

Writing $S'(u_k)=\innb{\delta(\cdot-u_k),S'(\cdot)}$, 
we obtain
\begin{equation}\label{eq:Ik3-5}
	I_k^{(3)}=\frac12\inn{G_k^{(1)},S'},
	\quad
	I_k^{(4)}=\frac12 
	\inn{\pa_\lambda G_k^{(2)},S'},
	\quad
	I_k^{(5)}=-\frac12 \innb{G_k^{(3)},S'},
\end{equation}
where
\begin{align*}
	& G_k^{(1)} = -2\delta(\lambda-u_k)
	\sum_{j=1}^\infty  
	\frac{e_j(\mx)\delta_k(\lambda_j^2-1)}
	{1+\delta_k(\lambda_j^2-1)}
	\int_M \bigl(-\Div \mff_k(\my,u_k)
	+\eps_k \Delta u_k\bigr)e_j(\my) \,dV(\my),
	\\ & 
	G_k^{(2)} = -\delta(\lambda-u_k) 
	\left(\,\sum_{j=1}^\infty 
	\frac{e_j(\mx)}{1+\delta_k(\lambda_j^2-1)}
	\int_M \Phi(\my,u_k) e_j(\my) \,dV(\my) \right)^2,
	\\ & 
	G_k^{(3)}=2\delta(\lambda-u_k)\sum_{j=1}^\infty 
	\frac{e_j(\mx)\delta_k(\lambda_j^2-1)}
	{1+\delta_k(\lambda_j^2-1)}
	\int_M \Phi(\my,u_k)e_j(\my) \,dV(\my).
\end{align*}
At this point, we are abusing the notation in two instances. 
First, the Dirac delta measure is denoted by $\delta(\cdot)$, 
although this partially conflicts with the capillarity parameter $\delta_k$. 
Second, the velocity variable $\lambda$ of the kinetic function 
$h_k=h_k(\omega,t,\mx,\lambda)$ partially conflicts 
with the $j$th eigenvalue $\lambda_j$ linked 
to the orthonormal basis $\left\{e_k\right\}_{k\in \N}$ introduced 
in Section \ref{sec:prelim}. 

Inserting the identities \eqref{eq:Ik1}, 
\eqref{eq:Ik2} and \eqref{eq:Ik3-5} 
into \eqref{ec-1} gives 
\begin{equation}\label{eq:kinetic-tmp}
	\begin{split}	
		& \innb{d h_k,S'}+\innb{\Div\bigl(\mff_k' h_k\bigr),S'}\,dt
		\\ & \quad 
		=\inn{\Div G_k^{(0,1)}
		-\pa_{\lambda} G_k^{(0,2)}
		+ G_k^{(1)}
		+\pa_\lambda G_k^{(2)},S'}\,dt
		+\inn{G_k^{(0,3)}-G_k^{(3)},S'}\, dW_t,
	\end{split}
\end{equation}
which holds in $\cD_{t,\mx}'$, almost surely. 
Using the arbitrariness of $S'$, we 
arrive at the kinetic SPDE 
\eqref{kinetic}. Strictly speaking, 
we have derived the weak form 
of \eqref{eq:kinetic-tmp} for test functions 
$\varphi\in \cD_{t,\mx,\lambda}$ 
that are tensor products $\phi\otimes \rho$ of 
$\phi\in \cD_{t,\mx}$ and $\rho=S'\in \cD_\lambda$. 
Using the density of such functions 
in $\cD_{t,\mx,\lambda}$ and the Schwarz kernel 
theorem we can extend \eqref {eq:kinetic-tmp} 
to $\cD'_{t,\mx,\lambda}$, so that \eqref{kinetic} 
holds. In the deterministic case, some 
related kinetic decompositions can be found  
in \cite{Hwang:2004}.

In view of \eqref{prva}, the following 
$k$-uniform bounds on $G_k^{(0,1)}$, $G_k^{(0,2)}$ 
and $G_k^{(0,3)}$ are easily verified:
\begin{equation}\label{eq:G0-bound-tmp}
	\begin{split}
		& E\left[\abs{\inn{G_k^{(0,1)},
		\phi\otimes \rho}}^2\right]
		\lesssim \eps_k\norm{\phi}_{L^2((0,T)\times M)}^2
		\norm{\rho}_{L^\infty(\R)}^2,
 		\\ & 
 		E\left[\abs{\inn{G_k^{(0,2)},
 		\phi\otimes \rho}}\right]
		\lesssim	
		\norm{\phi}_{L^\infty((0,T)\times M)}
		\norm{\rho}_{L^\infty(\R)},
		\\ & 
 		E\left[\abs{\inn{G_k^{(0,3)},
 		\phi\otimes \rho}}^2\right]
		\lesssim	
		\norm{\phi}_{L^1(0,T;L^2(M))}^2
		\norm{\rho}_{L^\infty(\R)}^2.
	\end{split}
\end{equation}
In view of the second part of \eqref{eq:G0-bound-tmp}, 
$$
E\left[\norm{G_k^{(0,2)}}_{\cM_{t,\mx,\lambda}}
\right]\lesssim 1.
$$
Since $H^1(\R)\subset L^\infty(\R)$, 
the first part of \eqref{eq:G0-bound-tmp} implies
$$
E\left[\abs{\inn{G_k^{(0,1)},
\phi\otimes \rho}}^2\right]
\lesssim \eps_k\norm{\phi}_{L^2((0,T)\times M)}^2
\norm{\rho}_{H^1(\R)}^2,
$$
and there exist $\bar{G}_k^{(0,1)}$ and 
$G_k^{(0,1,\lambda)}$ such that 
$G_k^{(0,1)}=\bar{G}_k^{(0,1)}
+\pa_{\lambda} G_k^{(0,1,\lambda)}$, where
$$
E \left[\norm{\bar{G}_k^{(0,1)}}_{L^2_{t,\mx,\lambda}}^2\right], 
\, E \left[
\norm{G_k^{(0,1,\lambda)}}_{L^2_{t,\mx,\lambda}}^2\right] 
\lesssim \eps_k\tok 0.
$$
Similarly, using also that 
$L^2_{t,\mx}\subset L^1_tL^2_{\mx}$, 
$$
E\left[\abs{\inn{G_k^{(0,3)},
\phi\otimes \rho}}^2\right]
\lesssim	
\norm{\phi}_{L^2((0,T)\times M)}^2
\norm{\rho}_{H^1(\R)}^2,
$$
and there exist $\bar{G}_k^{(0,3)}$ and 
$G_k^{(0,3,\lambda)}$ such that 
$G_k^{(0,3)}=\bar{G}_k^{(0,3)}
+\pa_{\lambda} G_k^{(0,3,\lambda)}$, where
$$
E \left[\norm{\bar{G}_k^{(0,3)}}_{L^2_{t,\mx,\lambda}}^2
\right], \,
E \left[
\norm{G_k^{(0,3,\lambda)}}_{L^2_{t,\mx,\lambda}}^2
\right] 
\lesssim 1.
$$
Summarising, we have established 
part (i) of the lemma.

It remains to show that the distribution-valued 
random variables $G_k^{(1)}$, $G_k^{(2)}$, 
and $G_k^{(3)}$ satisfy, respectively, 
parts (ii), (iii) and (iv) of the lemma. 
To this end, we shall use 
Lemma \ref{osnov} and the fact that
\begin{equation}\label{ineq}
	\frac{\sqrt{\delta_k\abs{\lambda_j^2-1}}}
	{1+\delta_k(\lambda_j^2-1)} \leq 1,
\end{equation}
recalling that the diffusion-capillarity parameters 
$\eps_k,\delta_k$ satisfy \eqref{neps}.

\medskip

\noindent \underline{Estimate of $G_k^{(1)}$:} We have 
$G_k^{(1)}=\pa^2_{\lambda\lambda} \bar{G}_k^{(1)}$, where 
$$
\bar{G}_k^{(1)}=-\abs{\lambda-u_k}\, \sum_{j=1}^\infty  
\frac{e_j(\mx)\delta_k(\lambda_j^2-1)}
{1+\delta_k(\lambda_j^2-1)}
\int_M \bigl(-\Div \mff_k(\my,u_k)
+\eps_k \Delta u_k\bigr)e_j(\my) \,dV(\my).
$$
For $\prob$-a.e.~$\omega\in\Omega$, using \eqref{ineq} 
and the Cauchy-Schwarz inequality (for sums), we have
\begin{align}
	\int_0^T\int_M \abs{\bar{G}_k^{(1)}}
	\, dV(\mx)\, dt  
	& =\Biggl| \int_0^T\sum_{j=1}^\infty
	\int_M \abs{\lambda-u_k}\, e_j(\mx) 
	\frac{\delta_k(\lambda_j^2-1)}
	{1+\delta_k(\lambda_j^2-1)} \,dV(\mx)
	\notag \\ & \qquad\qquad \qquad\times
	\int_M \bigl(-\Div \mff_k(\my, u_k)
	+\eps_k \Delta u_k\bigr)e_j(\my)\,dV(\my)
	\, dt \,d\lambda \Biggr|
	\notag \\ &  
	\leq  \int_0^T 
	I_u(\omega,t) I_\mff(\omega,t,\lambda) \,dt 
	+ \eps_k \delta_k^{\frac12} 
	\int_0^T I_u(\omega,t) I_\Delta(\omega,t)\,dt,
	\label{eq:G1-tmp}
\end{align} where
\begin{align*}
	I_u & :=\left(\sum_{j=1}^\infty \delta_k (\lambda_j^2-1)
	\abs{\int_M  |\lambda -u_k|\,
	e_j(\mx)\,dV(\mx)}^2\right)^{\frac{1}{2}}
	\\ & 
	=\delta_k \norm{\nabla_{\mx}\abs{\lambda-u_k}}_{L^2(M)} 
	\leq \delta_k \, \| \nabla u_k(t,\cdot)\|_{L^2(M)},
	\\ 
	I_\mff & :=\left(\sum_{j=1}^\infty
	\abs{\int_M \Div \mff_k(\my,u_k) e_j(\mx)
	\,dV(\my)}^2\right)^{\frac{1}{2}}
	\\ & 
	=\norm{\Div \mff_k(\cdot,u_k)}_{L^2(M)}
	\overset{\eqref{gc}}{\leq} 
	\norm{\mff_k'}_{L^\infty} 
	\norm{\nabla u_k(\omega,t,\cdot)}_{L^2(M)},
	\\ I_\Delta & := \left(\sum\limits_{j=1}^\infty 
	\abs{\int_M \Delta u_k e_j(\my) \,dV(\my)}^2 
	\right)^{\frac{1}{2}}=
	\norm{\Delta u_k(\omega,t,\cdot)}_{L^2(M)}, 
\end{align*}
making repeated use of the properties 
of $\Seq{e_j}_{j\in \N}$, $\Seq{\lambda_j}_{j\in \N}$. 
By the Cauchy-Schwarz inequality,
\begin{align*}
	\abs{\eqref{eq:G1-tmp}} 
	& \leq 
	\norm{I_u(\omega,\cdot)}_{L^2((0,T))}
	\norm{I_\mff(\omega,\cdot)}_{L^2(0,T)}
	\\ & \qquad 
	+\eps_k \delta_k^{\frac12} 
	\norm{I_u(\omega,\cdot)}_{L^2((0,T))}
	\norm{I_\Delta(\omega,\cdot)}_{L^2(0,T)}
	=:I_{\sqrt{\delta}}+I_{\eps\sqrt{\delta}},
\end{align*}
where 
\begin{align*}
	& I_{\sqrt{\delta}} \leq 
	\delta_k^{\frac12} \norm{\mff_k'}_{L^\infty}
	\norm{\nabla u_k}^2_{L^2((0,T)\times M)},
	\\ 
	& I_{\eps\sqrt{\delta}} 
	\leq \eps_k \delta_k^{\frac12} 
	\norm{\Delta u_k}_{L^2((0,T)\times M)}
	\norm{\nabla u_k}_{L^2((0,T)\times M)}.
\end{align*}
Hence,
\begin{equation}\label{SJ-1}
	\begin{split}
		\int_0^T\int_M\abs{\bar{G}_k^{(1)}}\, dV(\mx)\, dt 
		& \lesssim
		\Biggl( \, \frac{\delta_k^{\frac12}}{\eps_k}
		\norm{\mff_k'}_{L^\infty} \eps_k
		\norm{\nabla u_k}^2_{L^2((0,T)\times M)}
		\\ & \qquad\qquad \qquad
		+\eps_k^{\frac12} \norm{\nabla u_k}_{L^2((0,T)\times M)} 
		\,\left(\eps_k\delta_k\right)^{\frac12}
		\norm{\Delta u_k}_{L^2((0,T)\times M)}\Biggr).
	\end{split}
\end{equation}
Integrating \eqref{SJ-1} over $\lambda\in L$ 
(for $L\subset\subset \R$), squaring, taking 
the expectation, and then using ($C_f$--2), 
the second part of \eqref{eq:moment-diss} with $p=4$, 
\eqref{prva}, \eqref{cetvrta}, and the 
link \eqref{neps} between the diffusion-capillarity 
parameters $\eps_k,\delta_k$, we deduce that 
$$
E\left[\, \abs{\int_0^T\int_M\int_L\abs{\bar{G}_k^{(1)}}
\,d\lambda \, dV(\mx)\, dt}^2\right]\leq C_L,
$$
where $C_L$ is independent of $k$.

\medskip

\noindent \underline{Estimate of $G_k^{(2)}$:} 
It is simpler to deal with $G_k^{(2)}$. Set 
$$
I_{\Phi}=I_{\Phi}(\omega,t,\mx)
:=\sum_{j=1}^\infty
\frac{1}{1+\delta_k(\lambda_j^2-1)} 
\left(\int_M \Phi(\my,u_k) e_j(\my) \,dV(\my)
\right) e_j(\mx),
$$
and note that $\frac{1}{1+\delta_k(\lambda_j^2-1)}
\le 2$, assuming $\delta_k\le 1/2$ $\forall k$ 
and recalling $0<\lambda_j\to \infty$ as $j\to \infty$.
By the $L^2$ orthonormality of $\Seq{e_j}_{j\in \N}$, 
Bessel's inequality and the 
assumption \eqref{assump-Phi},
$$
\norm{I_\Phi(\omega,t,\cdot)}_{L^2(M)}^2
\leq 2 \norm{\Phi(\cdot,u_k)}_{L^2(M)}^2
\lesssim_{\Phi} \left(1
+\norm{u_k(\omega,t,\cdot)}_{L^2(M)}^2\right).
$$
Using this, for any $\varphi\in 
\cD((0,T)\times M\times \R)$ with $\phi\in \cD_{t,\mx}$ 
and $\rho\in \cD_{\lambda}$, we obtain
\begin{align*}
	\abs{\inn{G_k^{(2)},\varphi}}
	& =\abs{\int_0^T\int_M 
	\varphi(t,\mx,u_k)
	\bigl(I_\Phi(\omega,t,\mx)\bigr)^2\, dV(\mx)\, dt}
	\\ & \leq 
	\norm{\varphi}_{L^\infty((0,T)\times M\times \R)}  
	\int_0^T\int_M
	\bigl(I_\Phi(\omega,t,\mx)\bigr)^2\,dV(\mx)\, dt
	\\ & \lesssim_{\Phi,T,M}
	\norm{\varphi}_{L^\infty((0,T)\times M\times \R)}
	\left(1+\norm{u_k(\omega,\cdot)}_{L^2((0,T)\times M)}^2
	\right).
\end{align*}
Squaring, taking the expectation and then using 
the first part of \eqref{eq:moment-diss} with $p=4$,
\begin{align*}
	E\left[\abs{\inn{G_k^{(2)},\varphi}}^2\right]  
	& \lesssim_{\Phi,T,M}  
	\norm{\varphi}_{L^\infty((0,T)\times M\times \R)}^2,
\end{align*}
which implies part (iii) of the lemma.

\medskip

\noindent \underline{Estimate of $G_k^{(3)}$:} The treatment 
of the third term $G_k^{(3)}$ follows along the 
lines of $G_k^{(2)}$. Set 
$$
\tilde I_{\Phi}=\tilde I_{\Phi}(\omega,t,\mx)
:=\sum_{j=1}^\infty
\frac{\delta_k(\lambda_j^2-1)}{1+\delta_k(\lambda_j^2-1)} 
\left(\int_M \Phi(\my,u_k) e_j(\my) \,dV(\my)
\right) e_j(\mx),
$$
and note that $\abs{\frac{\delta_k(\lambda_j^2-1)}
{1+\delta_k(\lambda_j^2-1)}}\le 1$, assuming 
$\delta_k\le 1/2$ $\forall k$ 
and recalling that $0<\lambda_j\to \infty$ 
as $j\to \infty$. For any 
$\varphi=\phi\otimes \rho \in \cD((0,T)\times M\times \R)$ 
with $\phi\in \cD_{t,\mx}$ and $\rho\in \cD_{\lambda}$, 
by the Cauchy-Schwarz inequality,
\begin{align*}
	\abs{\inn{\frac{1}{2} G_k^{(3)},\phi\otimes \rho}}
	&=\abs{\int_0^T\int_M \varphi(t,\mx,u_k)
	\tilde I_\Phi(\omega,t,\mx)\, dV(\mx)\, dt}
	\\ & \le \norm{\phi}_{L^1((0,T);L^2(M))}
	\norm{\rho}_{L^\infty(\R)}
	\norm{\tilde I_\Phi(\omega,t,\cdot)}_{L^\infty((0,T);L^2(M))}.
\end{align*}
As before,
$$
\norm{\tilde I_\Phi(\omega,t,\cdot)}_{L^2(M)}^2
\lesssim_{\Phi,T,M} 1
+\norm{u_k(\omega,t,\cdot)}_{L^2(M)}^2,
$$
so that
$$
\norm{\tilde I_\Phi(\omega,\cdot,\cdot)}_{L^\infty((0,T);L^2(M))}^2
\lesssim_{\Phi,T,M} 1+
\norm{u_k(\omega,\cdot,\cdot)}_{L^\infty(0,T;L^2(M))}^2.
$$ 
By \eqref{prva}, 
$L^2_{t,\mx}\subset L^1_tL^2_{\mx}$ 
and $H^1(\R)\subset L^\infty(\R)$, we obtain
\begin{align*}
	E\left[\abs{\inn{G_k^{(3)},
	\phi\otimes \rho}}^2\right]  
	& \lesssim_{\Phi,T,M}
	\norm{\phi}_{L^2((0,T)\times M)}^2
	\norm{\rho}_{H^1(\R)}^2.
\end{align*}
As before, this establishes the final 
part (iv) of the lemma.
\end{proof} 

The following lemma exhibits a critical strong temporal 
translation estimate for the solution 
$h_k$ of the kinetic equation \eqref{kinetic} 
linked to the pseudo-parabolic SPDE \eqref{PP-1}. 
To motivate the lemma, let us, for the moment, 
set aside the probability variable $\omega$ 
(and thus $E[\cdot]$). The $k$-uniform 
bound $\norm{h_k}_{L^\infty}\lesssim 1$ implies 
that the $h_k$ is bounded in $L^p(0,T;L^2(M\times L))$ 
for all $p\in [1,\infty]$, $L\subset\subset \R$. 
Ignoring the statistical mean, the next 
lemma shows that $h_k$ is $t$-continuous 
in $L^1(0,T)$ for the norm $H^{-N}(M\times L)$, 
for $N$ large, uniformly in $k$. 
Then, according to \cite{Simon:1987vn}, 
this implies that the velocity averages of $\Seq{h_k}$ 
are compact in the space $L^2(0,T;H^{-1}(M\times L))$ 
(for the strong topology), and also in 
the slightly better space $L^2_t\bigl(L^2_w(M\times L)\bigr)$, 
cf.\eqref{eq:L2tL2x-weak}. The strong temporal compactness supplied 
by $L^2_t\bigl(L^2_w(M\times L)\bigr)$ is essential 
for applying our stochastic velocity 
averaging result, cf.~Theorem \ref{thm-1}. 
In the next section, we will handle 
the probability variable by invoking 
some results of Skorokhod, linked to 
the tightness of measures and a.s.~representations 
of random variables. 

\begin{lemma}[temporal translation estimate 
in $L^1_tH^{-N}_{\mx,\lambda}$]\label{lem:temporal-est}
For each $k\in \N$, let $h_k=h_k(\omega,t,\mx,\lambda)$ 
satisfy the kinetic SPDE \eqref{kinetic}. 
Fix $L\subset\subset \R$, and $N\in \N$ such 
$H^N_0(M\times L)\subset C^2(M\times L)$. 
Then, for any $\vartheta\in (0,T)$,
\begin{equation} \label{eq:temporal-est}
	E \left[\sup_{\tau\in (0,\vartheta)}
	\norm{h_k(\cdot,\cdot+\tau,\cdot,\cdot)
	-h_k}_{L^1(0,T-\tau;H^{-N}(M\times L))}\right]
	\lesssim C_L\left(\vartheta+\vartheta^{1/2}\right),
\end{equation}
for some $k$-independent constant $C_L$.
\end{lemma}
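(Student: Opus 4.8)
The plan is to test the kinetic SPDE \eqref{kinetic} against time-independent functions $\psi=\psi(\mx,\lambda)$ in the unit ball of $H^N_0(M\times L)$, exploiting that, since $N$ is chosen so large that $H^N_0(M\times L)\subset C^2(M\times L)$, such $\psi$ control all the spatial and velocity derivatives that land on the distributional source and noise terms. Concretely, using the pointwise-in-time form of the equation supplied by Lemma \ref{ito-eq} (cf.~\eqref{weak-sense}), the increment $\inn{h_k(t+\tau)-h_k(t),\psi}$ splits as $D_\psi(t,\tau)+\Sigma_\psi(t,\tau)$, where $D_\psi$ is the time integral over $[t,t+\tau]$ of the transport term $\inn{\mff_k'h_k,\nabla_\mx\psi}$ together with the source contributions, and $\Sigma_\psi(t,\tau)=\int_t^{t+\tau}\inn{G_k^{(0,3)}-G_k^{(3)},\psi}\,dW_s$ is a martingale increment. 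Taking the supremum over $\psi$ turns these into the $H^{-N}(M\times L)$-norm of the drift and stochastic parts of $h_k(t+\tau)-h_k(t)$, and I would estimate the two separately, the drift producing the $\vartheta$ term and the stochastic part the $\vartheta^{1/2}$ term.

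For the drift I would use the decompositions and bounds of Lemma \ref{kinetic-L}. The transport term is bounded pointwise in $(\omega,s)$ by $\norm{\mff_k'}_{L^\infty}\norm{\nabla_\mx\psi}_{L^1(M\times L)}\lesssim_L 1$, by ($C_f$--2) and $\norm{h_k}_{L^\infty}\lesssim 1$; the terms $\Div_\mx G_k^{(0,1)}$ and $G_k^{(1)}=\pa^2_{\lambda\lambda}\bar G_k^{(1)}$ are paired, after transposing derivatives onto $\psi$, with the $L^2$- and $L^1$-bounded fields $G_k^{(0,1)}$ and $\bar G_k^{(1)}$, giving $L^1_t$ integrands; and the measure terms $\pa_\lambda G_k^{(0,2)}$, $\pa_\lambda G_k^{(2)}$ contribute the total variation of $G_k^{(0,2)}$, $G_k^{(2)}$ on the time-slab $[t,t+\tau]\times M\times L$. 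Integrating in $t$ over $[0,T-\tau]$ and applying Fubini, every one of these contributions is bounded by $\tau$ times a ($k$-uniform, $L$-dependent) quantity, and since these bounds are monotone in $\tau$ the supremum over $\tau\in(0,\vartheta)$ costs only a factor $\vartheta$; taking $E[\cdot]$ and using Cauchy--Schwarz with the second-moment bounds of Lemma \ref{kinetic-L} yields the $C_L\vartheta$ part of \eqref{eq:temporal-est}.

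The \emph{main obstacle} is the stochastic part, because of the supremum over $\tau$ sitting inside the expectation. I would first record that, via the splittings $G_k^{(0,3)}=\bar G_k^{(0,3)}+\pa_\lambda G_k^{(0,3,\lambda)}$ and $G_k^{(3)}=\bar G_k^{(3)}+\pa_\lambda G_k^{(3,\lambda)}$ from parts (i) and (iv), the noise coefficient $\Theta_k:=G_k^{(0,3)}-G_k^{(3)}$ defines an $H^{-N}(M\times L)$-valued adapted process with $E\bigl[\int_0^T\norm{\Theta_k(s)}_{H^{-N}(M\times L)}^2\,ds\bigr]\lesssim_L 1$ uniformly in $k$. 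Writing $M(r)=\int_0^r\Theta_k\,dW_s$ for the associated $H^{-N}(M\times L)$-valued continuous martingale (extended constantly for $r>T$), the stochastic contribution to the left-hand side of \eqref{eq:temporal-est} is $E\bigl[\sup_{\tau\in(0,\vartheta)}\int_0^{T-\tau}\norm{M(t+\tau)-M(t)}_{H^{-N}(M\times L)}\,dt\bigr]$. I would move the supremum inside the $t$-integral, bounding it by $\int_0^T\sup_{r\in[t,t+\vartheta]}\norm{M(r)-M(t)}_{H^{-N}(M\times L)}\,dt$, interchange $E$ and $\int_0^T$ by Tonelli, and then apply the Hilbert-space Burkholder--Davis--Gundy inequality (the vector-valued analogue of \eqref{eq:BDG}) to the increment martingale $r\mapsto M(r)-M(t)$, which gives $E\bigl[\sup_{r\in[t,t+\vartheta]}\norm{M(r)-M(t)}_{H^{-N}(M\times L)}\bigr]\lesssim E\bigl[(\int_t^{t+\vartheta}\norm{\Theta_k}_{H^{-N}}^2\,ds)^{1/2}\bigr]$. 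A final Cauchy--Schwarz in $t$ and Fubini in $(s,t)$ convert the $\int_0^T$ of this quantity into $T^{1/2}(\vartheta\,E\int_0^T\norm{\Theta_k}_{H^{-N}}^2\,ds)^{1/2}\lesssim_L\vartheta^{1/2}$. Adding the drift and stochastic bounds gives \eqref{eq:temporal-est}; the only genuinely delicate point is justifying the Hilbert-space BDG estimate for this single-Wiener-process stochastic integral, which can alternatively be reduced to the scalar inequality \eqref{eq:BDG} by expanding $M$ in an orthonormal basis of $H^N_0(M\times L)$ and summing the componentwise quadratic variations.
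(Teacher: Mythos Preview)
Your proposal is correct and follows essentially the same approach as the paper: split the increment via the pointwise-in-time formulation into drift and martingale parts, transpose all $(\mx,\lambda)$-derivatives onto the test function using the Sobolev embedding $H^N_0\subset C^2$, control the drift by the Fubini argument you describe (yielding the $\vartheta$ factor, including for the measure-valued terms), and handle the stochastic part by moving $\sup_\tau$ inside the $t$-integral, swapping $E$ and $\int_0^T$, then applying the Hilbert-space BDG inequality followed by Cauchy--Schwarz in $t$ and Fubini in $(s,t)$ to extract $\vartheta^{1/2}$. The only cosmetic difference is that the paper splits the noise coefficient into four $L^2(M\times L)$-valued pieces ($\bar G_k^{(0,3)}$, $G_k^{(0,3,\lambda)}$, $\bar G_k^{(3)}$, $G_k^{(3,\lambda)}$) and runs the BDG argument in $L^2$ for each, whereas you package them as a single $H^{-N}$-valued process $\Theta_k$; the two formulations are equivalent.
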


\begin{proof}
Recall that for any $f\in L^1(0,T;H^{-N}(M\times L))$,
$$
\norm{f}_{L^1(0,T;H^{-N}(M\times L))}
=\int_0^T \sup\Seq{\abs{\int_M\int_L 
f \phi \,d\mx \,d\lambda}: 
\phi\in H^N_0(M\times L), \, 
\norm{\phi}_{H_0^1(M\times L)}\leq 1} \,dt.
$$
To prove the lemma, it is sufficient to establish that
\begin{equation}\label{eq:time-estimate1}
	E \left[\, \sup_{\tau\in (0,\vartheta)} \int_0^{T-\tau}
	\sup_\phi \abs{\int_M\int_L \bigl(h_k(t+\tau)-h_k(t)\bigr)
	\phi \, d\mx \,d\lambda}
	\,dt \right]\leq 
	C \left(\vartheta+\vartheta^{1/2}\right),
\end{equation} 
for a constant $C$ independent of $k$. 
In \eqref{eq:time-estimate1}, the inner 
supremum runs over all $\phi\in H^N_0(M\times L)$ 
for which $\norm{\phi}_{H_0^1(M\times L)}\leq 1$. 
First, we use the SPDE \eqref{kinetic} 
and Lemma \ref{ito-eq} to write
\begin{equation}\label{eq:translation-tmp1}
	\begin{split}
		&\abs{\int_M\int_L \bigl( h_k(t+\tau)-h_k(t) \bigr)
		\phi \, d\mx \, d\lambda }
		\\ & \quad 
		\leq \sum_{i=1}^6\int_t^{t+\tau}\int_M\int_L
		\abs{I_i} \, d\mx \, d\lambda \, dt'
		+\sum_{i=7}^{10}\abs{\int_t^{t+\tau}\int_M\int_L
		I_i \, d\mx \, d\lambda \, dW(t')},
	\end{split}
\end{equation}
where 
\begin{align*}
	& I_1=\abs{\mff_k'(\mx,\lambda)}
	\abs{h_k}\abs{\nabla_{\mx}\phi}
	\lesssim 
	\abs{\nabla_{\mx}\phi}
	\lesssim
	\norm{\phi}_{H^N_0(M\times L)},
	\\ & 
	I_2=\abs{\bar{G}_k^{(0,1)}}
	\abs{\nabla_{\mx} \phi}
	\lesssim 
	\abs{\bar{G}_k^{(0,1)}}
	\norm{\phi}_{H^N_0(M\times L)},
	\\ & 
	I_3=\abs{G_k^{(0,1,\lambda)}}
	\abs{\pa_{\lambda}\nabla_{\mx}\phi}
	\lesssim 
	\abs{G_k^{(0,1,\lambda)}}
	\norm{\phi}_{H^N_0(M\times L)},
	\\ & 
	I_4=\abs{G_k^{(0,2)}}\abs{\pa_\lambda \phi}
	\lesssim 
	\abs{G_k^{(0,2)}}
	\norm{\phi}_{H^N_0(M\times L)},
	\\ &
	I_5=\abs{\bar{G}^{(1)}_k}\abs{\pa^2_{\lambda\lambda} \phi}
	\lesssim 
	\abs{\bar{G}^{(1)}_k}\norm{\phi}_{H^N_0(M\times L)},
	\\ & 
	I_6 =\abs{G_k^{(2)}}\abs{\pa_\lambda \phi}
	\lesssim \abs{G_k^{(2)}}
	\norm{\phi}_{H^N_0(M\times L)},
\end{align*}
and
\begin{align*} 
	I_7 =\bar{G}_k^{(0,3)}\phi,
	\quad 
	I_8 = G_k^{(0,3,\lambda)}\pa_\lambda \phi,
	\quad 
	I_9 = \bar{G}_k^{(3)}\phi, 
	\quad 
	I_{10}=G_k^{(3,\lambda)}\pa_\lambda \phi.
\end{align*}
In the estimate of $I_1$, we used ($C_f$--2) 
and that $\abs{h_k}\leq 1$. In addition, for $I_1,\ldots,I_6$, 
we specified the Sobolev index $N$ large enough to 
ensure---via the Sobolev embedding theorem---that the various 
terms involving $\phi$ are bounded by a constant times 
$\norm{\phi}_{H^N_0(M\times L)}$. By assumption, 
$\norm{\phi}_{H^N_0(M\times L)}\leq 1$. 

Observe that we have slightly abused the integral notation for $I_4$ and 
$I_6$ in \eqref{eq:translation-tmp1}, recalling that these 
objects are measure-valued in $\lambda$, see 
\eqref{eq:measure-m-def} for more precise notation.

In \eqref{eq:translation-tmp1}, we 
take the supremum over $\phi$, 
integrate with respect to $t\in (0,T-\tau)$,
take the supremum over $\tau\in (0,\vartheta)$, 
and finally apply the expectation operator $E[\cdot]$, yielding 
\begin{equation}\label{eq:translation-tmp2}
	\begin{split}
		&	E \left[\, \sup_{\tau\in (0,\vartheta)} \int_0^{T-\tau}
		\sup_\phi \abs{\int_M\int_L \bigl( h_k(t+\tau)-h_k(t) \bigr)
		\phi \, d\mx \, d\lambda}\, dt\right]
		\lesssim \vartheta\sum_{i=1}^6 J_i+\sum_{i=7}^{10} J_i,
	\end{split}
\end{equation}
where, using the Cauchy-Schwarz inequality (whenever needed) 
and the bounds derived in Lemma \ref{kinetic-L},
\begin{align*}
	& J_1=1,
	\quad 
	J_2=E\left[\norm{\bar{G}_k^{(0,1)}}_{L^1((0,T)\times M\times L)}\right]
	\lesssim_{T,L} 
	E\left[\norm{\bar{G}_k^{(0,1)}}_{L^2((0,T)\times M\times L)}\right]
	\lesssim 1,
	\\ & 
	J_3=E\left[
	\norm{G_k^{(0,1,\lambda)}}_{L^1((0,T)\times M\times L)}\right]
	\lesssim_{T,L}
	E\left[
	\norm{G_k^{(0,1,\lambda)}}_{L^2((0,T)\times M\times L)}\right]
	\lesssim 1,
	\\ & 
	J_4=E\left[\norm{G_k^{(0,2)}}_{\cM((0,T)\times M\times L)}\right]
	\lesssim 1,
	\quad 
	J_5= E\left[
	\norm{\bar{G}^{(1)}_k}_{L^1((0,T)\times M\times L)}\right]
	\lesssim_L 1,
	\\ &
	J_6=E\left[\norm{G_k^{(2)}}_{\cM((0,T)\times M\times L)}
	\right]\lesssim 1,
\end{align*}
and, for $i=7,8,9,10$,
\begin{align*}
	J_i=E\left[\sup_{\tau\in (0,\vartheta)}
	\int_0^{T-\tau}\sup_\phi\abs{\int_t^{t+\tau}\int_M\int_L
	I_i \, d\mx \, d\lambda \, dW(t')} \, dt\right].
\end{align*}

By the stochastic Fubini theorem and the 
Cauchy-Schwarz inequality,
\begin{align*}
	& \abs{\int_t^{t+\tau}\int_M\int_L
	I_7 \, d\mx \, d\lambda \, dW(t')}=
	\abs{\int_M\int_L \left(\,\int_t^{t+\tau}
	\bar{G}_k^{(0,3)} \, dW(t')\right) 
	\, \phi \, d\mx \, d\lambda}
	\\ & \qquad 
	\leq \norm{\int_t^{t+\tau}
	\bar{G}_k^{(0,3)} \, dW(t')}_{L^2(M\times L)}
	\norm{\phi}_{L^2(M\times L)}
	\lesssim \norm{\int_t^{t+\tau}
	\bar{G}_k^{(0,3)} \, dW(t')}_{L^2(M\times L)},
\end{align*}
where we have used that $\norm{\phi}_{L^2(M\times L)}\lesssim 
\norm{\phi}_{H^N_0(M\times L)}\leq 1$.  
Hence, by the (Hilbert-space valued) BDG inequality 
(see, e.g., \cite[page 174]{DaPrato:2014aa}), 
\begin{align*}
	& E\left[\sup_{\tau\in (0,\vartheta)}
	\int_0^{T-\tau} \sup_\phi\abs{\int_t^{t+\tau}\int_M\int_L
	I_7 \, d\mx \, d\lambda \, dW(t')} \, dt\right]
	\\ & \qquad
	\lesssim\int_0^{T-\vartheta} E\left[\sup_{\tau\in (0,\vartheta)}
	\norm{\int_t^{t+\tau}\bar{G}_k^{(0,3)} \, dW(t')}_{L^2(M\times L)}
	\right]\, dt
	\\ & \qquad 
	\lesssim \int_0^{T-\vartheta} 
	E \left[\left(\int_t^{t+\vartheta}
	\norm{\bar{G}_k^{(0,3)}}_{L^2(M\times L)}^2
	\,dt'\right)^{\frac12}\right]\, dt
	\\ & \qquad \lesssim_T
	\left(E \int_0^{T-\vartheta}\int_t^{t+\vartheta}\int_M\int_L
	\abs{\bar{G}_k^{(0,3)}}^2 \, d\mx \, d\lambda
	\,dt'\, dt \right)^{\frac12}
	\\ & \qquad \leq \vartheta^{\frac12}
	\left(E\left[\int_0^T\int_M\int_L
	\abs{\bar{G}_k^{(0,3)}}^2 \, d\mx \, d\lambda
	\,dt\right]\right)^{\frac12}
	\lesssim \vartheta^{\frac12}.
\end{align*}
Similarly,
\begin{align*}
	E\left[\sup_{\tau\in (0,\vartheta)}
	\int_0^{T-\tau} \sup_\phi\abs{\int_t^{t+\tau}\int_M\int_L
	I_i \, d\mx \, d\lambda \, dW(t')}\right]\, dt
	\lesssim \vartheta^{\frac12}, 
	\quad i=8,9,10.
\end{align*}

Utilising our estimates of $J_1,\ldots,J_{10}$ 
in \eqref{eq:translation-tmp2}, we arrive at the 
sought-after translation 
estimate \eqref{eq:time-estimate1}.
\end{proof}

\subsection{Tightness and Skorokhod-Jakubowski a.s.~representations}
For each $k\in \N$, $u_k$ is a solution to the pseudo-parabolic SPDE 
\eqref{PP-1} with initial data $u_0$, see \eqref{PP-ID}. 
According to Lemma \ref{kinetic-L}, the corresponding process 
$h_k:=\sign(u_k-\lambda)$ solves the kinetic 
SPDE \eqref{kinetic} (weakly in $t,\mx,\lambda$). 
Setting
\begin{align*}
	&
	D^f_k:=\bigl(\mff'-\mff_k'\bigr)h_k
	: \Omega\to \cD_{t,\mx,\lambda}',
	\quad 
	g_k^{(1)}:= \bar{G}_k^{(0,1)} 
	: \Omega\to \cD_{t,\mx,\lambda}',
	\quad 
	g_k^{(2)}:= \bar{G}_k^{(0,1)}
	: \Omega\to \cD_{t,\mx,\lambda}',
	\\ &
	g_k^{(3)}:= G_k^{(0,1,\lambda)} 
	: \Omega\to \cD_{t,\mx,\lambda}',
	\quad 
	g_k^{(4)}:= \bar{G}_k^{(1)}
	: \Omega\to \cD_{t,\mx,\lambda}',
	\quad 
	D_k := G_k^{(2)}-G_k^{(0,2)}
	: \Omega\to \cD_{t,\mx,\lambda}',
	\\
	& \Phi_k^{(1)} := \bar{G}_k^{(0,3)}-\bar{G}_k^{(3)}
	: \Omega\to \cD_{t,\mx,\lambda}',
	\quad
	\Phi_k^{(2)} := G_k^{(0,3,\lambda)}
	-G_k^{(3,\lambda)} 
	: \Omega\to \cD_{t,\mx,\lambda}',
\end{align*}
it follows that $h_k$ satisfies the 
following SPDE in $\cD_{t,\mx,\lambda}'$, 
almost surely: 
\begin{equation}\label{eq:kinetic-new}
	\begin{split}
		d h_k+\Div_\mx \bigl(F h_k\bigr)\, dt
		& =\Div_{\mx} \left(D^f_k+g_k^{(1)}+g_k^{(2)}\right)\,dt
		+\pa_{\lambda}\Div_{\mx} g_k^{(3)}\,dt
		+\pa^2_{\lambda\lambda} g_k^{(4)}\,dt
		+\pa_{\lambda} D_k\,dt
		\\ & \qquad 
		+\left[ \Phi_k^{(1)}+\pa_{\lambda}
		\Phi_k^{(2)}\right]\,dW_t, 
		\qquad h_k\big |_{t=0}=h_0,
	\end{split}
\end{equation} 
where $h_0:=\sign(u_0-\lambda)$, $u_0$ 
comes from \eqref{PP-ID}, and $F:=\mff'
=\partial_\lambda \mff(\mx,\lambda)$, 
cf.~\eqref{conv}.  Recall that $0\leq h_k\leq 1$ 
and $F\in L^2_{\mx,\lambda,\loc}$, cf.~\eqref{conv}, where 
$F\in L^2_{\mx,\lambda,\loc}
:=L^2_{\loc}(M\times \R)$ 
if and only if $F\in L^2_{\mx,\lambda} 
:=L^2(M\times L)$ for all $L\subset\subset \R$ 
(and similarly for the other local spaces).
Moreover, by \eqref{conv},
$$
\norm{D^f_k}_{L^2((0,T)\times 
M\times L)}\lesssim_{T,L}
\norm{\sup\limits_{\lambda \in \R}
\abs{\mff'(\cdot,\lambda)
-\mff_k'(\cdot,\lambda)}}_{L^2(M)}\tok 0, 
\qquad L\subset\subset \R.
$$
We will view the constituents of 
the right-hand side of \eqref{eq:kinetic-new} 
as random mappings of the form
\begin{align*}
	& g_k^{(1)},g_k^{(2)},g_k^{(3)},
	\Phi_k^{(1)},\Phi_k^{(2)}
	:\Omega \to L^2_{t,\mx,\lambda},
	\quad
	D^f_k
	:\Omega \to L^2_{t,\mx,\lambda,\loc},
	\\ &
	g_k^{(4)}:\Omega \to L^1_{t,\mx,\lambda,\loc},
	\quad 
	D_k:\Omega \to \cM_{t,\mx,\lambda},
\end{align*}
where, as before, ``loc" refers to the $\lambda$-variable. 
Given Lemma \ref{kinetic-L}, the 
following $k$-uniform bounds hold:
\begin{equation}\label{eq:basic-kinetic-est}
	\begin{split}
		& E \left[\norm{\left(g_k^{(1)},g_k^{(2)},
		g_k^{(3)}\right)}_{L^2_{t,\mx,\lambda}}^2\right]\tok 0, 
		\quad 
		E \left[\norm{D^f_k}_{L^2_{t,\mx,\lambda,\loc}}^2
		\right]\tok 0, 
		\\ &
		E\left[\norm{g_k^{(4)}}_{L^1_{t,\mx,\lambda,\loc}}^2
		\right]\lesssim 1,
		\quad
		E\left[\norm{D_k}_{\cM_{t,\mx,\lambda}}^2
		\right]\lesssim 1, 
		\quad 
		E\left[\norm{\left(\Phi_k^{(1)},
		\Phi_k^{(2)}\right)}_{L^2_{t,\mx,\lambda}}^2
		\right]\lesssim 1.
	\end{split}
\end{equation}

As for the stochastic part of 
\eqref{eq:kinetic-new}, $W$ is an $\R$-valued 
Wiener process  defined on a 
filtered probability space $\bigl(\Omega,\mcf,\prob,
\Seqb{\mcf_t}\bigr)$ satisfying the usual conditions. 
Moreover, since $u_k$ is $\Seqb{\mcf_t}$--adapted,  
$\innb{h_k,\rho}$ and $\innb{\Phi_k,\rho}$ 
are adapted to $\Seqb{\mcf_t}$, for each $\rho\in C_c(\R)$.
Each $u_k$ is a.s.~continuous in time with 
values in $L^2(M)$, cf.~\eqref{eq:pseudo-spaces}, and thus 
the velocity averages $\innb{h_k,\rho}$ and 
$\innb{\Phi_k,\rho}$ are similarly time-continuous. 
However, Lemma \ref{ito-eq} shows 
that the weak limit of the velocity averages 
$\innb{h_k,\rho}$ is at best weakly right-continuous. 
Generally, $h_k$, $\Phi_k$, $g_k$, $G_k$ do 
not possess temporal continuity. Whenever needed, stochastic objects 
that lack temporal regularity are interpreted as random 
distributions (see Section \ref{sec:prelim}). 

Returning to \eqref{eq:basic-kinetic-est}, we 
have additional estimates for $h_k$. 
To summarise the key estimates 
(for later use), we will introduce a particular 
Banach space. Inspired by \cite{Bensoussan:1995aa}, 
fix $N\in \N$ and two sequences $\Seq{\mu_m}_{m\in \N}$, 
$\Seq{\nu_m}_{m\in \N}$ of positive numbers 
converging to zero as $m\to\infty$. 
For any $L\subset\subset \R$, consider the set
$$
\cZ_{\mu_m,\nu_m}^{(p,q)}(L):=
\Seq{u\in L^p(0,T;L^q(M\times L)):
\sup_{m\in \N} \frac{1}{\nu_m}
\sup_{\tau\in (0,\mu_m)}
\norm{u(\cdot+\tau,\cdot)-u}_{L^1(0,T-\tau;
H^{-N}(M\times L))}<\infty},
$$
for $p,q\in [1,\infty]$, which is endowed 
with the norm
\begin{equation}\label{eq:cZ-norm}
	\norm{u}_{\cZ_{\mu_m,\nu_m}^{(p,q)}(L)}
	:=\norm{u}_{L^p(0,T;L^q(M\times L))}
	+\sup_{m\in \N} \frac{1}{\nu_m}
	\sup_{\tau\in (0,\mu_m)}
	\norm{u(\cdot+\tau,\cdot)
	-u(\cdot,\cdot)}_{L^1(0,T-\tau;H^{-N}(M\times L))}.
\end{equation}
Note that $\cZ_{\mu_m,\nu_m}^{(p,q)}$ is a 
Banach space, which is separable (and thus Polish) 
if $p,q<\infty$. We also need the 
corresponding probabilistic space, for $r\in [1,\infty]$,
\begin{align*}
	L^r_{\prob}\bigl(\cZ_{\mu_m,\nu_m}^{(p,q)}(L)\bigr)
	& :=\Biggl\{u\in L^r_\prob
	\bigl(\Omega;L^p(0,T;L^q(M\times L))\bigr): 
	\\ & \qquad\qquad 
	E\left[\sup_{m\in \N} \frac{1}{\nu_m}
	\sup_{\tau\in (0,\mu_m)}
	\norm{u(\cdot+\tau,\cdot)
	-u}_{L^1(0,T-\tau;H^{-N}(M\times L))}\right]
	<\infty\Biggr\},	
\end{align*}
which is a Banach space when equipped with the norm
\begin{align*}
	\norm{z}_{L^r_{\prob}\bigl(\cZ_{\mu_m,\nu_m}^{(p,q)}
	(L)\bigr)} & :=\left(E\norm{u}_{L^p(0,T;L^q(M\times L))}^r
	\right)^{\frac{1}{r}}
	\\ & \qquad 
	+E\left[\sup_{m\in \N} \frac{1}{\nu_m}
	\sup_{\tau\in (0,\mu_m)}
	\norm{u(\cdot+\tau,\cdot)
	-u}_{L^1(0,T;H^{-N}(M\times L))}\right].
\end{align*}
Suppose the sequences $\Seq{\mu_m}$ and $\Seq{\nu_m}$ 
(which converge to zero) satisfy
\begin{equation}\label{eq:rm-num}
	\sum_{m=1}^\infty \frac{\mu_m^\alpha}{\nu_m}<\infty,
\end{equation}
where $\alpha\in (0,1)$ is given 
by Lemma \ref{lem:temporal-est}. 
Then $h_k$ is bounded in $L^r_{\prob}
\bigl(\cZ_{\mu_m,\nu_m}^{(p,q)}(L)\bigr)$: 
\begin{equation*}
	\norm{h_k}_{L^r_{\prob}
	\bigl(\cZ_{\mu_m,\nu_m}^{(p,q)}(L)\bigr)}
	\leq C_L, \quad \forall p,q,r\in [1,\infty],
\end{equation*}
for some constant $C_L$ independent of $k$, 
which follows from the 
$L^\infty_{\omega,t,\mx,\lambda}$-bound 
on $h_k$ and Lemma \ref{lem:temporal-est}.

The bounds \eqref{eq:basic-kinetic-est} imply 
weak convergences in $(\omega,t,x,\lambda)$ 
of the functions 
\begin{equation}\label{eq:Xk-def}
	\begin{split}
		& X^{(1)}_k:=h_k, \,\,
		X^{(2)}:=D^f_k,\,\,
		X^{(3)}_k:=g_k^{(1)}, \,\,
		X^{(4)}_k:=g_k^{(2)},\,\,
		X^{(5)}_k:=g_k^{(3)},\,\,
		X^{(6)}_k:=g_k^{(4)},\,\,
		\\ & 
		X^{(7)}_k:=D_k, \,\,
		X^{(8)}_k:=\Phi_k^{(1)}, \,\,
		X^{(9)}_k:=\Phi_k^{(2)},\,\,
		X^{(10)}_k:=W, \,\,
		X^{(11)}_k:=h_0,
	\end{split}
\end{equation}
which---according to Lemma \ref{lem:weak-limit-kinetic}---is 
sufficient to pass to the limit in the 
kinetic SPDE \eqref{eq:kinetic-new}. 
To be able to apply the stochastic velocity 
averaging result (Theorem \ref{thm-1}), it 
is essential that we upgrade the weak convergence in 
the probability variable $\omega$ to strong (a.s.) 
convergence. To conclude a.s.~convergence 
we will invoke some results  
linked to tightness of probability measures 
and a.s.~representations of random variables 
in quasi-Polish spaces \cite{Jakubowski:1997aa} 
(see Section \ref{sec:prelim}). Simultaneously, we will 
upgrade the convergence in the temporal variable $t$ 
to strong $L^2$ convergence, which is also a key condition for 
applying the stochastic velocity averaging theorem. 
To this end, we introduce the quasi-Polish path space
$$
\cX=\cX_1\times \cdots \times \cX_{11},
$$
where
\begin{align*}
	& \cX_1 = 
	L^2\Bigl(0,T;\bigl(L_{\loc}^2(M\times \R),\tau_w\bigr)\Bigr)
	=: L^2_t\bigl(L_{\loc,w}^2(M\times \R)\bigr),
	\\ & 
	\cX_2=
	\Bigl(L^2_{\loc}((0,T)\times M\times \R),\tau_w\Bigr)
	=:L^2_{\loc,w}((0,T)\times M\times \R),
	\\ &
	\cX_3=\cX_4=\cX_5=\cX_8=\cX_9
	=\Bigl(L^2((0,T)\times M\times \R),\tau_w\Bigr)
	=: L^2_w((0,T)\times M\times \R),
	\\ & 
	\cX_6=
	\Bigl(\cM_{\loc}((0,T)\times M\times \R),\tau_{w\star}\Bigr)
	=:\cM_{\loc,w\star}((0,T)\times M\times \R)
	\\ &
	\cX_7=\Bigl(\cM((0,T)\times M\times \R),\tau_{w\star}\Bigr)
	=:\cM_{w\star}((0,T)\times M\times \R),
	\\ & 
	\cX_{10}=C([0,T]),
	\quad
	\cX_{11}=\Bigl(L^2_{\loc}(M\times \R),\tau_w\Bigr)
	=:L^2_{\loc,w}(M\times \R).
\end{align*}
The factor space $\cX_{l}$ is Polish for 
$l=10$ and otherwise quasi-Polish.  
The corresponding $\sigma$-algebras are 
denoted by $\cB_{\cX},\cB_1,\ldots,\cB_{11}$, 
where $\cB_l:=\cB(\cX_l)$ is chosen 
as the Borel $\sigma$-algebra for all
$l$ except $l=6,7$. Regarding $\cB_6$ and $\cB_7$, denote 
by $\Seq{f_\ell^{(l)}}_{\ell\in \N}$ 
the separating sequence of the quasi-Polish space 
$\cX_{l}$ and by $\cB_{f^{(l)}}$ the $\sigma$-algebra generated by 
$\Seq{f_\ell^{(l)}}_{\ell\in \N}$, $l=6,7$, 
see Section \ref{sec:prelim}. 
We then specify $\cB_6:=\cB_{f^{(6)}}\subset \cB(\cX_6)$ 
and $\cB_7:=\cB_{f^{(7)}}\subset \cB(\cX_7)$. 
For concreteness, we take $\cB_{\cX}:=\cB(\cX)$. 
Consider the random mappings 
$X_k:\bigl(\Omega,\prob,\F\bigr)
\to \bigl(\cX,\cB_{\cX}\bigr)$ defined by
$$
\Omega\ni \omega \mapsto X_k(\omega)\in \cX, 
\quad X_k(\omega):=\bigl(X_k^{(1)}(\omega),\ldots,
X_k^{(11)}(\omega)\bigr),
$$
with the corresponding joint laws
\begin{equation}\label{eq:Xk-joint-laws}
	\cL_k=\prob\circ X_k^{-1}.
\end{equation}
The marginals of $\cL_k$ are denoted 
by $\cL_k^{(l)}$, $l=1,\ldots,11$. We will verify that 
the laws $\cL_k$ are tight by demonstrating 
that the product measures $\cL_k^{(1)}
\times \cdots \times \cL_k^{(11)}$ are tight (with 
respect to $\cB_1\times\cdots\times\cB_{11}$).

\begin{lemma}[tightness of joint laws]\label{lem:tight}
The sequence $\Seq{\cL_k}_{k\in \N}$ of 
probability measures---defined by 
\eqref{eq:Xk-joint-laws}---is tight on 
the path space $\bigl(\cX,\cB_{\cX}\bigr)$.
\end{lemma}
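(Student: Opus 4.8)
The plan is to reduce tightness of the joint laws to tightness of the eleven marginals. Since the index set is finite, the product sequence $\cL_k^{(1)}\times\cdots\times\cL_k^{(11)}$ is tight precisely when each marginal sequence $\Seq{\cL_k^{(l)}}_{k\in\N}$ is tight; moreover, if for a given $\kappa>0$ one selects, for each $l$, a compact set $K_\kappa^{(l)}\subset\cX_l$ with $\cL_k^{(l)}\bigl(K_\kappa^{(l)}\bigr)>1-\kappa/11$ uniformly in $k$, then $K_\kappa:=\prod_{l=1}^{11}K_\kappa^{(l)}$ is compact in $\cX$ by Tychonoff's theorem and, by subadditivity,
\[
\cL_k(\cX\setminus K_\kappa)
=\prob\Bigl(\exists\,l:\,X_k^{(l)}\notin K_\kappa^{(l)}\Bigr)
\le\sum_{l=1}^{11}\prob\bigl(X_k^{(l)}\notin K_\kappa^{(l)}\bigr)<\kappa,
\]
so that $\cL_k(K_\kappa)>1-\kappa$, uniformly in $k$. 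Hence it suffices to produce the sets $K_\kappa^{(l)}$ marginal by marginal. Two factors are immediate: the Wiener measure $\cL_k^{(10)}$ on the Polish space $C([0,T])$ does not depend on $k$ and a single Borel probability measure on a Polish space is tight, while $X_k^{(11)}=h_0$ is deterministic, so $\cL_k^{(11)}$ is a Dirac mass and trivially tight.

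For the factors carrying a weak or weak-$\star$ topology I would use Banach--Alaoglu together with Chebyshev's inequality. For $l\in\Seq{2,3,4,5,8,9}$ the space is an $L^2$ endowed with $\tau_w$; since $L^2$ is separable and reflexive, its closed balls are weakly compact and metrisable in the subspace topology, and the bounds in \eqref{eq:basic-kinetic-est} give $E\bigl[\norm{X_k^{(l)}}_{L^2}^2\bigr]\lesssim 1$ uniformly in $k$. Markov's inequality then yields a radius $R_\kappa$ with $\prob\bigl(\norm{X_k^{(l)}}_{L^2}>R_\kappa\bigr)\le R_\kappa^{-2}E\bigl[\norm{X_k^{(l)}}_{L^2}^2\bigr]<\kappa$, so the weakly compact ball $\Seq{\norm{\cdot}_{L^2}\le R_\kappa}$ is the desired compact set. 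For $l\in\Seq{6,7}$ the space is $\cM=\bigl(C_0\bigr)^\star$ with $\tau_{w\star}$, whose closed balls are weak-$\star$ compact and metrisable (separability of $C_0$), and the total-variation bounds in \eqref{eq:basic-kinetic-est} (using $L^1\hookrightarrow\cM$ for $g_k^{(4)}$) again allow Markov's inequality to select a compact ball carrying mass $>1-\kappa$. The spaces $\cX_2,\cX_6,\cX_{11}$ are local in $\lambda$; here I would fix an exhaustion $L_1\subset\subset L_2\subset\subset\cdots$ with $\bigcup_j L_j=\R$, apply the preceding argument on each $M\times L_j$ with tolerance $\kappa\,2^{-j}$, and intersect the resulting preimages, using that a subset of a local space is relatively compact iff its restriction to each $M\times L_j$ is.

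The only substantial point concerns the first factor $\cX_1=L^2_t\bigl(L^2_{\loc,w}(M\times\R)\bigr)$, carrying $X_k^{(1)}=h_k$. Here I would invoke the uniform bound $\norm{h_k}_{L^r_{\prob}(\cZ_{\mu_m,\nu_m}^{(p,q)}(L))}\le C_L$, which follows from $\norm{h_k}_{L^\infty}\lesssim 1$ and the temporal translation estimate of Lemma \ref{lem:temporal-est} under the summability condition \eqref{eq:rm-num}. By Markov's inequality this produces, for each $\kappa>0$ and each $L\subset\subset\R$, a bounded set $B_{\kappa,L}\subset\cZ_{\mu_m,\nu_m}^{(2,2)}(L)$ with $\prob\bigl(h_k\in B_{\kappa,L}\bigr)>1-\kappa$ uniformly in $k$. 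The crux is then a compact embedding: every bounded subset of $\cZ_{\mu_m,\nu_m}^{(2,2)}(L)$ that in addition enjoys the uniform $L^\infty_{\omega,t,\mx,\lambda}$ pointwise bound is relatively compact in $L^2_t\bigl(L^2_{\loc,w}(M\times L)\bigr)$. This is a mixed strong-in-$t$, weak-in-$(\mx,\lambda)$ compactness of Aubin--Lions--Simon type (cf.~\cite{Simon:1987vn,Bensoussan:1995aa}): the $L^\infty$ bound renders, for each $\phi\in L^2(M\times L)$, the averaged curves $t\mapsto\action{h_k(t)}{\phi}$ uniformly bounded and---via the $H^{-N}$ translation control---equicontinuous in $t$, so an Arzel\`a--Ascoli argument run along the separating sequence of $L^2_w(M\times L)$ upgrades weak-in-$t$ convergence of these averages to strong $L^2(0,T)$ convergence, which is exactly convergence in $L^2_t\bigl(L^2_{\loc,w}\bigr)$. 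A diagonal argument over $L_j\uparrow\R$ then delivers a compact set in $\cX_1$ of $\cL_k^{(1)}$-mass $>1-\kappa$.

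The main obstacle is precisely this compact embedding. Unlike the remaining factors, where compactness is a one-line consequence of Banach--Alaoglu, here one must combine two different notions of compactness and argue inside a quasi-Polish space that is not metrisable on unbounded sets; the verification that the translation estimate genuinely converts weak temporal convergence into strong temporal convergence of velocity averages---uniformly over the weakly compact $(\mx,\lambda)$-mass of $h_k$---is the technical heart of the lemma and must be carried out through the separating sequence furnished by the quasi-Polish structure \cite{Jakubowski:1997aa} rather than by naive metric sequential compactness. Once this embedding is in hand, assembling the eleven compact sets into the product $K_\kappa$ as in the first paragraph completes the proof.
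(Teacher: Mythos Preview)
Your overall architecture coincides with the paper's: reduce to the eleven marginals, handle the weak/weak-$\star$ factors by Banach--Alaoglu plus Chebyshev (with diagonal arguments for the local-in-$\lambda$ spaces), observe that $\cX_{10}$ and $\cX_{11}$ are trivial, and isolate $\cX_1$ as the only nontrivial case. This is exactly what the paper does.

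For the factor $\cX_1$, however, your write-up contains two imprecisions that the paper avoids. First, you invoke a bound in $\cZ_{\mu_m,\nu_m}^{(2,2)}(L)$, i.e.\ with temporal integrability exponent $p=2$. Simon's Theorem~6, which you cite, yields relative compactness in $L^q(0,T;X_0)$ only for $q<\bar q$; to reach $q=2$ you need $\bar q>2$, and the paper indeed fixes some $p>2$ (which is free since $\abs{h_k}\le 1$). Second, your description of the mechanism as ``Arzel\`a--Ascoli'' with ``equicontinuous in $t$'' curves $t\mapsto\action{h_k(t)}{\phi}$ is misleading: the temporal translation estimate \eqref{eq:temporal-est} is an $L^1$-in-time bound with values in $H^{-N}$, not a pointwise modulus of continuity, so there is no equicontinuity to speak of. The paper's route is cleaner and more honest: apply Simon's theorem directly with $X_1=L^2(M\times L)$, $X_0=H^{-1}(M\times L)$, $X_{-1}=H^{-N}(M\times L)$ and $\bar q=p>2$ to obtain relative compactness of $\cZ$-balls in $L^2(0,T;H^{-1}(M\times L))$, and then observe that a sequence which converges in $L^2_t H^{-1}_{\mx,\lambda}$ and is uniformly bounded in $L^2_{t,\mx,\lambda}$ automatically converges in $L^2_t\bigl(L^2_w(M\times L)\bigr)$. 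This last upgrade is a one-line interpolation, not an Arzel\`a--Ascoli argument. With these two corrections your proof is the paper's proof.
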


\begin{proof}
For each $\kappa>0$, we need to produce compact sets
\begin{align*}
	& \cC_{\kappa}^{(1)} \subset 
	L^2_t\bigl(L_{\loc,w}^2(M\times \R)\bigr),
	\quad 
	\cC_{\kappa}^{2}\subset 
	L^2_{\loc,w}((0,T)\times M\times \R),
	\\ &
	\cC_{\kappa}^{(3)},\, \cC_{\kappa}^{4}, \, \cC_{\kappa}^{(5)},\,
	\cC_{\kappa}^{(8)},\, \cC_{\kappa}^{(9)} 
	\subset L^2_w((0,T)\times M\times \R),
	\\ & 
	\cC_{\kappa}^{6} \subset 
	\cM_{\loc,w\star}((0,T)\times M\times \R),
	\quad
	\cC_{\kappa}^{(7)} \subset 
	\cM_{w\star}((0,T)\times M\times \R),
	\\ & 
	\cC_{\kappa}^{(10)} \subset C([0,T]), 
	\quad \text{and} \quad
	\cC_{\kappa}^{(11)} \subset L^2_{\loc,w}(M\times \R),
\end{align*}
such that, uniformly in $k\in \N$,
\begin{equation}\label{eq:tight-marginals}
	\cL_k^{(l)}\bigl(\cC_{\kappa}^{(l)}\bigr)
	>1-\kappa/11 \quad 
	\Longleftrightarrow \quad
	\cL_k^{(l)}\bigl((\cC_{\kappa}^{(l)})^c\bigr)
	=\prob\bigl(X_k^{(l)}\in 
	(\cC_{\kappa}^{(l)})^c\bigr)
	\le \kappa/11, \qquad l=1,\ldots,11.
\end{equation} 
This will immedately imply that 
$$
\cL_k\bigl(\cC_{\kappa}\bigr)
=\prob\bigl(X_k \in \cC_{\kappa}\bigr)
> 1-\kappa,
\quad 
\text{where $\cC_{\kappa}:=\cC_{\kappa}^{(1)}
\times \cdots\times \cC_{\kappa}^{(11)}$ is compact},
$$
which establishes the uniform 
tightness of $\Seq{\cL_k}_{k\in\N}$ 
according to Definition \ref{def:tight-measures}.

\medskip

Let us now prove \eqref{eq:tight-marginals}, starting 
with $l=1$. For $L\subset\subset \R$, set 
$\cZ_{\mu_m,\nu_m}(L):=\cZ_{\mu_m,\nu_m}^{(p,2)}(L)$, for 
some $p>2$, and suppose \eqref{eq:rm-num} holds. 
Clearly, for any $N>1$, 
$$
L^2(M\times L)\subset\subset H^{-1}(M\times L)
\subset\subset H^{-N}(M\times L),
$$ 
where $X\subset\subset Y$ means that $X$ 
is compactly embedded in $Y$. By \cite{Simon:1987vn}, we have that
\begin{equation}\label{eq:cZ-compact-tmp}
	\Seq{u:\norm{u}_{\cZ_{\mu_m,\nu_m}(L)}\leq R}
	\quad \text{is relatively compact in 
	$L^2(0,T;H^{-1}(M\times L))$},
\end{equation} 
for any $R>0$. Indeed, to conclude this we need 
Theorem 6 of \cite{Simon:1987vn} on the partial compactness of 
functions with values in an intermediate space. 
Let $X_1$, $X_0$, $X_{-1}$ be Banach spaces 
with continuous embeddings $X_1\subset X_0\subset X_{-1}$ 
and $X_1$ compactly embedded in $X_0$. 
Then \cite[Theorem 6]{Simon:1987vn} ensures 
that a set $\cZ$ is relatively compact in $L^q(0,T;X_0)$, 
$\forall q\in [1,\bar{q})$, $\bar{q}\in[1,\infty]$, 
if $\cZ$ is bounded in $L^{\bar{q}}(0,T;X_0)\cap L^1(0,T;X_1)$ 
and $\norm{u(\cdot+\tau)-u}_{L^1(0,T;X_{-1})}\to 0$ as 
$\tau\to 0$, uniformly for $u\in \cZ$. 
To conclude \eqref{eq:cZ-compact-tmp}, 
we apply this result with $X_1=L^2(M\times L)$, 
$X_0=H^{-1}(M\times L)$, $X_{-1}= H^{-N}(M\times L)$, $\overline{q}=p$ 
(recall $p>2$), and $N$ given by Lemma \ref{lem:temporal-est}.
Observe that \eqref{eq:cZ-compact-tmp} can be improved to
\begin{equation}\label{eq:cZ-compact}
	\Seq{u:\norm{u}_{\cZ_{\mu_m,\nu_m}(L)}\leq R}
	\quad \text{is relatively compact in
	$L^2_t\bigl(L_w^2(M\times L)\bigr)$},
\end{equation}
which follows from \eqref{eq:cZ-compact-tmp} and the uniform 
$L^2$ boundedness of the involved 
functions, cf.~\eqref{eq:cZ-norm}.

\medskip

Let $\Seq{L_l}_{l\in\N}$ be a sequence of 
sets $L_l\subset\subset \R$ that
monotonically increases towards $\R$ 
as $l\to \infty$, and let 
$\Seqb{R_{\kappa}^{(1)}(l)}_{l\in\N}$ 
be a sequence of positive numbers.
By \eqref{eq:cZ-compact} (and 
a diagonal argument), the set 
$$
\cK_\kappa^{(1)}:=
\Seq{u:\norm{u}_{\cZ_{\mu_m,\nu_m}(L_l)}
\leq R_{\kappa}^{(1)}(l), \, l\in \N}
$$
is a relatively compact subset 
of $L^2_t\bigl(L_{\loc,w}^2(M\times \R)\bigr)$. 
For any $m\ge 1$, we have the obvious bound
$$
\prob\bigl(X_k^{(1)}\in 
(\cK_{\kappa}^{(1)})^c\bigr)
\leq \sum_{l=1}^\infty\prob\left(\Seq{\omega\in
\Omega:\norm{h_k}_{\cZ_{\mu_m,\nu_m}(L_l)}
>R_{\kappa}^{(1)}(l)}\right)\leq A+B_m,
$$
where
\begin{align*}
	& A:=\sum_{l=1}^\infty\prob\left(\Seq{\omega\in
	\Omega:\norm{h_k}_{L^p(0,T;L^2(M\times L_l))}
	>R_{\kappa}^{(1)}(l)}\right),
	\\ & 
	B_m:=\sum_{l=1}^\infty\prob\left(\Seq{\omega\in
	\Omega:\sup_{\tau\in (0,\mu_m)}
	\norm{h_k(\cdot,\cdot+\tau,\cdot,\cdot)
	-h_k}_{L^1(0,T-\tau;H^{-N}(M\times L_l))}
	>\nu_m R_{\kappa}^{(1)}(l)}\right).
\end{align*}
By the Chebyshev inequality 
and $\abs{h_k}\leq 1$, 
$$
A\leq \sum_{l=1}^\infty\frac{1}{R_{\kappa}^{(1)}(l)} 
E\left[\norm{h_k}_{L^p(0,T;L^2(M\times L_l))}\right]
\lesssim \sum_{l=1}^\infty\frac{C_l}{R_{\kappa}^{(1)}(l)},
$$
which can be made $\leq \kappa/22$ by taking 
$R_{\kappa}^{(1)}(l)=2^{-l}C_lR_{\kappa}^{(1)}$ with 
$R_{\kappa}^{(1)}$ large enough (uniformly in $k$). 
Similarly, given \eqref{eq:temporal-est} 
and \eqref{eq:rm-num},
\begin{align*}
	B_m &\leq  \sum_{l,m=1}^\infty
	\frac{1}{\nu_m R_{\kappa}^{(1)}(l)} 
	E \left[\sup_{\tau\in (0,\mu_m)}
	\norm{h_k(\cdot,\cdot+\tau,\cdot,\cdot)
	-h_k}_{L^1(0,T-\tau;H^{-N}(M\times L_l))}\right]
	\\ & \lesssim \sum_{l,m=1}^\infty
	\frac{\mu_m^\alpha}{\nu_m}
	\frac{C_l}{R_{\kappa}^{(1)}(l)}
	\lesssim \sum_{l=1}^\infty
	\frac{\bar C_l}{R_{\kappa}^{(1)}(l)},
\end{align*}
which can be made $\leq \kappa/22$ by taking 
$R_{\kappa}^{(1)}(l)=2^{-l}\bar C_lR_{\kappa}^{(1)}$ with 
$R_{\kappa}^{(1)}$ large enough (uniformly in $k$). 
Hence, we conclude that $\prob\bigl(X_k^{(1)}\in 
(\cK_{\kappa}^{(1)})^c\bigr)\le A+B_m\leq \kappa/11$. 
Consequently, we can take the compact 
set $\cC_\kappa^{(1)}$ in \eqref{eq:tight-marginals}
as the closure of $\cK_\kappa^{(1)}$. 

Next, by the Banach-Alaoglu theorem 
and the reflexivity of $L^2$, the set
$$
\cC_\kappa^{(3)}:=
\Seq{u:\norm{u}_{L^2((0,T)\times M\times \R)}
\leq R_{\kappa}^{(3)}}
$$
is a compact subset of $L^2_w((0,T)\times M\times \R)$, 
for any $R_{\kappa}^{(3)}>0$. By the Chebyshev inequality 
and \eqref{eq:basic-kinetic-est},
\begin{align*}
	\prob\bigl(X_k^{(3)}\in 
	(\cC_{\kappa}^{(3)})^c\bigr)
	& =\prob\left(\Seq{\omega\in 
	\Omega: \norm{g_k^{(1)}}_{L^2((0,T)\times M\times \R)}
	>R_{\kappa}^{(3)}}\right)
	\\ & \leq \frac{1}{\bigl(R_{\kappa}^{(3)}\bigr)^2} 
	E\left[\norm{g_k^{(1)}}_{L^2((0,T)\times M\times \R)}^2\right]
	\lesssim\frac{1}{\bigl(R_{\kappa}^{(3)}\bigr)^2},
\end{align*}
which can be made $\leq \kappa/11$ by taking 
$R_{\kappa}^{(3)}$ large enough (uniformly in $k$), so 
that \eqref{eq:tight-marginals} holds for $l=3$. 
Similarly, we can produce compact subsets 
$\cC_{\kappa}^{(4)}$, $\cC_{\kappa}^{(5)}$, $\cC_{\kappa}^{(8)}$, 
$\cC_{\kappa}^{(9)}$ of $L^2_w((0,T)\times M\times \R)$ 
such that \eqref{eq:tight-marginals} holds 
for $l=4,5,8,9$.

Let $\Seq{L_l}_{l\in\N}$ be a sequence of sets
$L_l\subset\subset \R$ such that $L_l\uparrow \R$ 
as $l\to \infty$, and let 
$\Seqb{R_{\kappa}^{(4)}(l)}_{l\in\N}$ 
be a sequence of positive numbers.
By the Banach-Alaoglu theorem (and 
a diagonal argument), the set 
$$
\cC_\kappa^{(2)}:=
\Seq{u:\norm{u}_{L^2((0,T)\times M\times L_l)}
\leq R_{\kappa}^{(2)}(l), \, l\in \N}
$$
is a compact subset of $L^2_{\loc,w}((0,T)\times M\times \R)$. 
By the Chebyshev inequality 
and \eqref{eq:basic-kinetic-est},
\begin{align*}
	\prob\bigl(X_k^{(2)}\in 
	(\cC_{\kappa}^{(2)})^c\bigr)
	& \leq \sum_{l=1}^\infty
	\prob\left(\Seq{\omega\in 	
	\Omega:\norm{D^f_k}_{L^2((0,T)\times M\times L_l)}
	>R_{\kappa}^{(2)}(l)}\right)
	\\ & \leq \frac{1}{\bigl(R_{\kappa}^{(2)}(l)\bigr)^2} 
	E\left[\norm{D^f_k}_{L^2((0,T)\times M\times L_l)}^2\right]
	\leq \sum_{l=1}^\infty
	\frac{C_l}{\bigl(R_{\kappa}^{(2)}(l)\bigr)^2},
\end{align*}
which can be made $\leq \kappa/11$ by taking 
$\bigl(R_{\kappa}^{(2)}(l)\bigr)^2
=2^{-l}C_lR_{\kappa}^{(2)}$ with 
$R_{\kappa}^{(2)}$ large enough (uniformly in $k$). 
Hence, we conclude that $\prob\bigl(X_k^{(2)}\in 
(\cC_{\kappa}^{(2)})^c\bigr)\le \kappa /11$, 
and so \eqref{eq:tight-marginals} holds for $l=2$. 
Similarly, we can find a compact subset 
$\cC_{\kappa}^{(11)}$ of $L^2_{\loc,w}(M\times \R)$ 
such that \eqref{eq:tight-marginals} holds for $l=11$. 

By the Banach-Alaoglu theorem, the set
$\cC_\kappa^{(7)}:=\Seq{u:\norm{u}_{\cM((0,T)\times M\times \R)}
\leq R_{\kappa}^{(7)}}$ is a compact subset of 
$\cM_{w\star}((0,T)\times M \times \R)$, 
for any $R_{\kappa}^{(7)}>0$. By the Chebyshev inequality 
and \eqref{eq:basic-kinetic-est},
\begin{align*}
	\prob\bigl(X_k^{(7)}\in 
	(\cC_{\kappa}^{(7)})^c\bigr)
	& =\prob\left(\Seq{\omega\in 
	\Omega: \norm{D_k}_{\cM((0,T)\times M\times \R)}
	>R_{\kappa}^{(7)}}\right)
	\\ & \leq 
	\frac{1}{\bigl(R_{\kappa}^{(7)}\bigr)^2} 
	E\left[\norm{D_k}_{\cM((0,T)\times M\times \R)}^2\right]
	\lesssim\frac{1}{\bigl(R_{\kappa}^{(7)}\bigr)^2},
\end{align*}
which can be made $\leq \kappa/11$ by taking 
$R_{\kappa}^{(7)}$ large (uniformly in $k$). 
Thus \eqref{eq:tight-marginals} holds for $l=7$. 
Similarly, we can supply a compact subset 
$\cC_{\kappa}^{(6)}$ of $\cM_{\loc,w\star}(M\times \R)$ 
such that \eqref{eq:tight-marginals} holds for $l=6$.

Finally, since the law of $W$ is tight as a 
Radon measure on the Polish space $C([0, T])$, 
we can find a compact subset $\cC_{\kappa}^{(10)}$ 
of $C([0,T])$ such that 
$\prob\bigl(W\in (\cC_{\kappa}^{(10)})^c\bigr)
\leq \kappa/11$, thereby 
verifying \eqref{eq:tight-marginals} for $l=10$.
\end{proof}

\begin{lemma}[Skorokhod-Jakubowski 
a.s.~representations]\label{lem:as-repr}
(i) By passing to a subsequence (not relabelled), there 
exist a new complete probability space 
$\bigl(\tilde\Omega,\tilde \mcf,\tilde\prob\bigr)$ 
and random variables 	
\begin{equation}\label{eq:tilde-Xk}
	\begin{split}
		& \tilde X_k=\bigl(\tilde X_k^{(1)},\ldots, 
		\tilde X_k^{(11)}\bigr),\quad k\in \N, 
		\quad \text{with joint laws $\tilde\cL_k=\cL_k$},
		\quad \text{where},
		\\ & \tilde X^{(1)}_k:=\tilde h_k, \,\,
		\tilde X^{(2)}:= \tilde D^f_k,\,\,
		\tilde X^{(3)}_k:=\tilde g_k^{(1)},\,\,
		\tilde X^{(4)}_k:=\tilde g_k^{(2)},\,\,
		\tilde X^{(5)}_k:=\tilde g_k^{(3)},\,\,
		\tilde X^{(6)}_k:=\tilde g_k^{(4)},\,\,
		\\ & 
		\tilde X^{(7)}_k:=\tilde D_k, \,\,
		\tilde X^{(8)}_k:=\tilde \Phi_k^{(1)}, \,\,
		\tilde X^{(9)}_k:=\tilde \Phi_k^{(2)},\,\,
		\tilde X^{(10)}_k:=\tilde W_k, \,\,
		\tilde X^{(11)}_k:=\tilde h_{0,k},
	\end{split}
\end{equation}
and
\begin{equation}\label{eq:tilde-X}
	\begin{split}
		& \tilde X=\bigl(\tilde X^{(1)},\ldots, 
		\tilde X^{(11)}\bigr), 
		\quad \text{where},
		\\ & \tilde X^{(1)}:=\tilde h, \,\,
		\tilde X^{(2)}:= \tilde D^f,\,\, 
		\tilde X^{(3)}:=\tilde g^{(1)},\,\,
		\tilde X^{(4)}:=\tilde g^{(2)},\,\,
		\tilde X^{(5)}:=\tilde g^{(3)},\,\,
		\tilde X^{(6)}:=\tilde g^{(4)},\,\,
		\\ & 
		\tilde X^{(7)}:=\tilde D, \,\,
		\tilde X^{(8)}:=\tilde \Phi^{(1)}, \,\,
		\tilde X^{(9)}:=\tilde \Phi^{(2)},\,\,
		\tilde X^{(10)}:=\tilde W, \,\,
		\tilde X^{(11)}:=\tilde h_{0},
	\end{split}
\end{equation}
such that $\tilde{X}_k\to X$ 
almost surely (in the topology of $\cX$), 
i.e., as $k\to \infty$,
\begin{equation}\label{eq:tilde-conv}
	\begin{split}
		& \tilde h_k
		\to \tilde h 
		\quad \text{in $L^2_t\bigl(L^2_{\loc,w}(M\times \R)\bigr)$, 
		$\tilde \prob$--a.s.},
		\\ & 
		\tilde g_k^{(1)}	
		\weak 
		\tilde g^{(1)}, \,\,
		\tilde g_k^{(2)}
		\weak \tilde g^{(2)},\,\,
		\tilde g_k^{(3)}	
		\weak \tilde g^{(3)}
		\quad \text{in $L^2((0,T)\times M\times \R)$, 
		$\tilde \prob$--a.s.},
		\\ & 
		\tilde \Phi_k^{(1)}
		\weak 
		\tilde \Phi^{(1)},\,\,
		\tilde \Phi_k^{(2)}
		\weak 
		\tilde \Phi^{(2)}
		\quad \text{in $L^2((0,T)\times M\times \R)$, 
		$\tilde \prob$--a.s.},
		\\ &
		\tilde D_k^f	
		\weak \tilde D^f
		\quad \text{in $L^2_{\loc}((0,T)\times M\times \R)$, 
		$\tilde \prob$--a.s.},
		\\ & 
		\tilde g_k^{(4)}
		\weak \tilde g^{(4)}
		\quad \text{in $\cM_{\loc}((0,T)\times M\times \R)$, 
		$\tilde \prob$--a.s.},
		\\ & 
		\tilde D_k	
		\weakstar \tilde D \quad 
		\text{in $\cM((0,T)\times M\times \R)$, 
		$\tilde \prob$--a.s.},
		\\ &
		\tilde W_k	
		\to \tilde W \quad 
		\text{in $C([0,T])$, 
		$\tilde \prob$--a.s.},
		\quad 
		\tilde h_{0,k}
		\weak \tilde h_0 \quad 
		\text{in $L^2_{\loc}(M\times \R)$, 
		$\tilde \prob$--a.s.}
	\end{split}
\end{equation}

(ii) We have the following 
$k$-independent estimates:
\begin{equation}\label{eq:tilde-kinetic-est}
	\begin{split}
		& \tilde E \left[\norm{\left(\tilde g_k^{(1)},
		\tilde g_k^{(2)}, \tilde g_k^{(3)}\right)}_{L^2_{t,\mx,
		\lambda}}^2\right]\tok 0, 
		\quad 
		\tilde E \left[\norm{\tilde D^f_k}_{L^2_{t,\mx,\lambda,\loc}}^2
		\right]\tok 0, 
		\\ & 
		\tilde E \left[\norm{\tilde g_k^{(4)}}_{\cM_{t,\mx,\lambda,\loc}}^2
		\right]\lesssim 1, \quad 
		\tilde E\left[\norm{\tilde D_k}_{\cM_{t,\mx,\lambda}}^2
		\right]\lesssim 1,\quad 
		\tilde E\left[\norm{\left(\tilde \Phi_k^{(1)},
		\tilde \Phi_k^{(2)}\right)}_{L^2_{t,\mx,\lambda}}^2
		\right]\lesssim 1,
	\end{split}
\end{equation}
where $\tilde E$ denotes the expectation 
on $\bigl(\tilde\Omega,\tilde \mcf,\tilde\prob\bigr)$. 
In particular, $\tilde D^f=0$ and 
$\tilde g^{(l)}=0$ for $l=1,2,3$.

(iii) For each $k\in \N$, there exist Borel measurable 
functions $\tilde u_k=\tilde u_k(\omega,t,\mx)$, 
$\tilde u_{0,k}=\tilde u_{0,k}(\omega,\mx)$ such that
\begin{equation}\label{eq:tilde-sign-functions}
	\begin{split}
		& \tilde h_k(\omega,t,\mx,\lambda) 
		=\sign\bigl(\tilde u_k(\omega,t,\mx)
		-\lambda\bigr),
		\quad \text{for a.e.~$(\omega,t,\mx,\lambda)
		\in \tilde \Omega\times (0,T)\times M\times \R$.}
		\\ &
		\tilde h_{0,k}(\omega,\mx,\lambda) 
		=\sign\bigl(\tilde u_{0,k}(\omega,\mx)
		-\lambda\bigr),
		\quad \text{for a.e.~$(\omega,\mx,\lambda)
		\in \tilde \Omega\times M\times \R$.} 
	\end{split}
\end{equation}
Denote by $\cL(u_k)$, $\cL(\tilde u_k)$ 
the laws of $u_k$, $\tilde u_k$, respectively, 
where $u_k$ is a solution to the 
pseudo-parabolic SPDE \eqref{PP-1}--\eqref{PP-ID}. 
Viewing $\cL(u_k)$, $\cL(\tilde u_k)$ as probability 
measures on $L^2_t\bigl(L^2_w(M)\bigr)$, 
the laws $\cL(u_k)$, $\cL(\tilde u_k)$ coincide. 
Further, $\tilde u_k$ takes values 
in $C([0,T];L^2(M))\cap L^\infty(0,T;H^1(M))
\cap L^2(0,T;H^2(M))$, $\tilde u_k$ 
exhibits the $k$-dependent estimates
\begin{equation}\label{eq:tilde-uk-kdep-est}
	\tilde E \left [ 
	\norm{\tilde u_k}_{L^\infty(0,T;H^1(M))}^2\right],
	\, \tilde E \left [ 
	\norm{\tilde u_k}_{L^2(0,T;H^2(M))}^2\right]\lesssim_k 1,
\end{equation}
and the $k$-independent estimate
\begin{equation}\label{eq:tilde-uk-Linf-L2-est}
	\tilde E \left [
	\norm{\tilde u_k}_{L^\infty(0,T;L^2(M))}^2
	\right]\lesssim 1.
\end{equation}
Similarly, viewing the laws $\cL(u_0)$ of $u_0$ 
and $\cL(\tilde u_{k,0})$ of $\tilde u_{0,k}$ 
as probability measures on $\bigl(L^2(M),\tau_w\bigr)$, 
$\cL(u_0)$ and $\cL\bigl(\tilde u_{k,0}\bigr)$ 
coincide. Besides, $\tilde u_{k,0}$ takes values 
in $H^2(M)$ and satisfies the two estimates 
\begin{equation}\label{eq:tilde-u0k-est}
	\tilde E \left [
	\norm{\tilde u_{0,k}}_{H^2(M)}^2\right]\lesssim_k 1, 
	\quad 
	\tilde E \left [\norm{\tilde u_{0,k}}_{L^2(M)}^2
	\right]\lesssim 1.
\end{equation} 
\end{lemma}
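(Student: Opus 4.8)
The plan is to read off all three parts from the Skorokhod--Jakubowski theorem (Theorem \ref{thm:Jak-Skor}) applied to the joint laws $\Seq{\cL_k}$, together with the tightness already secured in Lemma \ref{lem:tight}. Since countable products of quasi-Polish spaces are quasi-Polish, the path space $\cX=\cX_1\times\cdots\times\cX_{11}$ is quasi-Polish, and every compact subset lies in the product of the factor $\sigma$-algebras; hence the tightness of $\Seq{\cL_k}$ on $\bigl(\cX,\cB_\cX\bigr)$ furnished by Lemma \ref{lem:tight} is exactly the hypothesis of Theorem \ref{thm:Jak-Skor}. First I would invoke that theorem to extract a subsequence (not relabelled) and a single complete probability space $\bigl(\tilde\Omega,\tilde\mcf,\tilde\prob\bigr)$---the completion of $\bigl([0,1],\cB([0,1]),\mathrm{Leb}\bigr)$---carrying Borel random variables $\tilde X_k,\tilde X$ with $\tilde\cL_k=\cL_k$ and $\tilde X_k\to\tilde X$ almost surely in the topology of $\cX$. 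Reading this convergence off in each factor $\cX_l$ produces precisely the list \eqref{eq:tilde-conv}, which establishes part (i).

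For part (ii), the estimates \eqref{eq:tilde-kinetic-est} follow by transfer of law. Each norm appearing on the left of \eqref{eq:basic-kinetic-est} (the $L^2$, $L^1$ and total-variation norms) is lower semicontinuous for the respective weak or weak-$\star$ topology, hence Borel measurable on the corresponding factor space; since $\tilde X_k^{(l)}$ and $X_k^{(l)}$ share the same law, the expectations of these (squared) norms coincide, and \eqref{eq:basic-kinetic-est} transfers verbatim to \eqref{eq:tilde-kinetic-est}. To deduce $\tilde D^f=0$ and $\tilde g^{(l)}=0$ for $l=1,2,3$, I would combine the a.s.\ weak convergences from \eqref{eq:tilde-conv} with weak lower semicontinuity of the norm and Fatou's lemma: for instance $\tilde E\bigl[\norm{\tilde g^{(1)}}_{L^2_{t,\mx,\lambda}}^2\bigr]\le\liminf_k\tilde E\bigl[\norm{\tilde g_k^{(1)}}_{L^2_{t,\mx,\lambda}}^2\bigr]=0$, and identically for the remaining terms.

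Part (iii) is where the real work lies, and I expect it to be the main obstacle. The key point is that ``being a sign function'' is a measurable property preserved under equality of laws. I would show that the set $\cK\subset\cX_1$ of functions of the form $\sign(u(\cdot)-\lambda)$ is Borel by writing it as a countable intersection of the measurable constraints $h^2=1$ a.e.\ and monotonicity $\pa_\lambda h\le0$ in $\lambda$, each testable against a countable dense family (using that $h\mapsto\int h^2\zeta$ is weakly lower semicontinuous, hence Borel, for $\zeta\ge0$, so that $\int(h^2-1)\zeta=0$ over a dense family cuts out a Borel set). Since $\cL_k^{(1)}(\cK)=\prob(h_k\in\cK)=1$ and $\tilde\cL_k^{(1)}=\cL_k^{(1)}$, each $\tilde h_k$ lies in $\cK$ $\tilde\prob$-almost surely, so the inversion formula $\tilde u_k=\tfrac12\int_\R\bigl(\tilde h_k(\lambda)+\sign\lambda\bigr)\,d\lambda$ defines a Borel map $\Theta$ with $\tilde h_k=\sign(\tilde u_k-\lambda)$, and likewise for $\tilde u_{0,k}$; this yields \eqref{eq:tilde-sign-functions}. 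Because $u_k=\Theta(h_k)$ and $\tilde u_k=\Theta(\tilde h_k)$ through the \emph{same} measurable map, pushing the equal laws of $h_k$ and $\tilde h_k$ forward gives $\cL(u_k)=\cL(\tilde u_k)$ on $L^2_t\bigl(L^2_w(M)\bigr)$, and similarly for the initial data. The regularity and the bounds \eqref{eq:tilde-uk-kdep-est}--\eqref{eq:tilde-uk-Linf-L2-est} and \eqref{eq:tilde-u0k-est} then transfer from the a priori estimates of Theorem \ref{unique_sol_par} and Lemma \ref{osnov} exactly as in part (ii); the one delicate point is that membership in $C([0,T];L^2(M))\cap L^\infty(0,T;H^1(M))\cap L^2(0,T;H^2(M))$ must be recognised as a Borel property of $L^2_t\bigl(L^2_w(M)\bigr)$ on which $\cL(u_k)$ is concentrated, after which equality of laws forces the same membership and the same lower-semicontinuous norm bounds for $\tilde u_k$.
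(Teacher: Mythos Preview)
Your treatment of parts (i) and (ii) coincides with the paper's proof. For part (iii) you take a genuinely different route. The paper does \emph{not} argue that the set $\cK$ of sign functions is Borel in $\cX_1$; instead it first derives a pointwise equality-of-laws statement, namely that $\prob\bigl(\{a<h_k(\cdot,t,\mx,\lambda)<b\}\bigr)=\tilde\prob\bigl(\{a<\tilde h_k(\cdot,t,\mx,\lambda)<b\}\bigr)$ for a.e.\ $(t,\mx,\lambda)$, by a contradiction argument (if the pointwise laws differed on a set $I$ of positive measure, integrating against $\chi_I$ would contradict the equality of laws in the weak topology of $\cX_1$). From this pointwise statement the paper reads off that $\tilde h_k$ is $\{-1,1\}$-valued and nonincreasing in $\lambda$, hence a sign function. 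Your abstract approach---cutting out $\cK$ by the Borel constraints $h^2=1$ a.e.\ (via the lower-semicontinuous functionals $h\mapsto\int h^2\zeta$) and $\partial_\lambda h\le 0$ (via linear functionals), and then pushing the law equality through the measurable inversion map $\Theta$---is cleaner and sidesteps the somewhat delicate passage from weak-topology laws to pointwise laws. Both approaches recover $\tilde u_k$ through the same truncation formula $\cT_L(\tilde u_k)=\tfrac12\int_{-L}^L\tilde h_k\,d\lambda$, and for the regularity transfer the paper invokes the Lusin--Suslin theorem (Polish spaces embedded continuously in quasi-Polish spaces have Borel image) explicitly, which is precisely the mechanism behind the ``delicate point'' you allude to at the end.
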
	

\begin{proof}
The first part (i) of the lemma---in particular 
\eqref{eq:tilde-conv}---is an immediate consequence of 
applying Lemma \ref{lem:tight} (tightness) and 
Theorem \ref{thm:Jak-Skor} (Jakubowski-Skorokhod) 
to the mappings $X_k=\bigl(X_k^{(1)},\ldots,
X_k^{(11)}\bigr)$ and their joint laws $\cL_k$, 
defined respectively in \eqref{eq:Xk-def} 
and \eqref{eq:Xk-joint-laws}.  Regarding part (ii), the 
estimates in \eqref{eq:tilde-kinetic-est} follow from 
the equality of laws ($\tilde \cL_k=\cL_k$) and 
the corresponding estimates in \eqref{eq:basic-kinetic-est}.  

Next, let us prove \eqref{eq:tilde-sign-functions}. Since the 
laws of $h_k=\sign\bigl(u_k-\lambda\bigr)$ 
and $\tilde{h}_k$ are the same, we claim that
\begin{equation}\label{=}
	\prob\bigl(\left\{a<h_k(\cdot,t,\mx,\lambda)<b\right\}\bigr)
	=\tilde{\prob}\left(\left\{a<\tilde{h}_k(\cdot,t,\mx,\lambda)
	<b\right\}\right)
\end{equation} 
for every $(a,b) \subset\R$ and 
a.e.~$(t,\mx,\lambda)\in (0,T)\times M \times \R$. 
Indeed, if there exist $(a,b)\subset \R$ 
and $I\subset (0,T)\times M\times L$, with $|I|>0$, such that
$$
\prob\bigl(\left\{a<h_k(\cdot,t,\mx,\lambda)<b\right\}\bigr)
\, >\, 
\tilde{\prob}\left(\left\{a<\tilde{h}_k(\cdot,t,\mx,\lambda)
<b\right\}\right),
\qquad (t,\mx,\lambda) \in I, 
$$
then it follows that
\begin{align*}
	& \prob\Bigl(\Bigl\{a \, |I| \, 
	<\iiint_{(0,T)\times M \times \R}  
	h_k(\cdot,t,\mx,\lambda)
	\chi_I(t,\mx,\lambda) \,d\lambda \,d\mx\, dt \, 
	< b\,|I|\Bigr\}\Bigr) 
	\\ & \qquad
	> \, \tilde{\prob}
	\Bigl(\Bigl\{a\, |I|\,
	< \iiint_{(0,T)\times M \times \R} 
	\tilde{h}_k(\cdot,t,\mx,\lambda) 
	\chi_I(t,\mx,\lambda) \, d\lambda\, d\mx\, dt 
	< \,b \, |I| \Bigr\}\Bigr),
\end{align*}
where $\chi_{I}$ denotes the characteristic function 
of the set $I$; without loss of generality, we 
may assume that the measure of $I$ is finite.
This strict inequality ($>$) contradicts the equality of laws 
of $h_k$ and $\tilde{h}_k$ as random variables 
taking values in the space $L^2_t\bigl(L^2_{\loc,w}(M\times\R)\bigr)$. 
Similarly, a contradiction arises when ``$>$" is replaced by ``$<$". 
This proves the claim \eqref{=}.

The relation \eqref{=} implies that the range 
of $\tilde{h}_k$ consists of the two values 
$\Seq{-1,1}$, and also that $\tilde{h}_k$ is nonincreasing 
in $\lambda$. To see this, consider any interval $(a,b)$ 
not containing $-1$ or $1$. Then, cf.~\eqref{=},
$$
0=\prob\bigl(\left\{a<h_k<b\right\}\bigr)
=\tilde{\prob}\left(\left\{a<\tilde{h}_k<b\right\}\right),
$$ 
and thus it follows that $\tilde{h}_k$ is 
almost surely equal to $-1$ or $1$. Similarly, since $h_k$ 
is nonincreasing with respect to $\lambda$, its 
$\lambda$-derivative is a nonpositive measure. 
Thus, for any nonnegative $\rho \in C_c(\R)$, we have
$$
1=\prob\bigl(\left\{\left\langle\pa_\lambda h_k,\rho
\right \rangle\leq 0\right\}\bigr)
=\tilde{\prob}\left(\left\{\left\langle 
\pa_\lambda \tilde{h}_k,\rho \right\rangle\leq 0\right\}\right),
$$
which clearly implies that $\tilde{h}_k$ 
is nonincreasing with respect to $\lambda$.
 
Combining our findings, there exists a measurable function 
$\tilde{u}_k$ such that 
\begin{equation}\label{sol-form-tilde}
	\tilde{h}_k(\omega,t,\mx,\lambda)
	=\sign\bigl(\tilde{u}_k(\omega,t,\mx)-\lambda\bigr).
\end{equation}
This proves the first part of \eqref{eq:tilde-sign-functions}. 
The second part follows in the same way.

Viewing $u_k$ and $\tilde u_k$ as random variables 
taking values in $L^2_t\bigl(L^2_w(M)\bigr)$, we claim that $u_k$ 
and $\tilde{u}_k$ have the same laws. To verify this, denote   
by $\chi_L(\lambda)$ the characteristic function of the 
interval $(-L,L)$, $L\in \R$. We also need the 
truncation function (at level $L>0$):
\begin{equation}\label{eq:truncate-L}
	\cT_L:\R\to\R, 
	\quad \cT_L(u):=\max\bigl(-L,\min(u,L)\bigr)=
	\begin{cases} 
		-L, & \text{if $u<-L$} \\
		u, & \text{if $-L\leq u \leq L$} \\
		L, & \text{if $u>L$}
	\end{cases}.
\end{equation}
As the laws of $h_k$ and $\tilde{h}_k$ coincide, 
we obtain, for a.e.~$(t,\mx)\in (0,T)\times M$, 
\begin{align*}
	& \prob\left(\left\{
	a<
	\cT_L\bigl(u_k(\cdot,t,\mx)\bigr) 
	<b\right\}\right)
	=\prob\left(\left\{
	a<
	\frac{1}{2}\int_\R h_k(\cdot,t,\mx,\lambda)
	\chi_L(\lambda)\, d\lambda 
	<b\right\}\right)
	\\ & \quad
	= \tilde{\prob}\left(\left\{
	a<
	\frac{1}{2}\int_\R 
	\tilde{h}_k(\cdot,t,\mx,\lambda)
	\chi_L(\lambda)\, d\lambda 
	<b\right\}\right)
	=\tilde{\prob}\left(\left\{
	a<
	\cT_L\bigl(\tilde{u}_k(\cdot,t,\mx)\bigr) 
	<b\right\}\right),
\end{align*} 
where \eqref{sol-form-tilde} is used 
to obtain the last equality. In other words, the laws 
of $\cT_L\bigl(u_k\bigr)$ 
and $\cT_L\bigl(\tilde{u}_k\bigr)$ 
are the same for \textit{every} $L>0$, which implies that 
the laws of $u_k$ and $\tilde{u}_k$ coincide. 
Similarly, we can prove that the laws of $u_0$ and 
$\tilde{u}_{0,k}$ coincide.

Let us argue that $\tilde u_k$ takes 
values in the indicated spaces. By Theorem 
\ref{unique_sol_par}, $u_k$ takes values in these spaces.
First, $C([0,T];L^2(M))$ is a Polish space 
that is continuously embedded 
in the quasi-Polish space $L^2_t\bigl(L^2_w(M)\bigr)$. 
Thus, by the Lusin-Suslin-Kuratowski theorem, 
$C([0,T];L^2(M))$ is a Borel set in $L^2_t\bigl(L^2_w(M)\bigr)$ 
and so, by the equality of laws, $\tilde u_k$ takes values 
in $C([0,T];L^2(M))$. Similarly, $\tilde u_k$ takes values 
in $L^2(0,T;H^2(M))$. We can also argue that 
$\tilde{u}_k$ takes values in $L^\infty(0,T;H^1(M))$.
Clearly, by \eqref{eq:pseudo-spaces}, 
$E\norm{u_k}_{L^\infty([0,T];L^2(M))}^2$ 
is bounded uniformly in $k$. Therefore, 
by the equality of laws, we conclude that 
\eqref{eq:tilde-uk-Linf-L2-est} holds. 
Similarly, we verify that 
\eqref{eq:tilde-uk-kdep-est} holds. 
The claims about $\tilde u_{0,k}$ can 
be proved in the same way.
\end{proof}

We need to introduce a filtration 
$\Seqb{\tilde \F_t^k}$  
for the new mappings $\tilde X_k$, 
cf.~\eqref{eq:tilde-Xk}. Set 
\begin{equation}\label{eq:tilde-Ftk}
	\tilde \F_t^k=\sigma\left(
	\sigma \Bigl(\tilde X_k\big |_{[0,t]}\Bigr)
	\cup \Seq{N \in \tilde \F: \tilde P (N)=0}
	\right).
\end{equation}
The filtration $\Seqb{\tilde \F_t^k}$ is the smallest 
one that makes all the components of 
$\tilde X_k=\bigl(\tilde X_k^{(1)},\ldots,
\tilde X_k^{(11)} \bigr)$ adapted.  
Recall that a Wiener process is fully 
determined by its law. Since $\tilde W_k$ and $W$ 
have the same laws, L\'{e}vy's martingale 
characterization of a Wiener 
process (see e.g.~\cite[Theorem 4.6]{DaPrato:2014aa}, 
implies that each $\tilde W_k$ is a Wiener 
process with respect to its canonical filtration. 
Besides, each $\tilde W_k$ is a 
Wiener process relative to the filtration 
$\Seqb{\tilde \F_t^k}$ defined by \eqref{eq:tilde-Ftk}. 
The proof of this claim is standard, verifying 
that $\tilde W_k(t)$ is $\Seqb{\tilde \F_t^k}$ 
measurable and that $\tilde W_k(t)-\tilde W_k(s)$ 
is independent of $\tilde \F_s^k$, for all $s< t$. 
These properties are consequences of 
the facts that $\tilde W_k,W$ share 
the same law, $W(t)$ is $\F_t$ 
measurable, and $W(t)-W(s)$ is 
independent of $\F_s$.

It remains to demonstrate that the tilde variables 
$\tilde h_k,\ldots,\tilde W_k,\tilde h_{0,k}$ satisfy 
kinetic SPDEs like \eqref{eq:kinetic-new} 
on the new probability space 
$\bigl(\tilde\Omega,\tilde \mcf,\tilde\prob\bigr)$. 
There exist several approaches for doing this, see 
for example \cite{Bensoussan:1995aa,Brzezniak:2011aa,Ondrejat:2010aa}.
In the framework of random distributions, 
\cite[Theorem 2.9.1]{Breit:2018aa} supplies a 
result of this type for a general class of SPDEs (the 
proof of which can be applied here).

\begin{lemma}[SPDE on new probability space]
\label{lem:SPDE-on-new-prob-space}
(i) On the filtered probability space 
$\bigl(\tilde\Omega,\tilde \mcf,\tilde\prob,
\Seqb{\tilde \F_t^k}\bigr)$ given by 
Lemma \ref{lem:as-repr} and \eqref{eq:tilde-Ftk}, 
the Skorokhod-Jakubowski 
representations $\tilde X_k=\bigl(\tilde h_k,\ldots,
\tilde W_k,\tilde h_{0,k}\bigr)$ satisfy 
the following SPDE in $\cD_{t,\mx,\lambda}'$, 
$\tilde \prob$--a.s.: 
\begin{equation}\label{eq:tilde-kinetic}
	\begin{split}
		d \tilde h_k+\Div_\mx \bigl(F \tilde h_k\bigr)\, dt
		& =\Div_{\mx} \left(\tilde D^f_k
		+\tilde g_k^{(1)}+\tilde g_k^{(2)}\right)\,dt
		+\pa_{\lambda}\Div_{\mx} \tilde g_k^{(3)}\,dt
		+\pa^2_{\lambda\lambda} \tilde g_k^{(4)}\,dt
		+\pa_{\lambda} \tilde D_k\, dt
		\\ & \qquad 
		+\left[\tilde \Phi_k^{(1)}
		+\pa_{\lambda}\tilde \Phi_k^{(2)}\right]\,d\tilde W_k(t), 
		\qquad \tilde h_k\big |_{t=0}=\tilde h_{0,k},
	\end{split}
\end{equation} 
that is, for every $\varphi\in 
\cD([0,T)\times M\times \R)$,
\begin{equation}\label{eq:tilde-kinetic-weak}
	\begin{split}
		&\int_0^T\int_M\int_{\R}\tilde h_k 
		\pa_t\varphi \, d\lambda \, dV(\mx) \, dt
		+\int_{\R^d} \int_{\R}\tilde h_{0,k} 
		\varphi(0,\mx)\, d\lambda\, dV(\mx)
		+\int_0^T\int_{\R^d}\int_{\R} \bigl(F\tilde h_k\bigr) 
		\cdot \nabla_{\mx}\varphi \, d\lambda\, dV(\mx)\,dt
		\\ & \qquad =\int_0^T \int_{\R^d}\int_{\R}
		\left(\tilde D_k^f+\tilde g_k^{(1)}+ \tilde g_k^{(2)}\right)
		\cdot \nabla_{\mx}\varphi \, d\lambda \, dV(\mx)\, dt
		+\int_0^T \int_{\R^d}\int_{\R}
		\tilde D_k\pa_{\lambda}\varphi \, d\lambda \, dV(\mx)\, dt
		\\ & \qquad \qquad 
		-\int_0^T \int_{\R^d}\int_{\R}
		\tilde g_k^{(3)}\cdot \bigl(\pa_{\lambda}\nabla_{\mx}
		\varphi\bigr) \, d\lambda \, dV(\mx)\, dt
		-\int_0^T \int_{\R^d}\int_{\R}
		\tilde g_k^{(4)}\pa^2_{\lambda\lambda}
		\varphi \, d\lambda \, dV(\mx)\, dt
		\\ & \qquad \qquad\qquad
		-\int_0^T\int_{\R^d}\int_{\R}\left( 
		\tilde \Phi_k^{(1)}\varphi 
		- \tilde \Phi_k^{(2)} \pa_{\lambda}\varphi 
		\right)\, d\lambda \,dV(\mx) \, d\tilde W_k(t), 
		\quad \text{$\tilde \prob$--almost surely}.
	\end{split}
\end{equation}
where the $k$-dependent quantities in \eqref{eq:tilde-kinetic} 
satisfy the properties and bounds listed 
in \eqref{eq:tilde-kinetic-est}, 
\eqref{eq:tilde-sign-functions}.

(ii) The process $\tilde u_k \in 
L_{\tilde \prob}^2\left(\tilde \Omega;
L^\infty(0,T;H^1(M))\cap C([0,T];L^2(M))\cap 
L^2(0,T;H^2(M))\right)$---defined via 
\eqref{eq:tilde-sign-functions}---is 
adapted to $\Seqb{\tilde \F_t^k}$ and satisfies 
the following SPDE in the $\mx$-weak 
sense of \eqref{mild-ppde} on $(0,T)\times M$: 
\begin{equation}\label{PP-1-SJ}
	\begin{split}
		& d \tilde u_k 
		+\Div_{\mx} \mff_k(\mx, \tilde u_k) 
		\, dt =\eps_k \Delta \tilde u_k \, dt
		+\delta_k  \, d \Delta  \tilde u_k 
		+\Phi(\mx, \tilde u_k)\, d\tilde W_k(t), 
		\quad 
		\tilde u_k\big|_{t=0}=\tilde u_{0,k}\in H^2(M).
	\end{split}
\end{equation}
In addition, $\tilde u_k$, $\tilde u_{0,k}$ satisfy the 
estimates \eqref{eq:tilde-uk-kdep-est}, 
\eqref{eq:tilde-uk-Linf-L2-est}, and 
\eqref{eq:tilde-u0k-est}.
\end{lemma}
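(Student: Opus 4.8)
The plan is to establish Lemma \ref{lem:SPDE-on-new-prob-space} in two stages, treating part (i) as the genuine analytic content and deriving part (ii) as a corollary via the sign-function identity \eqref{eq:tilde-sign-functions}. For part (i), the underlying principle is that an SPDE, when interpreted in integrated (weak) form, is an \emph{equality of random variables} expressed through integrals of the data against test functions, and such equalities are preserved under equality in law. Concretely, fix test functions $\varphi\in\cD([0,T)\times M\times\R)$ and introduce, for each $k$, the real-valued functional $\cJ_k$ obtained by moving every term of the weak formulation \eqref{eq:tilde-kinetic-weak} to one side; on the \emph{original} probability space, Lemma \ref{kinetic-L} (equivalently the weak formulation of \eqref{eq:kinetic-new}) tells us that the analogous functional $\cJ_k^{\mathrm{orig}}$ vanishes $\prob$-a.s. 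The functional $\cJ_k$ is a measurable function of the tuple $\bigl(h_k,D_k^f,g_k^{(1)},\dots,\Phi_k^{(2)},W,h_0\bigr)$ — and crucially it is \emph{continuous} (or at least Borel) with respect to the product topology of $\cX$, since each constituent term is a continuous linear or bilinear pairing of the data with the smooth compactly supported $\varphi$ (drifts are integrals against $\nabla_\mx\varphi$, $\pa_\lambda\varphi$, etc., which are all in the relevant dual spaces, and the stochastic term is handled by the It\^o isometry). Because $\tilde X_k$ and $X_k$ share the same joint law $\tilde\cL_k=\cL_k$ (Lemma \ref{lem:as-repr}(i)), the law of $\cJ_k(\tilde X_k)$ equals the law of $\cJ_k(X_k)$, which is the Dirac mass at $0$; hence $\cJ_k(\tilde X_k)=0$ $\tilde\prob$-a.s.

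The subtle point, and what I expect to be the \textbf{main obstacle}, is the stochastic integral term: the map $\Seq{\text{integrand},\,W}\mapsto\int_0^T(\cdots)\,dW_t$ is \emph{not} continuous on the path space, so one cannot simply invoke continuity of $\cJ_k$ together with equality of laws to transfer the identity. This is the standard difficulty in Skorokhod-type arguments. The remedy, as indicated in the excerpt, is to invoke \cite[Theorem 2.9.1]{Breit:2018aa}, whose proof applies verbatim here: one shows that the new filtration $\Seqb{\tilde\F_t^k}$ from \eqref{eq:tilde-Ftk} makes $\tilde W_k$ a Wiener process (already verified in the text via L\'evy's characterization) and makes the integrand $\tilde\Phi_k^{(1)}\varphi-\tilde\Phi_k^{(2)}\pa_\lambda\varphi$ adapted, so that $\int_0^t\bigl(\tilde\Phi_k^{(1)}\varphi-\tilde\Phi_k^{(2)}\pa_\lambda\varphi\bigr)\,d\tilde W_k$ is a bona fide It\^o integral. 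One then matches quadratic variations and cross-variations: the processes $\cJ_k(\tilde X_k)$ and its drift-plus-martingale decomposition have the same law as the corresponding objects on the original space, and since the original martingale part is (by construction) the correct It\^o integral with respect to $W$, the martingale characterization forces the tilde martingale part to coincide with the It\^o integral against $\tilde W_k$. I would present this as a direct citation of \cite[Theorem 2.9.1]{Breit:2018aa}, noting only that the random-distribution framework of Section \ref{sec:prelim} is what legitimizes treating $\tilde h_k$, $\tilde\Phi_k^{(j)}$ as adapted objects despite their lack of temporal regularity. Adaptedness of the velocity averages $\innb{\tilde h_k,\rho}$ and $\innb{\tilde\Phi_k,\rho}$ follows from the measurability-in-the-sense-of-distributions criterion \cite[Theorem 2.2.3]{Breit:2018aa}.

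For the transfer of the estimates, the bounds \eqref{eq:tilde-kinetic-est} are immediate from $\tilde\cL_k=\cL_k$ applied to the (lower semicontinuous, hence measurable) norm functionals, exactly as already recorded in Lemma \ref{lem:as-repr}(ii); I would simply reference that part rather than repeat the argument. The density passage from tensor test functions $\phi\otimes\rho$ to general $\varphi\in\cD([0,T)\times M\times\R)$ uses the Schwartz kernel theorem, as in the proof of Lemma \ref{kinetic-L}, and the exceptional $\tilde\prob$-null set is rendered independent of $\varphi$ by exploiting the separability of $\cD$ (choosing $\varphi$ from a countable dense family and taking a countable union of null sets).

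Part (ii) is then a translation of \eqref{eq:tilde-kinetic} back to the level of $\tilde u_k$. Using the sign representation $\tilde h_k=\sign(\tilde u_k-\lambda)$ from \eqref{eq:tilde-sign-functions}, I would integrate the kinetic identity against a test function of the form $\varphi(t,\mx)\,S'(\lambda)$ and reverse the It\^o-entropy computation carried out in the proof of Lemma \ref{kinetic-L}: specializing to $S(\lambda)=\lambda$ (i.e. $S'\equiv1$ on the support, handled by approximation with $S'\in C_c^1$) collapses the velocity-averaged kinetic equation back to the $\mx$-weak form \eqref{mild-ppde} of the pseudo-parabolic SPDE \eqref{PP-1-SJ}, with the source terms $\tilde g_k^{(j)},\tilde D_k,\tilde\Phi_k^{(j)}$ reassembling into $\Div\mff_k(\mx,\tilde u_k)$, $\eps_k\Delta\tilde u_k$, $\delta_k\,d\Delta\tilde u_k$, and $\Phi(\mx,\tilde u_k)\,d\tilde W_k$ respectively — the reassembly being guaranteed because these processes were \emph{defined} as the pieces of that decomposition and equality of laws preserves the algebraic relations \eqref{eq:tilde-sign-functions} among them. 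The adaptedness of $\tilde u_k$ to $\Seqb{\tilde\F_t^k}$ and its membership in $C([0,T];L^2(M))\cap L^\infty(0,T;H^1(M))\cap L^2(0,T;H^2(M))$ with the stated estimates are all already furnished by Lemma \ref{lem:as-repr}(iii), so part (ii) requires only the assembly step plus a citation to those facts.
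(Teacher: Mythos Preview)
Your treatment of part (i) matches the paper's proof: define a real-valued functional $F_\varphi$ on the path space $\cX$ encoding the weak formulation, transfer the almost-sure identity $F_\varphi(X_k)=0$ to $F_\varphi(\tilde X_k)=0$ via equality of laws $\tilde\cL_k=\cL_k$, and handle the stochastic integral term by invoking the procedure from \cite[Theorem 2.9.1]{Breit:2018aa} (which is exactly what the paper does, citing also \cite{Bensoussan:1995aa}). Your discussion of the subtlety with the stochastic integral and the role of the random-distribution framework is accurate and, in fact, more detailed than the paper's one-line citation.

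For part (ii), however, you take a genuinely different and more circuitous route than the paper. You propose to \emph{reassemble} the pseudo-parabolic SPDE from the tilde kinetic equation by integrating against $S'(\lambda)$ with $S(\lambda)=\lambda$ and undoing the entropy decomposition of Lemma \ref{kinetic-L}. The paper instead applies the \emph{same} transfer-by-law argument directly to the pseudo-parabolic SPDE: since Lemma \ref{lem:as-repr}(iii) already establishes $\tilde u_k\sim u_k$ and $\tilde u_{0,k}\sim u_0$ as $L^2_t(L^2_w(M))$-valued random variables, and since $u_k$ satisfies \eqref{mild-ppde}, one defines the analogous functional of $(\tilde u_k,\tilde u_{0,k},\tilde W_k)$ and concludes it vanishes $\tilde\prob$-a.s.\ by the identical reasoning used in part (i). This is both shorter and cleaner. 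Your reassembly approach would require verifying that the Skorokhod--Jakubowski representations $\tilde g_k^{(j)},\tilde D_k,\tilde\Phi_k^{(j)}$ --- which are obtained as abstract elements of the factor spaces $\cX_l$ --- coincide $\tilde\prob$-a.s.\ with the explicit formulas (in terms of $\tilde u_k$, $\nabla\tilde u_k$, $\Delta\tilde u_k$, the eigenbasis sums, etc.) that appear in the proof of Lemma \ref{kinetic-L}; your remark that ``equality of laws preserves the algebraic relations'' glosses over this step, and carrying it out rigorously for each piece would in effect amount to redoing the paper's direct argument several times over.
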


\begin{proof}
For any fixed test function $\varphi 
\in \cD([0,T)\times M\times \R)$, consider the function 
$F_\varphi: \bigl(\cX,\cB_\cX\bigr) \to \R$, 
$\cX \ni X\mapsto F_\varphi(X)\in \R$, defined such that  
the claim \eqref{eq:tilde-kinetic-weak} takes the form 
$F_\varphi\bigl(\tilde X_k\bigr)=0$, 
$\tilde\prob$-almost surely. Recall that the 
law $\tilde \cL_k$ of $\tilde X_k$ 
and the law $\cL_k$ of $X_k$ coincide, see Lemma 
\ref{lem:as-repr}. Then, using the same procedure 
as in the proof of \cite[Theorem 2.9.1]{Breit:2018aa} (see also 
\cite{Bensoussan:1995aa}), we conclude that
$\cL_{\R}\bigl(F_\varphi\bigl(\tilde X_k\bigr)
\bigr)= \cL_{\R}\bigl(F_\varphi(X_k)\bigr)$. 
Combining this with the knowledge that 
$X_k$ satisfies the kinetic 
SPDE \eqref{kinetic} in $\cD'_{t,\mx,\lambda}$, 
$\prob$--a.s., the claim \eqref{eq:tilde-kinetic-weak} follows. 
Similarly, since $\tilde u_k\sim u_k$, 
$\tilde u_{0,k}\sim u_0$ and $u_k$ satisfies 
the SPDE \eqref{PP-1} with initial data $u_0$, it follows 
that $\tilde u_k, \tilde u_{0,k}$ satisfy 
\eqref{PP-1-SJ}. Here we have also used that 
$\tilde u_k$ is $\Seqb{\tilde \F_t^k}$--adapted. 
To see this, recall that $\tilde u_k$ is defined via 
\eqref{eq:tilde-sign-functions} and that 
$\tilde h_k$ is $\Seqb{\tilde \F_t^k}$--adapted.
\end{proof}

\begin{remark}\label{rem:compare-kinetic}
The kinetic SPDE \eqref{eq:tilde-kinetic} can be written 
in the form \eqref{eq-1} (with $m=1$, $\lambda\in \R$) used 
by the stochastic velocity averaging result (Corollary 
\ref{cor:velocity-average-M}). 
Indeed, for any $\rho\in C^\infty_c(\R)$, we make 
the following identifications:
\begin{align*}
	g_{k,\rho} & = \action{g_{k}}{\rho}
	:=\Div \left( \action{\tilde D_k^f
	+\tilde{g}^{(1)}_k+\tilde{g}^{(2)}_k}{\rho}
	-\action{\tilde{g}^{(3)}_k}{\rho'}
	\right),
	\\ 
	G_{k,\rho} & = \action{G_{k}}{\rho}:=
	\action{\tilde{g}_k^{(4)}}{\rho''}
	+\action{\tilde{D}_k}{\rho'},
	\\ 
	\Phi_{k,\rho} & =\action{\Phi_{k}}{\rho}:=
	\action{\tilde{\Phi}_k^{(1)}}{\rho}
	+\action{\tilde{\Phi}_k^{(2)}}{\rho'}.
\end{align*}
In view of \eqref{eq:tilde-conv}, as $k\to \infty$,
$$
G_{k,\rho} \overset{\star}{\rightharpoonup} 
G_\rho=\action{G}{\rho}:= \action{\tilde{g}^{(4)}}{\rho''}
+\action{\tilde{D}}{\rho'}
\quad \text{in $\cM((0,T)\times M)$, 
$\tilde \prob$--a.s.}
$$
Besides, by \eqref{eq:tilde-kinetic-est}, 
$E\left[\norm{G_{k,\rho}}_{\cM((0,T)\times M)}^2\right]
\lesssim 1$. This verifies the assumption (iii) 
of Lemma \ref{lem:weak-limit-kinetic} (with 
$\R^d$ replaced by $M$). 

Similarly, by \eqref{eq:tilde-conv}, 
$$
g_{k,\rho} \rightharpoonup 
g_\rho =\action{g}{\rho}
:=\Div \left( \action{\tilde D^f
+\tilde{g}^{(1)}+\tilde{g}^{(2)}}{\rho}
-\action{\tilde{g}^{(3)}}{\rho'}
\right)
$$
in $L^2(0,T;W^{-1,2}(M))$, $\tilde \prob$--a.s., as $k\to \infty$. 
Unfortunately, this weak convergence is not enough to 
deliver the assumption (ii) with $r=2$. However, 
by the estimates in \eqref{eq:tilde-kinetic-est}, 
this convergence is, in fact, strong and the 
limit $g$ is zero. Indeed, the first part 
of \eqref{eq:tilde-kinetic-est} implies
$$
E\left[\norm{g_{k,\rho}}_{L^2(0,T;W^{-1,2}(M))}^2\right]
\tok 0,
$$
and so we conclude $g\equiv 0$; in other words, $g_{k,\rho} \tok 
\action{g}{\rho}=0$ in $L^2(0,T;W^{-1,2}(M))$, $\tilde \prob$--a.s., 
for any $\rho\in C^\infty_c(\R)$. This shows that the assumption (ii) 
of Lemma \ref{lem:weak-limit-kinetic} holds (on the manifold $M$).

Next, by \eqref{eq:tilde-conv} and 
\eqref{eq:tilde-kinetic-est},
\begin{align*}
	&\Phi_{k,\rho}\rightharpoonup 
	\Phi_{\rho}=\action{\Phi}{\rho}
	:=\action{\tilde{\Phi}^{(1)}}{\rho}
	+\action{\tilde{\Phi}^{(2)}}{\rho'}
	\quad 
	\text{in $L^2((0,T)\times M)$, $\tilde \prob$--a.s.},
\end{align*} 
and $E\left[\norm{\Phi_{k,\rho}}_{L^2((0,T)\times M)}^2\right]
\lesssim 1$; Due to the first convergence given in 
\eqref{eq:tilde-conv} for $\tilde h_k$, coupled with the 
specific form of $\Phi_{k,\rho}$ that lets us express it using 
$\tilde h_k$, we are able to enhance the convergence 
of $\Phi_{k,\rho}$ to $\Phi_{\rho}$ in the space 
$L^2_t\bigl(L^2_{\loc,w}(\R^d)\bigr)$. 
Thus, (iv) of Lemma \ref{lem:weak-limit-kinetic} holds (on $M$). 

Finally, by the first part of \eqref{eq:tilde-conv}, the assumption 
(i) of Lemma \ref{lem:weak-limit-kinetic} is fulfilled (on $M$). 
Now, since all the assumptions (i)--(iv) hold, we may apply 
Corollary \ref{cor:velocity-average-M} to 
the kinetic SPDE \eqref{eq:tilde-kinetic} (see 
next subsection). 
\end{remark}

\subsection{Strong convergence and proof 
of Theorem \ref{main-thm}} 

A filtered probability space 
$\bigl(\tilde\Omega,\tilde \mcf,\tilde\prob,
\Seqb{\tilde \F_t}\bigr)$ is needed for the limit $\tilde X$
of $\Seqb{\tilde X_k}$, cf.~\eqref{eq:tilde-Xk} and 
\eqref{eq:tilde-X}. Here we specify 
\begin{equation}\label{eq:tilde-Ft}
	\tilde \F_t :=\sigma\left(
	\sigma \Bigl(\tilde X\big |_{[0,t]}\Bigr)
	\cup \Seq{N \in \tilde \F: \tilde P (N)=0}
	\right).
\end{equation}
The filtration $\Seqb{\tilde \F_t}$ is the smallest 
one that makes all the components of 
$\tilde X =\bigl(\tilde h,\ldots,\tilde W,
\tilde h_0\bigr)$ adapted. Recall that 
$\tilde W_k$ is a Wiener process with 
respect to $\bigl(\tilde\Omega,\tilde \mcf,
\tilde\prob,\Seqb{\tilde \F_t^k}\bigr)$.
Since $\tilde W_k\to \tilde W$ in $C([0,T])$, $\tilde \prob$--a.s., 
it is easy to see that also the limit 
$\tilde W$ is a Wiener process with respect to $\bigl(\tilde\Omega,
\tilde \mcf,\tilde\prob,\Seqb{\tilde \F_t^k}\bigr)$, 
see for example \cite[Lemma 9.9]{Ondrejat:2010aa} or 
\cite[Lemma 2.9.3]{Breit:2018aa}.

\begin{theorem}[strong convergence]\label{thm:strong-conv}
Suppose conditions ($C_f$--1)--($C_f$--4) 
and ($C_\Phi$--1)--($C_\Phi$--2) hold, and that the 
flux $\mff$ is non-degenerate in 
the sense of Definition \ref{def-non-deg}. 
Suppose the diffusion-capillarity 
parameters $\eps_k,\delta_k$ satisfy the 
relation \eqref{neps}. 

(i) Consider the sequence $\Seqb{\tilde{h}_k}$ of 
Skorokhod-Jakubowski representations of $\Seq{h_k}$, 
cf.~\eqref{sol-form} and Lemma \ref{lem:as-repr}. Then 
the averaged quantities 
$$
\Seq{\inn{\tilde h_k,\rho}(\omega,t,\mx)
=\int_{\R} \tilde{h}_k(\omega,t,\mx,\lambda) 
\rho(\lambda) \, d\lambda}_{k\in \N}, 
\quad \rho\in L^2_{\loc}(\R), 
$$ 
converge strongly in 
$L^2\bigl(\tilde \Omega\times (0,T)\times M)\bigr)$, 
along a non-relabelled subsequence. More precisely, 
\begin{equation}\label{eq:thk-strong-conv}
	\tilde h_k \tok \tilde h 
	\quad
	\quad \text{in $L^2_{\omega,t,\mx}
	\bigl(L^2_{\loc,w}(\R)\bigr)
	:=L^2\bigl(\tilde \Omega\times (0,T)
	\times M;L^2_{\loc,w}(\R)\bigr)$},
\end{equation}
where $\tilde h$ is the limit identified 
in \eqref{eq:tilde-conv}.

(ii) The corresponding sequence $\Seq{\tilde u_k}$ 
of processes $\tilde u_k$ defined via 
\eqref{eq:tilde-sign-functions}, see also 
part (ii) of Lemma \ref{lem:SPDE-on-new-prob-space}, 
converges strongly in $L^1\bigl(\tilde \Omega\times (0,T)
\times M\bigr)$ to a limit $\tilde u$:
\begin{equation}\label{eq:tuk-strong-conv}
	\tilde E\left[\norm{\tilde u_k
	-\tilde u}_{L^1((0,T)\times M)}\right]\tok 0, 
	\quad \text{where} \quad
	\tilde E \left[ 
	\norm{\tilde u}_{L^\infty(0,T;L^2(M))}^2
	\right]\lesssim 1.
\end{equation}
The limit $\tilde u$ is adapted to the 
filtration $\Seqb{\tilde \F_t}$ defined 
in $\eqref{eq:tilde-Ft}$. 
\end{theorem}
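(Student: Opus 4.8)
The plan is to prove the two parts of Theorem~\ref{thm:strong-conv} by assembling the machinery developed in the preceding sections, so the work is mostly one of verification and bookkeeping rather than of new estimates. For part~(i), the starting point is Lemma~\ref{lem:SPDE-on-new-prob-space}, which tells us that the Skorokhod--Jakubowski representations $\tilde h_k$ satisfy the kinetic SPDE \eqref{eq:tilde-kinetic} on the new filtered probability space. The goal is to apply Corollary~\ref{cor:velocity-average-M} (stochastic velocity averaging on $M$), and the entire content of the argument is checking its hypotheses (i)--(iv) of Lemma~\ref{lem:weak-limit-kinetic} together with (b) of Theorem~\ref{thm-1} and the non-degeneracy condition. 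Most of this checking has already been done in Remark~\ref{rem:compare-kinetic}, where the source terms of \eqref{eq:tilde-kinetic} are reorganised into the generic form \eqref{eq-1} via the identifications of $g_{k,\rho}$, $G_{k,\rho}$, $\Phi_{k,\rho}$, and where the convergences \eqref{eq:tilde-conv} and bounds \eqref{eq:tilde-kinetic-est} are shown to yield exactly assumptions (i)--(iv). Condition (b) is the integrability $F=\mff'\in L^q_{\loc}$ with $q>2$, which follows from hypothesis ($C_f$--1), and non-degeneracy is assumed outright. Thus part~(i), namely \eqref{eq:thk-strong-conv}, is a direct invocation of Corollary~\ref{cor:velocity-average-M}; the strong-in-$t$, weak-in-$(\mx,\lambda)$ convergence of $\tilde h_k$ from \eqref{eq:tilde-conv} supplies precisely the input (a)/(i) that the averaging theorem upgrades to $L^2_{\omega,t,\mx}$-convergence of the velocity averages.

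For part~(ii), I would pass from the strong convergence of the kinetic averages to strong $L^1$ convergence of $\tilde u_k$ itself. The key identity is \eqref{eq:tilde-sign-functions}, $\tilde h_k=\sign(\tilde u_k-\lambda)$, which lets me recover truncations of $\tilde u_k$ from velocity averages: for the truncation function $\cT_L$ of \eqref{eq:truncate-L} one has $\cT_L(\tilde u_k)=\tfrac12\int_{\R}\tilde h_k\,\chi_L(\lambda)\,d\lambda$, with $\chi_L$ the characteristic function of $(-L,L)$. Choosing $\rho=\tfrac12\chi_L$ (after the usual approximation of $\chi_L$ by $L^2_{\loc}$ functions), part~(i) gives strong $L^2_{\omega,t,\mx}$ convergence of $\cT_L(\tilde u_k)$ to $\tfrac12\int_{\R}\tilde h\,\chi_L\,d\lambda=:\cT_L$-limit, and in particular defines a candidate limit $\tilde u$ through $\tilde h=\sign(\tilde u-\lambda)$ (the limit $\tilde h$ inherits the sign structure by the same law-equality argument used in Lemma~\ref{lem:as-repr}). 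To remove the truncation and get convergence of $\tilde u_k$ rather than $\cT_L(\tilde u_k)$, I would use the uniform moment bound \eqref{eq:tilde-uk-Linf-L2-est}, $\tilde E\,\norm{\tilde u_k}_{L^\infty(0,T;L^2(M))}^2\lesssim 1$, to control the tail $\tilde E\,\norm{\tilde u_k-\cT_L(\tilde u_k)}_{L^1((0,T)\times M)}$ by something like $C/L$ uniformly in $k$ via Chebyshev. Sending first $k\to\infty$ and then $L\to\infty$ in a standard three-$\varepsilon$ argument then yields the strong $L^1\bigl(\tilde\Omega\times(0,T)\times M\bigr)$ convergence \eqref{eq:tuk-strong-conv}, while the bound on $\tilde u$ is again inherited from \eqref{eq:tilde-uk-Linf-L2-est} by lower semicontinuity of the norm under the convergence.

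Finally, adaptedness of $\tilde u$ to $\Seqb{\tilde\F_t}$ defined in \eqref{eq:tilde-Ft} follows because each $\tilde u_k$ is $\Seqb{\tilde\F_t^k}$-adapted (part~(ii) of Lemma~\ref{lem:SPDE-on-new-prob-space}) and $\tilde u$ is an almost-sure limit; more precisely, I would argue via the random-distribution framework of Section~\ref{sec:prelim} that the history-filtration measurability is preserved under the strong limit, exactly as the adaptedness of $\tilde W$ was deduced after \eqref{eq:tilde-Ft}. The main obstacle I anticipate is not in either passage individually but in the interchange of truncation and limit in part~(ii): one must ensure the tail estimate is genuinely uniform in $k$ and that the limit $\tilde u$ extracted from the truncated averages is consistent across different levels $L$, i.e.\ that the pointwise-in-$L$ limits glue to a single function $\tilde u$ with $\tilde h=\sign(\tilde u-\lambda)$ a.e. This consistency is where the monotonicity of $\tilde h$ in $\lambda$ and the sign structure must be used carefully, but it parallels the law-based reconstruction already carried out in Lemma~\ref{lem:as-repr}, so I expect it to go through without essentially new ideas.
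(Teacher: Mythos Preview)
Your overall strategy matches the paper's proof closely: part~(i) is indeed a direct application of Corollary~\ref{cor:velocity-average-M} via the verification recorded in Remark~\ref{rem:compare-kinetic}, and part~(ii) proceeds by the truncation identity $\cT_L(\tilde u_k)=\tfrac12\int_{-L}^L\tilde h_k\,d\lambda$, the uniform tail bound from \eqref{eq:tilde-uk-Linf-L2-est}, and a three-$\varepsilon$ argument. The adaptedness conclusion also matches.

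There is, however, one genuine gap in how you propose to define the limit $\tilde u$. You write that $\tilde h$ ``inherits the sign structure by the same law-equality argument used in Lemma~\ref{lem:as-repr}'', so that $\tilde h=\sign(\tilde u-\lambda)$ for some $\tilde u$. This does not follow: the law-equality argument in Lemma~\ref{lem:as-repr} works for $\tilde h_k$ because $\tilde h_k\simd h_k$ and $h_k$ is a sign function by construction. No such law relation is available for the limit $\tilde h$, which is obtained only as a weak (in $\lambda$) limit on the new probability space; weak limits of sign functions need not be sign functions, and nothing in the Skorokhod--Jakubowski construction identifies the law of $\tilde h$ with that of a sign function.

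The paper sidesteps this entirely. It never asserts a sign structure for $\tilde h$. Instead it sets $\tilde u_L:=\tfrac12\int_{-L}^L\tilde h\,d\lambda$ (well-defined regardless of any structural property of $\tilde h$) and proves directly that $\{\tilde u_L\}_{L>0}$ is Cauchy in $L^1(\tilde\Omega\times(0,T)\times M)$: for $L_1,L_2$ large one writes
\[
\norm{\tilde u_{L_1}-\tilde u_{L_2}}_{L^1}\le
\norm{\tilde u_{L_1}-\cT_{L_1}(\tilde u_k)}_{L^1}
+\norm{\cT_{L_1}(\tilde u_k)-\cT_{L_2}(\tilde u_k)}_{L^1}
+\norm{\cT_{L_2}(\tilde u_k)-\tilde u_{L_2}}_{L^1},
\]
where the outer terms are handled by part~(i) for fixed $L_i$ and large $k$, and the middle term is controlled uniformly in $k$ by the tail bound \eqref{eq:trunc-control}. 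The function $\tilde u$ is then \emph{defined} as the $L^1$ limit of $\tilde u_L$, after which $\cT_L(\tilde u_k)\to\cT_L(\tilde u)$ and the final three-$\varepsilon$ step go through. Your anticipated ``consistency across levels $L$'' is thus resolved by a Cauchy argument, not by any structural property of $\tilde h$; once you drop the unjustified sign claim and run this Cauchy step, your proof coincides with the paper's.
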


\begin{proof} 
The convergence statement in \eqref{eq:thk-strong-conv} 
follows directly from Theorem \ref{thm-1} (stochastic 
velocity averaging), in the form 
of Corollary \ref{cor:velocity-average-M}, which 
can be applied in view of Lemmas \ref{lem:as-repr}, 
\ref{lem:SPDE-on-new-prob-space} and Remark \ref{rem:compare-kinetic}. 
The bound on $\tilde u$ in \eqref{eq:tuk-strong-conv} 
is a consequence of the $k$-independent estimate 
\eqref{eq:tilde-uk-Linf-L2-est}. 

Given \eqref{eq:tilde-sign-functions}, we may 
assume that $\norm{\bigl(\tilde h_k,
\tilde h\bigr)}_{L^\infty(\tilde \Omega
\times (0,T)\times M)}\leq 1$. 
Therefore, by \eqref{eq:thk-strong-conv} and a simple 
approximation argument, for any 
$\rho \in L^\infty_c(\R)$,
\begin{equation}\label{conv-hk}
	\int_{\R} \left(\tilde{h}_k-\tilde{h}\right) 
	\rho(\lambda)\, d\lambda 
	\tok 0 \quad \text{strongly in $L^2(\tilde \Omega 
	\times (0,T)\times M)$}.
\end{equation}
We will use this to prove that $\Seqb{\tilde{u}_k}$ converges 
in $L^1\bigl(\tilde{\Omega}\times (0,T)\times M\bigr)$. 
To this end, we need the truncation function 
\eqref{eq:truncate-L}. Using \eqref{conv-hk} with 
$\rho=\chi_{[-L,L]}(\lambda)$, for any 
arbitrary $L>0$, we obtain 
\begin{equation}\label{eq:L1-conv-trunc}
	\cT_L(\tilde u_k)
	\overset{\eqref{eq:tilde-sign-functions}}{=}
	\frac12\int^L_{-L}\tilde h_k\, d\lambda
	\tok \frac12\int_{-L}^L \tilde h\, d\lambda
	=: \tilde u_L 
	\quad \text{in $L^2(\tilde \Omega \times (0,T)\times M)$};
\end{equation}
in particular, $\cT_L(\tilde u_k)\tok \tilde u_L$ 
in $L^1_{\omega,t,\mx}$. Next, 
\begin{equation}\label{eq:trunc-control}
	\begin{split}	
		& \tilde{E}\left[\int_0^T\int_M 
		\abs{\tilde u_k-\cT_L(\tilde u_k)}
		\, dV(\mx)\, dt\right]
		\\ & \qquad \leq 
		\tilde{E}\left[\int_0^T\int_{\Seq{\abs{\tilde u_k>L}}} 
		\abs{L}\, dV(\mx) \, dt\right] 
		\leq \frac{1}{L}\tilde{E}\left[\int_0^T\int_M 
		\abs{\tilde u_k}^2 \, dV(\mx) \, dt\right]\toL 0,
	\end{split}
\end{equation}
uniformly in $k$, thanks to the 
$k$-independent $L^2$ estimate 
\eqref{eq:tilde-uk-Linf-L2-est}. 
We can use this and \eqref{eq:L1-conv-trunc} 
to show that $\Seqb{\tilde u_L}_{L>0}$ is a 
Cauchy sequence in $L^1(\tilde \Omega 
\times (0,T)\times M)$. Indeed, for any $L_1,L_2>0$ 
and $k\in \N$,
$$
\tilde{E}\left[\int_0^T\int_M 
\abs{\tilde u_{L_1}-\tilde u_{L_2}}\, dV(\mx)\,dt\right]
\leq A(L_1,k)+B(L_1,L_2,k)+C(L_2,k),
$$
where 
\begin{align*}
	& A(L_1,k):=\tilde{E}\left[\int_0^T\int_M 
	\abs{\tilde u_{L_1}-\cT_{L_1}(\tilde u_k)}
	\, dV(\mx) \,dt\right],
	\\ & 
	B(L_1,L_2,k):=\tilde{E}\left[\int_0^T\int_M 
	\abs{\cT_{L_1}(\tilde u_k)-\cT_{L_2}(\tilde u_k)}
	\, dV(\mx) \,dt\right],
	\\ & 
	C(L_2,k):=\tilde{E}\left[\int_0^T\int_M 
	\abs{\cT_{L_2}(\tilde u_k)-\tilde u_{L_2}}
	\, dV(\mx)\,dt\right].
\end{align*}
Fix any small $\kappa>0$. First, given 
\eqref{eq:trunc-control}, there exists 
$L_0=L_0(\kappa)>0$ such that for any $L_1,L_2\ge L_0$, 
we have $B_1(L_1,L_2,k)<\kappa/3$, uniformly in $k$. 
On the other hand, by \eqref{eq:L1-conv-trunc}, for 
fixed $L_1,L_2,\ge L_0$, there exists 
$k_0=k_0(\kappa,L_1,L_2)$ such that 
$A(L_1,k)<\kappa/3$ and $C(L_2,k)<\kappa/3$ 
for all $k\ge k_0$. Consequently, 
$$
\lim_{L_1,L_2\to \infty}\tilde{E}
\left[\int_0^T\int_M 
\abs{\tilde u_{L_1}-\tilde u_{L_2}}
\, dV(\mx)\,dt \right]=0.
$$
In other words, $\Seqb{\tilde u_L}$ is a 
Cauchy sequence in the Banach space 
$L^1\bigl(\tilde \Omega\times (0,T)\times M\bigr)$,
with measure $d\tilde \prob\times dt\times dV$. 
This implies the existence of 
$\tilde u \in L^1\bigl(\tilde \Omega \times 
(0,T)\times M\bigr)$ such that
\begin{equation}\label{eq:uL-limit}
	\lim_{L\to \infty} 
	\tilde E\left[\norm{\tilde u
	-\tilde u_{L}}_{L^1((0,T)\times M)}\right]=0.
\end{equation}

Arguing as in \eqref{eq:trunc-control}, using 
the $L^2_{\omega,t,\mx}$ bound on $\tilde u$ 
coming from \eqref{eq:tuk-strong-conv}, it follows that 
\begin{equation}\label{eq:trunc-control2}
	\cT_L(\tilde u)\toL \tilde u 
	\quad \text{in $L^1_{\omega,t,\mx}$}, 
\end{equation}	
which---via \eqref{eq:L1-conv-trunc} and 
\eqref{eq:uL-limit}---implies that 
\begin{equation}\label{eq:TL-uk-conv}
	\cT_L(\tilde u_k)
	\tok \cT_L(\tilde u) 
	\quad \text{in $L^1_{\omega,t,\mx}$}.
\end{equation}
As a result, for any $L>0$,
$$
\tilde E \left[\norm{\tilde u
-\tilde u_k}_{L^1((0,T)\times M)}\right]
\leq I_1(L)+I_2(L,k)+I_3(k),
$$
where
\begin{align*}
	& I_1(L) :=\tilde E \left[\norm{\tilde u
	-\cT_L(\tilde u)}_{L^1((0,T)\times M)}\right]
	\toL 0, \quad \text{by \eqref{eq:trunc-control2}},
	\\ 
	& I_2(L,k):= \tilde E \left[\norm{\cT_L(\tilde u)
	-\cT_L(\tilde u_k)}_{L^1((0,T)\times M)}\right]
	\tok 0, \quad \text{by \eqref{eq:TL-uk-conv}, for 
	fixed $L$},
	\\ & 
	I_3(L,k):=\tilde E \left[\norm{\cT_L(\tilde u_k)
	-\tilde u_k}_{L^1((0,T)\times M)}\right]\toL 0, 
	\quad \text{by \eqref{eq:trunc-control}, 
	uniformly in $k$}.
\end{align*}
Therefore, $\tilde u_k\tok \tilde u$ in 
$L^1_{\omega,t,\mx}$, which brings to an end 
the proof of \eqref{eq:tuk-strong-conv}. 

Finally, the limit $\tilde{u}$ is adapted to the 
filtration $\Seqb{\tilde \F_t}$ defined 
in \eqref{eq:tilde-Ft}, since it is 
the (strong $L^1_{\omega,t,x}$) limit 
of the $\Seqb{\tilde \F_t^k}$--adapted processes 
$\Seqb{\tilde u_k}$, cf.~part (ii) of 
Lemma \ref{lem:SPDE-on-new-prob-space}.
\end{proof}

In view of the $k$-independent $L^2$ estimate 
in \eqref{eq:tilde-u0k-est}, there is no loss of generality 
in assuming that 
\begin{equation}\label{eq:new-tilde-data}
	\tilde u_{0,k} \weak \tilde u_0
	\quad \text{in $L^2(\tilde \Omega \times M)$ as 
	$k\to \infty$}, 
\end{equation}
for some $\tilde u_0\in L^2(\tilde \Omega\times M)$.  
Given Theorem \ref{thm:strong-conv}, 
we can now conclude the proof of our main result, 
namely Theorem \ref{main-thm}, by 
appealing to

\begin{lemma}[weak solution]\label{lem:weak-sol}
In addition to the conditions listed in 
Theorem \ref{thm:strong-conv}, suppose
($C_\Phi$--3) holds. Let $\tilde u$ be the 
strong $L^1_{\omega,t,\mx}$ limit identified 
in \eqref{eq:tuk-strong-conv}. Then $\tilde{u}$ is a 
weak martingale solution to the stochastic conservation law 
\eqref{cl-1} with $\mx$-discontinuous flux 
$\mff(\mx,\cdot)$ and initial data $\tilde u_0\in 
L^2_{\tilde \prob}\bigl(\tilde \Omega;L^2(M)\bigr)$, 
cf.~\eqref{eq:new-tilde-data}.
\end{lemma}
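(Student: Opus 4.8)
The plan is to pass to the limit $k\to\infty$ in the weak formulation \eqref{eq:tilde-kinetic-weak} of the kinetic SPDE satisfied by $\tilde u_k$, or equivalently in the $\mx$-weak formulation \eqref{PP-1-SJ}, using the strong $L^1_{\omega,t,\mx}$ convergence $\tilde u_k\to\tilde u$ from \eqref{eq:tuk-strong-conv}. Concretely, I would start from the weak form \eqref{PP-1-SJ} tested against $\varphi\in C^2(M)$, written as
\begin{equation*}
	\int_M \tilde u_k(t)\varphi\,dV
	-\int_M \tilde u_{0,k}\varphi\,dV
	-\int_0^t\!\int_M \mff_k(\mx,\tilde u_k)\cdot\nabla\varphi\,dV\,dt'
	=\eps_k\!\int_0^t\!\int_M \tilde u_k\Delta\varphi\,dV\,dt'
	+\delta_k\!\left(\cdots\right)
	+\int_0^t\!\int_M \Phi(\mx,\tilde u_k)\varphi\,dV\,d\tilde W_k(t'),
\end{equation*}
and show each term converges to the corresponding term in \eqref{mild-cl}. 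The deterministic terms are the easy part: the two diffusion-capillarity terms vanish because $\eps_k,\delta_k\to 0$ while $\tilde u_k$ satisfies the $k$-independent $L^2$ and (via \eqref{eq:tilde-uk-kdep-est}) $\delta_k H^1$, $\delta_k^{1/2}H^2$-type controls, so that $\eps_k\int\tilde u_k\Delta\varphi$ and the $\delta_k$-boundary terms are $o(1)$ in $L^1_\omega$. For the flux term I would split $\mff_k(\mx,\tilde u_k)-\mff(\mx,\tilde u)$ into $\bigl(\mff_k-\mff\bigr)(\mx,\tilde u_k)$, controlled by \eqref{conv}, and $\mff(\mx,\tilde u_k)-\mff(\mx,\tilde u)$, controlled by the Lipschitz bound ($C_f$--2) together with the strong $L^1$ convergence of $\tilde u_k$; and I would use \eqref{eq:new-tilde-data} for the initial-data term.

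The main obstacle will be passing to the limit in the stochastic integral, precisely because—as emphasised in the excerpt—there is no strong temporal compactness, only the strong $L^1_{\omega,t,\mx}$ convergence of $\tilde u_k$ and the a.s.\ convergence $\tilde W_k\to\tilde W$ in $C([0,T])$. This is exactly the situation Lemma \ref{lem:weak-limit-kinetic} and its proof were designed to handle, and this is where condition ($C_\Phi$--3) enters. The plan is to first upgrade the strong $L^1_{\omega,t,\mx}$ convergence: since $\tilde h_k\to\tilde h$ strongly in the averaged sense \eqref{eq:thk-strong-conv} and $\abs{\tilde u_k}$ is controlled uniformly in $L^2$, one gets $\tilde u_k\to\tilde u$ in $L^p_{\omega,t,\mx}$ for every $p<2$ by interpolating the $L^1$ convergence against the uniform $L^2$ bound. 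Then, using ($C_\Phi$--3) (the strengthened linear-growth/boundedness of $\Phi$ in $\lambda$ in $L^2_\mx$), the integrand $\int_M\Phi(\mx,\tilde u_k)\varphi\,dV$ converges strongly in $L^p_{\omega,t}$ for suitable $p<2$, and in particular the predictable integrands converge in a topology strong enough to pass to the limit in the It\^o integral jointly with $\tilde W_k\to\tilde W$.

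For the actual convergence of the stochastic integrals I would invoke the same machinery already used in the proof of Lemma \ref{lem:weak-limit-kinetic}: namely the weak-convergence technique of \cite{Frid:2021us} combined with the regularisation argument of \cite{Bensoussan:1995aa,Debussche:2011aa}, adapted (as noted there, see \cite{GHKP-inviscid}) to a sequence of Wiener processes $\tilde W_k\to\tilde W$ in $C([0,T])$ a.s. The strong $L^p_{\omega,t}$ ($p<2$) convergence of $\int_M\Phi(\mx,\tilde u_k)\varphi\,dV$ furnished by ($C_\Phi$--3) is what makes this limit passage legitimate without temporal compactness, yielding
\begin{equation*}
	\int_0^t\!\int_M \Phi(\mx,\tilde u_k)\varphi\,dV\,d\tilde W_k(t')
	\tok
	\int_0^t\!\int_M \Phi(\mx,\tilde u)\varphi\,dV\,d\tilde W(t').
\end{equation*}
Finally, I would record that $\tilde u$ is $\Seqb{\tilde\F_t}$-adapted (already established in Theorem \ref{thm:strong-conv}) and that $\tilde W$ is a Wiener process relative to this filtration (noted after \eqref{eq:tilde-Ft}), so that the It\^o integral in \eqref{mild-cl} is well defined; assembling the limits of all terms gives \eqref{mild-cl} $\tilde\prob$-a.s.\ for all $\varphi\in C^2(M)$ and all $t$, which is exactly the assertion that $\tilde u$ is a weak martingale solution of \eqref{cl-1}.
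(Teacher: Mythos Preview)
Your overall strategy is the same as the paper's: start from the $\mx$-weak formulation \eqref{PP-1-SJ} for $\tilde u_k$, and pass to the limit term by term, with the $\eps_k,\delta_k$ terms vanishing by the uniform $L^2$ bounds, the flux term handled by the splitting you describe plus ($C_f$--1)--($C_f$--2), and the initial data by \eqref{eq:new-tilde-data}.

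The one place where you diverge from the paper is the stochastic integral, and here you over-engineer things. You invoke the weak-convergence machinery of Lemma~\ref{lem:weak-limit-kinetic} (the Frid/Bensoussan/Debussche argument), which was needed there precisely because the integrand $\Phi_{n,\rho}$ converged only \emph{weakly} in $L^2_{t,\mx}$. Here the situation is better: ($C_\Phi$--3) supplies the dominating function $4\sup_{\lambda}\abs{\Phi(\mx,\lambda)}^2\in L^1(\tilde\Omega\times(0,T)\times M)$, and since $\tilde u_k\to\tilde u$ in $L^1_{\omega,t,\mx}$ one can pass to an a.e.-convergent subsequence and apply dominated convergence to obtain \emph{strong} $L^2_{\omega,t,\mx}$ convergence of $\Phi(\cdot,\tilde u_k)\to\Phi(\cdot,\tilde u)$ (not merely $L^p$ for $p<2$, as you suggest). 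With that in hand, the paper simply applies the BDG inequality with exponent $1$ and Cauchy--Schwarz to conclude directly. So your route works, but it misses the point of ($C_\Phi$--3): it is exactly what upgrades the integrand convergence to strong $L^2$, rendering the delicate weak-limit argument from Lemma~\ref{lem:weak-limit-kinetic} unnecessary here.
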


\begin{proof}
Given the convergences \eqref{eq:tuk-strong-conv} 
and \eqref{eq:new-tilde-data}, we will argue along 
the lines of the proof of Lemma 
\ref{lem:weak-limit-kinetic}. Introduce 
the random variable 
\begin{align*}
	\cI(\omega,t) &:= \int_M \tilde u(t)\, \varphi\, dV\, dt
	-\int_M \tilde u_0\, \varphi\, dV 
	-\int_0^T \int_M \mff(\mx, \tilde u)\cdot 
	\nabla \varphi \, dV \, dt' 
	\\ & \qquad
	-\int_0^t\int_M \Phi(\mx,\tilde u) 
	\varphi(\mx) \, dV \, d\tilde W_{t'}
	\\ & =:\cI^{(1)}(\omega,t)-\cI^{(2)}(\omega)
	-\cI^{(3)}(\omega,t)-\cI^{(4)}(\omega,t), 
	\qquad \varphi \in C_c^2(M).
\end{align*}
By our assumptions on $\mff,\Phi$ 
and the $L^2$ bound in \eqref{eq:tuk-strong-conv}, 
$\cI\in L^2(\tilde \Omega\times (0,T))$. 

Now the proof of the lemma consists of showing that 
\begin{equation}\label{eq:I=0}
	\norm{\cI}_{L^2(\tilde \Omega\times (0,T))}^2
	=\tilde E \left[\int_0^T 	\bigl(\cI(\omega,t)
	\bigr)^2\,dt\right]=0,
\end{equation}
which implies that $\cI=0$ for 
$d\tilde \prob\times dt$--a.e.~$(\omega,t)\in 
\tilde \Omega\times (0,T)$. By the Fubini theorem, 
this shows that $\cI(\omega,t)=0$ for a.e.~$t\in (0,T)$, 
$\tilde \prob$--a.s.

Let us establish \eqref{eq:I=0}. Since the simple functions 
are dense in $L^2$, it is enough to establish that 
$\tilde E \left[\int_0^T \chi_Z\cI\,dt\right]=0$, where 
$\chi_Z=\chi_Z(\omega,t)\in L^\infty_{\omega,t}$ 
is the characteristic function of an arbitrary 
measurable set $Z\subset \tilde \Omega \times (0,T)$. 
Each $\tilde u_k=\tilde u_k(\omega,t)=
\tilde u_k(\omega,t,\mx)$ is a weak 
solution of \eqref{PP-1-SJ}, so that
\begin{equation}\label{mild-ppde-SJ}
	\begin{split}
		\cI_k(\omega,t) & :=
		\underbrace{\int_M  \tilde u_k(\omega,t) 
		\varphi \, dV(\mx)}_{=:\cI_k^{(1)}}
		-\underbrace{\int_M \tilde u_{0,k}(\omega) 
		\varphi \, dV(\mx)}_{=:\cI_k^{(2)}} 
		\\ & \quad 
		-\underbrace{\int_0^t \int_M \mff_k(\mx, \tilde u_k) 
		\cdot \nabla \varphi \, dV(\mx) \, dt'}_{=:\cI_k^{(3)}}
 		-\underbrace{\int_0^t\int_M \Phi(\mx,\tilde u_k)
 		\varphi \,dV(\mx) \, d\tilde{W}_{t'}}_{=:\cI_k^{(4)}}
 		\\ & \quad  
 		-\underbrace{\eps_k\int_0^t \int_M \tilde u_k 
 		\Delta \varphi \,dV(\mx)\, dt'}_{=:\cI_k^{(5)}}
 		-\underbrace{\delta_k \int_M \tilde u_k(\omega,t)
 		\Delta \varphi \, dV(\mx)}_{=:\cI_k^{(6)}}
 		\\ & \quad 
 		-\underbrace{\delta_k\int_M \tilde{u}_{0,k}(\omega)
 		\Delta \varphi \, dV(\mx)}_{=:\cI_k^{(7)}} =0, 
 		\quad \text{for $d\tilde{\prob}\times dt$ 
 		a.e.~$(\omega,t)\in \tilde \Omega\times (0,T)$}.
	\end{split}
\end{equation} 

Let us compute the limit of each
$\tilde E \left[\int_0^T\chi_Z \cI_k^{(l)}\,dt\right]$ as 
$k\to \infty$, $l=1,\ldots,7$. 
By the $k$-independent $L^2$ estimates 
\eqref{eq:tilde-uk-Linf-L2-est}, \eqref{eq:tilde-u0k-est}, 
and also $\chi_Z \Delta \varphi \in L^2_{\omega,t,\mx}$, 
\begin{equation}\label{eq:weak-sol-tmp1}
	\lim_{k\to\infty}
	\tilde E \left[\int_0^T\chi_Z \cI_k^{(l)}\,dt\right]=0, 
	\quad l=5,6,7.
\end{equation}

Next, by the convergences 
\eqref{eq:tuk-strong-conv}, \eqref{eq:new-tilde-data} 
and also $\chi_Z\varphi\in 
L^\infty_{\omega,t,\mx}\subset L^2_{\omega,t,\mx}$,
\begin{equation}\label{eq:weak-sol-tmp2}
	\begin{split}
		&\lim_{k\to\infty}
		\tilde E \left[\int_0^T\chi_Z\cI_k^{(1)}\,dt\right]
		=\tilde E\left[\int_0^T\int_M \chi_Z
		\tilde u(t) \varphi \, dV(\mx) \, dt\right]
		= \tilde E \left[\int_0^T\chi_Z\cI^{(1)}\,dt\right],
		\\ &
		\lim_{k\to\infty}
		\tilde E \left[\int_0^T\chi_Z\cI_k^{(2)}\,dt\right]
		=\tilde E\left[\int_0^T\int_M \chi_Z
		\tilde u_0 \varphi \, dV(\mx) \, dt\right]
		=\tilde E\left[
		\int_0^T\chi_Z\cI^{(2)}\,dt\right].
	\end{split}
\end{equation}

Regarding the flux term $\cI_k^{(3)}$, note that
\begin{align*}
	\abs{\mff_k(\mx, \tilde u_k)-\mff(\mx, \tilde u)}
	& \leq \abs{\mff_k(\mx, \tilde u_k)-\mff_k(\mx, \tilde u)}
	+\abs{\mff_k(\mx, \tilde u)-\mff(\mx,\tilde u)}
	\\ & \leq 
	\sup\limits_{\lambda \in \R}
	\abs{\mff_k'(\mx,\lambda)}
	\abs{\tilde u_k-\tilde u}
	+\sup\limits_{\lambda \in \R}
	\abs{\mff(\mx,\lambda)-\mff_k(\mx,\lambda)}.
\end{align*}
In view of \eqref{eq:tilde-uk-Linf-L2-est} and 
\eqref{eq:tuk-strong-conv}, we may as well assume 
that $\tilde u_k\to \tilde u$ in $L^p_{\omega,t,\mx}$ 
for all $p\in [1,2)$. Hence, by the assumptions 
($C_f$--1) and ($C_f$--2),
$$
\tilde E \left[\int_0^T\int_0^t \int_M 
\abs{\mff_k(\mx, \tilde u_k)
-\mff(\mx, \tilde u)}\,dV(\mx)
\, dt' \,dt \right] 
\tok 0.
$$
Since $\chi_Z \nabla \varphi\in L^\infty_{\omega,t,\mx}$, 
this implies that
\begin{equation}\label{eq:weak-sol-tmp3}
	\lim_{k\to\infty}
	\tilde E \left[\int_0^T\chi_Z\cI_k^{(3)}\,dt\right]
	=\tilde E\left[\int_0^T\int_0^t\int_M 
	\chi_Z\mff(\mx, \tilde u)\cdot 
	\nabla \varphi \, dV(\mx) \, dt'\, dt\right]
	=\tilde E \left[\int_0^T\chi_Z\cI^{(3)}\,dt\right].
\end{equation}

Finally, we consider the stochastic integral term 
$\cI_k^{(4)}$. Using the BDG inequality \eqref{eq:BDG} 
with $p=1$ and then the Cauchy-Schwarz inequality (twice), 
\begin{align*}
	&\abs{\tilde E \left[\int_0^T \chi_Z \cI^{(4)}\,dt\right]
	-\tilde E \left[\int_0^T\chi_Z\cI_k^{(4)}\,dt\right]}
	\leq \int_0^T\tilde E \left[\, \abs{\int_0^t
	\int_M \bigl(\Phi(\mx,\tilde u)
	-\Phi(\mx, \tilde u_k)\bigr)
 	\varphi \,dV\, d\tilde W_{t'}}\, \right]\,dt
 	\\ & \qquad 
 	\lesssim_T \tilde{E}\left[\left(\int_0^T
 	\left(\int_M \bigl(\Phi(\mx, \tilde u)
 	-\Phi(\mx, \tilde u_k) \bigr)\varphi \, dV\, 
 	\right)^2 \, dt'\right)^{\frac12}\right]
 	\\ & \qquad
 	\leq \left(\tilde E \left[\int_0^T \int_M 
 	\left(\Phi(\mx,\tilde u)
 	-\Phi(\mx,\tilde u_k)\right)^2\,dV(\mx)\,dt\right] 
 	\right)^{\frac12}\norm{\varphi}_{L^2(M)}.
\end{align*} 
By \eqref{eq:tuk-strong-conv}, passing to a 
subsequence if necessary, there is no loss 
of generality in assuming that 
$$
\tilde u_k(\omega,t,\mx) \tok 
\tilde u(\omega,t,\mx) \quad 
\text{for $d\tilde\prob \times dt\times dV(\mx)$ 
a.e.~$(\omega,t,\mx)\in \tilde \Omega
\times (0,T)\times M$},
$$
so that $I_k(\omega,t,\mx):=\left(\Phi(\mx,\tilde u)
-\Phi(\mx,\tilde u_k)\right)^2 \tok 0$ 
for a.e.~$(\omega,t,\mx)\in \tilde 
\Omega\times (0,T)\times M$. Besides, 
by $({\rm C}_{\Phi}-3)$, we have a dominating function
$\abs{I_k(\omega,t,\mx)}\leq 4\sup\limits_{\lambda \in \R}
\abs{\Phi(\mx,\lambda)}^2\in L^1(\tilde \Omega
\times(0,T)\times M)$. Thus, by the dominated 
convergence theorem,
\begin{equation}\label{eq:weak-sol-tmp4}
	\abs{\tilde E \left[\int_0^T \chi_Z \cI^{(4)}\,dt\right]
	-\tilde E \left[\int_0^T\chi_Z\cI_k^{(4)}\,dt\right]}\tok 0.
\end{equation}

Summarising, given \eqref{mild-ppde-SJ} 
and the convergences \eqref{eq:weak-sol-tmp1}, 
\eqref{eq:weak-sol-tmp2}, \eqref{eq:weak-sol-tmp3} 
and \eqref{eq:weak-sol-tmp4},
$$
\tilde E \left[\int_0^T \chi_Z\left(\cI^{(1)}
-\cI^{(2)}-\cI^{(3)}-\cI^{(4)}\right)\,dt\right]=0, 
\quad \text{for any measurable set 
$Z\subset \tilde \Omega \times (0,T)$}.
$$ 
This concludes the proof that 
$\cI= \cI^{(1)}-\cI^{(2)}-\cI^{(3)}-\cI^{(4)}$ 
satisfies \eqref{eq:I=0}, and thus we have 
shown that the limit $\tilde u$ 
satisfies \eqref{mild-cl}, $\tilde \prob$--a.s., for 
a.e.~$t\in [0,T]$. Finally, the right-hand 
side of \eqref{mild-cl} clearly defines 
a continuous stochastic process. Therefore, 
$\int_M \tilde u(t)\, \varphi\,dV(\mx)$ has 
a continuous modification such that \eqref{mild-cl} 
holds for \textit{all} $t$.
\end{proof}

\begin{remark}
We note that since the flux $\mff$ generally 
is discontinuous in $\mx$, we have not 
discussed the admissibility of the obtained weak solution. 
Admissibility (entropy) conditions for stochastic 
conservation laws with discontinuous 
flux are little studied, 
see \cite{Andreianov:2010fk} for an overview 
of different conditions in the deterministic case.
Having said that, the entropy condition in 
\cite{Karlsen:2003lz} has a straightforward 
adaption to the present stochastic 
problem. The weak solution constructed 
herein can be easily shown to satisfy this condition.
On the other hand, if $\mff$ is a $C^1$ vector field, then 
the admissibility condition from 
\cite{Galimberti:2018aa} can be used, in which 
case we can prove that $\Seq{\tilde{u}_k}$ 
converges towards the entropy solution. 
This entropy solution is unique and 
one can in this case upgrade the obtained ``martingale" 
solution to a probabilistic strong 
solution \textit{\`ala} Yamada-Watanabe.
\end{remark}

\section{Acknowledgement} 
This work was supported by 
project P35508 of the Austrian Science Fund FWF.


\begin{thebibliography}{10}

\bibitem{AMP}
J.~Aleksi\'{c}, D.~Mitrovic, and S.~Pilipovi\'{c}.
\newblock Hyperbolic conservation laws with vanishing nonlinear diffusion and
  linear dispersion in heterogeneous media.
\newblock {\em J. Evol. Equ.}, 9(4):809--828, 2009.

\bibitem{Andreianov:2010fk}
B.~Andreianov, K.~H. Karlsen, and N.~H. Risebro.
\newblock A theory of {$L^1$}-dissipative solvers for scalar conservation laws
  with discontinuous flux.
\newblock {\em Arch. Ration. Mech. Anal.}, 201(1):27--86, 2011.

\bibitem{AEL}
N.~Antoni\'{c}, M.~Erceg, and M.~Lazar.
\newblock Localisation principle for one-scale {H}-measures.
\newblock {\em J. Funct. Anal.}, 272(8):3410--3454, 2017.

\bibitem{AM-1}
N.~Antoni\'{c} and D.~Mitrovi\'{c}.
\newblock {$H$}-distributions: an extension of {$H$}-measures to an
  {$L^p$}-{$L^q$} setting.
\newblock {\em Abstr. Appl. Anal.}, pages Art. ID 901084, 12, 2011.

\bibitem{AMM}
N.~Antoni\'{c}, M.~Mi\v{s}ur, and D.~Mitrovi\'{c}.
\newblock On compactness of commutators of multiplications and {F}ourier
  multipliers.
\newblock {\em Mediterr. J. Math.}, 15(4):Paper No. 170, 13, 2018.

\bibitem{Ben-Artzi:2007aa}
M.~Ben-Artzi and P.~G. LeFloch.
\newblock Well-posedness theory for geometry-compatible hyperbolic conservation
  laws on manifolds.
\newblock {\em Ann. Inst. H. Poincar\'e Anal. Non Lin\'eaire}, 24(6):989--1008,
  2007.

\bibitem{Bensoussan:1995aa}
A.~Bensoussan.
\newblock Stochastic {N}avier-{S}tokes equations.
\newblock {\em Acta Applicandae Mathematica}, 38(3):267--304, 1995.

\bibitem{Breit:2018aa}
D.~Breit, E.~Feireisl, and M.~Hofmanov\'{a}.
\newblock {\em Stochastically forced compressible fluid flows}, volume~3 of
  {\em De Gruyter Series in Applied and Numerical Mathematics}.
\newblock De Gruyter, Berlin, 2018.

\bibitem{Brzezniak:2013aa}
Z.~Brze\'zniak and E.~Motyl.
\newblock Existence of a martingale solution of the stochastic
  {N}avier-{S}tokes equations in unbounded 2{D} and 3{D} domains.
\newblock {\em J. Differential Equations}, 254(4):1627--1685, 2013.

\bibitem{Brzezniak:2011aa}
Z.~Brze\'zniak and M.~Ondrej\'at.
\newblock Weak solutions to stochastic wave equations with values in
  {R}iemannian manifolds.
\newblock {\em Comm. Partial Differential Equations}, 36(9):1624--1653, 2011.

\bibitem{Brzezniak:2013ab}
Z.~Brze\'zniak and M.~Ondrej\'at.
\newblock Stochastic geometric wave equations with values in compact
  {R}iemannian homogeneous spaces.
\newblock {\em Ann. Probab.}, 41(3B):1938--1977, 2013.

\bibitem{CP-PDE-1982}
J.~Chazarain and A.~Piriou.
\newblock {\em Introduction to the theory of linear partial differential
  equations}, volume~14 of {\em Studies in Mathematics and its Applications}.
\newblock North-Holland Publishing Co., Amsterdam-New York, 1982.
\newblock Translated from the French.

\bibitem{Chow:2015aa}
P.-L. Chow.
\newblock {\em Stochastic partial differential equations}.
\newblock Advances in Applied Mathematics. CRC Press, Boca Raton, FL, second
  edition, 2015.

\bibitem{DaPrato:2014aa}
G.~Da~Prato and J.~Zabczyk.
\newblock {\em Stochastic equations in infinite dimensions}, volume 152 of {\em
  Encyclopedia of Mathematics and its Applications}.
\newblock Cambridge University Press, Cambridge, second edition, 2014.

\bibitem{Dafermos:2016aa}
C.~M. Dafermos.
\newblock {\em Hyperbolic conservation laws in continuum physics}, volume 325
  of {\em Grundlehren der Mathematischen Wissenschaften [Fundamental Principles
  of Mathematical Sciences]}.
\newblock Springer-Verlag, Berlin, fourth edition, 2016.

\bibitem{Debussche:2011aa}
A.~Debussche, N.~Glatt-Holtz, and R.~Temam.
\newblock Local martingale and pathwise solutions for an abstract fluids model.
\newblock {\em Phys. D}, 240(14-15):1123--1144, 2011.

\bibitem{Debussche:2010fk}
A.~Debussche and J.~Vovelle.
\newblock Scalar conservation laws with stochastic forcing.
\newblock {\em J. Funct. Anal.}, 259(4):1014--1042, 2010.

\bibitem{Dotti:2018aa}
S.~Dotti and J.~Vovelle.
\newblock Convergence of {A}pproximations to {S}tochastic {S}calar
  {C}onservation {L}aws.
\newblock {\em Arch. Ration. Mech. Anal.}, 230(2):539--591, 2018.

\bibitem{Ewards:Book65}
R.~E. Edwards.
\newblock {\em Functional analysis. {T}heory and applications}.
\newblock Holt, Rinehart and Winston, New York, 1965.

\bibitem{Evans:1990wt}
L.~C. Evans.
\newblock {\em Weak convergence methods for nonlinear partial differential
  equations}, volume~74 of {\em CBMS Regional Conference Series in
  Mathematics}.
\newblock Published for the Conference Board of the Mathematical Sciences,
  Washington, DC, 1990.

\bibitem{GHKP-inviscid}
L.~Galimberti, H.~Holden, K.~H. Karlsen, and P.~H. Pang.
\newblock Global existence of dissipative solutions to the {C}amassa--{H}olm
  equation with transport noise.
\newblock Submitted, 2022.

\bibitem{Galimberti:2018aa}
L.~Galimberti and K.~H. Karlsen.
\newblock Well-posedness theory for stochastically forced conservation laws on
  {R}iemannian manifolds.
\newblock {\em J. Hyperbolic Differ. Equ.}, 16(3):519--593, 2019.

\bibitem{Gerard:1991kl}
P.~G{\'e}rard.
\newblock Microlocal defect measures.
\newblock {\em Comm. Partial Differential Equations}, 16(11):1761--1794, 1991.

\bibitem{Gess:2018ab}
B.~Gess and M.~Hofmanov\'{a}.
\newblock Well-posedness and regularity for quasilinear degenerate
  parabolic-hyperbolic {SPDE}.
\newblock {\em Ann. Probab.}, 46(5):2495--2544, 2018.

\bibitem{Graf:2017aa}
M.~{Graf}, M.~{Kunzinger}, and D.~{Mitrovic}.
\newblock {Well-posedness theory for degenerate parabolic equations on
  Riemannian manifolds}.
\newblock {\em J. Differential Equations}, 263:4787--4825, Oct. 2017.

\bibitem{GKMV}
M.~Graf, M.~Kunzinger, D.~Mitrovic, and D.~Vujadinovic.
\newblock A vanishing dynamic capillarity limit equation with discontinuous
  flux.
\newblock {\em Z. Angew. Math. Phys.}, 71(6):Paper No. 201, 22, 2020.

\bibitem{Grafakos:2008}
L.~Grafakos.
\newblock {\em Classical {F}ourier analysis}, volume 249 of {\em Graduate Texts
  in Mathematics}.
\newblock Springer, New York, second edition, 2008.

\bibitem{GKOS}
M.~Grosser, M.~Kunzinger, M.~Oberguggenberger, and R.~Steinbauer.
\newblock {\em Geometric theory of generalized functions with applications to
  general relativity}, volume 537 of {\em Mathematics and its Applications}.
\newblock Kluwer Academic Publishers, Dordrecht, 2001.

\bibitem{Hassanizadeh:1993oz}
S.~Hassanizadeh and W.~Gray.
\newblock Thermodynamic basis of capillary pressure in porous media.
\newblock {\em Water Resour. Res.}, 29(10):3389--3405, 1993.

\bibitem{Hassanizadeh:1990ef}
S.~M. Hassanizadeh and W.~G. Gray.
\newblock Mechanics and thermodynamics of multiphase flow in porous media
  including interphase boundaries.
\newblock {\em Advances in Water Resources}, 13(4):169--186, 1990.

\bibitem{HKM}
H.~Holden, K.~H. Karlsen, and D.~Mitrovic.
\newblock Zero diffusion-dispersion-smoothing limits for scalar conservation
  law with discontinuous flux function.
\newblock {\em International Journal of Differential Equations}, 2009:Article
  ID 279818, 33 pages, 2009.

\bibitem{Hwang:2004}
S.~Hwang.
\newblock Kinetic decomposition for singularly perturbed higher order partial
  differential equations.
\newblock {\em J. Differential Equations}, 200(2):191--205, 2004.

\bibitem{HTz}
S.~Hwang and A.~E. Tzavaras.
\newblock Kinetic decomposition of approximate solutions to conservation laws:
  application to relaxation and diffusion-dispersion approximations.
\newblock {\em Comm. Partial Differential Equations}, 27(5-6):1229--1254, 2002.

\bibitem{BanachI:2016}
T.~Hyt\"{o}nen, J.~van Neerven, M.~Veraar, and L.~Weis.
\newblock {\em Analysis in {B}anach spaces. {V}ol. {I}. {M}artingales and
  {L}ittlewood-{P}aley theory}, volume~63 of {\em Ergebnisse der Mathematik und
  ihrer Grenzgebiete. 3. Folge. A Series of Modern Surveys in Mathematics
  [Results in Mathematics and Related Areas. 3rd Series. A Series of Modern
  Surveys in Mathematics]}.
\newblock Springer, Cham, 2016.

\bibitem{Jakubowski:1997aa}
A.~Jakubowski.
\newblock The almost sure skorokhod representation for subsequences in
  nonmetric spaces.
\newblock {\em Theory Probab. Appl.}, 42:167--174, 1997.

\bibitem{Karlsen:2003lz}
K.~H. Karlsen, N.~H. Risebro, and J.~D. Towers.
\newblock {$L^1$} stability for entropy solutions of nonlinear degenerate
  parabolic convection-diffusion equations with discontinuous coefficients.
\newblock {\em Skr. K. Nor. Vidensk. Selsk.}, (3):1--49, 2003.

\bibitem{LM2}
M.~Lazar and D.~Mitrovi\'{c}.
\newblock Velocity averaging---a general framework.
\newblock {\em Dyn. Partial Differ. Equ.}, 9(3):239--260, 2012.

\bibitem{LM5}
M.~Lazar and D.~Mitrovi\'{c}.
\newblock Existence of solutions for a scalar conservation law with a flux of
  low regularity.
\newblock {\em Electron. J. Differential Equations}, pages Paper No. 325, 18,
  2016.

\bibitem{Lee:2003aa}
J.~M. Lee.
\newblock {\em Introduction to smooth manifolds}, volume 218 of {\em Graduate
  Texts in Mathematics}.
\newblock Springer-Verlag, New York, 2003.

\bibitem{Lions:2013ab}
P.-L. Lions, B.~Perthame, and P.~E. Souganidis.
\newblock Stochastic averaging lemmas for kinetic equations.
\newblock In {\em S\'eminaire {L}aurent {S}chwartz---\'Equations aux
  d\'eriv\'ees partielles et applications. {A}nn\'ee 2011--2012}, S\'emin.
  \'Equ. D\'eriv. Partielles, pages Exp. No. XXVI, 17. \'Ecole Polytech.,
  Palaiseau, 2013.

\bibitem{Lions:1994qy}
P.-L. Lions, B.~Perthame, and E.~Tadmor.
\newblock A kinetic formulation of multidimensional scalar conservation laws
  and related equations.
\newblock {\em J. Amer. Math. Soc.}, 7(1):169--191, 1994.

\bibitem{Liu:2015vb}
W.~Liu and M.~R\"{o}ckner.
\newblock {\em Stochastic partial differential equations: an introduction}.
\newblock Universitext. Springer, Cham, 2015.

\bibitem{Mar}
J.~E. Marsden.
\newblock Generalized {H}amiltonian mechanics: {A} mathematical exposition of
  non-smooth dynamical systems and classical {H}amiltonian mechanics.
\newblock {\em Arch. Rational Mech. Anal.}, 28:323--361, 1967/68.

\bibitem{MM}
M.~Mi\v{s}ur and D.~Mitrovi\'{c}.
\newblock On a generalization of compensated compactness in the {$L^p$}-{$L^q$}
  setting.
\newblock {\em J. Funct. Anal.}, 268(7):1904--1927, 2015.

\bibitem{Nariyoshi-vel-av:2020}
J.~F. Nariyoshi.
\newblock Critical velocity averaging lemmas, 2020.

\bibitem{Ondrejat:2010aa}
M.~Ondrej\'at.
\newblock Stochastic nonlinear wave equations in local {S}obolev spaces.
\newblock {\em Electron. J. Probab.}, 15:no. 33, 1041--1091, 2010.

\bibitem{Pan}
E.~Y. Panov.
\newblock Existence and strong pre-compactness properties for entropy solutions
  of a first-order quasilinear equation with discontinuous flux.
\newblock {\em Archive for Rational Mechanics and Analysis}, 195(2):643--673,
  Feb 2010.

\bibitem{Perthame:1998if}
B.~Perthame and P.~E. Souganidis.
\newblock A limiting case for velocity averaging.
\newblock {\em Ann. Sci. \'Ecole Norm. Sup. (4)}, 31(4):591--598, 1998.

\bibitem{Punshon-Smith:2018aa}
S.~Punshon-Smith and S.~Smith.
\newblock On the {B}oltzmann equation with stochastic kinetic transport: global
  existence of renormalized martingale solutions.
\newblock {\em Arch. Ration. Mech. Anal.}, 229(2):627--708, 2018.

\bibitem{Revuz:1999wi}
D.~Revuz and M.~Yor.
\newblock {\em Continuous martingales and {B}rownian motion}, volume 293 of
  {\em Grundlehren der Mathematischen Wissenschaften}.
\newblock Springer-Verlag, Berlin, third edition, 1999.

\bibitem{rind}
F.~Rindler.
\newblock Directional oscillations, concentrations, and compensated compactness
  via microlocal compactness forms.
\newblock {\em Arch. Ration. Mech. Anal.}, 215(1):1--63, 2015.

\bibitem{Simon:1987vn}
J.~Simon.
\newblock Compact sets in the space {$L\sp p(0,T;B)$}.
\newblock {\em Ann. Mat. Pura Appl. (4)}, 146:65--96, 1987.

\bibitem{Stein:1970pr}
E.~M. Stein.
\newblock {\em Singular integrals and differentiability properties of
  functions}.
\newblock Princeton Mathematical Series, No. 30. Princeton University Press,
  Princeton, N.J., 1970.

\bibitem{Tadmor:2006vn}
E.~Tadmor and T.~Tao.
\newblock Velocity averaging, kinetic formulations and regularizing effects in
  quasilinear pdes.
\newblock {\em Comm. Pure Appl. Math.}, 60(10):1488--1521, 2007.

\bibitem{Tartar:1990mq}
L.~Tartar.
\newblock {$H$}-measures, a new approach for studying homogenisation,
  oscillations and concentration effects in partial differential equations.
\newblock {\em Proc. Roy. Soc. Edinburgh Sect. A}, 115(3-4):193--230, 1990.

\bibitem{Duijn:2007lk}
C.~J. van Duijn, L.~A. Peletier, and I.~S. Pop.
\newblock A new class of entropy solutions of the {B}uckley-{L}everett
  equation.
\newblock {\em SIAM J. Math. Anal.}, 39(2):507--536 (electronic), 2007.

\end{thebibliography}

\end{document}